\pdfoutput=1

\newif\ifpersonal
\personaltrue 

\documentclass[11pt,a4paper,dvipsnames,x11names]{amsart}

\usepackage[utf8]{inputenc}
\usepackage[T1]{fontenc}
\usepackage[english]{babel}

\usepackage[a4paper,margin=1in]{geometry}

\usepackage{microtype,lmodern}
\usepackage{mathpazo}
\usepackage{euler}

\usepackage{amsmath,amssymb,amsthm,mathtools,mathrsfs,bm,eucal,tensor}
\usepackage{stmaryrd}
\usepackage{dsfont}

\usepackage[all,cmtip]{xy}
\usepackage{tikz}
\usetikzlibrary{arrows.meta,calc,positioning,fit,backgrounds,shapes.geometric}
\usepackage{tikz-cd}

\tikzset{
  >=Stealth,
  box/.style={draw, rounded corners, inner sep=6pt},
  v/.style={draw, circle, inner sep=1.4pt},
  arr/.style={->, thick},
  darr/.style={->, thick, dashed}
}

\usepackage{enumerate,comment,braket,xspace,csquotes}

\usepackage{xcolor}
\usepackage{hyperref}
\hypersetup{
  colorlinks=true,
  linkcolor=blue!60!black,
  citecolor=blue!60!black,
  urlcolor=blue!60!black
}
\usepackage[capitalize,nameinlink]{cleveref}

\numberwithin{equation}{section}

\theoremstyle{plain}
\newtheorem{theorem}{Theorem}[section]
\newtheorem{proposition}[theorem]{Proposition}
\newtheorem{lemma}[theorem]{Lemma}
\newtheorem{corollary}[theorem]{Corollary}

\theoremstyle{definition}
\newtheorem{definition}[theorem]{Definition}
\newtheorem{remark}[theorem]{Remark}

\newtheorem{stokesproblem}[theorem]{Problem}

\newcommand{\Xlog}{X^{\log}}
\newcommand{\Xlogn}{X^{\log}_n}

\newcommand{\CC}{\mathbb C}
\newcommand{\CB}{\Cech(\mathcal B)}

\newcommand{\ZZ}{\mathbb Z}

\newcommand{\id}{\mathrm{id}}
\newcommand{\Ob}{\mathrm{Ob}}
\newcommand{\Hom}{\mathrm{Hom}}
\newcommand{\Aut}{\mathrm{Aut}}

\newcommand{\Sec}{\operatorname{Sec}}

\newcommand{\StokesLocal}{\mathrm{StokesLocal}}
\newcommand{\Stokes}{\mathrm{Stokes}}

\newcommand{\LocSys}{\mathrm{LocSys}}
\newcommand{\Tors}{\mathrm{Tors}}
\newcommand{\Rep}{\mathrm{Rep}}

\newcommand{\Cech}{\check{\mathrm C}}
\newcommand{\GL}{\mathrm{GL}}
\newcommand{\Cocone}{\mathrm{Cocone}}

\newcommand{\CoconeTwo}{\Cocone_{2}}

\newcommand{\St}{\mathrm{St}}
\newcommand{\Gr}{\mathrm{Gr}}

\newcommand{\GCphisk}{G_{C,\Phi}^{\mathrm{sk}}}
\newcommand{\GUs}{G_{U}^{\mathrm{sk}}}

\newcommand{\G}{\mathscr G}

\newcommand{\GU}{\G_U}
\newcommand{\GX}{\G_{\times}}
\newcommand{\GC}{\G_C}

\newcommand{\GXsk}{\GX^{\mathrm{sk}}}

\newcommand{\CBn}{\Cech(\mathcal B_n)}
\newcommand{\GCn}{\mathcal G_{C,n}}

\title{A \v{C}ech--Stokes Pushout Groupoid: A Log/Kummer Betti Presenter for Stokes Torsors}
\author{Mauricio   Corr\^ea  }
\address{Dipartimento di Matematica, Universit\`a degli Studi di Bari Aldo Moro, Bari, Italy}
\email{mauricio.barros@uniba.it}
\date{}
\subjclass[2020]{14Fxx, 32Sxx, 58H05}
\keywords{Stokes data, groupoids, \v{C}ech groupoid, Kato--Nakayama space, logarithmic geometry, torsors, descent, Morita equivalence}

\begin{document}

\begin{abstract}
We give an explicit Betti presentation of the Stokes torsors attached to meromorphic flat connections of prescribed irregular type along a simple normal crossings divisor, at fixed Kummer level. Our construction is strictly \(1\)-categorical and cover-based: on a punctured logarithmic collar of the divisor, we define a small \v{C}ech--Stokes groupoid and prove that the boundary Stokes moduli is described by sections of a natural forgetful functor, rather than by representations of the decorated boundary groupoid itself. By gluing this boundary model to the ordinary \v{C}ech presenter of the complement through an explicit pushout construction, we obtain a groupoid presentation that computes the global Stokes objects. The resulting presentation is canonical up to Morita equivalence, compatible with Kummer descent along all normal crossings strata, and admits an explicit finite local description near corners in terms of nonabelian cocycles and relations.
\end{abstract}

\maketitle

\section{Introduction}

Irregular singular connections and Stokes phenomena may be viewed, at their core, as a problem of
\emph{topological encoding of analytic continuation near a divisor}.
Let $X$ be a complex manifold (or a smooth complex algebraic variety), and let $D\subset X$ be a simple normal crossings divisor.
Write $U:=X\setminus D$.
A meromorphic flat connection on $(X,D)$ restricts on $U$ to an ordinary local system, but its behavior near $D$
involves additional, genuinely non-abelian gluing data: the \emph{Stokes jumps}.
In dimension one this is classical (see e.g.\ \cite{BV89,Sabbah,Boalch2014}); in higher dimension, the same principle persists,
with one essential geometric change:
the relevant ``directions'' around $D$ vary along the strata of $D$ and must be handled functorially across charts.
A Betti-side model that is useful for geometry should therefore keep track simultaneously of
(i) the directional boundary geometry, (ii) tame/Kummer branching along all strata, and
(iii) the wild Stokes gluing across sectorial overlaps.

A standard form of the irregular Riemann--Hilbert correspondence asserts that meromorphic connections
with prescribed formal/irregular type are classified on the Betti side by \emph{Stokes-filtered}
(or Stokes-graded) local systems on a real boundary space surrounding $D$.
Analytically, this boundary is often realized by the real oriented blow-up along $D$,
while modern formulations use enhanced or perverse-sheaf avatars of irregularity (cf.\ \cite{DAgnoloKashiwara,Mochizuki}).
In the Stokes-filtered description, the Stokes jumps are encoded by a Stokes sheaf on the boundary,
and the classification problem becomes a torsor problem.
While this framework is well-established, the Betti objects are frequently presented either analytically
or as stacks defined by descent; in particular, the underlying \emph{presenting groupoid} is often left implicit.
For applications where one wants strict gluing along covers (for instance as input to Betti-stack or Tannakian formalisms),
it is useful to have a small and explicit strict groupoid model adapted to normal crossings strata and compatible with tame/Kummer ramification.

A complementary viewpoint, especially effective on curves, is to encode wild monodromy by groupoids:
Stokes data can be organized into ``wild'' fundamental groupoids, and moduli can be described as representation groupoids
of these presenters (see e.g.\ \cite{GualtieriLiPym} and the references therein).
This has two structural advantages: it makes cutting-and-pasting arguments transparent, and it interfaces naturally with
geometric structures on moduli (Poisson/symplectic forms, generators-and-relations descriptions, and related constructions).
In higher dimension, however, one faces an additional bookkeeping issue: the boundary geometry is naturally \emph{logarithmic},
and the tame/Kummer layer interacts with the Stokes layer along strata of arbitrary depth.
The aim of this paper is to give a single, strictly $1$-categorical, cover-based model in which the directional, tame, and wild layers
appear together through explicit maps of small groupoids governed by universal properties (pullbacks and pushouts),
and which admits finite skeletal models well-suited to computation (compare the \textit{skeleton} perspective in \cite{TeyssierSkeletons}).

We work with the logarithmic Betti realization of the pair $(X,D)$.
Let $\Xlog$ denote the Kato--Nakayama space of $(X,D)$, equipped with the canonical map $\tau:\Xlog\to X$ \cite{KatoNakayama}.
Locally, near a point where $k$ components of $D$ meet, the fiber of $\tau$ is a real torus $(S^1)^k$,
canonically recording the angular directions around each local branch.
To incorporate tame ramification uniformly we pass to the Kummer level $n$ and use the Kummer log cover
$\Xlogn\to \Xlog$ (cf.\ \cite{NakayamaOgus,Ogus}); along a depth-$k$ stratum the natural deck group is $(\mu_n)^k$.
This is the bookkeeping forced by multi-branch behavior once one insists on a boundary model compatible with descent.

A recurring point of terminology concerns the word \textit{representation}.
If $G$ is a small (topological) groupoid, we write $\Rep_r(G)$ for the groupoid of \emph{torsorial} data on $G$:
principal $\GL_r(\CC)^\delta$-bundles on $G_0$ equipped with descent isomorphisms along $G_1$
(here $\GL_r(\CC)^\delta$ denotes the discrete group).
This unpointed notion is stable under refinement and Morita equivalence, and it is the natural target for strict gluing arguments.
When a basepoint and a trivialization are fixed on a connected space, one recovers the familiar $\Hom(\pi_1,-)$ description,
but we do not impose such auxiliary choices.

Fix an irregular type $\Phi$ along $D$, and let $N^\times\subset \Xlogn$ be a punctured collar neighborhood of the logarithmic boundary.
We consider the groupoid $\LocSys_r(U)$ of rank-$r$ local systems on $U$ and the groupoid
$\StokesLocal_r(N^\circ;\Phi)$ of rank-$r$ Stokes local systems on the collar of type $\Phi$,
together with their restrictions to the overlap $N^\circ\subset U$.
We define the global Stokes object intrinsically as the $2$-fiber product
\begin{equation}\label{eq:intro-global-stokes}
\Stokes_r(\Xlogn;D,\Phi)
:=
\LocSys_r(U)\times^{(2)}_{\LocSys_r(N^\circ)}
\StokesLocal_r(N^\circ;\Phi).
\end{equation}
This formalizes the basic gluing principle: an irregular object is obtained by gluing an ordinary local system on $U$
with a Stokes enhancement on a collar, compatibly on the overlap.

Our main technical step is to present \eqref{eq:intro-global-stokes} by an explicit \emph{small strict groupoid}
built from \v{C}ech data.
Choose a good cover of $U$ and a sectorial/Stokes cover of $N^\times$ compatible on $U\cap N^\times$.
Let $\GU$ be the \v{C}ech groupoid of the interior cover, let $\GC$ be the \v{C}ech--Stokes collar groupoid on $N^\times$,
and let $\GX$ be the induced overlap groupoid.
The collar groupoid comes equipped with a canonical projection
$\pi:\GC\to \Cech(\mathcal B)$ to the underlying sector \v{C}ech groupoid (forgetting the Stokes labels);
the correct collar moduli is the section groupoid $\Sec(\pi)$, rather than a representation groupoid of $\GC$.
We then form the explicit $2$-pushout in small groupoids
\begin{equation}\label{eq:pushout-intro}
\G_{\Phi,n}\ :=\ \GU \sqcup_{\GX} \GC.
\end{equation}
The pushout is the groupoid-theoretic expression of \textit{glue interior monodromy and boundary Stokes torsors
along their common restriction.}

\begin{theorem} \label{thm:intro-main}
For every $r\ge 1$ there is a natural equivalence of groupoids
\begin{equation}\label{eq:intro-main-equiv}
\Rep_r(\GU)\times^{(2)}_{\Rep_r(\GX)} \Sec(\pi)
\ \simeq\ \Stokes_r(\Xlogn;D,\Phi),
\end{equation}
compatible with restriction to $\Rep_r(\GU)$ (forgetting the wild layer) and with restriction to $\Sec(\pi)$
(forgetting the interior).
\end{theorem}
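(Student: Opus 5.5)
The plan is to compare the two $2$-fiber products factor by factor. Since $\Stokes_r(\Xlogn;D,\Phi)$ is defined in \eqref{eq:intro-global-stokes} as a $2$-fiber product, it suffices to give three equivalences
\[
\Rep_r(\GU)\simeq\LocSys_r(U),\qquad \Rep_r(\GX)\simeq\LocSys_r(N^\circ),\qquad \Sec(\pi)\simeq\StokesLocal_r(N^\circ;\Phi),
\]
and to check that they commute, up to specified natural isomorphisms, with the two restriction functors to the overlap. Once this is done, the $2$-fiber product is invariant under equivalences of the entries of a cospan when the squares are filled by $2$-cells. That invariance is a formal statement about groupoids: one writes down the induced functor on objects $(E,S,\alpha)$ and checks that it is essentially surjective and fully faithful. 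Together this gives \eqref{eq:intro-main-equiv}, and compatibility with the two forgetful projections holds by construction.

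\textbf{Interior and overlap factors.} Here I would use the standard \v{C}ech descent argument. An object of $\Rep_r(\GU)$ is a $\GL_r(\CC)^\delta$-torsor on $G_0=\bigsqcup U_i$ with descent isomorphisms on $G_1=\bigsqcup U_{ij}$ satisfying the cocycle condition. Because the torsors are discrete and the cover is good, each piece is trivializable, and this is exactly a locally constant transition cocycle, i.e.\ descent data for the frame torsor of a local system on $U$. Descent for sheaves of sets (torsors for a discrete group) along an open cover then gives $\Rep_r(\GU)\simeq\Tors_{\GL_r}(U)\simeq\LocSys_r(U)$. The same argument applied to the overlap cover of $U\cap N^\times$ gives the equivalence for $\GX$. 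I would check that the restriction $\GX\to\GU$, viewed as a refinement map, corresponds to restriction of local systems, which is just functoriality of descent.

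\textbf{Collar factor.} This is the main obstacle. An object of $\StokesLocal_r(N^\circ;\Phi)$ is a local system with a Stokes structure of type $\Phi$. After the Kummer pullback to level $n$, the exponents become single-valued on $\Xlogn$, so the Stokes sheaf $\St_\Phi$ is a sheaf of unipotent groups on the sectors, together with graded pieces $\Gr$. On a Stokes cover $\mathcal B$ the following hold:
\begin{itemize}
\item on each sector $B_a$ the filtration splits,
\item $\St_\Phi$ has trivial $H^1$ on each sector,
\item the filtered object is determined by the graded local system together with Stokes jumps in $\St_\Phi(B_{ab})$ on double overlaps.
\end{itemize}
I would use these as the defining properties of a sectorial/Stokes cover. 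The fiber of $\pi:\GC\to\CB$ over a \v{C}ech arrow should be the set of Stokes labels, i.e.\ elements of $\St_\Phi(B_{ab})$ twisted by the graded transition. A section of $\pi$ then chooses, over each object and arrow of $\CB$, a lift compatible with composition. I would show that this is precisely a nonabelian \v{C}ech $1$-cocycle with values in the Stokes-twisted automorphism sheaf, and that morphisms in $\Sec(\pi)$ are coboundaries. Nonabelian Leray for the Stokes sheaf (acyclicity on sectors) then identifies $\Sec(\pi)$ with the torsor groupoid, which is $\StokesLocal_r(N^\circ;\Phi)$. This is also why representations of $\GC$ itself would be wrong: they would forget the requirement that the lift lies over the identity of $\CB$.

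\textbf{Finishing.} For the restriction $\Sec(\pi)\to\Rep_r(\GX)$, I would compose with $\pi$ to forget the Stokes labels, then restrict to the overlap where the sectors meet $U$. I would check that this matches the forgetful functor from a Stokes-filtered local system to its underlying local system; the compatibility isomorphism is the canonical identification of the cocycles. The main difficulty I expect is handling the Kummer level and corners: one must verify that sectorial acyclicity of $\St_\Phi$ holds at depth-$k$ strata with the $(\mu_n)^k$ deck action. I would try to reduce this to the local product structure, namely a product of one-dimensional Stokes sheaves after ramification.
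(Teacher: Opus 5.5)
Your architecture is the same as the paper's proof of \Cref{thm:main}. There the equivalence is obtained by substituting $\Rep_r(\GU)\simeq\LocSys_r(U)$ and $\Rep_r(\GX)\simeq\LocSys_r(N^\circ)$ (\Cref{prop:rep-cech-locsys}, with \Cref{lem:cech-naturality} for the left leg) and $\Sec(\pi)\simeq\Tors(\St_\Phi;\mathcal B)\simeq\StokesLocal_r(N^\circ;\Phi)$ (\Cref{prop:stokes-as-sections} and \Cref{thm:stokeslocal-tors}) into the $2$-fiber product. Your insistence on $2$-cells filling the two squares is, if anything, more careful than the paper.

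The overlap has to be $N^\circ$, not $U\cap N^\times$. $\GX$ is the \v{C}ech groupoid of a good cover of $N^\circ$ refining both $\mathcal V|_{N^\circ}$ and $\mathcal B$. Otherwise $j_C$ is not defined, and the base would be $\LocSys_r(N^\times)$ rather than the $\LocSys_r(N^\circ)$ appearing in the definition of $\Stokes_r(\Xlogn;D,\Phi)$.

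The step that fails as written is your right leg $\Sec(\pi)\to\Rep_r(\GX)$. For a section one has $\pi\circ\sigma=\id_{\CB}$ by definition, so composing with $\pi$ retains nothing. ``Forgetting the Stokes labels'' (setting every $u_{\alpha\beta}=1$) would return $\Gr$ itself, not the underlying local system.

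The functor you need is the paper's $\mathrm{forg}_\times$ from \Cref{lem:forg-to-overlap}: glue the pieces $\Gr|_{B_\alpha}$ along the labels $u_{\alpha\beta}\in\St_\Phi(B_{\alpha\beta})\subset\Aut(\Gr|_{B_{\alpha\beta}})$, then restrict along the refinement $\mathcal W\to\mathcal B$. In trivialized frames its transition functions are those of $\Gr$ corrected by the $u_{\alpha\beta}$, so it uses the fixed graded local system and not just $\sigma$. With this functor the right-hand square commutes up to the canonical isomorphism given by the splittings $s_\alpha$, because a Stokes-local object is recovered by gluing $\Gr|_{B_\alpha}$ along $s_\beta s_\alpha^{-1}$.

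The difficulty you single out (sectorial acyclicity at depth-$k$ corners with the $(\mu_n)^k$-action, nonabelian Leray) does not arise for this statement:
\begin{itemize}
\item the theorem is at a fixed Kummer level, and the Stokes input on $N^\circ$ is a standing assumption;
\item $\Tors(\St_\Phi;\mathcal B)$ is defined on the fixed cover;
\item the splittings $s_\alpha$ are part of the data of a Stokes-local object, so \Cref{thm:stokeslocal-tors} is a direct transition-function computation $u_{\alpha\beta}=s_\beta s_\alpha^{-1}$.
\end{itemize}
The deck group only enters the separate descent results.
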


Although the global Stokes groupoid \eqref{eq:intro-global-stokes} is defined intrinsically as a $2$-fiber product,
the main point of this paper is to provide an explicit small strict presenter for it, adapted to SNC depth and Kummer descent.
Concretely, we:
\begin{itemize}
\item construct a strict \v{C}ech--Stokes collar groupoid $\GC$ and identify the correct collar moduli as $\Sec(\pi)$
(rather than $\Rep_r(\GC)$), making the Stokes layer strictly functorial and cover-based;
\item build the explicit $2$-pushout presenter $\G_{\Phi,n}$ and prove that it computes global Stokes objects via the torsorial gluing problem
in \Cref{thm:intro-main};
\item make the tame/Kummer layer along depth strata functorial at the level of presenters, enabling explicit descent constraints
and, in particular, a hands-on description in terms of equivariant cocycles;
\item provide finite skeletal models (invariant up to Morita equivalence, with contractible ambiguity) and an explicit
generators-and-relations/chamber-complex description in SNC corners, solving the local presentation problem highlighted in
Problem~12.3.
\end{itemize}

Theorem~\Cref{thm:intro-main} makes it possible to treat global Stokes objects using a concrete presenter:
questions about existence, comparison, and descent can be translated into torsors and non-abelian \v{C}ech cocycles
on an explicit skeleton underlying $\G_{\Phi,n}$.
We record three consequences that will be used later; each is proved in the indicated section.

\begin{corollary} \label{cor:intro-morita}
If one replaces the interior and collar covers by refinements, the resulting pushout groupoid $G'_{\Phi,n}$
is Morita equivalent to $\G_{\Phi,n}$; in particular the associated torsorial representation groupoids are equivalent:
$\Rep_r(G'_{\Phi,n})\simeq \Rep_r(\G_{\Phi,n})$.
\end{corollary}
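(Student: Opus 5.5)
The strategy is to reduce the statement to three standard facts: refinement of a cover gives a Morita equivalence of \v{C}ech groupoids, Morita equivalences are stable under the relevant pullbacks and homotopy pushouts of small groupoids, and $\Rep_r$ is Morita invariant, as recalled in the introduction for the unpointed torsorial notion.

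\textbf{Step 1: common refinements.} Let $\mathcal U'$ refine the interior cover $\mathcal U$, and $\mathcal B'$ refine the collar cover $\mathcal B$, compatibly on $U\cap N^\times$. Since two refinements are always dominated by a common refinement, it suffices to treat the case where each primed cover refines the unprimed one. Choose refinement maps $\rho_U:\mathcal U'\to\mathcal U$ and $\rho_B:\mathcal B'\to\mathcal B$ that are compatible on the overlap cover. These induce functors
\[
\GU'\to\GU,\qquad \GC'\to\GC,\qquad \GX'\to\GX .
\]
Each functor sends a point of $U'_i$ to the same point of $U_{\rho(i)}$. On the collar it transports Stokes labels along the inclusion of sectors. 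This works because the Stokes structure is a sheaf on $N^\times$, so the labels restrict canonically.

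\textbf{Step 2: each comparison functor is a Morita equivalence.} For a \v{C}ech groupoid this is standard. The functor is surjective on the underlying space $U$, since the object map $\bigsqcup U'_i\to\bigsqcup U_j$ followed by the projection to $U$ is an open surjection, hence essentially surjective in the topological sense. It is fully faithful because both groupoids are the pullback of the unit groupoid of $U$ along étale surjections. For the collar groupoid we must also check that the square formed by $\pi'$, $\pi$ and $\Cech(\mathcal B')\to\Cech(\mathcal B)$ is a pullback of groupoids. The Stokes decorations are pulled back from the collar, so this should follow from the definition of $\GC$. Granting that, the functor $\GC'\to\GC$ is Morita, and it moreover induces an equivalence $\Sec(\pi)\simeq\Sec(\pi')$.

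\textbf{Step 3: pushouts.} This is the main obstacle. Morita equivalences are weak equivalences in the model/bicategory of topological groupoids localized at them. A strict pushout is only homotopy-invariant when one leg is a cofibration, for example an inclusion of a full open subgroupoid. My first attempt is to check that $\GX\to\GU$ and $\GX\to\GC$ are injective on objects and are inclusions of open subgroupoids. This should hold because $\GX$ is the \v{C}ech groupoid of the overlap cover, which sits openly inside both. Then the $2$-pushout \eqref{eq:pushout-intro} is a homotopy pushout, and the comparison map $\G'_{\Phi,n}\to\G_{\Phi,n}$ is a Morita equivalence by the gluing lemma.

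If that point-set argument is awkward, I would instead argue at the level of stacks. Morita classes of groupoids correspond to stacks, and the pushout presents the gluing of the stacks of $\GU$ and $\GC$ along that of $\GX$. Since the underlying spaces are unchanged, this glued stack does not depend on the covers.

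\textbf{Step 4: representations.} Finally, $\Rep_r(-)$ converts Morita equivalences into equivalences of groupoids, as stated in the introduction. This yields $\Rep_r(G'_{\Phi,n})\simeq\Rep_r(\G_{\Phi,n})$. As a consistency check, this is compatible with \Cref{thm:intro-main} via Step 2.
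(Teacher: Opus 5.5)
\paragraph{Where the proposal fails.} Steps 1, 2 and 4 are fine and match what the paper uses: refinement functors are Morita equivalences on each corner, and $\Rep_r$ is Morita invariant. Step 3, the only step with real content, is not established.

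Your cofibrancy route rests on a false claim. In general $j_U$ and $j_C$ are neither injective on objects nor inclusions of subgroupoids. The cover $\mathcal W$ is not a subfamily of $\mathcal V$ or $\mathcal B$. It only refines $\mathcal V|_{N^\circ}$ and $\mathcal B|_{N^\circ}$ through index maps $k\mapsto r(k)$. Whenever $r(k)=r(k')$ and $x\in W_k\cap W_{k'}$, the distinct objects $(k,x)\neq(k',x)$ of $\GX$ have the same image $(r(k),x)$. So the gluing lemma cannot be applied the way you intend.

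The stack-level fallback, that the pushout ``presents the gluing of the stacks'', is asserted rather than proved. It is essentially the compatibility of the pushout with Morita equivalence, which is exactly what Step 3 was supposed to supply.

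\paragraph{The missing observation.} No cofibrancy is needed, because $\G_{\Phi,n}$ is not a strict pushout. By \Cref{def:pushout} and \Cref{lem:strict-pushout-is-bicategorical} it is the bicategorical $2$-pushout. Its explicit model adjoins formal isomorphisms $s_b:j_U(b)\to j_C(b)$ rather than identifying $j_U(b)$ with $j_C(b)$. It is characterized by $\Hom_{\mathbf{Grpd}}(\G_{\Phi,n},H)\simeq\CoconeTwo(j_U,j_C;H)$, which is the $2$-fiber product $\Hom(\GU,H)\times^{(2)}_{\Hom(\GX,H)}\Hom(\GC,H)$.

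Now take a map of cospans that is an equivalence on each corner and whose squares commute up to natural isomorphism. It induces equivalences of these $2$-fiber products, naturally in $H$, so $G'_{\Phi,n}\simeq\G_{\Phi,n}$ by $2$-Yoneda. This is the functoriality of explicit $2$-pushouts that the paper invokes (cf.\ the proofs of \Cref{prop:van-kampen-presenters} and \Cref{prop:curve-skeleton-clean}), with pushouts formed on underlying small groupoids as stated in \Cref{subsec:2cat}.

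The refinement functors $\GU'\to\GU$, $\GX'\to\GX$, $\GC'\to\GC$ are such equivalences: they are fully faithful and essentially surjective. The squares with $j_U,j_C$ commute up to the canonical \v{C}ech arrows, with label $1$ on the collar.

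\paragraph{A correction to Step 1.} The functor $\GC'\to\GC$ must send a label on $B'_{\alpha'\beta'}$ to a label on the larger set $B_{\rho(\alpha')\rho(\beta')}$. This requires the restriction map of $\St_\Phi$ between these connected sets to be bijective, which holds by constancy of $\St_\Phi$ there. The sheaf property alone does not give it.

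\paragraph{A shortcut for the ``in particular''.} You can bypass the comparison of pushouts entirely. \Cref{prop:rep-pushout-pullback} identifies $\Rep_r$ of the pushout with $\Rep_r(\GU)\times^{(2)}_{\Rep_r(\GX)}\Rep_r(\GC)$. A $2$-fiber product is unchanged when its three corners are replaced by equivalent groupoids.
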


\begin{corollary} \label{cor:intro-kummer}
Along a depth-$k$ stratum of $D$, the Kummer cover $\Xlogn\to \Xlog$ has deck group $(\mu_n)^k$,
and the groupoid $\G_{\Phi,n}$ records this tame layer functorially through the collar groupoid $\GC$.
Consequently, Kummer descent constraints for Stokes objects can be expressed as strict descent conditions
on the collar data (equivalently, as equivariant cocycle constraints on a finite skeleton).
\end{corollary}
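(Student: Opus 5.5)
The plan is to make the corollary a statement about the collar groupoid $\GC$ alone. The interior groupoid $\GU$ lives on $U$, where the Kummer cover $\Xlogn\to\Xlog$ restricts to a trivial cover. Indeed, over $U$ the log structure is trivial, so $\tau$ and the Kummer cover are isomorphisms there. Hence every $(\mu_n)^k$-action is concentrated on the collar $N^\times$ and the overlap $\GX$. The first step is local. Near a point of a depth-$k$ stratum, use adapted coordinates $z_1\cdots z_k=0$. Then $\Xlog$ is locally (a product with) $[0,\epsilon)^k\times (S^1)^k$, and $\Xlogn$ is the $n$-fold root cover in each angular variable. So the deck group is $(\mu_n)^k$, acting by $\theta_i\mapsto \theta_i+2\pi j_i/n$. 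Compatibility of these local identifications on overlaps of strata comes from the functoriality of the Kato--Nakayama construction and of Kummer covers (\cite{NakayamaOgus,Ogus}).

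The second step makes the action functorial at the level of presenters. I would choose the sectorial/Stokes cover $\mathcal B$ of $N^\times$ to be $(\mu_n)^k$-stable near each depth-$k$ stratum. This can be done by pulling back a sectorial cover from $\Xlog$ and saturating it under the deck group. Since $\Phi$ is pulled back from level $n$ data, the deck transformations permute the Stokes labels. The group then acts by strict automorphisms of $\Cech(\mathcal B)$ and of $\GC$, commuting with $\pi:\GC\to\Cech(\mathcal B)$. The action is trivial on $\GU$ and compatible with $\GX$. By the universal property of the pushout \eqref{eq:pushout-intro}, we get a strict action on $\G_{\Phi,n}$ that is induced entirely through $\GC$. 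If the cover is refined to achieve stability, \Cref{cor:intro-morita} ensures nothing is lost up to equivalence.

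The third step translates descent. A Stokes object at level $n$ descends along $\Xlogn\to\Xlog$ exactly when it carries a compatible $(\mu_n)^k$-equivariant structure. Using \Cref{thm:intro-main}, the object is a pair $(\rho,s)\in\Rep_r(\GU)\times^{(2)}_{\Rep_r(\GX)}\Sec(\pi)$. Since the action on $\GU$ is trivial, the equivariant structure reduces to isomorphisms $s\cong g^*s$ in $\Sec(\pi)$, satisfying the cocycle condition and matching on $\GX$. Written in a skeleton, $s$ becomes a nonabelian \v{C}ech cocycle with values in the Stokes groups. The equivariance then becomes the equivariant cocycle constraints stated in the corollary.

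The main obstacle is step two: producing a cover that is simultaneously $(\mu_n)^k$-stable, good, and sectorial at every depth. The difficulty lies at corners, where several strata meet and the different deck groups must be compatible along inclusions of strata. My first approach is to work chart by chart using product sectors in the polar coordinates. The factor groups $\mu_n$ act coordinatewise on these, so compatibility along strata inclusions reduces to restricting the product structure.
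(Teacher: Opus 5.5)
Your plan has a genuine gap. The reduction to the collar rests on the claim that the Kummer cover is an isomorphism over $U$, so that $(\mu_n)^k$ acts trivially on $\GU$, and that claim is incompatible with the rest of your argument. The punctured collar $N^\times=N\cap U^{\log}_n$ lies over $U$, and $\GX$ is the \v{C}ech groupoid of a cover of $N^\circ\subset U$. A deck group acting trivially over $U$ would therefore act trivially on $N^\times$, on $\GX$ and on the underlying spaces of $\GC$, and there would be nothing to descend.

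Conversely, take the local model you use yourself: the $n$-fold root cover in each angular variable, i.e.\ $z_i=w_i^n$. There the part $(0,\epsilon)^k\times(S^1)^k$ lying over $U$ is also covered $n^k$-to-$1$. This is exactly what makes $q_n$ a $(\mu_n)^k$-Galois covering of the chart in \Cref{prop:descent}, and a free action cannot be trivial on a nonempty open set. The convention in \Cref{sec:conventions} that $\tau_n$ is a homeomorphism over $U$ cannot coexist with such an action; the paper's descent arguments instead let $G_k$ act on the interior as well. Hence $G_k$ acts nontrivially on the interior presenter of the chart, on $\GX$ and on $\GC$. The covers (or triangulations) must be chosen so that both $j_U$ and $j_C$ are strictly equivariant. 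The action on $\G_{\Phi,n}$ is then induced from the whole equivariant cospan, not through $\GC$ alone.

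This breaks your third step. Write $\pi_n$ for the level-$n$ collar projection as in \Cref{thm:visible-descent}. A descent datum on a level-$n$ object $(\rho,s,\varphi)$ is an object of $\Rep_r(\GU)^{hG_k}\times^{(2)}_{\Rep_r(\GX)^{hG_k}}\Sec(\pi_n)^{hG_k}$. Concretely, it consists of coherent isomorphisms $\psi_g\colon g^*\rho\cong\rho$ and $\chi_g\colon g^*s\cong s$, intertwined by $\varphi$ over $\GX$. It does not reduce to the $\chi_g$ alone. It would not reduce even for a trivial action on $\GU$, since homotopy fixed points of a trivial action are functors $BG_k\to\Rep_r(\GU)$, i.e.\ $\rho$ together with a homomorphism $G_k\to\Aut(\rho)$.

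The paper argues along these lines. It applies Galois descent to both factors of the defining $2$-fiber product (\Cref{prop:descent}). On the presenter, it chooses equivariant triangulations on $U$, on the overlap and on the collar so that $j_U^{\mathrm{sk}}$ and $j_C^{\mathrm{sk}}$ are equivariant, and takes homotopy fixed points of the whole fiber product (\Cref{thm:kummer-descent-skeletal}).

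What your second and third steps do establish, as the paper does, is the collar-only statement $\Sec(\pi)\simeq\Sec(\pi_n)^{hG_k}$ and its equivariant-cocycle form (\Cref{thm:visible-descent}). For that, the level-$n$ type must be $q_n^*\Phi$, pulled back from $\Xlog$; this is what makes the Stokes sheaf $G_k$-equivariant. The corner issue you single out is not the real difficulty: pulling back the downstairs sector cover along $q_n$ gives a $G_k$-stable good cover automatically, with $G_k$ permuting the components.
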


\begin{corollary} \label{cor:intro-van-kampen}
Let $X= X^{(1)}\cup X^{(2)}$ be an open cover compatible with $D$, and assume the sectorial/Stokes data are chosen compatibly on overlaps.
Then the presenting groupoids admit a pushout description reflecting this decomposition, so that questions about
$\Stokes_r(\Xlogn;D,\Phi)$ may be reduced to representations of an explicit groupoid obtained by gluing the local presenters.
 
\end{corollary}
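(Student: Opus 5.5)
The plan is to reduce the statement to a pushout of presenting groupoids over the open cover $X=X^{(1)}\cup X^{(2)}$, and then invoke \Cref{thm:intro-main} together with the Morita invariance of \Cref{cor:intro-morita}. Write $U^{(i)}=X^{(i)}\setminus D$, $N^{(i),\times}=N^\times\cap \tau_n^{-1}(X^{(i)})$, and similarly for the double overlap $X^{(12)}=X^{(1)}\cap X^{(2)}$. The first step is to choose all covers adapted to the decomposition. Take a good cover of $U$ each of whose members lies in $U^{(1)}$ or $U^{(2)}$. Take a sectorial/Stokes cover $\mathcal B$ of $N^\times$ each of whose sectors lies over $X^{(1)}$ or $X^{(2)}$; this is possible after refinement, using the compatibility hypothesis on the sectorial data. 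By \Cref{cor:intro-morita}, passing to these refinements changes $\G_{\Phi,n}$ only up to Morita equivalence. The restrictions of these covers give presenters $\G^{(i)}_{\Phi,n}$ for $(X^{(i)},D\cap X^{(i)})$ and $\G^{(12)}_{\Phi,n}$ for $X^{(12)}$, built exactly as in \eqref{eq:pushout-intro}, together with strict inclusion functors $\G^{(12)}_{\Phi,n}\to \G^{(i)}_{\Phi,n}$.

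Next I will prove that the canonical comparison functor
\begin{equation*}
\G^{(1)}_{\Phi,n}\sqcup_{\G^{(12)}_{\Phi,n}}\G^{(2)}_{\Phi,n}\longrightarrow \G_{\Phi,n}
\end{equation*}
is a Morita equivalence. On objects it is a bijection, because every cover member lies in one of the two pieces, and members lying in both are identified by the pushout. For morphisms I will argue by Čech groupoid generators. Every arrow of $\GU$, $\GX$, or $\GC$ is either a Čech overlap arrow between two members or a Stokes-labelled arrow over an overlap. Any such arrow between members in different pieces factors through members of $X^{(12)}$, and this factorization is exactly what the pushout adjoins. Relations in $\G_{\Phi,n}$ come from triple overlaps and Stokes cocycle relations. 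These likewise localize to one piece or to the double overlap.

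The final step is to apply the $2$-categorical universal property of the pushout to the torsorial functors. Both $\Rep_r(-)$ and the section construction $\Sec(\pi)$ send pushouts of presenters to $2$-fiber products of groupoids. For $\Sec(\pi)$ this requires that the projection $\pi$ is itself compatible with the decomposition, i.e.\ that $\CB$ splits as the corresponding pushout of sector Čech groupoids. Combining this with \Cref{thm:intro-main} applied to each piece gives the stated reduction
\begin{equation*}
\Stokes_r(\Xlogn;D,\Phi)\simeq \Stokes_r^{(1)}\times^{(2)}_{\Stokes_r^{(12)}}\Stokes_r^{(2)},
\end{equation*}
with each factor computed by the local presenters $\G^{(i)}_{\Phi,n}$ and $\G^{(12)}_{\Phi,n}$.

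The main obstacle is the morphism-level identification in the second step, namely that a strict $1$-pushout of groupoids (freely generated with relations) is Morita equivalent to $\G_{\Phi,n}$. This is a van Kampen statement for Čech–Stokes groupoids, and the delicate point is the Stokes arrows near corners, whose supporting sectors may straddle both pieces. I would first try to refine $\mathcal B$ so that every sector is contained in a single piece. Then the Stokes-label compatibility on overlaps, which the hypothesis guarantees, ensures that the Stokes relations are generated by relations lying in a single piece, and the question reduces to the classical Brown–van Kampen theorem for Čech groupoids.
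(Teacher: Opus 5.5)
Your route is the one the paper takes in \Cref{prop:van-kampen-presenters}. You refine to covers adapted to $X^{(1)}\cup X^{(2)}$, restrict them to get the local presenters, identify the global presenter with their pushout up to Morita equivalence, and then apply $\Rep_r$, $\Sec$ and \Cref{thm:intro-main} piece by piece. Your generators-and-relations argument for the comparison functor goes further than the paper, which only invokes functoriality of explicit $2$-pushouts.

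\textbf{The gap.} That comparison step fails as you set it up, because your first step only asks that each member of $\mathcal V$ (resp.\ each sector box) lie in $U^{(1)}$ or in $U^{(2)}$. This does not guarantee that the members contained in $U^{(12)}$ cover $U^{(12)}$, nor that those contained in $U^{(a)}$ cover $U^{(a)}$. A one-dimensional illustration: take $U=\mathbb R$, $U^{(1)}=(-\infty,1)$, $U^{(2)}=(0,\infty)$, and the good cover $\{(-\infty,\tfrac12),(\tfrac15,\infty)\}$.
\begin{itemize}
\item No member lies in $U^{(12)}$, so the presenter over $X^{(12)}$ is empty.
\item The restricted collections do not cover $U^{(1)}$ or $U^{(2)}$.
\item The \v{C}ech arrows over $(\tfrac15,\tfrac12)$ joining the two members are not in the image of your comparison functor, since the factorization ``through members of $X^{(12)}$'' does not exist.
\end{itemize}

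\textbf{The fix.} You need two conditions.
\begin{itemize}
\item For each $a\in\{1,2,12\}$, the members contained in the $a$-th piece must cover that piece. For instance, use convex balls, resp.\ sufficiently small sub-boxes, each inside one piece, chosen so that those inside each piece cover it.
\item The overlap cover $\mathcal W$ and its refinement maps to $\mathcal V$ and $\mathcal B$ must send members lying in a piece to members lying in the same piece.
\end{itemize}
Then the local cospans are sub-cospans of the global one, and your factorization works. An arrow over a point of the double overlap, Stokes-labelled or not, factors through a box of that overlap, because the restriction maps of $\St_\Phi$ are isomorphisms.

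\textbf{Smaller corrections.}
\begin{itemize}
\item In the $2$-pushout model of \Cref{lem:strict-pushout-is-bicategorical} objects are not identified, so the comparison functor is not bijective on objects. Only essential surjectivity is needed. Alternatively, use the strict pushout along the object-injective inclusions $\G^{(12)}_{\Phi,n}\hookrightarrow\G^{(a)}_{\Phi,n}$, which models the $2$-pushout.
\item A Morita equivalence of the total presenters says nothing by itself about $\Rep^{\St}_r$. That groupoid is built from the cospan $\GU\leftarrow\GX\to\GC$ and from $\pi$, not from $\G_{\Phi,n}$ alone. What you actually use is the separate decomposition of $\GU$, of $\GX$, and of $\pi:\GC\to\CB$.
\item You also need that restriction $\Sec(\pi^{(2)})\to\Sec(\pi^{(12)})$ is an isofibration (extend a gauge on the overlap boxes by $1$). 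This makes the strict fiber product of section groupoids agree with the $2$-fiber product.
\end{itemize}

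\textbf{A shortcut.} The final equivalence $\Stokes_r\simeq\Stokes^{(1)}_r\times^{(2)}_{\Stokes^{(12)}_r}\Stokes^{(2)}_r$ needs no presenter-level van Kampen. Local systems and $\St_\Phi$-torsors form stacks, so each factor of the defining $2$-fiber product decomposes over the two pieces. Since $2$-limits commute, the equivalence follows, and \Cref{thm:intro-main} then supplies the local presenters.
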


For a fully explicit local model (including a finite Morita skeleton), the reader may jump directly to
\Cref{subsec:curve-model-clean}.
There we compute the presenting groupoid near a simple normal crossings corner and spell out the resulting
generators-and-relations (equivalently, non-abelian cocycle) description of the Stokes data.
This example is included as a concrete reference point: it displays, in a single diagram, the directional layer
($\Xlog$), the tame/Kummer layer (the $(\mu_n)^k$-actions along strata), and the wild Stokes layer
(sector overlaps and Stokes jumps), and it provides a template for later computations.

After recalling the basic geometry of $\Xlogn$ in the SNC case and fixing conventions, we define the intrinsic groupoid
$\Stokes_r(\Xlogn;D,\Phi)$ and establish its torsor description.
We then construct the interior \v{C}ech groupoid $\GU$, the collar \v{C}ech--Stokes groupoid $\GC$, and the overlap groupoid $\GX$,
form the pushout \eqref{eq:pushout-intro}, and prove \Cref{thm:intro-main} in \Cref{sec:pushout}.
Finally, we discuss Morita invariance, Kummer descent along depth strata, and an explicit coequalizer
(generators-and-relations) model for $\G_{\Phi,n}$, including the SNC computation announced above.

\section{Standing conventions and geometric input}\label{sec:conventions}

\subsection{SNC divisor and logarithmic boundary spaces}
Let $X$ be a complex manifold and let $D\subset X$ be a \emph{reduced simple normal crossings} divisor.
Set $U:=X\setminus |D|$.

Let $X^{log}$ denote the Kato--Nakayama space associated to the log structure of $(X,D)$, and let $X^{log}_n$ be the level-$n$ Kummer variant.
Write $\tau_n:X^{log}_n\to X$ for the structure map and set $U^{log}_n:=\tau_n^{-1}(U)$.
Over $U$, the map $\tau_n$ is a homeomorphism, and we will freely identify $U^{log}_n$ with $U$ via $\tau_n$.

Locally near a point where $k$ branches of $D$ meet, there are coordinates $(z_1,\dots,z_k,w)$ with $D=\{z_1\cdots z_k=0\}$,
and the boundary directions form a torus $(S^1)^k$; at level $n$ the angular variables are $n$-fold covered.
For precise constructions in this toroidal/SNC setting, see \cite{KatoNakayama,Ogus}.

\subsection{Collars}

Choose once and for all a collar neighborhood $N\subset X^{log}_n$ of the boundary over $|D|$,
and set $N^\times:=N\cap U^{log}_n$ (the punctured collar).
Via $\tau_n|_{U^{log}_n}$ we regard $N^\times$ as an open subset of $U$ whenever we write $U\cap N^\times$.
Let $\Sigma_n\subset N^\times$ be the (locally finite) wall set attached to $\Phi$ at level $n$.
Set
\begin{equation*}
N^\circ := N^\times\setminus \Sigma_n.
\end{equation*}
We will use sector-box covers only on $N^\circ$, where the Stokes preorder is locally constant.

\subsection{Standing Stokes input on the regular collar $N^\circ$}\label{subsec:stokes-input}

Fix a Kummer level $n\ge 1$ and a punctured collar $N^\times\subset \Xlogn$ over $|D|$.
Let $\Sigma_n\subset N^\times$ be a locally finite real-analytic wall arrangement and set
$$
N^\circ:=N^\times\setminus \Sigma_n.
$$

We assume the following Stokes input on $N^\circ$ (treated as a black box, compatible with the usual irregular Riemann--Hilbert formalisms):
\begin{itemize}
\item a finite index set $\Phi$ (after the chosen Kummer level);
\item a graded local system on $N^\circ$
$$
\Gr=\bigoplus_{q\in\Phi}\Gr_q;
$$
\item on each connected component of $N^\circ$ a locally constant preorder $\preceq$ on $\Phi$, hence a standard filtration $F^{\mathrm{std}}_{\le q}$ on $\Gr$.
\end{itemize}

Such Stokes data arise, for instance, from the irregular Riemann--Hilbert correspondences of Deligne--Malgrange and their modern treatments; see \cite{Sabbah,Mochizuki}.

We further assume our sector-box cover $\mathcal B=\{B_\alpha\}$ of $N^\circ$ is locally finite, every nonempty finite intersection
$B_{\alpha_0\cdots \alpha_k}$ is connected and contractible, and the preorder $\preceq$ is constant on it.
In particular, any locally constant sheaf of groups on such an intersection has constant sections.

\subsection{Sector boxes and standard filtrations}
Fix a locally finite open cover $\mathcal B=\{B_\alpha\}$ of $N^\circ$ such that every nonempty finite intersection $B_{\alpha_0\cdots\alpha_k}$ is connected and contractible and the preorder $\preceq$ on $\Phi$ is constant on it.
In particular, the preorder is constant on each $B_\alpha$.

\begin{remark}\label{rem:locally-finite-boxes}
Even when $X$ is compact, the regular collar $N^\circ$ is typically non-compact (it contains a radial direction).
Thus we work with locally finite covers. In many geometric situations one may choose the boxes as products $(0,\varepsilon)\times V_\alpha$
and arrange finiteness in the boundary directions, but we do not rely on this.
\end{remark}

On each $B_\alpha$ define the \emph{standard filtration} $F^{std}_{\le q}$ of $\Gr|_{B_\alpha}$ by
\begin{equation}\label{eq:std-filtration}
F^{std}_{\le q}(\Gr)|_{B_\alpha}
:=
\bigoplus_{q'\preceq_\alpha q}\Gr_{q'}|_{B_\alpha},
\end{equation}
where $\preceq_\alpha$ is the constant preorder on $B_\alpha$.

\section{Groupoids, \v{C}ech groupoids, and torsorial representations}\label{sec:groupoids}

\subsection{Ambient 2-category and smallness}
We work in the $(2,1)$-category $\mathbf{Grpd}$ of small groupoids, strict functors, and natural isomorphisms.
Whenever a groupoid is built from an open cover, we implicitly keep the standard \v{C}ech \emph{topological} realization
(object and arrow spaces are disjoint unions of opens and their intersections, with the disjoint union topology).
Since our coefficient groups are discrete, continuity forces local constancy of transition data, so torsors on these \v{C}ech presenters
model local systems and Stokes torsors by strict gluing. For bicategorical universal properties (pushouts and pullbacks) we only use the
underlying small groupoids; Morita invariance under refinement guarantees independence of the chosen presenters.

All fiber products of groupoids in this paper are \emph{$2$-fiber products} (pseudopullbacks) in $\mathbf{Grpd}$.
Concretely, given $A\xrightarrow{F}C\xleftarrow{G}B$, the groupoid $A\times^{(2)}_C B$ has objects triples
$(a,b,\varphi)$ with $a\in A$, $b\in B$, and an isomorphism $\varphi:F(a)\xrightarrow{\sim}G(b)$ in $C$;
morphisms are pairs making the evident square commute.
We keep the notation $\times_C$ to avoid typographical overload.

\subsection{Groupoids and strict functors}
\begin{definition}
A \emph{groupoid} $G$ consists of a set $G_0$ of objects and a set $G_1$ of arrows, with source/target maps
$s,t:G_1\to G_0$, identity $e:G_0\to G_1$, inverse $i:G_1\to G_1$,
and composition $m:G_1\times_{G_0}G_1\to G_1$ satisfying the usual axioms.
A \emph{strict functor} $F:G\to H$ is a pair of maps $(F_0,F_1)$ commuting with all structure maps.
\end{definition}

 \subsection{The \v{C}ech groupoid of an open cover}
Let $T$ be a topological space and let $\mathcal V=\{V_i\}_{i\in I}$ be an open cover.
Define the \emph{\v{C}ech topological groupoid} $\Cech(\mathcal V)$ by
\begin{equation}\label{eq:cech-def}
\Cech(\mathcal V)_0 := \bigsqcup_{i\in I} V_i,
\qquad
\Cech(\mathcal V)_1 := \bigsqcup_{(i,j)\in I\times I} (V_i\cap V_j),
\end{equation}
with the disjoint union topology; source/target are induced by inclusions and composition is induced on triple overlaps.

\subsection{Torsorial ``representations'' of groupoids}
Fix $r\ge 1$ and write $\GL_r(\CC)^\delta$ for $\GL_r(\CC)$ with the discrete topology.

\begin{definition}\label{def:BGL}
Let $B\GL_r(\CC)^\delta$ be the one-object groupoid whose automorphism group is $\GL_r(\CC)^\delta$.
\end{definition}

\begin{definition}\label{def:rep-r}
Let $G$ be a groupoid.
A \emph{$B\GL_r(\CC)^\delta$-torsor over $G$} is a pair $(P,\alpha)$ where:
\begin{itemize}
\item $P\to G_0$ is a principal $\GL_r(\CC)^\delta$-bundle (equivalently, a $\GL_r(\CC)$-torsor in locally constant sheaves on $G_0$);
\item $\alpha:s^*P\xrightarrow{\sim} t^*P$ is an isomorphism of torsors over $G_1$, compatible with identities, inverses, and composition.
\end{itemize}
Morphisms are isomorphisms of torsors over $G_0$ commuting with $\alpha$.
We denote the resulting groupoid by $\Rep_r(G)$.
\end{definition}

\begin{remark}\label{rem:not-pi1}
If $T$ is connected and pointed, fixing a trivialization over the basepoint identifies $\LocSys_r(T)$ with representations of $\pi_1(T)$ up to conjugation.
Our torsorial definition avoids basepoints and is the natural one for descent, refinements, and gluing.
\end{remark}

\subsection{Representations as functors into the torsor groupoid}
Let $\mathbf{Tors}_r$ denote the groupoid whose objects are right $\GL_r(\CC)^\delta$-torsors in sets
and whose morphisms are equivariant bijections.

\begin{proposition}\label{prop:tors-BGL}
The groupoid $\mathbf{Tors}_r$ is (non-canonically) equivalent to $B\GL_r(\CC)^\delta$.
\end{proposition}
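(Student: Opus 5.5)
We prove that $\mathbf{Tors}_r$ is equivalent to $B\GL_r(\CC)^\delta$ by the standard criterion: a functor that is fully faithful and essentially surjective is an equivalence. Using the axiom of choice, such a functor has a quasi-inverse, unique up to natural isomorphism. We exhibit the functor from the one-object side. Let $T_0:=\GL_r(\CC)$, regarded as a right torsor over itself by right multiplication. Define
\[
\iota: B\GL_r(\CC)^\delta\longrightarrow \mathbf{Tors}_r,
\]
which sends the unique object $\ast$ to $T_0$ and sends $g\in\GL_r(\CC)$ to the bijection $L_g:T_0\to T_0$, $x\mapsto gx$. Left multiplication commutes with the right action, so $L_g$ is equivariant. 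Moreover $L_{gh}=L_g\circ L_h$ and $L_1=\id$, so $\iota$ is a strict functor.

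\emph{Full faithfulness.} Let $\psi:T_0\to T_0$ be a right-equivariant bijection and set $g:=\psi(1)$. Then $\psi(x)=\psi(1\cdot x)=\psi(1)x=gx$, so $\psi=L_g$. Conversely, $L_g=L_h$ forces $g=L_g(1)=L_h(1)=h$. Hence
\[
\Aut(\ast)=\GL_r(\CC)\longrightarrow \Aut_{\mathbf{Tors}_r}(T_0)
\]
is a bijection.

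\emph{Essential surjectivity.} A torsor $P$ is by definition nonempty with a free and transitive right action. Choose a point $p\in P$. The orbit map
\[
\theta_p: T_0\to P,\qquad x\mapsto p\cdot x,
\]
is equivariant, bijective by freeness and transitivity, and $\theta_p^{-1}$ is then automatically equivariant. Thus $\theta_p$ is an isomorphism $\iota(\ast)\cong P$.

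Choosing a point $p_P\in P$ for every object $P$ (the axiom of choice, applied to a class-sized family if one ignores size) produces the quasi-inverse
\[
P\mapsto \ast,\qquad \big(f:P\to Q\big)\mapsto \theta_{p_Q}^{-1}\circ f\circ\theta_{p_P}\in\Aut(T_0)\cong\GL_r(\CC).
\]
The isomorphisms $\theta_{p_P}$ then form a natural isomorphism from $\iota$ composed with this quasi-inverse to $\id_{\mathbf{Tors}_r}$.

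\emph{Non-canonicity.} Different choices of basepoints change the quasi-inverse by conjugation in $\GL_r(\CC)$. This is precisely the sense in which the equivalence is non-canonical.

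The only real subtlety is a set-theoretic one: $\mathbf{Tors}_r$ is not small. To stay within the $(2,1)$-category $\mathbf{Grpd}$ of small groupoids, one either restricts to torsors whose underlying sets lie in a fixed universe, or simply notes that the argument above only uses choice on objects, which is legitimate in either setting.
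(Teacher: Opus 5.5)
Your proof is correct and follows the same route as the paper: $\mathbf{Tors}_r$ is connected because every torsor is isomorphic to the standard torsor $\GL_r(\CC)$, and the automorphisms of that standard torsor are exactly the left multiplications, which identifies them with $\GL_r(\CC)^\delta$. Your version simply spells out what the paper's two-line argument leaves implicit: the orbit isomorphisms $\theta_p$, the computation $\psi=L_{\psi(1)}$, the choice-based quasi-inverse, and the size caveat.
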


\begin{proof}
The groupoid $\mathbf{Tors}_r$ is connected: any torsor is isomorphic (non-canonically) to $\GL_r(\CC)^\delta$.
Fixing the standard torsor identifies automorphisms with $\GL_r(\CC)^\delta$, hence yields an equivalence with the one-object groupoid $B\GL_r(\CC)^\delta$.
\end{proof}

\begin{proposition}\label{prop:rep-as-fun}
For any small groupoid $G$, there is a natural equivalence of groupoids
\begin{equation}\label{eq:rep-as-fun}
\Rep_r(G)\simeq \Hom_{\mathbf{Grpd}}(G,\mathbf{Tors}_r),
\end{equation}
where the right-hand side is the groupoid of strict functors and natural isomorphisms.
\end{proposition}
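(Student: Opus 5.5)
We construct explicit functors in both directions and check that they are quasi-inverse. First observe that $G_0$ and $G_1$ carry topologies, since the \v{C}ech realization is a disjoint union of opens and $\GL_r(\CC)^\delta$ is discrete. Hence a principal bundle $P\to G_0$ is locally trivial with locally constant transition data. We therefore work fibrewise. The argument is stated for the underlying small groupoid, whose objects are the points of $G_0$, which is how \eqref{eq:rep-as-fun} is meant.

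\emph{From $\Rep_r(G)$ to $\Hom_{\mathbf{Grpd}}(G,\mathbf{Tors}_r)$.} Let $(P,\alpha)$ be given. We define $F_{(P,\alpha)}(x):=P_x$, the fibre over $x\in G_0$, which is a right $\GL_r(\CC)^\delta$-torsor in sets. For an arrow $g\in G_1$ with $s(g)=x$ and $t(g)=y$, the isomorphism $\alpha$ restricted to the fibre over $g$ is an equivariant bijection $\alpha_g:(s^*P)_g=P_x\to (t^*P)_g=P_y$, and we set $F_{(P,\alpha)}(g):=\alpha_g$. The compatibility of $\alpha$ with identities and composition in Definition~\ref{def:rep-r} says exactly that $\alpha_{e(x)}=\id$ and $\alpha_{hg}=\alpha_h\circ\alpha_g$, so $F_{(P,\alpha)}$ is a strict functor. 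A morphism $u:P\to P'$ commuting with $\alpha$ has components $u_x:P_x\to P'_x$. The commutation condition is precisely naturality of these components, so $u$ gives a natural isomorphism. This assignment is functorial and faithful by construction.

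\emph{Essential surjectivity and fullness.} Let $F:G\to\mathbf{Tors}_r$ be a strict functor. Put $P:=\bigsqcup_{x\in G_0}F(x)$ and define $\alpha_g:=F(g)$. A natural isomorphism between two such functors gives fibrewise equivariant bijections that commute with $\alpha$, which proves fullness.

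The main obstacle is the topology. In the topological reading, $P$ must be a locally constant bundle on $G_0$ and $\alpha$ must be continuous over $G_1$. This requires $F$ to be locally constant along $G_0$, and an arbitrary strict functor of the underlying set-theoretic groupoid need not be. For \v{C}ech groupoids $\Cech(\mathcal V)$ with connected contractible $V_i$ and intersections, which is the standing hypothesis in the paper, I would resolve this as follows:
\begin{itemize}
\item interpret $\Hom_{\mathbf{Grpd}}(G,\mathbf{Tors}_r)$ as continuous functors, i.e.\ those locally constant on $G_0$ and $G_1$;
\item use contractibility of each $V_i$ to trivialize $P|_{V_i}$ globally;
\item reduce the data to the discrete nerve of the cover, where every strict functor is automatically locally constant.
\end{itemize}
The remaining verification, that inverses are respected, is automatic because functors between groupoids preserve inverses. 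Naturality in $G$ follows because both constructions commute with pullback along strict functors $G'\to G$: the fibres of $f^*P$ are $P_{f(x)}$.
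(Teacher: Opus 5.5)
Your argument is correct and is the same fibre-functor dictionary the paper uses: $x\mapsto P_x$, $g\mapsto\alpha_g$ in one direction, and gluing $\bigsqcup_{x}F(x)$ with $\alpha_g:=F(g)$ in the other, with compatibility of $u$ with $\alpha$ matching naturality of the components. Your caveat about the topological reading goes beyond the paper's brief proof and is justified: for a topological \v{C}ech groupoid the set-theoretic functor groupoid is too large (for the one-chart cover of an interval its automorphism groups are $\prod_{x}\GL_r(\CC)$ rather than $\GL_r(\CC)$), so the equivalence only holds, in the way the paper later applies it, after restricting to functors that are locally constant up to local trivialization, as you propose.
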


\begin{proof}
This is the usual descent = functor dictionary for principal bundles on a groupoid:
given $(P,\alpha)$, send an object $x\in G_0$ to the fiber torsor $P_x$ and an arrow to the induced torsor isomorphism.
Conversely, a functor $G\to\mathbf{Tors}_r$ glues fibers into a torsor over $G_0$ with descent along $G_1$.
\end{proof}

\subsection{\v{C}ech groupoids present local systems}
\begin{proposition}[Cech presenters compute local systems]\label{prop:rep-cech-locsys}
Let $T$ be a topological space and let $\mathcal V=\{V_i\}_{i\in I}$ be a \emph{good cover} of $T$, meaning that every nonempty finite intersection
$V_{i_0\cdots i_k}:=V_{i_0}\cap\cdots\cap V_{i_k}$ is contractible.
Then for every $r\ge 1$ there is a natural equivalence of groupoids
\begin{equation}\label{eq:rep-cech-locsys}
\Rep_r(\Cech(\mathcal V)) \ \simeq\ \LocSys_r(T).
\end{equation}
\end{proposition}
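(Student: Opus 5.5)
We prove \eqref{eq:rep-cech-locsys} by building explicit functors in both directions and checking that they are quasi-inverse. We read $\LocSys_r(T)$ as the groupoid of $\GL_r(\CC)$-torsors in locally constant sheaves on $T$, equivalently rank-$r$ local systems via frame torsors.

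\textbf{The restriction functor.} Let $q:\Cech(\mathcal V)_0=\bigsqcup_i V_i\to T$ be the map induced by the inclusions. Given a torsor $\mathcal P$ on $T$, set $P:=q^*\mathcal P$, so that $P|_{V_i}=\mathcal P|_{V_i}$. Over $V_i\cap V_j$, viewed as an arrow space, we have $s^*P=\mathcal P|_{V_{ij}}=t^*P$, and we take $\alpha$ to be the identity. The cocycle and identity conditions then hold trivially. A morphism of torsors on $T$ restricts componentwise to a morphism commuting with $\alpha$. This gives a functor $R:\LocSys_r(T)\to\Rep_r(\Cech(\mathcal V))$.

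\textbf{The gluing functor.} Given $(P,\alpha)$, the datum $P|_{V_i}$ is a locally constant torsor on $V_i$. The components of $\alpha$ are isomorphisms $\alpha_{ij}:P|_{V_i}|_{V_{ij}}\xrightarrow{\sim}P|_{V_j}|_{V_{ij}}$. Compatibility with composition in Definition~\ref{def:rep-r} is exactly $\alpha_{jk}\alpha_{ij}=\alpha_{ik}$ on $V_{ijk}$, and compatibility with identities gives $\alpha_{ii}=\id$. Since the category of sheaves, and of torsors, on $T$ is a stack for the open cover $\mathcal V$, this descent datum glues to a torsor $\mathcal P$ on $T$. The glued sheaf is locally constant because it is so on each $V_i$, and it comes with isomorphisms $\mathcal P|_{V_i}\cong P|_{V_i}$ intertwining the $\alpha_{ij}$. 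Morphisms glue in the same way, which defines $E:\Rep_r(\Cech(\mathcal V))\to\LocSys_r(T)$. By the defining property of gluing, $E\circ R\cong\id$ and $R\circ E\cong\id$ naturally, the second isomorphism being assembled from the gluing isomorphisms. Naturality in refinements of the cover follows because both functors are built from restrictions.

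\textbf{Role of goodness and the main difficulty.} The descent argument above uses only that $\mathcal V$ covers $T$ and that the coefficient group is discrete. Goodness enters to identify the data with the concrete cocycle description. Each $V_i$ is contractible, so $P|_{V_i}$ is trivializable. Each $V_{ij}$ is connected, so an isomorphism of trivialized torsors there is a single constant matrix $g_{ij}\in\GL_r(\CC)$. Hence $\Rep_r(\Cech(\mathcal V))$ is equivalent to the groupoid of nonabelian \v{C}ech $1$-cocycles $(g_{ij})$ modulo $0$-cochains $(h_i)$, acting by $g_{ij}\mapsto h_j g_{ij} h_i^{-1}$, where the $h_i$ are constant on $V_i$ since $V_i$ is connected.

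The step that needs the most care is the subtlety in Definition~\ref{def:rep-r}. There $P$ is a torsor on the \emph{topological} object space and $\alpha$ is a continuous isomorphism on the arrow space. One must check that continuity with discrete $\GL_r(\CC)^\delta$ forces $\alpha$ to be locally constant, and therefore constant on each connected $V_{ij}$. This is what makes the strict cocycle condition on triple overlaps $V_{ijk}$ a genuine condition on sheaf isomorphisms rather than on set-theoretic fibers. I would handle this by interpreting the whole datum sheaf-theoretically on $G_0$ and $G_1$ from the outset. With that reading, the proof reduces to the stack property of locally constant sheaves on $T$.
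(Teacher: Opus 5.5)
Your argument is correct, but it is organized differently from the paper's. The paper goes through trivializations. It uses contractibility of each $V_i$ to choose sections $s_i$ of $P_i$. It then reads off transition elements $g_{ij}$, which are constant because $V_{ij}$ is connected and $\GL_r(\CC)^\delta$ is discrete. This identifies $\Rep_r(\Cech(\mathcal V))$ with nonabelian \v{C}ech cocycles modulo the gauge action $g_{ij}\mapsto h_j g_{ij}h_i^{-1}$. Finally it glues the trivial torsors $V_i\times\GL_r(\CC)^\delta$ along the cocycle. Goodness is used essentially there, and the ``restrict and trivialize'' direction depends on the choice of sections.

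You instead observe that an object of $\Rep_r(\Cech(\mathcal V))$ is literally a descent datum $(P|_{V_i},\alpha_{ij})$ for the open cover $\mathcal V$ in the stack of locally constant $\GL_r(\CC)$-torsors. The equivalence is then just effective descent, and you have a choice-free quasi-inverse $R=q^*$ with identity descent isomorphisms. Both arguments ultimately rest on gluing along $\mathcal V$; the paper simply first rigidifies the descent datum into constant matrices.

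Your route shows that goodness is superfluous for the equivalence itself, since any open cover works. It also avoids dependence on trivializations, which makes naturality more transparent. The paper's route produces the explicit cocycle-modulo-gauge model directly. That model is what is used downstream, e.g.\ in Lemma~\ref{lem:cech-naturality} and in the parallel treatment of Stokes torsors. You recover it only as a corollary once goodness is invoked.

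Your concern about continuity of $\alpha$ is settled by Definition~\ref{def:rep-r} itself. There $\alpha$ is an isomorphism of principal $\GL_r(\CC)^\delta$-bundles (equivalently of torsors in sheaves) over $G_1$, so local constancy is automatic. Your sheaf-theoretic reading from the outset is exactly the right one.
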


\begin{proof}
Write $\G:=\Cech(\mathcal V)$.
By Definition~\ref{def:rep-r}, an object of $\Rep_r(\G)$ consists of principal $\GL_r(\CC)^\delta$-bundles $P_i\to V_i$ together with
isomorphisms of torsors on overlaps
\begin{equation}\label{eq:alphaij}
\alpha_{ij}:P_i|_{V_{ij}}\xrightarrow{\sim}P_j|_{V_{ij}},
\qquad V_{ij}:=V_i\cap V_j,
\end{equation}
satisfying the cocycle identity $\alpha_{jk}\circ \alpha_{ij}=\alpha_{ik}$ on each triple overlap $V_{ijk}$, and morphisms are families of
bundle isomorphisms compatible with the $\alpha_{ij}$.

Since $V_i$ is contractible and $\GL_r(\CC)^\delta$ is discrete, each $P_i$ is trivial; choose a section $s_i:V_i\to P_i$.
For $x\in V_{ij}$ there is a unique element $g_{ij}(x)\in \GL_r(\CC)^\delta$ such that
\begin{equation}\label{eq:alpha-sections}
\alpha_{ij}\bigl(s_i(x)\bigr)=s_j(x)\cdot g_{ij}(x).
\end{equation}
Because $V_{ij}$ is connected and the group is discrete, $g_{ij}$ is constant on $V_{ij}$.
Applying $\alpha_{jk}\circ\alpha_{ij}=\alpha_{ik}$ to $s_i$ on $V_{ijk}$ yields
\begin{equation}\label{eq:cech-cocycle}
g_{jk}\,g_{ij}=g_{ik}.
\end{equation}
Replacing $s_i$ by $s_i\cdot h_i$ for locally constant $h_i:V_i\to \GL_r(\CC)^\delta$ replaces $g_{ij}$ by
$g_{ij}'=h_j g_{ij}h_i^{-1}$. Thus an object of $\Rep_r(\G)$ determines a nonabelian \v{C}ech $1$-cocycle
$(g_{ij})$ on $\mathcal V$ with values in $\GL_r(\CC)^\delta$, well-defined up to the usual gauge action, and morphisms become gauge
transformations.

Conversely, given a \v{C}ech cocycle $(g_{ij})$ satisfying \eqref{eq:cech-cocycle}, glue the trivial torsors
$V_i\times \GL_r(\CC)^\delta$ by identifying, for $x\in V_{ij}$,
\begin{equation}\label{eq:gluing}
(x,a)\in V_i\times \GL_r(\CC)^\delta \sim (x, a\,g_{ij})\in V_j\times \GL_r(\CC)^\delta .
\end{equation}
The cocycle condition makes this identification transitive on triple overlaps, hence the quotient is a principal
$\GL_r(\CC)^\delta$-bundle $P\to T$. Since $\GL_r(\CC)^\delta$ is discrete, such torsors are equivalent to rank-$r$ local systems on $T$.
Gauge-equivalent cocycles yield isomorphic glued torsors, and a gauge $0$-cochain produces the corresponding isomorphism.
These constructions are functorial on morphisms and are inverse to each other up to canonical isomorphism, yielding the equivalence
\eqref{eq:rep-cech-locsys}. One may package the two functors in the diagram
\begin{equation}\label{eq:diagram-cech-locsys}
\begin{tikzcd}[row sep=large, column sep=huge]
\Rep_r(\Cech(\mathcal V))
\arrow[r, shift left=1.2, "\mathrm{glue}"]
&
\LocSys_r(T)
\arrow[l, shift left=1.2, "\mathrm{restrict\ and\ trivialize}"]
\end{tikzcd}
\end{equation}
and the good-cover hypothesis ensures that the auxiliary choices of trivializations do not affect the resulting object up to unique isomorphism.
References: \cite[\S 1--2]{BottTu} and \cite[Ch.\ III]{Giraud}.
\end{proof}

\section{The Stokes sheaf on the punctured collar}\label{sec:stokes-sheaf}

\subsection{Filtered and graded automorphisms; the Stokes sheaf}
For an open $W\subset B_\alpha$ define:
\begin{itemize}
\item $\Aut^{fil}(W)$: automorphisms of $\Gr|_W$ preserving all standard filtrations $F^{std}_{\le q}|_W$;
\item $\Aut^{gr}(W)$: graded automorphisms of $\Gr|_W$, i.e.\ preserving each direct summand $\Gr_q|_W$.
\end{itemize}
There is a natural homomorphism $gr:\Aut^{fil}(W)\to \Aut^{gr}(W)$.

\begin{definition}[Stokes sheaf]\label{def:stokes-sheaf}
The \emph{Stokes sheaf} (relative to $\Phi$ at level $n$) is the presheaf (in fact sheaf) of groups on the sector-box cover
given by
\begin{equation}\label{eq:stokes-sheaf}
\St_\Phi(W):=\ker\!\big(\Aut^{fil}(W)\xrightarrow{gr}\Aut^{gr}(W)\big).
\end{equation}
\end{definition}

\begin{remark}
On any contractible and connected $W\subset B_\alpha$, the local system $\Gr|_W$ is trivial and $\Aut(\Gr|_W)$ is constant (discrete).
Hence sections of $\St_\Phi$ on such $W$ are just group elements.
\end{remark}

\section{Stokes objects on the collar as torsors under $\St_\Phi$}\label{sec:local-stokes}

\subsection{Local Stokes objects on $N^\circ$}
\begin{definition}[Stokes-local objects on the punctured collar]\label{def:stokeslocal}
A \emph{Stokes-local object of rank $r$ on $N^\circ$ of type $\Phi$} is a tuple
\begin{equation}\label{eq:stokeslocal-object}
\big(L,\{L_{\le q}\}_\alpha,\iota,\{s_\alpha\}\big)
\end{equation}
where:
\begin{itemize}
\item $L$ is a rank-$r$ local system on $N^\circ$;
\item for each $\alpha$, $\{L_{\le q}|_{B_\alpha}\}_{q\in\Phi}$ is a filtration compatible with $\preceq_\alpha$;
\item $\iota:gr(L)\xrightarrow{\sim}\Gr$ is a graded identification (global);
\item for each $\alpha$, a splitting $s_\alpha: L|_{B_\alpha}\xrightarrow{\sim}\Gr|_{B_\alpha}$
such that $s_\alpha(L_{\le q})=F^{std}_{\le q}(\Gr)$ and $gr(s_\alpha)=\iota|_{B_\alpha}$.
\end{itemize}
Morphisms are isomorphisms preserving filtrations and commuting with $\iota$.
We denote the resulting groupoid by $\mathrm{StokesLocal}_r(N^\circ;\Phi)$.
\end{definition}

\subsection{Difference of splittings is Stokes-unipotent}
\begin{lemma}\label{lem:difference-splittings}
Let $(L,\{L_{\le q}\}_\alpha,\iota,\{s_\alpha\})$ be a Stokes-local object.
Fix $\alpha$ and let $s'_\alpha$ be another splitting satisfying the same conditions with the same graded identification $\iota$.
Then the automorphism
$$
u_\alpha:=s'_\alpha\circ s_\alpha^{-1}\in \Aut(\Gr|_{B_\alpha})
$$
lies in $\St_\Phi(B_\alpha)$.
\end{lemma}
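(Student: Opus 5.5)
We have to check the two conditions in Definition~\ref{def:stokes-sheaf}: that $u_\alpha$ preserves every standard filtration $F^{std}_{\le q}|_{B_\alpha}$, and that its associated graded is the identity. Both are formal consequences of the defining properties of $s_\alpha$ and $s'_\alpha$ in Definition~\ref{def:stokeslocal}, so the proof is a direct verification in two steps.

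\textbf{Step 1 (filtered).} First, $u_\alpha=s'_\alpha\circ s_\alpha^{-1}$ is an automorphism of $\Gr|_{B_\alpha}$, because $s_\alpha,s'_\alpha:L|_{B_\alpha}\xrightarrow{\sim}\Gr|_{B_\alpha}$ are isomorphisms of local systems. For each $q\in\Phi$ we have $s_\alpha^{-1}(F^{std}_{\le q})=L_{\le q}|_{B_\alpha}$, since $s_\alpha(L_{\le q})=F^{std}_{\le q}$ and $s_\alpha$ is bijective. Likewise $s'_\alpha(L_{\le q})=F^{std}_{\le q}$. Hence
\[
u_\alpha\bigl(F^{std}_{\le q}\bigr)=s'_\alpha\bigl(L_{\le q}|_{B_\alpha}\bigr)=F^{std}_{\le q}
\]
for every $q$, so $u_\alpha\in\Aut^{fil}(B_\alpha)$. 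The same computation applied to $u_\alpha^{-1}=s_\alpha\circ s'^{-1}_\alpha$ shows that the inverse is also filtered. This does not depend on how the preorder $\preceq_\alpha$ is realised; it uses only that both splittings carry the same filtration $L_{\le q}$ to the same $F^{std}_{\le q}$.

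\textbf{Step 2 (graded part trivial).} Taking associated graded with respect to the filtrations $L_{\le q}$ and $F^{std}_{\le q}$ is functorial for filtered isomorphisms, so
\[
gr(u_\alpha)=gr(s'_\alpha)\circ gr(s_\alpha)^{-1}=\iota|_{B_\alpha}\circ(\iota|_{B_\alpha})^{-1}=\id_{\Gr|_{B_\alpha}},
\]
using $gr(s_\alpha)=gr(s'_\alpha)=\iota|_{B_\alpha}$. Here we identify $gr(F^{std})$ with $\Gr$ via \eqref{eq:std-filtration}; this identification is the one implicit in the map $gr:\Aut^{fil}\to\Aut^{gr}$.

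The only point needing care is in Step 2, since $\preceq_\alpha$ is merely a preorder. Graded pieces must then be indexed by equivalence classes of $q$, namely $F_{\le q}/F_{<q}$, where $F_{<q}$ is the sum over $q'\preceq q$ with $q\not\preceq q'$. The functoriality argument is unaffected, because both filtrations are indexed the same way and $gr$ is a functor for either indexing. On this convention, $gr(s_\alpha)=\iota$ must be read consistently with the kernel in \eqref{eq:stokes-sheaf}. With the convention fixed, we conclude $u_\alpha\in\ker(gr)=\St_\Phi(B_\alpha)$.
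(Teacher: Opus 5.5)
Your proof is correct and follows the same route as the paper's. Filtration preservation comes from both splittings carrying $L_{\le q}$ onto $F^{std}_{\le q}$, and triviality on the graded part is $gr(u_\alpha)=gr(s'_\alpha)\,gr(s_\alpha)^{-1}=\iota\,\iota^{-1}=\id$. Your remark about indexing graded pieces by equivalence classes of the preorder is a point the paper leaves implicit, but it does not change the argument.
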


\begin{proof}
For each $q\in\Phi$, the defining property of a splitting says that both $s_\alpha$ and $s'_\alpha$ identify the filtered pieces with the
standard filtration:
$$
s_\alpha\bigl(L_{\le q}|_{B_\alpha}\bigr)=F^{std}_{\le q}(\Gr)|_{B_\alpha},
\qquad
s'_\alpha\bigl(L_{\le q}|_{B_\alpha}\bigr)=F^{std}_{\le q}(\Gr)|_{B_\alpha}.
$$
Equivalently, for every $q$ the following squares commute:
\begin{equation*}
\begin{tikzcd}[row sep=large, column sep=huge]
L_{\le q}|_{B_\alpha} \arrow[r,hook] \arrow[d,"s_\alpha"'] & L|_{B_\alpha} \arrow[d,"s_\alpha"] \\
F^{std}_{\le q}(\Gr)|_{B_\alpha} \arrow[r,hook] & \Gr|_{B_\alpha}
\end{tikzcd}
\qquad
\begin{tikzcd}[row sep=large, column sep=huge]
L_{\le q}|_{B_\alpha} \arrow[r,hook] \arrow[d,"s'_\alpha"'] & L|_{B_\alpha} \arrow[d,"s'_\alpha"] \\
F^{std}_{\le q}(\Gr)|_{B_\alpha} \arrow[r,hook] & \Gr|_{B_\alpha}.
\end{tikzcd}
\end{equation*}
From this one reads off filtration preservation for $u_\alpha$: if $y\in F^{std}_{\le q}(\Gr)|_{B_\alpha}$ then
$x:=s_\alpha^{-1}(y)$ lies in $L_{\le q}|_{B_\alpha}$, hence
$u_\alpha(y)=s'_\alpha(x)$ lies again in $F^{std}_{\le q}(\Gr)|_{B_\alpha}$.
Thus $u_\alpha\in \Aut^{fil}(B_\alpha)$.
On associated gradeds, the defining condition $gr(s_\alpha)=\iota|_{B_\alpha}=gr(s'_\alpha)$ gives the commutative diagram
\begin{equation*}
\begin{tikzcd}[row sep=large, column sep=huge]
gr(L|_{B_\alpha}) \arrow[r,"gr(s_\alpha)"] \arrow[dr,swap,"gr(s'_\alpha)"] & \Gr|_{B_\alpha} \\
& \Gr|_{B_\alpha} \arrow[u,swap,"\id"]
\end{tikzcd}
\end{equation*}
and therefore
$$
gr(u_\alpha)=gr(s'_\alpha)\,gr(s_\alpha)^{-1}=\iota\,\iota^{-1}=\id,
$$
so $u_\alpha$ lies in the kernel of $gr:\Aut^{fil}(B_\alpha)\to \Aut^{gr}(B_\alpha)$.
By Definition~\ref{def:stokes-sheaf}, this kernel is precisely $\St_\Phi(B_\alpha)$.
\end{proof}

\subsection{Equivalence with $\St_\Phi$-torsors}
Let $\mathcal B=\{B_\alpha\}$.
Write $\Tors(\St_\Phi;\mathcal B)$ for the groupoid of \v{C}ech 1-cocycles
$u_{\alpha\beta}\in \St_\Phi(B_\alpha\cap B_\beta)$ modulo gauge $h_\alpha\in \St_\Phi(B_\alpha)$.

\begin{theorem}\label{thm:stokeslocal-tors}
There is a natural equivalence of groupoids
\begin{equation}\label{eq:stokeslocal-tors}
\mathrm{StokesLocal}_r(N^\circ;\Phi)\ \simeq\ \Tors(\St_\Phi;\mathcal B).
\end{equation}
\end{theorem}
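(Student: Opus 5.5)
We construct explicit functors in both directions, following the template of Proposition~\ref{prop:rep-cech-locsys}: there local trivializations produced $\GL_r$-valued cocycles, and here the splittings $s_\alpha$ play the role of trivializations, so the cocycles take values in $\St_\Phi$.

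\emph{From Stokes-local objects to cocycles.} Given $(L,\{L_{\le q}\}_\alpha,\iota,\{s_\alpha\})$, set
\[
u_{\alpha\beta}:=s_\beta\circ s_\alpha^{-1}\in\Aut(\Gr|_{B_{\alpha\beta}}),\qquad B_{\alpha\beta}:=B_\alpha\cap B_\beta .
\]
Both $s_\alpha|_{B_{\alpha\beta}}$ and $s_\beta|_{B_{\alpha\beta}}$ are splittings of $L|_{B_{\alpha\beta}}$ carrying the filtration to $F^{std}$, and both induce $\iota$ on gradeds. The argument of Lemma~\ref{lem:difference-splittings}, run on $B_{\alpha\beta}$ instead of $B_\alpha$, then gives $u_{\alpha\beta}\in\St_\Phi(B_{\alpha\beta})$.

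This step needs a compatibility: the filtrations $L_{\le q}|_{B_\alpha}$ and $L_{\le q}|_{B_\beta}$ must agree on the overlap, or at least both be adapted to the single preorder that is constant on $B_{\alpha\beta}$ by the standing assumption. I read Definition~\ref{def:stokeslocal} as requiring this, i.e.\ as the filtrations forming a filtration of $L$ on $N^\circ$. This is the main point where the definition must be interpreted, and I would state it explicitly.

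The cocycle identity $u_{\beta\gamma}u_{\alpha\beta}=u_{\alpha\gamma}$ is then a telescoping identity. A morphism $f:L\to L'$ preserving filtrations and $\iota$ yields
\[
h_\alpha:=s'_\alpha\circ f\circ s_\alpha^{-1}.
\]
Each $h_\alpha$ is filtered and satisfies $gr(h_\alpha)=\iota'\circ gr(f)\circ\iota^{-1}=\id$, so $h_\alpha\in\St_\Phi(B_\alpha)$. A direct computation shows $u'_{\alpha\beta}=h_\beta u_{\alpha\beta}h_\alpha^{-1}$, so this is a gauge transformation. Functoriality is immediate.

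\emph{From cocycles to Stokes-local objects.} Given a cocycle $(u_{\alpha\beta})$, glue the local systems $\Gr|_{B_\alpha}$ via
\[
(x,v)\in\Gr|_{B_\alpha}\ \sim\ (x,u_{\alpha\beta}v)\in\Gr|_{B_\beta}\qquad (x\in B_{\alpha\beta}).
\]
The cocycle identity makes this transitive, so we obtain a rank-$r$ local system $L$. The rank is $r$ once $\mathrm{rk}\,\Gr=r$ is fixed as part of the standing data. The splittings $s_\alpha$ are the tautological identifications.

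Set $L_{\le q}|_{B_\alpha}:=s_\alpha^{-1}F^{std}_{\le q}$. This is consistent on overlaps because each $u_{\alpha\beta}$ preserves $F^{std}$. Likewise $\iota$ is well defined globally because $gr(u_{\alpha\beta})=\id$, so the identity maps $gr(\Gr|_{B_\alpha})=\Gr|_{B_\alpha}$ glue. A gauge $(h_\alpha)$ glues in the same way to a morphism of Stokes-local objects. All of this works because $\St_\Phi$ is exactly the subgroup of automorphisms fixing the filtration and acting trivially on gradeds.

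\emph{Quasi-inverse and well-definedness.} Starting from an object, the glued object is canonically isomorphic to the original via the maps $s_\alpha^{-1}$. Starting from a cocycle, the construction returns it on the nose. This shows the functors are mutually quasi-inverse. The real issue is well-definedness on the Stokes-local side. In Definition~\ref{def:stokeslocal} the splittings are part of the data, but morphisms are not required to respect them, so two objects differing only in their splittings are isomorphic via $\id_L$. Lemma~\ref{lem:difference-splittings} shows that such a change of splittings is exactly a gauge by elements of $\St_\Phi(B_\alpha)$, so the cocycle functor is well defined up to gauge and is full and faithful. Faithfulness holds because $f$ is recovered from the family $(h_\alpha)$. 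Fullness holds because every gauge comes from the glued isomorphism constructed above. Essential surjectivity is the gluing construction.
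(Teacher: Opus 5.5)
Your proposal is correct and follows essentially the same route as the paper: you take the transition maps $u_{\alpha\beta}=s_\beta s_\alpha^{-1}$, show they lie in $\St_\Phi(B_{\alpha\beta})$ by the difference-of-splittings argument, get the cocycle identity by telescoping, and invert by gluing the $\Gr|_{B_\alpha}$ along a cocycle so that $F^{std}$ and $\iota$ descend. Your version is somewhat more careful than the paper's in two places: you treat arbitrary morphisms via $h_\alpha=s'_\alpha f s_\alpha^{-1}$, whereas the paper handles only changes of splitting, and you state explicitly that the local filtrations must agree on overlaps, which the paper uses tacitly when it speaks of ``the filtration on $L|_{B_{\alpha\beta}}$''.
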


\begin{proof}
Let $\mathcal B=\{B_\alpha\}$ be the fixed sector-box cover of $N^\circ$.
Starting from a Stokes-local object $(L,\{L_{\le q}\}_\alpha,\iota,\{s_\alpha\})$ as in Definition~\ref{def:stokeslocal},
define on each overlap $B_{\alpha\beta}:=B_\alpha\cap B_\beta$ the transition automorphism
\begin{equation}\label{eq:uab-def}
u_{\alpha\beta}:=s_\beta\circ s_\alpha^{-1}\in \Aut(\Gr|_{B_{\alpha\beta}}).
\end{equation}
The maps $s_\alpha$ and $s_\beta$ both identify the filtration on $L|_{B_{\alpha\beta}}$ with the standard filtration on $\Gr|_{B_{\alpha\beta}}$
and induce the same graded identification $\iota|_{B_{\alpha\beta}}$, hence $u_{\alpha\beta}$ preserves all $F^{std}_{\le q}$ and satisfies
$gr(u_{\alpha\beta})=\id$. In other words, the square
\begin{equation}\label{eq:transition-square}
\begin{tikzcd}[row sep=large, column sep=huge]
\Gr|_{B_{\alpha\beta}} \arrow[r,"u_{\alpha\beta}"] \arrow[d,swap,"gr"] &
\Gr|_{B_{\alpha\beta}} \arrow[d,"gr"] \\
gr(\Gr)|_{B_{\alpha\beta}} \arrow[r,equals] &
gr(\Gr)|_{B_{\alpha\beta}}
\end{tikzcd}
\end{equation}
commutes and $u_{\alpha\beta}$ lies in $\St_\Phi(B_{\alpha\beta})=\ker(\Aut^{fil}\to \Aut^{gr})$ by Definition~\ref{def:stokes-sheaf}.
On a triple overlap $B_{\alpha\beta\gamma}$ one has the tautological identity
\begin{equation}\label{eq:cocycle}
u_{\beta\gamma}\,u_{\alpha\beta}=u_{\alpha\gamma},
\end{equation}
since $s_\gamma\circ s_\beta^{-1}\circ s_\beta\circ s_\alpha^{-1}=s_\gamma\circ s_\alpha^{-1}$.
Thus $(u_{\alpha\beta})$ is a nonabelian \v{C}ech $1$-cocycle with values in $\St_\Phi$.
If we replace the splittings by $s_\alpha':=h_\alpha\circ s_\alpha$ with $h_\alpha\in \St_\Phi(B_\alpha)$,
then $u_{\alpha\beta}$ is replaced by
\begin{equation}\label{eq:gauge-u}
u'_{\alpha\beta}=h_\beta\,u_{\alpha\beta}\,h_\alpha^{-1},
\end{equation}
which is exactly the gauge action in $\Tors(\St_\Phi;\mathcal B)$. This defines a functor
\begin{equation}\label{eq:F-stokes-to-tors}
F:\mathrm{StokesLocal}_r(N^\circ;\Phi)\longrightarrow \Tors(\St_\Phi;\mathcal B).
\end{equation}

Conversely, let $(u_{\alpha\beta})$ be a \v{C}ech $1$-cocycle in $\St_\Phi(B_{\alpha\beta})$.
Glue the trivial graded local systems $\Gr|_{B_\alpha}$ on the cover $\mathcal B$ by identifying on overlaps
$\Gr|_{B_{\alpha\beta}}$ via $u_{\alpha\beta}$. The cocycle condition \eqref{eq:cocycle} makes this identification associative on triple overlaps,
so we obtain a local system $L$ on $N^\circ$ together with local isomorphisms
\begin{equation}\label{eq:glued-splittings}
s_\alpha:L|_{B_\alpha}\xrightarrow{\sim}\Gr|_{B_\alpha}.
\end{equation}
Because $u_{\alpha\beta}\in \Aut^{fil}$, the standard filtrations $F^{std}_{\le q}(\Gr)|_{B_\alpha}$ descend to filtrations
$L_{\le q}|_{B_\alpha}$ on $L|_{B_\alpha}$, and because $u_{\alpha\beta}\in\ker(gr)$ the graded identifications
$gr(s_\alpha):gr(L)|_{B_\alpha}\to \Gr|_{B_\alpha}$ glue to a global graded identification $\iota:gr(L)\xrightarrow{\sim}\Gr$.
This produces a Stokes-local object, and changing the cocycle by a gauge $0$-cochain $(h_\alpha)$ produces an isomorphic Stokes-local object.
Thus we obtain a functor
\begin{equation}\label{eq:G-tors-to-stokes}
G:\Tors(\St_\Phi;\mathcal B)\longrightarrow \mathrm{StokesLocal}_r(N^\circ;\Phi).
\end{equation}
The constructions $F$ and $G$ are inverse to each other up to canonical isomorphism: starting from Stokes-local data, one recovers the same local system
by gluing the $\Gr|_{B_\alpha}$ using the transition maps $s_\beta s_\alpha^{-1}$; starting from a cocycle, one recovers the same cocycle by comparing
the resulting local splittings. This yields the claimed equivalence \eqref{eq:stokeslocal-tors}.
For convenience, the two functors may be summarized as
\begin{equation}\label{eq:diagram-stokeslocal-tors}
\begin{tikzcd}[row sep=large, column sep=huge]
\mathrm{StokesLocal}_r(N^\circ;\Phi)
\arrow[r, bend left=30,
  "(\,s_\alpha\,)\mapsto (\,u_{\alpha\beta}=s_\beta s_\alpha^{-1}\,)"]
&
\Tors(\St_\Phi;\mathcal B)
\arrow[l, bend left=30,
  "(\,u_{\alpha\beta}\,)\mapsto \text{glue }\Gr|_{B_\alpha}"]
\end{tikzcd}
\end{equation}
and naturality is immediate from functoriality of restriction on the cover.
\end{proof}
\begin{proposition} \label{prop:refinement}
If $\mathcal B'$ refines $\mathcal B$, then restriction induces an equivalence
\begin{equation}\label{eq:refinement-equivalence}
\Tors(\St_\Phi;\mathcal B)\ \simeq\ \Tors(\St_\Phi;\mathcal B').
\end{equation}
In particular, $\mathrm{StokesLocal}_r(N^\circ;\Phi)$ is independent (up to canonical equivalence) of the chosen sector-box cover.
\end{proposition}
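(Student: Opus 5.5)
Fix a refinement map $\rho:A'\to A$ with $B'_{\alpha'}\subset B_{\rho(\alpha')}$, where $\mathcal B'=\{B'_{\alpha'}\}_{\alpha'\in A'}$ and $\mathcal B=\{B_\alpha\}_{\alpha\in A}$. Define the restriction functor
\[
\rho^*:\Tors(\St_\Phi;\mathcal B)\to\Tors(\St_\Phi;\mathcal B'),\qquad
(u_{\alpha\beta})\mapsto \bigl(u_{\rho(\alpha')\rho(\beta')}|_{B'_{\alpha'\beta'}}\bigr),
\]
and send gauges by $(h_\alpha)\mapsto (h_{\rho(\alpha')}|_{B'_{\alpha'}})$. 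The cocycle identity and the gauge relation $u'_{\alpha\beta}=h_\beta u_{\alpha\beta}h_\alpha^{-1}$ restrict, so $\rho^*$ is a functor. A second refinement map $\rho'$ gives a functor naturally isomorphic to $\rho^*$ via the gauge
\[
k_{\alpha'}:=u_{\rho(\alpha')\rho'(\alpha')}|_{B'_{\alpha'}},
\]
which lies in $\St_\Phi$ since $B'_{\alpha'}\subset B_{\rho(\alpha')}\cap B_{\rho'(\alpha')}$. This is why the equivalence is canonical up to unique natural isomorphism. The cleanest route avoids cocycle bookkeeping: $\Tors(\St_\Phi;\mathcal B)$ is equivalent to the groupoid of $\St_\Phi$-torsors on $N^\circ$ that are trivializable on each $B_\alpha$. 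Concretely, glue copies of $\St_\Phi|_{B_\alpha}$ by left multiplication by $u_{\alpha\beta}$, and conversely take the transition elements of chosen local trivializations. This is the same argument as \Cref{thm:stokeslocal-tors}, or simply the standard nonabelian \v{C}ech dictionary \cite[Ch.\ III]{Giraud}, using only that $\St_\Phi$ is a sheaf. Under this dictionary $\rho^*$ corresponds to the identity on torsors, restricted to those trivial on the $B'_{\alpha'}$.

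It then suffices to prove that \emph{every $\St_\Phi$-torsor $P$ on $N^\circ$ that is trivial on each $B_\alpha$ is also trivial on each $B'_{\alpha'}$, and conversely}. The forward direction is immediate, since $B'_{\alpha'}\subset B_{\rho(\alpha')}$. For full faithfulness, morphisms in both groupoids are just morphisms of sheaf torsors on $N^\circ$, so full faithfulness is automatic.

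The main obstacle is essential surjectivity: a torsor trivial on the finer boxes $B'_{\alpha'}$ must be trivial on the coarser boxes $B_\alpha$. I would use the standing hypothesis that each $B_\alpha$ is contractible and that the preorder is constant on it. Then $\Gr|_{B_\alpha}$ is a trivial local system and the filtration is constant, so $\St_\Phi|_{B_\alpha}$ is the constant sheaf with value a fixed unipotent group $S_\alpha$. A torsor under a constant sheaf of discrete groups on $B_\alpha$ is a locally constant torsor, that is, a covering with fibre $S_\alpha$. Here one needs the refinement to be an open cover by connected opens, so that local triviality holds; this should be noted as a hypothesis if needed. Such a torsor is classified by $\Hom(\pi_1(B_\alpha),S_\alpha)/S_\alpha$, which is trivial because $B_\alpha$ is contractible, hence simply connected. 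So $P|_{B_\alpha}$ is trivial and $P$ lies in the essential image.

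Finally, combining with \Cref{thm:stokeslocal-tors}, $\mathrm{StokesLocal}_r(N^\circ;\Phi)\simeq\Tors(\St_\Phi;\mathcal B)\simeq\Tors(\St_\Phi;\mathcal B')$. Independence of the cover then follows: any two sector-box covers admit a common refinement by sector boxes, namely the intersections $B_\alpha\cap B'_{\beta}$. These intersections still have contractible finite intersections and constant preorder, by the standing assumptions. The canonicity of the comparison follows from the uniqueness of the natural isomorphism between the functors induced by different refinement maps.
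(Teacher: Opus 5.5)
Your argument is correct, but it is organized differently from the paper's.

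\textbf{The paper's route.} The paper stays inside the cocycle model. For essential surjectivity it asserts that, ``because $\St_\Phi$ is a sheaf of groups'', a cocycle $(v_{\alpha'\beta'})$ on $\mathcal B'$ can be gauged by some $h_{\alpha'}\in\St_\Phi(B'_{\alpha'})$ until it is constant along the fibres of the refinement map. Such a cocycle is then restricted from $\mathcal B$. Full faithfulness is the statement that a gauge relating two restricted cocycles descends uniquely.

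\textbf{Your route.} You pass once through the nonabelian \v{C}ech dictionary. Restriction then becomes the inclusion of the full subgroupoid of torsors trivial on every $B_\alpha$ into the full subgroupoid of torsors trivial on every $B'_{\alpha'}$. Full faithfulness is automatic. Everything reduces to one lemma: an $\St_\Phi$-torsor is trivial over each coarse box. You derive this from contractibility of $B_\alpha$ and constancy of $\St_\Phi|_{B_\alpha}$.

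\textbf{What each buys.} Your route isolates exactly where the geometric hypothesis enters. The paper's gauge is $h_{\alpha'}=t_{\rho(\alpha')}\circ(s'_{\alpha'})^{-1}$, where $t_\alpha$ trivializes the glued torsor over $B_\alpha$, so its existence \emph{is} your lemma. The sheaf property alone would not give it: for the constant sheaf $\ZZ$ on a circle, restriction from the one-set cover to a cover by two arcs is not essentially surjective. Your version also yields more. For any cover by contractible boxes on which $\St_\Phi$ is constant, $\Tors(\St_\Phi;\mathcal B)$ is equivalent to the groupoid of all $\St_\Phi$-torsors on $N^\circ$, and $\mathcal B'$ may be an arbitrary open refinement. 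The paper's approach, once your lemma is supplied, has the merit of staying purely combinatorial and producing the descended cocycle and gauges by explicit formulas.

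Three side remarks in your write-up are inaccurate, though none is load-bearing.

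\begin{itemize}
\item Connectedness of the members of $\mathcal B'$ is not needed for local triviality. $P|_{B_\alpha}$ is already trivial on the open cover $\{B_\alpha\cap B'_{\alpha'}\}$.
\item The natural isomorphism $k\colon\rho^*\Rightarrow(\rho')^*$ is canonical but not unique in general. Any nontrivial global section of the centre of $\St_\Phi$ gives a nontrivial natural automorphism of the identity functor. For example, take rank $2$ with $\Gr$ trivial on a connected component of $N^\circ$ and $q_1\prec q_2$ strictly there; then $\St_\Phi$ is the constant sheaf $(\CC,+)$ on that component.
\item Finite intersections of the sets $B_\alpha\cap B'_\beta$ need not be contractible or even connected, so they need not form a sector-box cover. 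You do not need this: your refinement equivalence holds for any open common refinement. More directly, both $\Tors(\St_\Phi;\mathcal B)$ and $\Tors(\St_\Phi;\mathcal B')$ are equivalent to the groupoid of all $\St_\Phi$-torsors on $N^\circ$.
\end{itemize}
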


\begin{proof}
Let $r:\mathcal B'\to \mathcal B$ be a refinement map, so each $B'_{\alpha'}\in\mathcal B'$ is contained in $B_{r(\alpha')}\in\mathcal B$.
Restriction along inclusions $B'_{\alpha'}\cap B'_{\beta'}\subset B_{r(\alpha')}\cap B_{r(\beta')}$ sends a cocycle
$(u_{\alpha\beta})$ on $\mathcal B$ to a cocycle $(u'_{\alpha'\beta'})$ on $\mathcal B'$ by
\begin{equation}\label{eq:refine-cocycle}
u'_{\alpha'\beta'}:=u_{r(\alpha')\,r(\beta')}\big|_{B'_{\alpha'\beta'}}\in \St_\Phi(B'_{\alpha'\beta'}),
\qquad
B'_{\alpha'\beta'}:=B'_{\alpha'}\cap B'_{\beta'}.
\end{equation}
This defines a functor $\Tors(\St_\Phi;\mathcal B)\to \Tors(\St_\Phi;\mathcal B')$.
To see it is essentially surjective, start from a cocycle $(v_{\alpha'\beta'})$ on $\mathcal B'$.
Because $\St_\Phi$ is a sheaf of groups, the family $(v_{\alpha'\beta'})$ satisfies effective descent along the refinement:
there exist elements $h_{\alpha'}\in \St_\Phi(B'_{\alpha'})$ such that, after the gauge change
$v_{\alpha'\beta'}\mapsto h_{\beta'}\,v_{\alpha'\beta'}\,h_{\alpha'}^{-1}$, the resulting cocycle is constant on fibers of $r$ and thus
comes by restriction from a cocycle on $\mathcal B$. Full faithfulness follows similarly: a gauge $0$-cochain on $\mathcal B'$ relating
two restricted cocycles descends uniquely to a gauge $0$-cochain on $\mathcal B$.
Therefore restriction is an equivalence.
Finally, combine \Cref{thm:stokeslocal-tors} with \eqref{eq:refinement-equivalence} to conclude that
$\mathrm{StokesLocal}_r(N^\circ;\Phi)$ is independent of the chosen sector-box cover.
A detailed reference for refinement invariance of nonabelian \v{C}ech torsors for a sheaf of groups is \cite[Ch.\ III, \S 2]{Giraud}.
\end{proof}

\section{The collar \v{C}ech--Stokes groupoid}\label{sec:collar-groupoid}

\subsection{The sector-box cover and its \v{C}ech groupoid}
Let $\mathcal B=\{B_\alpha\}$ be a sector-box open cover of $N^\circ:=N^\times\setminus\Sigma_n$ such that every nonempty finite intersection
$B_{\alpha_0\cdots\alpha_k}$ is connected and contractible and the Stokes preorder is constant on it.
Write $\CB:=\Cech(\mathcal B)$ for the associated \v{C}ech topological groupoid.

\begin{remark}\label{rem:CB-not-CC}
The symbol $\CB$ denotes the \v{C}ech groupoid of the collar cover $\mathcal B$.
It is \emph{not} the complex numbers $\CC$ with a subscript.
\end{remark}

\subsection{The Stokes-decorated collar groupoid $\GC$}
Recall the Stokes sheaf $\St_\Phi$ on $N^\circ$ given by
\begin{equation}\label{eq:Stokes-sheaf-recall}
\St_\Phi(W)=\ker\!\big(\Aut^{fil}(W)\to \Aut^{gr}(W)\big).
\end{equation}
Since $\St_\Phi$ is locally constant (indeed discrete in our setting) and each nonempty $B_{\alpha\beta}:=B_\alpha\cap B_\beta$ is connected,
every section of $\St_\Phi$ on $B_{\alpha\beta}$ is constant.
\begin{definition}[\v{C}ech--Stokes collar groupoid]\label{def:GC}
Let $\mathcal B=\{B_\alpha\}$ be the chosen sector-box cover of $N^\circ$ and let $\St_\Phi$ be the Stokes sheaf on $N^\circ$.
Define a topological groupoid $\GC$ as follows.

\smallskip
\noindent\emph{Objects.} Set
\[
(\GC)_0:=\bigsqcup_{\alpha} B_\alpha .
\]

\smallskip
\noindent\emph{Arrows.} Set
\begin{equation}\label{eq:GC-arrows-fixed}
(\GC)_1:=\bigsqcup_{\alpha,\beta}\Bigl(B_{\alpha\beta}\times \St_\Phi(B_{\alpha\beta})\Bigr),
\qquad B_{\alpha\beta}:=B_\alpha\cap B_\beta,
\end{equation}
and regard $(x,u)\in B_{\alpha\beta}\times \St_\Phi(B_{\alpha\beta})$ as an arrow
\[
(\alpha,x)\longrightarrow (\beta,x).
\]

\smallskip
\noindent\emph{Structure maps.} Define
\[
s(x,u)=(\alpha,x),\qquad t(x,u)=(\beta,x),\qquad e(\alpha,x)=(x,1),\qquad i(x,u)=(x,u^{-1}).
\]

\smallskip
\noindent\emph{Composition.} For $x\in B_{\alpha\beta\gamma}:=B_\alpha\cap B_\beta\cap B_\gamma$ and labels
$u\in \St_\Phi(B_{\alpha\beta})$, $v\in \St_\Phi(B_{\beta\gamma})$, define
\[
(x,v)\circ(x,u)=(x,w)\in B_{\alpha\gamma}\times \St_\Phi(B_{\alpha\gamma}),
\]
where $w\in \St_\Phi(B_{\alpha\gamma})$ is the unique element such that
\begin{equation}\label{eq:GC-comp-fixed}
\mathrm{res}_{\alpha\gamma}(w)=\mathrm{res}_{\beta\gamma}(v)\cdot \mathrm{res}_{\alpha\beta}(u)\ \in\ \St_\Phi(B_{\alpha\beta\gamma}).
\end{equation}
Here $\mathrm{res}_{\alpha\beta}$ denotes restriction to $B_{\alpha\beta\gamma}$. The uniqueness in \eqref{eq:GC-comp-fixed} holds because
$\St_\Phi$ is locally constant and $B_{\alpha\beta\gamma}$ is connected, hence
$\mathrm{res}_{\alpha\gamma}:\St_\Phi(B_{\alpha\gamma})\to \St_\Phi(B_{\alpha\beta\gamma})$ is an isomorphism.
\end{definition}

\begin{equation}\label{eq:GC-structure}
\begin{tikzcd}[column sep=huge,row sep=large]
\displaystyle
(\GC)_1\times_{(\GC)_0}(\GC)_1
\arrow[r,"m"]
&
\displaystyle (\GC)_1
\arrow[r,shift left=1.2,"t"] \arrow[r,shift right=1.2,swap,"s"]
\arrow[l,bend left=32,"i"]
&
\displaystyle (\GC)_0
\arrow[l,bend left=32,"e"]
\end{tikzcd}
\end{equation}

\subsection{The forgetful projection and its sections}
\begin{definition}[The forgetful projection]\label{def:pi}
There is a canonical strict functor
\begin{equation}\label{eq:pi-GC}
\pi:\GC\to \CB
\end{equation}
defined as the identity on objects and by $\pi(x,u)=x$ on arrows (forget the Stokes label).
\end{definition}

\begin{definition}[Functorial sections]\label{def:sections-pi}
Let $\Sec(\pi)$ be the groupoid of strict sections $\sigma:\CB\to\GC$ of $\pi$ (so $\pi\circ\sigma=\id_{\CB}$),
with morphisms given by natural isomorphisms between such sections.
\end{definition}

\begin{proposition}[Stokes torsors as sections]\label{prop:stokes-as-sections}
There is a natural equivalence of groupoids
\begin{equation}\label{eq:stokes-as-sections}
\Sec(\pi)\ \simeq\ \Tors(\St_\Phi;\mathcal B).
\end{equation}
\end{proposition}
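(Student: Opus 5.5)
The plan is to write down both functors explicitly and check that they are mutually inverse on the nose, using only the definitions of $\GC$, $\pi$ and $\Sec(\pi)$ together with the local constancy of $\St_\Phi$ on the connected overlaps $B_{\alpha\beta}$.

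\textbf{From sections to cocycles.} Let $\sigma:\CB\to\GC$ be a strict section. Since $\pi$ is the identity on objects, $\sigma$ is the identity on objects. On an arrow $x\in B_{\alpha\beta}$ of $\CB$ it must take the form $\sigma(x)=(x,u_{\alpha\beta}(x))$ with $u_{\alpha\beta}(x)\in\St_\Phi(B_{\alpha\beta})$. Continuity of $\sigma_1$ for the disjoint-union topology, with $\St_\Phi(B_{\alpha\beta})$ discrete, makes $x\mapsto u_{\alpha\beta}(x)$ locally constant. Connectedness of $B_{\alpha\beta}$ then forces it to be a single element $u_{\alpha\beta}$.

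Next, $\sigma$ preserves composition of arrows $(\alpha,x)\to(\beta,x)\to(\gamma,x)$ for $x\in B_{\alpha\beta\gamma}$. By \eqref{eq:GC-comp-fixed} this gives $\mathrm{res}(u_{\alpha\gamma})=\mathrm{res}(u_{\beta\gamma})\,\mathrm{res}(u_{\alpha\beta})$ on $B_{\alpha\beta\gamma}$, which is the cocycle identity \eqref{eq:cocycle}. Preservation of identities and inverses gives $u_{\alpha\alpha}=1$ and $u_{\beta\alpha}=u_{\alpha\beta}^{-1}$, which follow from the cocycle identity anyway.

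\textbf{Morphisms.} A natural isomorphism $\eta:\sigma\Rightarrow\sigma'$ assigns to each object $(\alpha,x)$ an arrow of $\GC$ with source and target $(\alpha,x)$, namely $(x,h_\alpha(x))$ with $h_\alpha(x)\in\St_\Phi(B_\alpha)$. The same continuity and connectedness argument on $B_\alpha$ makes $h_\alpha$ a single element. Naturality along $x\in B_{\alpha\beta}$ reads $h_\beta u_{\alpha\beta}=u'_{\alpha\beta}h_\alpha$ after restriction, which is exactly the gauge relation \eqref{eq:gauge-u}. This defines a functor $\Sec(\pi)\to\Tors(\St_\Phi;\mathcal B)$.

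\textbf{The inverse.} Given a cocycle $(u_{\alpha\beta})$, set $\sigma(x)=(x,u_{\alpha\beta})$ for $x\in B_{\alpha\beta}$. This is continuous because the label is constant on each piece. It is strictly functorial because composition in $\GC$ is defined by the same restriction formula, and $\pi\circ\sigma=\id$ holds by construction. A gauge $(h_\alpha)$ defines a natural isomorphism in the same way.

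The two constructions are inverse on the nose on objects and on morphisms, so we obtain an isomorphism of groupoids, in particular an equivalence. Naturality with respect to refinement is compatible with \Cref{prop:refinement}.

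\textbf{Main obstacle.} The only delicate point is the meaning of ``strict section'' for the topological groupoid. If sections were only maps of underlying sets, $u_{\alpha\beta}$ could vary pointwise and the correspondence would fail. I would therefore state explicitly that sections are continuous, as in the conventions of \S\ref{sec:groupoids}. Continuity plus connectedness of the $B_{\alpha\beta}$ and $B_\alpha$, and the restriction isomorphisms used in \Cref{def:GC}, then yield constancy of the labels, and the rest is bookkeeping.
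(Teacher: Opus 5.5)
Your proposal is correct and follows essentially the same route as the paper. In both, a strict section is identified with constant labels $u_{\alpha\beta}\in\St_\Phi(B_{\alpha\beta})$ using connectedness of the overlaps, functoriality becomes the cocycle identity \eqref{eq:cocycle}, and natural isomorphisms become gauges $h_\alpha\in\St_\Phi(B_\alpha)$ satisfying \eqref{eq:gauge-u}. Your explicit requirement that sections and natural transformations be continuous is exactly what the paper uses implicitly when it says the data depend locally constantly on $x$, so stating it outright makes the argument more precise.
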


\begin{proof}
Let $\pi:\GC\to\CB$ be the forgetful functor of Definition~\ref{def:pi}.
A strict section $\sigma:\CB\to\GC$ satisfies $\pi\circ\sigma=\id_{\CB}$, hence it is the identity on objects.
For $\alpha,\beta$, every arrow of $\CB$ over the overlap $B_{\alpha\beta}$ is a point $x\in B_{\alpha\beta}$ viewed as an arrow
$(\alpha,x)\to(\beta,x)$, and its lifts in $\GC$ are precisely the arrows $(x,u)$ with $u\in \St_\Phi(B_{\alpha\beta})$.
Thus $\sigma$ amounts to a choice of a section
\[
u_{\alpha\beta}\in \St_\Phi(B_{\alpha\beta})
\]
such that $\sigma(x)=(x,u_{\alpha\beta}|_x)$ for all $x\in B_{\alpha\beta}$.
Since $B_{\alpha\beta}$ is connected and $\St_\Phi$ is locally constant on it, such a section is constant.

Functoriality of $\sigma$ is exactly the cocycle condition on triple overlaps:
for $x\in B_{\alpha\beta\gamma}$ one has in $\GC$
\[
\sigma(\beta\gamma,x)\circ \sigma(\alpha\beta,x)=\sigma(\alpha\gamma,x),
\]
which reads
\begin{equation}\label{eq:sec-cocycle}
u_{\beta\gamma}\,u_{\alpha\beta}=u_{\alpha\gamma}
\qquad\text{in }\St_\Phi(B_{\alpha\beta\gamma}).
\end{equation}
Therefore objects of $\Sec(\pi)$ are precisely \v{C}ech $1$-cocycles with values in $\St_\Phi$ on the cover $\mathcal B$.
A natural isomorphism $\eta:\sigma\Rightarrow \sigma'$ is a family of arrows
\[
\eta_\alpha:(\alpha,x)\to(\alpha,x)\quad\text{in }\GC
\]
depending (locally constantly) on $x\in B_\alpha$, hence given by an element $h_\alpha\in \St_\Phi(B_\alpha)$, and naturality says
\begin{equation}\label{eq:sec-gauge}
u'_{\alpha\beta}=h_\beta\,u_{\alpha\beta}\,h_\alpha^{-1}.
\end{equation}
This is exactly the gauge action in $\Tors(\St_\Phi;\mathcal B)$. Hence $\Sec(\pi)$ and $\Tors(\St_\Phi;\mathcal B)$ have the same objects
and morphisms, functorially, which yields \eqref{eq:stokes-as-sections}.
For orientation, the correspondence can be summarized by the curved-arrow diagram
\begin{equation}\label{eq:sec-tors-diagram}
\begin{tikzcd}[row sep=large, column sep=huge]
\Sec(\pi)
\arrow[r, bend left=18, "\sigma\mapsto (u_{\alpha\beta})"{above}]
&
\Tors(\St_\Phi;\mathcal B)
\arrow[l, bend left=18, "(u_{\alpha\beta})\mapsto \sigma"{below}]
\end{tikzcd}
\end{equation}
where $\sigma(x)=(x,u_{\alpha\beta})$ on $B_{\alpha\beta}$.
\end{proof}
\begin{remark}[Key point]\label{rem:RepGC-not-stokeslocal-fixed}
The groupoid $\Rep_r(\GC)$ classifies $\GL_r(\CC)^\delta$-torsors \emph{on the decorated groupoid} $\GC$,
so it encodes representations of the Stokes groups into $\GL_r$.
The \emph{Stokes torsor} moduli on the collar is instead $\Sec(\pi)$ (equivalently, $\Tors(\St_\Phi;\mathcal B)$),
and via \Cref{thm:stokeslocal-tors,prop:stokes-as-sections} it matches $\StokesLocal_r(N^\circ;\Phi)$.
\end{remark}

\section{Gluing with the interior: the pushout presenter $\G_{\Phi,n}$}\label{sec:pushout}

\subsection{Working 2-category and Morita invariance}\label{subsec:2cat}

All bicategorical limits and colimits in this section are taken in the $(2,1)$-category $\mathbf{Grpd}$ of small groupoids,
strict functors, and natural isomorphisms.
Topological features enter only through the choice of \v{C}ech presenters: since our coefficient groups are discrete,
continuity forces local constancy of transition data, so torsors on these presenters model local systems and Stokes torsors.

All statements are Morita-invariant: replacing any cover by a refinement replaces the corresponding \v{C}ech groupoid by a Morita equivalent presenter.

\subsection{Interior and overlap \v{C}ech groupoids}
Fix:
\begin{itemize}
\item a good cover $\mathcal V=\{V_i\}$ of $U:=X\setminus D$;
\item a good cover $\mathcal W=\{W_k\}$ of the overlap $N^\circ\subset U$ refining both
$\mathcal V|_{N^\circ}$ and $\mathcal B|_{N^\circ}$.
\end{itemize}
Define the \v{C}ech groupoids
\begin{equation}\label{eq:cech-UV-fixed}
\GU := \Cech(\mathcal V),
\qquad
\GX := \Cech(\mathcal W).
\end{equation}
There are strict functors
\begin{equation}\label{eq:jU-jC-fixed}
j_U:\GX\to \GU
\qquad\text{and}\qquad
j_C:\GX\to \GC
\end{equation}
induced by the refinement maps $\mathcal W\to \mathcal V|_{N^\circ}$ and $\mathcal W\to \mathcal B|_{N^\circ}$,
where on arrows $j_C$ uses the trivial Stokes label $1$.

\subsection{Which 2-category?}
From now on, all pushouts and pullbacks of groupoids are taken in the $(2,1)$-category $\mathbf{Grpd}$
of small groupoids, strict functors, and natural isomorphisms.
The topology of the \v{C}ech groupoids is used only to justify local constancy; the gluing statements are groupoid-theoretic.

\begin{definition}[$2$-pushout presenter]\label{def:pushout}
Define $\G_{\Phi,n}$ to be a choice of bicategorical $2$-pushout of the cospan
\begin{equation}\label{eq:pushout}
\GU \xleftarrow{j_U} \GX \xrightarrow{j_C} \GC
\end{equation}
in $\mathbf{Grpd}$.
\end{definition}

\begin{remark}\label{rem:pushout-uniqueness}
Any two choices of $2$-pushout in \Cref{def:pushout} are canonically equivalent in $\mathbf{Grpd}$
(unique up to essentially unique equivalence). Hence $\G_{\Phi,n}$ is well-defined up to Morita equivalence.
\end{remark}

The following lemma is standard in the $2$-category $\mathbf{Grpd}$: since all $2$-morphisms are invertible, any pseudococone can be strictified by transport of structure. For the reader's convenience we include a brief proof.

\begin{lemma}[Explicit strict model for $2$-pushouts in $\mathbf{Grpd}$]\label{lem:strict-pushout-is-bicategorical}
Let $A\xleftarrow{i} B\xrightarrow{j} C$ be a cospan of small groupoids.
Define a groupoid $P:=A\bigsqcup_B C$ by generators and relations as follows.

Its objects are the disjoint union $P_0:=A_0\bigsqcup C_0$.
Its arrows are generated by the arrows of $A$, the arrows of $C$, and, for each object $b\in B_0$, a formal isomorphism
$s_b:i(b)\to j(b)$ (with inverse $s_b^{-1}:j(b)\to i(b)$), subject to the internal relations of $A$ and $C$ and, for every arrow $u:b\to b'$ in $B$,
the bridge relation $s_{b'}\circ i(u)=j(u)\circ s_b$.
Then $P$ is a bicategorical $2$-pushout of the cospan $A\xleftarrow{i} B\xrightarrow{j} C$ in $\mathbf{Grpd}$.
\end{lemma}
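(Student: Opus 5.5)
We verify the bicategorical universal property directly. For every small groupoid $Q$, we show that restriction along the canonical pseudococone is an equivalence
\[
\Hom_{\mathbf{Grpd}}(P,Q)\longrightarrow \CoconeTwo(Q),
\]
where $\CoconeTwo(Q)$ denotes the groupoid of pseudococones under the cospan with vertex $Q$. An object of $\CoconeTwo(Q)$ is a triple $(F_A,F_C,\theta)$ with $F_A:A\to Q$, $F_C:C\to Q$ strict functors and $\theta:F_A i\Rightarrow F_C j$ a natural isomorphism. A morphism $(F_A,F_C,\theta)\to(F'_A,F'_C,\theta')$ is a pair of natural isomorphisms $(a:F_A\Rightarrow F'_A,\ c:F_C\Rightarrow F'_C)$ with $\theta'\circ a i = c j\circ\theta$.

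\textbf{The canonical cocone.} First record the canonical cocone on $P$. The inclusions $\iota_A:A\to P$ and $\iota_C:C\to P$ are strict functors, because the relations of $A$ and $C$ are imposed in $P$. The family $s=(s_b)_{b\in B_0}$ is a natural isomorphism $\iota_A i\Rightarrow \iota_C j$, since naturality is exactly the bridge relation $s_{b'}\circ i(u)=j(u)\circ s_b$.

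\textbf{The restriction functor is an isomorphism.} Given any strict $H:P\to Q$, precomposition gives the cocone $(H\iota_A,H\iota_C,Hs)$. A natural isomorphism $\eta:H\Rightarrow H'$ gives the pair $(\eta\iota_A,\eta\iota_C)$, which is compatible with $Hs$ and $H's$ by naturality of $\eta$ at the arrows $s_b$. So restriction is a functor.

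Conversely, given a cocone $(F_A,F_C,\theta)$, define $H$ on the generators by
\[
H|_A=F_A,\qquad H|_C=F_C,\qquad H(s_b)=\theta_b .
\]
The universal property of a groupoid presented by generators and relations (a free groupoid on the generating graph, modulo the congruence generated by the relations) says $H$ extends uniquely to a strict functor once every relation is respected. The internal relations of $A$ and $C$ hold because $F_A$ and $F_C$ are functors. The bridge relations hold because they are precisely naturality of $\theta$.

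On morphisms, a compatible pair $(a,c)$ defines $\eta:H\Rightarrow H'$ with components $a_x$ on $A_0$ and $c_y$ on $C_0$. It suffices to check naturality on generators. On arrows of $A$ and $C$ it is naturality of $a$ and $c$. On $s_b$ it is the compatibility condition $\theta'_b a_{i(b)}=c_{j(b)}\theta_b$.

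These two constructions are mutually inverse on the nose, since functors and natural transformations out of $P$ are determined by their values on generators. Hence restriction is in fact an isomorphism of groupoids, and in particular an equivalence. This is the defining property of a bicategorical $2$-pushout.

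\textbf{Expected obstacle.} The main point needing care is that $P$ is well defined as a small groupoid presented by generators and relations. One must also check that functors out of it are exactly the assignments on generators that respect the relations, and that the same holds for natural transformations. Naturality on a composite word follows from naturality on each letter by pasting squares, and on inverse letters by inverting squares; I would state this explicitly.

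No strictification of cocones is needed, because our cocones already consist of strict functors together with a single invertible $2$-cell $\theta$, and that $2$-cell is absorbed into the generators $s_b$. If one prefers unbiased pseudococones with a vertex $2$-cell at $B$ as well, reduce to the above by composing that $2$-cell with the two leg $2$-cells. This uses only that all $2$-cells in $\mathbf{Grpd}$ are invertible.
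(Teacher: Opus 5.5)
Your proposal is correct and follows the paper's route: functors $P\to Q$ are identified with pseudococones by restricting to $A$ and $C$ and setting $\theta_b=H(s_b)$, with the bridge relations corresponding exactly to naturality of $\theta$. You go slightly beyond the paper's sketch by checking the morphism level explicitly and noting that the comparison is an isomorphism of groupoids, not merely an equivalence.
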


\begin{proof}
Let $H$ be any groupoid and write $\CoconeTwo(i,j;H)$ for the groupoid of bicategorical cocones under the cospan,
namely triples $(F_A,F_C,\eta)$ where $F_A:A\to H$ and $F_C:C\to H$ are functors and $\eta:F_A\circ i\Rightarrow F_C\circ j$ is a natural isomorphism,
with morphisms given by the evident compatible pairs of natural isomorphisms.

A functor $F:P\to H$ restricts to functors $F_A:=F|_A$ and $F_C:=F|_C$.
Setting $\eta_b:=F(s_b)$, the bridge relations ensure that $\eta$ is natural, hence $(F_A,F_C,\eta)\in \CoconeTwo(i,j;H)$.
Conversely, given $(F_A,F_C,\eta)\in \CoconeTwo(i,j;H)$, define $F$ on $A$ and $C$ by $F_A$ and $F_C$ and send each generator $s_b$ to $\eta_b$;
the bridge relations are exactly the naturality identities, so $F$ is well-defined.
These assignments are inverse up to isomorphism and functorial in morphisms, yielding an equivalence of groupoids
$\Hom_{\mathbf{Grpd}}(P,H)\simeq \CoconeTwo(i,j;H)$.
This is precisely the bicategorical $2$-pushout universal property of $P$.
\end{proof}

\subsection{Global Stokes objects as a $2$-fiber product}
 
\begin{definition}[Global Stokes category]\label{def:global-stokes}
Define the groupoid of global Stokes objects by the (bicategorical) fiber product
\begin{equation}\label{eq:global-stokes}
\mathrm{Stokes}_r(\Xlogn;D,\Phi)
:=
\LocSys_r(U)\times^{(2)}_{\LocSys_r(N^\circ)}
\mathrm{StokesLocal}_r(N^\circ;\Phi),
\end{equation}
where both legs are the forgetful functors to the underlying local system on $U\cap N^\circ=N^\circ$.
\end{definition}

\subsection{Representations via torsors (dictionary)}
We keep the notation $\Rep_r(G)$ from \Cref{def:rep-r}.
By \Cref{prop:rep-as-fun}, one may equivalently view $\Rep_r(G)$ as the groupoid of strict functors
$G\to \mathbf{Tors}_r$ and natural isomorphisms, where $\mathbf{Tors}_r$ is the groupoid of right
$\GL_r(\CC)^\delta$-torsors in sets.
We will freely move between these two descriptions when convenient.

\subsection{Categorical lemma: $\Rep_r$ sends $2$-pushouts to $2$-pullbacks}
\begin{proposition}\label{prop:rep-pushout-pullback}
Let $G$ be a $2$-pushout of $\GU \xleftarrow{j_U} \GX \xrightarrow{j_C} \GC$ in $\mathbf{Grpd}$.
Then restriction induces an equivalence of groupoids
\begin{equation}\label{eq:rep-pushout-pullback}
\Rep_r(G)\ \simeq\ \Rep_r(\GU)\times^{(2)}_{\Rep_r(\GX)}\Rep_r(\GC).
\end{equation}
\end{proposition}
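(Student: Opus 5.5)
The plan is to reduce the statement to the universal property of the $2$-pushout, using the functorial description of $\Rep_r$ from \Cref{prop:rep-as-fun}. By that proposition we have, naturally in $G$, an equivalence $\Rep_r(G)\simeq \Hom_{\mathbf{Grpd}}(G,\mathbf{Tors}_r)$, and the same holds for $\GU$, $\GX$, $\GC$. Under this dictionary, restriction of torsors along $j_U$, $j_C$ and along the structure functors of the pushout becomes precomposition of functors. Since precomposition is strictly functorial and the equivalences of \Cref{prop:rep-as-fun} are natural, it suffices to prove the statement with $\Rep_r(-)$ replaced by $\Hom_{\mathbf{Grpd}}(-,\mathbf{Tors}_r)$.

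Next I would apply the bicategorical universal property of $G$ with target $H=\mathbf{Tors}_r$. By definition of a $2$-pushout, or concretely via the strict model of \Cref{lem:strict-pushout-is-bicategorical}, restriction gives an equivalence
\[
\Hom_{\mathbf{Grpd}}(G,\mathbf{Tors}_r)\ \simeq\ \CoconeTwo(j_U,j_C;\mathbf{Tors}_r).
\]
The key observation is that $\CoconeTwo(j_U,j_C;H)$ is, by its very definition, the $2$-fiber product
\[
\Hom(\GU,H)\times^{(2)}_{\Hom(\GX,H)}\Hom(\GC,H).
\]
To check this, compare the two descriptions. An object of the fiber product is a triple $(F_U,F_C,\varphi)$ in which $\varphi:F_U\circ j_U\Rightarrow F_C\circ j_C$ is an isomorphism in the groupoid $\Hom(\GX,H)$, that is, a natural isomorphism. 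A morphism of the fiber product is a pair of natural isomorphisms compatible with $\varphi$. These are exactly the cocone data and cocone morphisms. Composing the two equivalences and transporting back through \Cref{prop:rep-as-fun} then yields \eqref{eq:rep-pushout-pullback}.

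The remaining point, and the only real obstacle, is to check that the composite equivalence really is the functor \enquote{restriction} named in the statement. Concretely, it should send a torsor $(P,\alpha)$ on $G$ to the triple consisting of its pullbacks to $\GU$ and $\GC$ together with the comparison isomorphism on $\GX$. For a general bicategorical pushout this comparison comes from the structural $2$-cell of the pushout square, and one must verify that the dictionary of \Cref{prop:rep-as-fun} intertwines it.

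I would handle this in two steps. First, for the strict model $P$, the comparison isomorphism is given by the fiber maps $(P,\alpha)$ evaluated on the generators $s_b$. Here the identification is immediate: the generator $s_b$ maps to the descent isomorphism $\alpha_{s_b}$, and the bridge relations become exactly naturality of $\varphi$. Second, for an arbitrary $2$-pushout $G$, I would transfer the result along the essentially unique equivalence $G\simeq P$ of \Cref{rem:pushout-uniqueness}, using that $\Rep_r$ is invariant under equivalence of groupoids by \Cref{prop:rep-as-fun}.
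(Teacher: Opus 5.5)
Your proposal is correct and follows the paper's proof essentially verbatim. You identify each $\Rep_r(-)$ with $\Hom_{\mathbf{Grpd}}(-,\mathbf{Tors}_r)$ via \Cref{prop:rep-as-fun}, apply the $2$-pushout universal property with target $H=\mathbf{Tors}_r$, and observe that the cocone groupoid is literally the $2$-fiber product of the $\Hom$-groupoids. Your extra check that the resulting equivalence is the restriction functor (first on the strict model of \Cref{lem:strict-pushout-is-bicategorical}, then transported to an arbitrary $2$-pushout) is harmless but not needed: for a bicategorical pushout the comparison functor is by definition precomposition with the cocone legs plus whiskering by the structural $2$-cell, which the paper takes as given.
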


\begin{proof}
By Proposition~\ref{prop:rep-as-fun}, for every groupoid $K$ there is a natural equivalence
\begin{equation}\label{eq:rep-map}
\Rep_r(K)\ \simeq\ \Hom_{\mathbf{Grpd}}(K,\mathbf{Tors}_r).
\end{equation}
Since $G$ is a $2$-pushout of $\GU \xleftarrow{j_U} \GX \xrightarrow{j_C} \GC$, its universal property says that for every target groupoid $H$
restriction along the structure maps induces an equivalence
\begin{equation}\label{eq:UP-pushout}
\Hom_{\mathbf{Grpd}}(G,H)\ \simeq\
\Hom_{\mathbf{Grpd}}(\GU,H)\times^{(2)}_{\Hom_{\mathbf{Grpd}}(\GX,H)}\Hom_{\mathbf{Grpd}}(\GC,H).
\end{equation}
Taking $H=\mathbf{Tors}_r$ and using \eqref{eq:rep-map} to identify each $\Hom(-,\mathbf{Tors}_r)$ with the corresponding $\Rep_r(-)$ yields
\eqref{eq:rep-pushout-pullback}.
This is summarized by the $2$-pushout square
\begin{equation}\label{eq:pushout-square-G}
\begin{tikzcd}[row sep=large, column sep=huge]
\GX \arrow[r,"j_C"] \arrow[d,swap,"j_U"] & \GC \arrow[d] \\
\GU \arrow[r] & G
\end{tikzcd}
\end{equation}
and, after applying $\Rep_r(-)\simeq \Hom(-,\mathbf{Tors}_r)$, by the induced $2$-Cartesian square
\begin{equation}\label{eq:pullback-square-Rep}
\begin{tikzcd}[row sep=large, column sep=huge]
\Rep_r(G) \arrow[r] \arrow[d] & \Rep_r(\GU) \arrow[d] \\
\Rep_r(\GC) \arrow[r] & \Rep_r(\GX).
\end{tikzcd}
\end{equation}
\end{proof}
\subsection{Stokes-corrected representations of the pushout presenter}

\begin{definition}\label{def:RepStokes-pushout}
Fix an integer $r\ge 1$.
Define the \emph{Stokes-corrected} representation groupoid of the pushout presenter by the bicategorical fiber product
\begin{equation}\label{eq:RepStokes-def}
\Rep^{\mathrm{St}}_r(\G_{\Phi,n})
\ :=\
\Rep_r(\GU)\times^{(2)}_{\Rep_r(\GX)} \Sec(\pi).
\end{equation}
Here the left leg is restriction along $j_U:\GX\to\GU$, and the right leg is the forgetful functor
$\mathrm{forg}_\times:\Sec(\pi)\to \Rep_r(\GX)$ constructed in Lemma~\ref{lem:forg-to-overlap}.
\end{definition}

\begin{lemma}[Forgetful functor to the overlap]\label{lem:forg-to-overlap}
There is a canonical functor
\begin{equation}\label{eq:forg-to-overlap}
\mathrm{forg}_\times:\Sec(\pi)\longrightarrow \Rep_r(\GX)
\end{equation}
which, under the identifications $\Rep_r(\GX)\simeq \LocSys_r(N^\circ)$ and $\Sec(\pi)\simeq \StokesLocal_r(N^\circ;\Phi)$,
agrees with the forgetful functor $\StokesLocal_r(N^\circ;\Phi)\to \LocSys_r(N^\circ)$ sending a Stokes-local object to its underlying local system.
\end{lemma}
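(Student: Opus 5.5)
The plan is to build $\mathrm{forg}_\times$ as a composite of three explicit functors and then check that it matches the forgetful functor. First, from a section $\sigma\in\Sec(\pi)$, read off the cocycle $(u_{\alpha\beta})$ with $u_{\alpha\beta}\in\St_\Phi(B_{\alpha\beta})$, as in \Cref{prop:stokes-as-sections}. Second, turn it into a $\GL_r(\CC)^\delta$-valued \v{C}ech cocycle on $\mathcal B$. Third, pull back along the refinement $\mathcal W\to\mathcal B|_{N^\circ}$ to get an object of $\Rep_r(\GX)$.

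For the second step, use that $\Gr$ is a local system and each $B_\alpha$ is contractible, so $\Gr|_{B_\alpha}$ is trivial. Fix once and for all a trivialization $\tau_\alpha:\Gr|_{B_\alpha}\xrightarrow{\sim}\underline{\CC}^r$. On each overlap $B_{\alpha\beta}$, the comparison $\tau_\beta\tau_\alpha^{-1}=:c_{\alpha\beta}$ is a constant matrix, and the $c_{\alpha\beta}$ form the cocycle of $\Gr$. Each $u_{\alpha\beta}$ is constant on the connected set $B_{\alpha\beta}$, so it defines a matrix $g_{\alpha\beta}:=\tau_\beta\,u_{\alpha\beta}\,\tau_\alpha^{-1}$, composed with the identification of $\Gr$ on the overlap. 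Concretely, $g_{\alpha\beta}=c_{\alpha\beta}\cdot(\tau_\alpha u_{\alpha\beta}\tau_\alpha^{-1})$.

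The cocycle identity $g_{\beta\gamma}g_{\alpha\beta}=g_{\alpha\gamma}$ on $B_{\alpha\beta\gamma}$ follows from \eqref{eq:sec-cocycle} together with the cocycle identity for the $c$'s. A natural isomorphism $h:\sigma\Rightarrow\sigma'$, with $h_\alpha\in\St_\Phi(B_\alpha)$, goes to the gauge $\tau_\alpha h_\alpha\tau_\alpha^{-1}$. This gives a functor $\Sec(\pi)\to\Rep_r(\CB)$, where $\Rep_r$ is modelled by cocycles as in the proof of \Cref{prop:rep-cech-locsys}.

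For the third step, let $\rho:\mathcal W\to\mathcal B$ be the refinement map. Set $(g')_{kl}:=g_{\rho(k)\rho(l)}|_{W_{kl}}$, which amounts to precomposing the functor $\CB\to\mathbf{Tors}_r$ of \Cref{prop:rep-as-fun} with the refinement functor $\GX\to\CB$. This defines $\mathrm{forg}_\times$. Canonicity means independence of the trivializations $\tau_\alpha$ and of $\rho$ up to unique natural isomorphism. Changing $\tau_\alpha$ to $k_\alpha\tau_\alpha$ changes the output by the gauge $(k_\alpha)$. Changing $\rho$ is handled by the usual comparison of two refinement maps: the cocycle restricted along $\rho$ and along $\rho'$ differ by the gauge $g_{\rho(k)\rho'(k)}|_{W_k}$.

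For the compatibility, chase an object around the square. Start with $\sigma$ and apply the functor $G$ from the proof of \Cref{thm:stokeslocal-tors}: this glues the spaces $\Gr|_{B_\alpha}$ via $u_{\alpha\beta}$ and produces $L$. Its underlying local system, trivialized by $\tau_\alpha\circ s_\alpha$ on each $B_\alpha$, has transition matrices exactly $g_{\alpha\beta}$. Restricting to $\mathcal W$ and applying \Cref{prop:rep-cech-locsys} for $\GX$ gives back $L$ up to canonical isomorphism. Naturality in morphisms holds because both routes send $h_\alpha$ to the same automorphism of $L|_{B_\alpha}$.

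The main obstacle is the passage from $\St_\Phi$-labels, which are automorphisms of $\Gr$ and not of a fixed $\CC^r$, to $\GL_r$-valued descent data. The definition of $\GC$ only uses $\St_\Phi$, so the underlying local system of the glued object genuinely involves the transition cocycle of $\Gr$, not just $u_{\alpha\beta}$. I would resolve this with the twisted formula $g_{\alpha\beta}=c_{\alpha\beta}\,u_{\alpha\beta}$ above. Equivalently, and more cleanly, $\mathrm{forg}_\times$ sends $\sigma$ to the functor $\GX\to\mathbf{Tors}_r$ given by $x\mapsto\mathrm{Frame}(\Gr_x)$, with arrows acting by $u$ followed by the tautological identification. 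This avoids choosing trivializations altogether and makes canonicity automatic.
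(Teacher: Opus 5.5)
Your proposal is correct, but it is organized differently from the paper's proof.

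\textbf{The paper's route.} The paper defines $\mathrm{forg}_\times$ by passing through the intrinsic side. It reads off the Stokes cocycle $(u_{\alpha\beta})$ and glues the restrictions $\Gr|_{B_\alpha}$ of the global $\Gr$ along the automorphisms $u_{\alpha\beta}$. This produces a local system $L$ on $N^\circ$, namely the underlying local system of the functor $G$ in \Cref{thm:stokeslocal-tors}. It then re-presents $L$ on $\mathcal W$ via the ``restrict and trivialize'' direction of \Cref{prop:rep-cech-locsys}. With that definition, agreement with the forgetful functor holds by construction and nothing needs to be checked.

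\textbf{Your route.} You build $\mathrm{forg}_\times$ directly on presenters. You use either the $\GL_r$-cocycle $g_{\alpha\beta}=\tau_\beta u_{\alpha\beta}\tau_\alpha^{-1}$ on $\mathcal B$, or, choice-free, the functor $x\mapsto\mathrm{Frame}(\Gr_x)$ with arrows acting by $u_{\alpha\beta}$. Either is precomposed with the refinement functor $\pi\circ j_C:\GX\to\CB$. You then prove compatibility by observing that $\tau_\alpha\circ s_\alpha$ trivializes the glued $L$ with transition matrices exactly $g_{\alpha\beta}$.

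Your frame version can be written as $T\circ\sigma\circ\pi\circ j_C$, where $T:\GC\to\mathbf{Tors}_r$ is the tautological strict functor $(\alpha,x)\mapsto\mathrm{Frame}(\Gr_x)$, $(x,u)\mapsto u_x$. So $\mathrm{forg}_\times$ is literally a whiskering, strictly functorial in $\sigma$, and needs no choices beyond the refinement already built into $j_C$.

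\textbf{Comparison.} Your route gives a functor that never leaves small strict groupoids, which is closer to the paper's stated aim of strict presenters. It also explains explicitly why the output carries the monodromy of $\Gr$ and not only the Stokes labels; the paper's phrase ``trivial graded local systems $\Gr|_{B_\alpha}$'' leaves this implicit. The paper's route buys brevity, since the compatibility is tautological. The price is that it passes through $\LocSys_r(N^\circ)$ and an inverse equivalence that depends on auxiliary trivializations on $\mathcal W$.
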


\begin{proof}
By Proposition~\ref{prop:stokes-as-sections}, a section of $\pi$ is the same as a Stokes torsor on the chosen sector-box cover,
i.e.\ a nonabelian \v{C}ech cocycle $(u_{\alpha\beta})$ with $u_{\alpha\beta}\in \St_\Phi(B_{\alpha\beta})$ modulo gauge.
Using $u_{\alpha\beta}$ as transition automorphisms of the trivial graded local systems $\Gr|_{B_\alpha}$, one glues the latter to a local system
$L$ on $N^\circ$; this gluing is functorial with respect to gauge morphisms.
Under Proposition~\ref{prop:rep-cech-locsys} applied to the good cover presenting $\GX$, the local system $L$ corresponds canonically to an object of
$\Rep_r(\GX)$.
We define $\mathrm{forg}_\times$ as this composite construction.
\end{proof}

\subsection{The correct collar moduli: sections of $\pi:\GC\to\CB$}

\begin{definition}\label{def:RepStokes-collar}
Fix an integer $r\ge 1$. The rank-$r$ collar Stokes moduli groupoid is the groupoid
$$
\operatorname{Rep}^{\mathrm{St}}_r(\mathcal{G}_C)
:=
\operatorname{Sec}(\pi),
$$
where $\pi:\mathcal{G}_C\to \mathcal{C}_B$ is the projection functor defined on arrows by
$$
\pi(x,u)=x
$$
(and it is the identity on objects).
\end{definition}

\begin{proposition}\label{prop:RepStokesGC-eq-StokesLocal}
There is a natural equivalence of groupoids
$$
\Rep^{\mathrm{St}}_r(\GC)\ \simeq\ \mathrm{StokesLocal}_r(N^\circ;\Phi).
$$
\end{proposition}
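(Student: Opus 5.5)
The statement follows by composing two equivalences already established. By Definition~\ref{def:RepStokes-collar}, $\Rep^{\mathrm{St}}_r(\GC)$ is by definition the section groupoid $\Sec(\pi)$. Proposition~\ref{prop:stokes-as-sections} then gives a natural equivalence $\Sec(\pi)\simeq\Tors(\St_\Phi;\mathcal B)$. Theorem~\ref{thm:stokeslocal-tors} gives a natural equivalence $\StokesLocal_r(N^\circ;\Phi)\simeq\Tors(\St_\Phi;\mathcal B)$. Choosing a quasi-inverse of the latter, given by the gluing functor $G$ of \eqref{eq:G-tors-to-stokes}, and composing yields
\[
\Rep^{\mathrm{St}}_r(\GC)=\Sec(\pi)\xrightarrow{\ \sim\ }\Tors(\St_\Phi;\mathcal B)\xrightarrow{\ G\ }\StokesLocal_r(N^\circ;\Phi).
\]
A composite of equivalences of groupoids is an equivalence.

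To make the equivalence concrete and to fix what \emph{natural} means, I would write the composite explicitly. A section $\sigma$ with labels $\sigma(x)=(x,u_{\alpha\beta})$ on $B_{\alpha\beta}$ is sent to the local system obtained by gluing the $\Gr|_{B_\alpha}$ along the $u_{\alpha\beta}$. This local system carries the descended standard filtrations, the glued graded identification $\iota$, and the tautological splittings $s_\alpha$. A natural isomorphism $\eta:\sigma\Rightarrow\sigma'$, with components $h_\alpha\in\St_\Phi(B_\alpha)$, is sent to the isomorphism of glued objects induced by the $h_\alpha$. The naturality identity \eqref{eq:sec-gauge} is exactly what makes these local maps glue. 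Since each $h_\alpha$ is filtered and has trivial graded part, the glued map preserves filtrations and commutes with $\iota$, so it is a morphism in $\StokesLocal_r(N^\circ;\Phi)$.

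The step needing the most care is the choice of naturality notion. I would take it to mean compatibility with restriction to sector-box refinements via Proposition~\ref{prop:refinement}, and compatibility with the forgetful functors to $\LocSys_r(N^\circ)$. For the latter I would check that the composite above, followed by passage to the underlying local system, agrees with $\mathrm{forg}_\times$ of Lemma~\ref{lem:forg-to-overlap}. This holds by construction, because both are defined by the same gluing of the $\Gr|_{B_\alpha}$ along $u_{\alpha\beta}$. The only remaining point is that the identification with $\Rep_r(\GX)$ through Proposition~\ref{prop:rep-cech-locsys} is canonical up to unique isomorphism. That is guaranteed by the good-cover hypothesis and the connectedness of the intersections $B_{\alpha\beta}$.
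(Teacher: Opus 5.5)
Your proposal is correct and is essentially the paper's proof: the paper also obtains the statement by composing the equivalence $\Sec(\pi)\simeq\Tors(\St_\Phi;\mathcal B)$ of Proposition~\ref{prop:stokes-as-sections} with the equivalence of Theorem~\ref{thm:stokeslocal-tors}, using $\Rep^{\mathrm{St}}_r(\GC)=\Sec(\pi)$ by definition. Your explicit unwinding of the composite and your discussion of compatibility with $\mathrm{forg}_\times$ go beyond what the paper writes there, but they are consistent with its constructions and not needed for the statement.
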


\begin{proof}
This is exactly \Cref{prop:stokes-as-sections} composed with \Cref{thm:stokeslocal-tors}.
\end{proof}

\begin{lemma}[Naturality for \v{C}ech presenters]\label{lem:cech-naturality}
Let $T$ be a space and let $\mathcal V=\{V_i\}$ be a good cover of $T$.
Let $T'\subset T$ be open and let $\mathcal W=\{W_a\}$ be a good cover of $T'$ refining $\mathcal V|_{T'}$.
Let $j:\Cech(\mathcal W)\to \Cech(\mathcal V)$ be the induced strict functor.
Under the equivalences of \Cref{prop:rep-cech-locsys}, the pullback functor
\[
j^*:\Rep_r(\Cech(\mathcal V))\longrightarrow \Rep_r(\Cech(\mathcal W))
\]
identifies with restriction of local systems
\[
\LocSys_r(T)\longrightarrow \LocSys_r(T').
\]
\end{lemma}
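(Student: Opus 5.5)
The goal is to show that a certain square commutes up to a natural isomorphism. The square has
\[
\Rep_r(\Cech(\mathcal V))\xrightarrow{j^*}\Rep_r(\Cech(\mathcal W))
\]
on top and $\LocSys_r(T)\to\LocSys_r(T')$ on the bottom. The vertical maps are the gluing equivalences $\mathrm{glue}_{\mathcal V}$ and $\mathrm{glue}_{\mathcal W}$ of \Cref{prop:rep-cech-locsys}. Since both vertical functors are equivalences, it is enough to build a natural isomorphism
\[
\mathrm{glue}_{\mathcal W}\circ j^*\ \Rightarrow\ (\mathrm{glue}_{\mathcal V}(-))|_{T'}.
\]
Under these equivalences, $j^*$ then corresponds to restriction up to this coherent identification.

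\textbf{Unwinding $j^*$.} Fix a refinement map $\rho$ with $W_a\subset V_{\rho(a)}$. The functor $j$ sends the object $(a,x)$ to $(\rho(a),x)$, and the arrow $x\in W_{ab}$ to $x\in V_{\rho(a)\rho(b)}$. For an object $(P_i,\alpha_{ij})$ of $\Rep_r(\Cech(\mathcal V))$, the pullback $j^*(P,\alpha)$ is the family
\[
\bigl(P_{\rho(a)}|_{W_a},\ \alpha_{\rho(a)\rho(b)}|_{W_{ab}}\bigr).
\]
The cocycle identity for this family is just the restriction of the identity $\alpha_{jk}\alpha_{ij}=\alpha_{ik}$ to triple overlaps of $\mathcal W$.

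\textbf{Constructing the comparison.} Let $P:=\mathrm{glue}_{\mathcal V}(P_i,\alpha_{ij})$, which comes with canonical isomorphisms $\phi_i:P_i\xrightarrow{\sim}P|_{V_i}$ satisfying $\phi_j\alpha_{ij}=\phi_i$ on $V_{ij}$. Define
\[
\psi_a:=\phi_{\rho(a)}|_{W_a}:\ P_{\rho(a)}|_{W_a}\xrightarrow{\sim}P|_{W_a}.
\]
The relation $\phi_j\alpha_{ij}=\phi_i$ shows that the $\psi_a$ intertwine the transition maps of $j^*(P,\alpha)$ with the identity on $P|_{T'}$. By the universal property of gluing, which holds because $\mathrm{glue}_{\mathcal W}$ is a colimit/descent construction, the $\psi_a$ therefore induce a unique isomorphism
\[
\mathrm{glue}_{\mathcal W}(j^*(P,\alpha))\xrightarrow{\sim}P|_{T'}.
\]
Naturality with respect to morphisms $(f_i)$ in $\Rep_r(\Cech(\mathcal V))$ follows from the same uniqueness, since the $f_i$ are compatible with the $\phi_i$.

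\textbf{Independence of $\rho$.} The functor $j$ depends on the choice of $\rho$. The identification should be canonical, so I would check that a second choice $\rho'$ gives a functor $j'$ naturally isomorphic to $j$, with components $\alpha_{\rho(a)\rho'(a)}|_{W_a}$. The resulting comparison isomorphisms then agree, because $\phi_{\rho'(a)}\alpha_{\rho(a)\rho'(a)}=\phi_{\rho(a)}$.

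I expect the main obstacle to be this bookkeeping: keeping the comparison isomorphisms, their independence of the auxiliary refinement map, and the compatibility with the trivialize-and-cocycle description of \Cref{prop:rep-cech-locsys} all coherent. One could instead argue through cocycles $g_{ij}\mapsto g_{\rho(a)\rho(b)}$ and gauge transformations, but the descent-theoretic argument above avoids dependence on the chosen trivializations $s_i$. The good-cover hypothesis on $\mathcal W$ is used only so that $\mathrm{glue}_{\mathcal W}$ is an equivalence.
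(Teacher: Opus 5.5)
Your proof is correct. It rests on the same basic idea as the paper's: pullback along $j$ re-indexes the transition data through the refinement map, and regluing the re-indexed data recovers the restriction of the glued object. The difference is the level at which you work.

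The paper passes to cocycles. It chooses trivializations and writes the object as $(g_{ij})$ modulo gauge. It then notes that $j^*$ sends this to $g'_{ab}=g_{r(a)r(b)}|_{W_{ab}}$, and asserts that gluing with $(g'_{ab})$ ``is exactly'' $L|_{T'}$.

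You stay with the descent data $(P_i,\alpha_{ij})$ themselves. From $\psi_a=\phi_{\rho(a)}|_{W_a}$ you build an explicit comparison isomorphism via the universal property of gluing.

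What each route buys:
\begin{itemize}
\item The paper's version is shorter and matches the cocycle model used in \Cref{prop:rep-cech-locsys} and later in the main theorem.
\item Your version is free of choices of trivializations. It actually constructs the natural isomorphism filling the square, and checks naturality in morphisms. It also shows the identification is independent of the refinement map, which the paper leaves implicit.
\end{itemize}

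One small remark on your last sentence. In your formulation the good-cover hypothesis is not needed even for $\mathrm{glue}_{\mathcal W}$ to be an equivalence. An object of $\Rep_r(\Cech(\mathcal W))$ is a descent datum of locally constant torsors, and open covers are of effective descent. Goodness only matters for the paper's passage to constant cocycles.
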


\begin{proof}
Fix a refinement map $r:A\to I$ such that $W_a\subset V_{r(a)}$ for every $a$.
On \v{C}ech groupoids, $j$ is the identity on points and sends the object $(a,x)\in W_a$ to $(r(a),x)\in V_{r(a)}$.
Write an object of $\Rep_r(\Cech(\mathcal V))$ as a nonabelian \v{C}ech cocycle $(g_{ij})$ on overlaps $V_{ij}$, modulo gauge,
and let $L$ be the corresponding glued local system on $T$.
Pullback along $j$ replaces $(g_{ij})$ by the cocycle on $\mathcal W$ given on each overlap $W_{ab}$ by
\[
g'_{ab}:=g_{r(a)\,r(b)}\big|_{W_{ab}}.
\]
Gluing the trivial torsors on the $W_a$ using $(g'_{ab})$ is exactly the restriction $L|_{T'}$, since both are obtained by restricting the
identifications used to glue $L$ to the opens lying in $T'$.
The good-cover hypothesis ensures all transition functions are locally constant and the cocycle model matches $\LocSys_r(-)$ functorially.
\end{proof}

\subsection{Main equivalence (Theorem B)}\label{subsec:theoremB}

\begin{theorem}\label{thm:main}
For every $r\ge 1$ there is a natural equivalence of groupoids
\begin{equation}\label{eq:main-equiv}
\Rep_r(\GU)\times^{(2)}_{\Rep_r(\GX)}\Rep^{\mathrm{St}}_r(\GC)
\ \simeq\
\mathrm{Stokes}_r(X^{\log}_n;D,\Phi).
\end{equation}
Under this equivalence, the projection to $\Rep_r(\GU)$ corresponds to the forgetful functor
$$\mathrm{Stokes}_r(X^{\log}_n;D,\Phi)\to\LocSys_r(U)$$
\end{theorem}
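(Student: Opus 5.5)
The plan is to compare the two $2$-fiber products leg by leg, using the general fact that a $2$-fiber product in $\mathbf{Grpd}$ is invariant under replacing the three vertices by equivalent groupoids, provided the connecting squares commute up to specified natural isomorphism. Concretely, I will produce a diagram
\[
\begin{tikzcd}[column sep=large]
\Rep_r(\GU) \arrow[r,"j_U^*"] \arrow[d,"\simeq"'] & \Rep_r(\GX) \arrow[d,"\simeq"] & \Rep^{\mathrm{St}}_r(\GC) \arrow[l,"\mathrm{forg}_\times"'] \arrow[d,"\simeq"] \\
\LocSys_r(U) \arrow[r,"\mathrm{res}"] & \LocSys_r(N^\circ) & \StokesLocal_r(N^\circ;\Phi) \arrow[l,"\mathrm{forg}"']
\end{tikzcd}
\]
in which the vertical arrows are equivalences and both squares commute up to natural isomorphism. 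Then the induced functor on $2$-fiber products is an equivalence. The target is $\Stokes_r(\Xlogn;D,\Phi)$ by \Cref{def:global-stokes}, and the source is the left-hand side of \eqref{eq:main-equiv} by \Cref{def:RepStokes-collar}.

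The vertical equivalences come from earlier results. The left one is \Cref{prop:rep-cech-locsys} applied to the good cover $\mathcal V$ of $U$. The middle one is the same proposition applied to the good cover $\mathcal W$ of $N^\circ$. The right one is \Cref{prop:RepStokesGC-eq-StokesLocal}, which is \Cref{prop:stokes-as-sections} composed with \Cref{thm:stokeslocal-tors}.

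The left square commutes up to canonical isomorphism by \Cref{lem:cech-naturality}, applied with $T=U$, $T'=N^\circ$, and the refinement $\mathcal W\to\mathcal V|_{N^\circ}$ that defines $j_U$. The right square commutes by \Cref{lem:forg-to-overlap}, which constructs $\mathrm{forg}_\times$ precisely so that it agrees with the forgetful functor. Here I must check that the agreement isomorphism is natural in the Stokes torsor, i.e.\ compatible with gauge morphisms $(h_\alpha)$. Gluing the $\Gr|_{B_\alpha}$ along $u_{\alpha\beta}$ and then re-trivializing on $\mathcal W$ through the refinement $\mathcal W\to\mathcal B$ is functorial in gauge transformations, so this should go through.

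Next I write down the induced functor explicitly. An object $(P,\sigma,\varphi)$ with $\varphi: j_U^*P\xrightarrow{\sim}\mathrm{forg}_\times(\sigma)$ is sent to $(L_P, \mathcal S_\sigma, \psi)$. Here $L_P$ is the glued local system on $U$, and $\mathcal S_\sigma$ is the Stokes-local object of \Cref{thm:stokeslocal-tors}. The map $\psi$ is the composite of the square isomorphisms with the image of $\varphi$. The standard lemma that the pseudopullback respects levelwise equivalences (essential surjectivity by transporting $\varphi$ through the vertical inverses; full faithfulness componentwise) gives the equivalence. Compatibility with the projection to $\Rep_r(\GU)$ versus $\LocSys_r(U)$ holds by construction, since the left vertical arrow is the first component. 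The same argument gives compatibility with $\Sec(\pi)$.

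The main obstacle I expect is the coherence of the right square. $\mathrm{forg}_\times$ passes through the auxiliary cover $\mathcal W$, which refines $\mathcal B$. The identification must use the refinement map $\mathcal W\to\mathcal B|_{N^\circ}$ consistently with $j_C$, which carries trivial Stokes labels. Otherwise the natural isomorphism could depend on choices of trivializations. I would handle this by fixing the refinement maps once and for all, and by using the constancy of sections on the contractible intersections to make all trivializations canonical up to unique isomorphism.
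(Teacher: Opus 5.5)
Your proposal is correct and is essentially the paper's proof. You identify the three vertices via \Cref{prop:rep-cech-locsys} and \Cref{prop:RepStokesGC-eq-StokesLocal}, obtain the two comparison squares from \Cref{lem:cech-naturality} and \Cref{lem:forg-to-overlap}, and conclude that the $2$-fiber product becomes the defining one in \Cref{def:global-stokes}, with the projection to $\Rep_r(\GU)$ matching the forgetful functor by construction. You are more explicit than the paper about invariance of pseudopullbacks under levelwise equivalences and about coherence of the right-hand square, which the paper settles simply by defining $\mathrm{forg}_\times$ as the gluing-then-retrivializing composite.
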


\begin{proof}
By \Cref{prop:rep-cech-locsys} there are natural equivalences
\begin{equation}\label{eq:rep-locsys-identifications}
\Rep_r(\GU)\ \simeq\ \LocSys_r(U),
\qquad
\Rep_r(\GX)\ \simeq\ \LocSys_r(N^\circ),
\end{equation}
and by \Cref{lem:cech-naturality} these identifications intertwine the restriction functor $$j_U^*:\Rep_r(\GU)\to\Rep_r(\GX)$$ with the usual
restriction $\LocSys_r(U)\to\LocSys_r(N^\circ)$.
On the collar side, \Cref{prop:RepStokesGC-eq-StokesLocal} gives a natural equivalence
\begin{equation}\label{eq:collar-identification}
\Rep^{\mathrm{St}}_r(\GC)=\Sec(\pi)\ \simeq\ \mathrm{StokesLocal}_r(N^\circ;\Phi),
\end{equation}
and the forgetful functor $\Rep^{\mathrm{St}}_r(\GC)\to \Rep_r(\GX)$ corresponds to the functor
$\mathrm{StokesLocal}_r(N^\circ;\Phi)\to \LocSys_r(N^\circ)$ sending a Stokes-local object to its underlying local system.
Consequently, replacing each term in the $2$-fiber product
$\Rep_r(\GU)\times^{(2)}_{\Rep_r(\GX)}\Rep^{\mathrm{St}}_r(\GC)$ using \eqref{eq:rep-locsys-identifications} and \eqref{eq:collar-identification}
identifies it canonically with
\[
\LocSys_r(U)\times^{(2)}_{\LocSys_r(N^\circ)}\mathrm{StokesLocal}_r(N^\circ;\Phi),
\]
which equals $\mathrm{Stokes}_r(X^{\log}_n;D,\Phi)$ by Definition~\ref{def:global-stokes}. This proves \eqref{eq:main-equiv}.
The final statement follows from the fact that both sides are defined as $2$-fiber products over the restriction to $N^\circ$, hence the projection to
$\Rep_r(\GU)\simeq \LocSys_r(U)$ is exactly the forgetful functor.
\end{proof}
\begin{equation}\label{eq:main-diagram}
\begin{tikzcd}[row sep=large, column sep=huge]
\Rep_r(\GU)\times^{(2)}_{\Rep_r(\GX)}\Rep^{\mathrm{St}}_r(\GC)
\arrow[r,"\sim"] \arrow[d]
&
\LocSys_r(U)\times^{(2)}_{\LocSys_r(N^\circ)}\mathrm{StokesLocal}_r(N^\circ;\Phi)
\arrow[d]
\\
\Rep_r(\GU) \arrow[r,"\sim"]
&
\LocSys_r(U)
\end{tikzcd}
\end{equation}

\begin{corollary} \label{cor:morita}
The groupoid $\Rep^{\mathrm{St}}_r(\G_{\Phi,n})$ of Definition~\ref{def:RepStokes-pushout} depends, up to canonical equivalence, only on the
classifying stacks $[\GU]$, $[\GX]$, $[\GC]$ and on the Morita class of the chosen $2$-pushout diagram.
\end{corollary}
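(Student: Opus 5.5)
The plan is to reduce the statement to the invariance of each factor of the $2$-fiber product in Definition~\ref{def:RepStokes-pushout} under Morita equivalence, and then to the invariance of $2$-fiber products under equivalence of cospans. Concretely, I regard $\Rep^{\mathrm{St}}_r(\G_{\Phi,n})=\Rep_r(\GU)\times^{(2)}_{\Rep_r(\GX)}\Sec(\pi)$ as a functor of the cospan
\[
\Rep_r(\GU)\xrightarrow{\,j_U^*\,}\Rep_r(\GX)\xleftarrow{\,\mathrm{forg}_\times\,}\Sec(\pi).
\]
The basic fact is that in the $(2,1)$-category $\mathbf{Grpd}$ an equivalence of cospans, meaning componentwise equivalences with the two squares commuting up to specified natural isomorphism, induces an equivalence of $2$-fiber products. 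The induced functor is canonical once the $2$-cells are fixed. I would check this directly on triples $(a,b,\varphi)$ by transporting $\varphi$ along the given $2$-cells.

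First, for the interior and overlap legs, Proposition~\ref{prop:rep-cech-locsys} identifies $\Rep_r(\GU)\simeq\LocSys_r(U)$ and $\Rep_r(\GX)\simeq\LocSys_r(N^\circ)$. The right-hand sides depend only on the spaces, which are the classifying stacks $[\GU]$ and $[\GX]$. More invariantly, I would use Proposition~\ref{prop:rep-as-fun}: $\Rep_r(K)\simeq\Hom(K,\mathbf{Tors}_r)$, and $\mathbf{Tors}_r$ is a stack, so $\Hom(K,\mathbf{Tors}_r)$ depends only on $[K]$. Lemma~\ref{lem:cech-naturality} shows that $j_U^*$ is intertwined with restriction, so this leg also depends only on the map of stacks $[\GX]\to[\GU]$.

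Second, for the collar leg, Proposition~\ref{prop:RepStokesGC-eq-StokesLocal} and Proposition~\ref{prop:refinement} give $\Sec(\pi)\simeq\StokesLocal_r(N^\circ;\Phi)$, independently of the sector-box cover. Lemma~\ref{lem:forg-to-overlap} identifies $\mathrm{forg}_\times$ with the intrinsic forgetful functor. So any Morita-equivalent choice of $\GC$, obtained from a refinement of $\mathcal B$ together with its projection $\pi$, yields an equivalent leg compatible with the forgetful functor.

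Third, Remark~\ref{rem:pushout-uniqueness} and Proposition~\ref{prop:rep-pushout-pullback} show that changing the $2$-pushout within its Morita class changes nothing up to canonical equivalence. Assembling these gives an equivalence of cospans, and the basic fact above concludes the argument.

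The main obstacle is making the collar step precise. The groupoid $\Sec(\pi)$ is not of the form $\Hom(\GC,\mathbf{Tors}_r)$, so it is not automatically Morita-invariant in $\GC$ alone; it depends on the pair $(\GC,\pi)$. I would handle this by interpreting "classifying stack $[\GC]$" as the stack over $[\CB]\simeq N^\circ$ determined by $\pi$. I would then show that a common refinement of two sector-box covers gives compatible squares of sections; this is exactly the content of Proposition~\ref{prop:refinement}. I would also verify that the $2$-cells in the cospan equivalence commute coherently, which amounts to naturality of the gluing in Proposition~\ref{prop:rep-cech-locsys} under refinement.
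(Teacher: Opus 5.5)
Your argument is correct and is essentially the paper's. The paper invokes Theorem~\ref{thm:main} to identify $\Rep^{\mathrm{St}}_r(\G_{\Phi,n})$ with the intrinsic groupoid $\Stokes_r(\Xlogn;D,\Phi)$, and the proof of that theorem is exactly your leg-by-leg equivalence of cospans (Propositions~\ref{prop:rep-cech-locsys} and~\ref{prop:RepStokesGC-eq-StokesLocal}, Lemmas~\ref{lem:cech-naturality} and~\ref{lem:forg-to-overlap}). Your remark that $\Sec(\pi)$ depends on the pair $(\GC,\pi)$, so that invariance must be checked along refinements of sector-box covers via Proposition~\ref{prop:refinement}, is a point the paper's one-line proof passes over.

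One correction to your third step: $\Rep^{\mathrm{St}}_r(\G_{\Phi,n})$ is defined without any reference to the pushout $\G_{\Phi,n}$, so its dependence on the pushout's Morita class is vacuous and needs no argument. Proposition~\ref{prop:rep-pushout-pullback} is the wrong citation there, since it concerns $\Rep_r(\G_{\Phi,n})$ with collar factor $\Rep_r(\GC)$, which is not the Stokes moduli.
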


\begin{proof}
By Proposition~\ref{prop:RepStokesGC-eq-StokesLocal} we have $\Rep^{\mathrm{St}}_r(\GC)=\Sec(\pi)$, so Theorem~\ref{thm:main} identifies
$\Rep^{\mathrm{St}}_r(\G_{\Phi,n})=\Rep_r(\GU)\times^{(2)}_{\Rep_r(\GX)}\Sec(\pi)$ with the intrinsic groupoid
$\Stokes_r(X_n^{\log};D,\Phi)$.
Since $\Stokes_r(X_n^{\log};D,\Phi)$ is defined in terms of local systems and Stokes-local objects and is invariant under Morita equivalence of
the presenters, the same holds for $\Rep^{\mathrm{St}}_r(\G_{\Phi,n})$.
\end{proof}

\section{Van Kampen for the pushout presenters}\label{sec:van-kampen}

Let $X=X^{(1)}\cup X^{(2)}$ be an open cover by complex manifolds, and assume that
$
D^{(a)}:=D\cap X^{(a)}
$
is a reduced simple normal crossings divisor in $X^{(a)}$ for $a=1,2$.
Set
\[
U^{(a)}:=X^{(a)}\setminus D^{(a)},
\qquad
U^{(12)}:=U^{(1)}\cap U^{(2)}.
\]
Fix a Kummer level $n\ge 1$. For each $a$ choose a punctured collar neighborhood
\[
N^{\times,(a)}\subset (X^{(a)})^{\log}_n
\]
of the logarithmic boundary over $|D^{(a)}|$, together with a wall arrangement $\Sigma_n^{(a)}\subset N^{\times,(a)}$ attached to the chosen irregular
type $\Phi$. Write
\[
N^{\circ,(a)}:=N^{\times,(a)}\setminus \Sigma_n^{(a)}.
\]
We assume the choices are compatible on overlaps, in the sense that they glue to global data on $X^{\log}_n$:
\[
N^\circ=N^{\circ,(1)}\cup N^{\circ,(2)},
\qquad
N^{\circ,(12)}:=N^{\circ,(1)}\cap N^{\circ,(2)}.
\]
Choose good covers of $U^{(1)},U^{(2)},U^{(12)}$ and sector-box covers of $N^{\circ,(1)},N^{\circ,(2)},N^{\circ,(12)}$,
together with common refinements on all overlaps (so that the induced \v{C}ech functors are defined).
Applying the construction of \Cref{sec:pushout} to each piece produces pushout presenters
\[
\G_{\Phi,n}^{(a)}\quad\text{on }X^{(a)},\qquad\text{and}\qquad \G_{\Phi,n}^{(12)}\quad\text{on }X^{(12)}:=X^{(1)}\cap X^{(2)}.
\].

\begin{proposition}\label{prop:van-kampen-presenters}
There is a canonical zig-zag of Morita equivalences exhibiting the global presenter $\G_{\Phi,n}$ as a bicategorical pushout of
$\G_{\Phi,n}^{(1)}$ and $\G_{\Phi,n}^{(2)}$ over $\G_{\Phi,n}^{(12)}$ in $\mathbf{Grpd}$.
Consequently, for every $r\ge 1$ there is a natural equivalence
\begin{equation}\label{eq:vk-rep-pullback}
\Rep^{\mathrm{St}}_r(\G_{\Phi,n})
\ \simeq\
\Rep^{\mathrm{St}}_r(\G_{\Phi,n}^{(1)})\times^{(2)}_{\Rep^{\mathrm{St}}_r(\G_{\Phi,n}^{(12)})}\Rep^{\mathrm{St}}_r(\G_{\Phi,n}^{(2)}).
\end{equation}
\end{proposition}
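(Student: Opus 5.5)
The plan is to split the statement into its two assertions and to prove the second one, \eqref{eq:vk-rep-pullback}, intrinsically. Proving it by pushing representations through the presenter pushout would fail here, because $\Rep^{\mathrm{St}}_r$ is not of the form $\Hom(-,\mathbf{Tors}_r)$: the collar layer enters through $\Sec(\pi)$, not through $\Rep_r(\GC)$.

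\textbf{Step 1: reduce to a statement about stacks.} By Theorem~\ref{thm:main} (as reformulated in Corollary~\ref{cor:morita}), each $\Rep^{\mathrm{St}}_r(\G^{(\bullet)}_{\Phi,n})$ is equivalent to the intrinsic groupoid $\Stokes_r$ on the corresponding piece. These equivalences are compatible with restriction, by Lemma~\ref{lem:cech-naturality} together with the evident naturality of Theorem~\ref{thm:stokeslocal-tors} and Proposition~\ref{prop:refinement} under restriction of sector-box covers. So \eqref{eq:vk-rep-pullback} reduces to showing that $\Stokes_r$ satisfies descent for the cover $X=X^{(1)}\cup X^{(2)}$.

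\textbf{Step 2: descent for each factor.} The assignments $\LocSys_r(-)$ on $U^{(1)},U^{(2)},U^{(12)}$ and on $N^{\circ,(1)},N^{\circ,(2)},N^{\circ,(12)}$ are stacks. The assignment $\StokesLocal_r(-;\Phi)$ is a stack as well. To see this, identify it with $\St_\Phi$-torsors via Theorem~\ref{thm:stokeslocal-tors}. Nonabelian \v{C}ech torsors for a sheaf of groups glue along a two-element open cover: choose a common sector-box refinement subordinate to $\{N^{\circ,(1)},N^{\circ,(2)}\}$, use Proposition~\ref{prop:refinement} to move all cocycles onto it, and glue the cocycles and gauges.

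\textbf{Step 3: commute the limits.} A $2$-fiber product of $2$-pullbacks is computed termwise, since $2$-limits commute with $2$-limits in $\mathbf{Grpd}$. Applying this to $\LocSys_r(U^{(a)})\times^{(2)}\StokesLocal_r(N^{\circ,(a)})$ yields \eqref{eq:vk-rep-pullback}. The compatibility assumptions on the collars and walls are used exactly here: they guarantee that $U^{(a)}\cap N^\circ=N^{\circ,(a)}$ and that the restricted data agree.

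\textbf{Step 4: the presenter-level pushout.} Apply Lemma~\ref{lem:strict-pushout-is-bicategorical} twice and commute the colimits, using the fact that iterated bicategorical pushouts can be rearranged. The pushout of $\G^{(1)}_{\Phi,n}\leftarrow\G^{(12)}_{\Phi,n}\to\G^{(2)}_{\Phi,n}$ then becomes the pushout of the glued interior groupoid $\GU^{(1)}\sqcup_{\GU^{(12)}}\GU^{(2)}$ and the analogous collar groupoid over the glued overlap groupoid. Each glued \v{C}ech piece is Morita equivalent to the \v{C}ech groupoid of the union cover, via the standard comparison for \v{C}ech groupoids of $\mathcal V^{(1)}\cup\mathcal V^{(2)}$ with refinements on overlaps. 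For the collar, the glued piece is Morita equivalent to the collar groupoid $\GC$ of the union cover, with the Stokes labels glued because $\St_\Phi$ is a sheaf. This produces the claimed zig-zag.

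\textbf{Main obstacle.} I expect the main obstacle to be Step 4 for the collar pieces. The issue is that a pushout of \v{C}ech groupoids is genuinely Morita equivalent to the \v{C}ech groupoid of the union only after formally inverting the bridge arrows $s_b$. One then has to check that these bridges carry trivial Stokes label, consistently with $j_C$ using label $1$, so that the glued $\pi$ still has $\Sec(\pi)$ equal to $\St_\Phi$-torsors on the union. I would handle this by writing down an explicit functor from the pushout to $\Cech$ of the union cover that sends each bridge to the identity-labelled overlap arrow, and then verifying that it is essentially surjective and fully faithful. For essential surjectivity I would use that the union of the covers covers the union; full faithfulness follows from the bridge relations.
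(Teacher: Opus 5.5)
Your proposal is correct, and for \eqref{eq:vk-rep-pullback} it takes a genuinely different route from the paper.

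The paper's argument runs as follows. It chooses a single global pair of covers $(\mathcal V,\mathcal B)$ refining all the local ones and restricts it to the pieces; these restrictions are refinements, hence Morita equivalences. It then invokes functoriality of explicit pushouts under strict maps of cospans to obtain a square that is a $2$-pushout up to Morita equivalence. Finally, it derives \eqref{eq:vk-rep-pullback} by ``applying the contravariant $2$-functor $\Rep^{\mathrm{St}}_r(-)$'' to that square. This is exactly the route you set aside.

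Your objection to that route is well taken: it is not formal. $\Rep^{\mathrm{St}}_r$ is built from the cospan together with $\Sec(\pi)$, not as $\Hom(-,\mathbf{Tors}_r)$ of the pushout groupoid. So \Cref{prop:rep-pushout-pullback} does not apply to it, and gluing $\Sec(\pi)$ across $N^{\circ}=N^{\circ,(1)}\cup N^{\circ,(2)}$ is an extra input the paper leaves implicit. Your Steps 1--3 supply that input directly. Applying \Cref{thm:main} piecewise turns the claim into descent for $\Stokes_r$. That descent follows from the stack property of $\LocSys_r$ and of $\St_\Phi$-torsors, together with commutation of $2$-limits.

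Each approach has a price. The paper's makes the ``consequently'' literal, but rests on an unjustified $2$-functoriality. Yours makes \eqref{eq:vk-rep-pullback} logically independent of the presenter-level pushout. It also obliges you to specify the restriction functors in \eqref{eq:vk-rep-pullback}, for instance via common refinements as the paper does. You must then check that the identifications of \Cref{thm:main} are natural for them; \Cref{lem:cech-naturality} and \Cref{prop:refinement} give this.

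For the first assertion, your rearrangement of the iterated pushout plus an explicit bridge-to-identity comparison functor is more explicit than the paper's bare claim. One caveat applies. The union $\mathcal B^{(1)}\cup\mathcal B^{(2)}$ need not satisfy the connectedness hypotheses of \Cref{def:GC}. An intersection $B^{(1)}_\alpha\cap B^{(2)}_\beta$ can be disconnected, which makes composition in $\GC$ ill-defined and its hom-sets too large. You should therefore compare the glued collar piece with the \v{C}ech--Stokes groupoid of a common sector-box refinement instead. This is exactly what the paper's choice of one global $\mathcal B$ refining all local covers achieves.
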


\begin{proof}
Choose once and for all a good cover $\mathcal V$ of $U=X\setminus D$ which refines the given good covers of $U^{(1)}$, $U^{(2)}$ and $U^{(12)}$,
and a sector-box cover $\mathcal B$ of $N^\circ$ which refines the given sector-box covers of $N^{\circ,(1)}$, $N^{\circ,(2)}$ and $N^{\circ,(12)}$.
Choose also a common good refinement on the overlap $N^\circ\subset U$ so that the overlap \v{C}ech groupoid is compatible with all restrictions.
Construct from $(\mathcal V,\mathcal B)$ the global presenters $\GU,\GC,\GX$ and the pushout $\G_{\Phi,n}$ as in \Cref{sec:pushout}.
Restricting $\mathcal V$ and $\mathcal B$ to $U^{(a)}$ and $N^{\circ,(a)}$ produces presenters for $\G_{\Phi,n}^{(a)}$, and similarly on $X^{(12)}$.
Each restriction is a refinement of the covers used to define $\G_{\Phi,n}^{(a)}$ and $\G_{\Phi,n}^{(12)}$, hence the induced functors between the
corresponding \v{C}ech groupoids are Morita equivalences. Since explicit $2$-pushouts in groupoids are functorial with respect to strict maps of cospans,
the same holds after forming the pushouts: there is a commutative square in $\mathbf{Grpd}$ which is a $2$-pushout up to Morita equivalence,
\begin{equation}\label{eq:vk-pushout}
\begin{tikzcd}[row sep=large, column sep=huge]
\G_{\Phi,n}^{(12)} \arrow[r] \arrow[d] & \G_{\Phi,n}^{(2)} \arrow[d] \\
\G_{\Phi,n}^{(1)} \arrow[r] & \G_{\Phi,n},
\end{tikzcd}
\end{equation}
and the maps in \eqref{eq:vk-pushout} are represented by zig-zags of Morita equivalences induced by the chosen common refinements.
Applying the contravariant $2$-functor $\Rep^{\mathrm{St}}_r(-)$ to the $2$-pushout square \eqref{eq:vk-pushout} yields a $2$-Cartesian square
\begin{equation}\label{eq:vk-pullback}
\begin{tikzcd}[row sep=large, column sep=huge]
\Rep^{\mathrm{St}}_r(\G_{\Phi,n}) \arrow[r] \arrow[d] &
\Rep^{\mathrm{St}}_r(\G_{\Phi,n}^{(1)}) \arrow[d] \\
\Rep^{\mathrm{St}}_r(\G_{\Phi,n}^{(2)}) \arrow[r] &
\Rep^{\mathrm{St}}_r(\G_{\Phi,n}^{(12)}),
\end{tikzcd}
\end{equation}
which is exactly the equivalence \eqref{eq:vk-rep-pullback}.
\end{proof}

\section{Kummer covering and $(\mu_n)^k$-descent}\label{sec:descent}

\subsection{Pullback along $q_n$}
Let $q_n:X^{log}_n\to X^{log}$ be the canonical finite covering.
Locally at a depth-$k$ stratum (where $k$ branches meet), the deck group is $(\mu_n)^k$ acting by rotation on each angular
coordinate. Pullback along $q_n$ defines functors on local systems and on Stokes-local objects, hence on global Stokes objects:
\begin{equation}\label{eq:pullback-qn}
q_n^*:\mathrm{Stokes}_r(X^{log};D,\Phi)\longrightarrow \mathrm{Stokes}_r(X^{log}_n;D,q_n^*\Phi).
\end{equation}

\subsection{Finite Galois descent}
Write $(-)^{hG}$ for the groupoid of $G$-equivariant objects (homotopy fixed points), i.e. objects equipped with
a coherent $G$-linearization and equivariant morphisms.

\begin{definition}[Homotopy fixed points]\label{def:hfp-finite}
Let $G$ be a finite group acting on a groupoid $\mathcal C$ by strict autoequivalences.
The \emph{homotopy fixed point groupoid} $\mathcal C^{hG}$ is the groupoid of $G$-equivariant objects:
\begin{itemize}
\item an object is $(x,\{\varphi_g\}_{g\in G})$ with $x\in\Ob(\mathcal C)$ and isomorphisms
$\varphi_g:g\cdot x\xrightarrow{\sim}x$ such that $\varphi_1=\id_x$ and
$\varphi_{gh}=\varphi_g\circ g(\varphi_h)$ for all $g,h\in G$;
\item a morphism $(x,\varphi)\to (x',\varphi')$ is a morphism $f:x\to x'$ in $\mathcal C$ such that
$f\circ \varphi_g=\varphi'_g\circ g(f)$ for all $g\in G$.
\end{itemize}
\end{definition}

\begin{proposition}[Finite Galois descent on a depth-$k$ chart]\label{prop:descent}
Fix $n\ge 1$. On any depth-$k$ SNC chart of $(X,D)$, the Kummer map
$q_n:\Xlogn\to \Xlog$ restricts to a finite Galois covering with deck group $(\mu_n)^k$.
Effective descent holds for locally constant sheaves and for $\GL_r(\CC)^\delta$-torsors along $q_n$.
Consequently, on such a chart there is a natural equivalence
\begin{equation}\label{eq:descent}
\mathrm{Stokes}_r(\Xlog;D,\Phi)\ \simeq\
\mathrm{Stokes}_r(\Xlogn;D,q_n^*\Phi)^{h(\mu_n)^k},
\end{equation}
where $(-)^{h(\mu_n)^k}$ denotes homotopy fixed points in the sense of Definition~\ref{def:hfp-finite}.
These local equivalences are compatible with restriction to overlaps, hence glue globally.
\end{proposition}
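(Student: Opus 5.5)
The plan is to reduce the statement to Galois descent for each of the three ingredients of the $2$-fiber product in Definition~\ref{def:global-stokes}, and then use that homotopy fixed points commute with $2$-fiber products.

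\emph{Step 1: the covering.} On a depth-$k$ chart with coordinates $(z_1,\dots,z_k,w)$, the space $\Xlog$ is locally $[0,\varepsilon)^k\times(S^1)^k\times(\text{polydisc in }w)$. The space $\Xlogn$ is the same with each angular factor replaced by its $n$-fold cover $\theta_i\mapsto n\theta_i$. Hence $q_n$ is a finite covering with deck group $(\mu_n)^k$, acting by rotation $\theta_i\mapsto\theta_i+2\pi m_i/n$. It is Galois because the total space is connected fibrewise over the radial and $w$ factors and the group acts freely and transitively on fibres. Over $U$ this is the usual Kummer cover of $(\Delta^*)^k\times\Delta^{\dim X-k}$, and it restricts to $N^\times$ and, once the walls are pulled back ($\Sigma_n=q_n^{-1}\Sigma_1$), to $N^\circ$.

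\emph{Step 2: descent on each leg.} For a finite Galois covering $p:Y\to Y/G$ of spaces, pullback of locally constant sheaves and of $\GL_r(\CC)^\delta$-torsors gives an equivalence $\LocSys_r(Y/G)\simeq\LocSys_r(Y)^{hG}$, since $p$ is a local homeomorphism and sheaves form a stack. I will present this concretely by choosing $G$-stable good covers, for instance pulling back a good cover by sets evenly covered by $q_n$. In that setting an equivariant structure $\varphi_g$ is exactly the gluing datum along $Y\times_{Y/G}Y\simeq G\times Y$, and the cocycle condition $\varphi_{gh}=\varphi_g\circ g(\varphi_h)$ of Definition~\ref{def:hfp-finite} is the descent cocycle. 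I apply this to $U$, to $N^\circ$, and to the Stokes-local objects.

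For the Stokes-local objects, Theorem~\ref{thm:stokeslocal-tors} identifies $\StokesLocal_r$ with $\St_\Phi$-torsors. The graded system $q_n^*\Gr$, the preorder, and the standard filtration are pulled back, so they carry canonical $(\mu_n)^k$-linearizations. Consequently $q_n^*\St_\Phi$ is $G$-equivariant with descended sheaf $\St_\Phi$, and torsor descent for a sheaf of groups (Giraud) gives $\StokesLocal_r(N^\circ_{\log};\Phi)\simeq\StokesLocal_r(N^\circ;q_n^*\Phi)^{hG}$. The filtrations and the identification $\iota$ descend because they are subobjects and morphisms of sheaves, and descent for sheaves is fully faithful.

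\emph{Step 3: assembly.} Homotopy fixed points for a finite group are a $2$-limit, so $(-)^{hG}$ commutes with $2$-fiber products. The three descent equivalences are compatible with the forgetful and restriction functors, since all of them are pullback along $q_n$. Therefore
\[
\bigl(\LocSys_r(U)\times^{(2)}_{\LocSys_r(N^\circ)}\StokesLocal_r\bigr)^{hG}\simeq \LocSys_r(U)^{hG}\times^{(2)}\cdots,
\]
which yields \eqref{eq:descent}. Compatibility with restriction to overlaps of charts holds because restriction commutes with pullback along $q_n$. The equivalences are canonical, so they glue by the stack property, in the same spirit as Proposition~\ref{prop:van-kampen-presenters}.

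The main obstacle is Step 2 for the Stokes layer. There one must check that the wall set, the preorder, and the index set $\Phi$ really are pulled back from level $1$. In particular, the $(\mu_n)^k$-action may permute the labels in $q_n^*\Phi$ (ramified exponents), so the equivariant structure must be taken on $\Gr$ together with this permutation rather than summand-wise. My first approach would be to work with the whole filtered local system $L$ and its equivariance, and check that $\St_\Phi$ is stable under conjugation by the lifted action.
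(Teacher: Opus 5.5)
Your proposal is correct and takes essentially the same route as the paper: the local toroidal model exhibits $q_n$ as a $(\mu_n)^k$-covering, Galois descent is applied to each leg of the defining $2$-fiber product, and the pieces are reassembled, with you making explicit two steps the paper only gestures at, namely Giraud descent for $\St_\Phi$-torsors on the collar and the fact that $(-)^{hG}$, being a $2$-limit, commutes with $2$-fiber products. The obstacle in your last paragraph does not arise for this statement: the level-$n$ type is $q_n^*\Phi$ for a level-$1$ index set $\Phi$, so its labels, walls and preorder are pulled back and $(\mu_n)^k$-invariant, and the linearization of $q_n^*\Gr$ is summand-wise, as you already use in Step~2.
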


\begin{proof}
On a depth-$k$ chart one may choose local coordinates $(z_1,\dots,z_k,w)$ with $D=\{z_1\cdots z_k=0\}$.
In these toroidal coordinates, the Kato--Nakayama boundary is modeled on the angular torus $(S^1)^k$ and the level-$n$ Kummer map multiplies
each angular coordinate by $n$. It follows that $q_n$ is a finite covering with deck group $(\mu_n)^k$ acting freely by rotations; see
\cite{KatoNakayama} and \cite{Ogus,NakayamaOgus}.

\begin{equation}\label{eq:descent-cover-diagram}
\begin{tikzcd}[row sep=large, column sep=huge]
\Xlogn \arrow[r,"q_n"] & \Xlog \\
\Xlogn \arrow[u, bend left=25, "g"] \arrow[u, bend right=25, swap, "h"] &
\end{tikzcd}
\end{equation}
For a finite Galois covering $p:Y\to X$ with group $G$, effective descent for locally constant sheaves and for torsors under a discrete group
identifies objects on $X$ with objects on $Y$ equipped with a coherent $G$-linearization, namely the homotopy fixed point groupoid $(-)^{hG}$ of
Definition~\ref{def:hfp-finite}. References: \cite[Ch.\ III]{Giraud} and \cite{SGA1}.
Apply this descent statement to the two ingredients entering the definition of global Stokes objects:
the interior local system on $U$ and the collar Stokes torsor on $N^\circ$.
Using the definition
\begin{equation}\label{eq:descent-fiber-product}
\mathrm{Stokes}_r(\Xlog;D,\Phi)
=
\LocSys_r(U)\times^{(2)}_{\LocSys_r(N^\circ)}\mathrm{StokesLocal}_r(N^\circ;\Phi),
\end{equation}
and the functoriality of the $2$-fiber product with respect to the descent equivalences on each factor, one obtains \eqref{eq:descent}.
Compatibility with restriction to overlaps follows from functoriality of Galois descent under restriction of charts.

\end{proof}

\subsection{Visible Kummer descent on the presenter: equivariant cocycles and homotopy fixed points}\label{subsec:visible-kummer-presenter}

The abstract descent statement of Proposition~\Cref{prop:descent} becomes \emph{completely explicit} on any chosen presenter.
We spell this out in two equivalent ways: first in intrinsic 2-categorical terms (homotopy fixed points),
and then as a hands-on cocycle condition on a $G$-equivariant CW skeleton.

\subsubsection*{Homotopy fixed points for a finite group action on a groupoid}

Let $G$ be a finite group and let $\mathcal C$ be a groupoid.
A (strict) \emph{left $G$-action} on $\mathcal C$ is a group homomorphism
$G\to\Aut_{\mathbf{Grpd}}(\mathcal C)$, $g\mapsto g^*$, where each $g^*$ is a strict autoequivalence.

\begin{definition}[Homotopy fixed points]\label{def:homotopy-fixed-points}
Let $G$ act on a groupoid $\mathcal C$.
The \emph{homotopy fixed point groupoid} $\mathcal C^{hG}$ has:
\begin{itemize}
\item objects: pairs $(x,\{\varphi_g\}_{g\in G})$ where $x\in\Ob(\mathcal C)$ and for each $g\in G$
an isomorphism $\varphi_g:g^*(x)\xrightarrow{\sim}x$ in $\mathcal C$ such that
$$
\varphi_{gh}=\varphi_g\circ g^*(\varphi_h)
\qquad\text{for all }g,h\in G,
$$
and $\varphi_{1}=\id_x$;
\item morphisms: a morphism $(x,\varphi)\to (x',\varphi')$ is a morphism $f:x\to x'$ in $\mathcal C$ such that
$f\circ \varphi_g=\varphi'_g\circ g^*(f)$ for all $g\in G$.
\end{itemize}
\end{definition}

\begin{figure}[t]
\centering
\begin{tikzcd}[column sep=huge,row sep=large]
\mathcal C \arrow[r,shift left=1.2,"\;g"] \arrow[r,shift right=1.2,swap,"\;h"] &
\mathcal C \arrow[r] &
G\ltimes \mathcal C
\end{tikzcd}
\caption{The action groupoid $G\ltimes \mathcal C$ of a group action. An object of $\mathcal C^{hG}$ is a functor $BG\to\mathcal C$,
i.e.\ an object $x\in\mathcal C$ equipped with coherently multiplicative isomorphisms $g\cdot x\xrightarrow{\sim}x$ for all $g\in G$.}
\label{fig:homotopy-fixed-points}
\end{figure}

\begin{remark}
Definition~\Cref{def:homotopy-fixed-points} is the standard 1-categorical model of homotopy fixed points for a strict action;
equivalently, $\mathcal C^{hG}$ is the groupoid of pseudonatural transformations from the one-object groupoid $BG$ to $\mathcal C$.
\end{remark}

\subsubsection*{A $G$-equivariant CW skeleton and explicit equivariant \v{C}ech cocycles}

Fix a depth-$k$ chart on $(X,D)$, so that locally $D$ has $k$ branches and the level-$n$ Kummer cover has deck group
$$
G_k:=(\mu_n)^k.
$$
On such a chart, the collar $N^\circ$ admits a locally finite stratification-compatible sector-box cover
$\mathcal B$ as in \Cref{subsec:stokes-input}.
Pulling back along $q_n$ yields a $G_k$-equivariant sector cover $\mathcal B_n$ of the corresponding level-$n$ collar,
hence a strict $G_k$-action on the \v{C}ech groupoid $\CB_n:=\Cech(\mathcal B_n)$.
The collar \v{C}ech--Stokes groupoid $\G_{C,n}$ and the projection $\pi_n:\G_{C,n}\to \Cech(\mathcal B_n)$ are $G_k$-equivariant as well.

For later use (and for explicit computations), it is convenient to replace the cover by a $G_k$-equivariant CW skeleton.

\begin{definition}[A $G_k$-equivariant stratified CW skeleton]\label{def:Gk-CW}
A \emph{$G_k$-equivariant stratified CW skeleton} of $N^\circ$ is a locally finite CW decomposition $K$ of $N^\circ$
such that:
\begin{itemize}
\item every open cell is contained in a single stratum of the stratification $\Sigma_{D,\Phi,n}$;
\item the $G_k$-action on the level-$n$ collar lifts to a cellular action on $K$;
\item each open cell is contractible and the Stokes preorder is constant on it.
\end{itemize}
\end{definition}

\begin{remark}
Such a skeleton exists: start with any locally finite triangulation subordinate to $\Sigma_{D,\Phi,n}$, lift it to the level-$n$ cover,
and barycentrically subdivide until the cellular action of $G_k$ is well-defined.
\end{remark}

Let $K^{(1)}$ and $K^{(2)}$ denote the $1$- and $2$-skeleta.
Write $V$ for the set of vertices and $E$ for the set of oriented edges of $K^{(1)}$.
For an oriented edge $e:v\to w$, let $S(e)$ be the stratum containing the interior of $e$ and write
$$
\St(e):=\St_\Phi(S(e)),
$$
a discrete group which is constant on $e$ by construction.
Let $F$ denote the set of (oriented) $2$-cells.
Each $f\in F$ has an attaching map whose boundary is a cyclic word in oriented edges
$$
\partial f \;=\; e_1 e_2\cdots e_m
\qquad\text{in the free groupoid on }K^{(1)}.
$$

\begin{definition}[Stokes $1$-cochains and cocycles on $K$]\label{def:stokes-cochains}
A \emph{Stokes $1$-cochain} on $K$ is a family $u=(u_e)_{e\in E}$ with $u_e\in \St(e)$ and $u_{\bar e}=u_e^{-1}$.
It is a \emph{Stokes cocycle} if for every $2$-cell $f$ with boundary $\partial f=e_1\cdots e_m$ one has the relation in the appropriate Stokes group:
$$
u_{e_m}\cdots u_{e_2}u_{e_1}=1.
$$
A \emph{gauge transformation} is a family $h=(h_v)_{v\in V}$ with $h_v\in \St_\Phi(S(v))$, acting by
$$
(u\cdot h)_e = h_{t(e)}\,u_e\,h_{s(e)}^{-1}.
$$
\end{definition}

\begin{proposition} \label{prop:stokes-torsors-skeleton}
Let $K$ be a $G_k$-equivariant stratified CW skeleton of $N^\circ$ as in Definition~\Cref{def:Gk-CW}.
Then the groupoid of Stokes torsors on $N^\circ$ is canonically equivalent to the gauge groupoid of Stokes cocycles on $K$
as in Definition~\Cref{def:stokes-cochains}.
\end{proposition}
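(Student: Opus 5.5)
The plan is to compare both sides with the sheaf-theoretic (Giraud) groupoid of $\St_\Phi$-torsors on $N^\circ$. By \Cref{thm:stokeslocal-tors} and \Cref{prop:refinement}, that groupoid is computed by \v{C}ech cocycles on any sector-box cover. So it suffices to pass from a suitable cover adapted to $K$ to the cellular cocycle data of Definition~\ref{def:stokes-cochains}. Concretely, I take the \emph{open star cover} $\mathcal B_K=\{\mathrm{St}(v)\}_{v\in V}$, where $\mathrm{St}(v)$ is the open star of the vertex $v$ (after one barycentric subdivision if needed). Its nonempty finite intersections are the open stars of simplices or cells, hence connected and contractible. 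After subdivision the preorder is constant on each of them, so $\mathcal B_K$ is an admissible sector-box cover. I would also check that the section group $\St_\Phi$ over a star intersection identifies with the stalk group attached to the corresponding cell, which is what justifies the notation $\St(e)$ and $\St_\Phi(S(v))$.

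Next I build a functor from $\Tors(\St_\Phi;\mathcal B_K)$ to the gauge groupoid of Stokes cocycles on $K$. Two stars $\mathrm{St}(v)$ and $\mathrm{St}(w)$ meet exactly when $v$ and $w$ span an edge $e$ (in a regular, flag-type subdivision). So a \v{C}ech cocycle $(u_{vw})$ restricts to edge labels $u_e:=u_{s(e)t(e)}\in\St(e)$ with $u_{\bar e}=u_e^{-1}$. For each $2$-cell $f$ with boundary $e_1\cdots e_m$, the iterated triple-overlap cocycle identities, all taken inside the common star of $f$, telescope to $u_{e_m}\cdots u_{e_1}=1$. Gauge $0$-cochains $h_v\in\St_\Phi(\mathrm{St}(v))$ match the vertex gauges $h_v$ of Definition~\ref{def:stokes-cochains}.

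For the converse I invert this construction: the cellular data defines a functor from the fundamental groupoid of $K^{(2)}$, relative to $V$, into the groupoid with fibers $\St$, and I read off $u_{vw}$ on each pairwise overlap. The cocycle condition on a triple overlap $\mathrm{St}(u)\cap\mathrm{St}(v)\cap\mathrm{St}(w)$ then follows because $u,v,w$ span a $2$-simplex, whose boundary relation is exactly the $2$-cell relation. Full faithfulness is immediate, since the gauge data coincide on both sides. I then compose with \Cref{thm:stokeslocal-tors} and \Cref{prop:refinement} to reach the intrinsic groupoid. Canonicity comes from two facts: any two skeleta admit a common subdivision, and subdivision induces an equivalence on cellular cocycles (a new vertex or edge is gauge-fixed by the relations on the new $2$-cells). $G_k$-equivariance is not needed for the equivalence itself; I would only note that the construction is visibly $G_k$-compatible.

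The main obstacle is that $\St_\Phi$ is only locally constant \emph{on $N^\circ$}, while the labels are indexed by strata and cells of possibly different dimensions. One must verify that restriction maps from star sections to cell sections are isomorphisms, so that the edge and face relations are imposed in a single group. My first attempt is to use contractibility of stars together with constancy of the preorder on each star intersection, as required in Definition~\ref{def:Gk-CW}. This forces each restriction to be an isomorphism of constant groups. If that fails at walls, I would instead apply the whole argument on $N^\circ$ only, which by definition excludes $\Sigma_n$.
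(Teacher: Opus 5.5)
Your proposal is correct and follows essentially the same route as the paper. Both pass to the open-star cover of the vertices of $K$, read off a \v{C}ech $\St_\Phi$-cocycle as edge labels, identify the triple-overlap cocycle identities with the $2$-cell relations, and match \v{C}ech $0$-cochains with vertex gauges. Your additional care fills in points the paper's proof takes for granted: you subdivide so that stars meet only along edges and the star cover is genuinely good, and you use subdivision invariance of cellular cocycles to handle non-triangular $2$-cells.
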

\begin{proof}
Let $\mathcal S=\{\mathrm{st}(v)\}_{v\in V}$ be the locally finite cover of $N^\circ$ by open stars of vertices of $K$.
It is a good cover, and every nonempty finite intersection $\mathrm{st}(v_0)\cap\cdots\cap \mathrm{st}(v_m)$ is contained in a single stratum,
so the Stokes sheaf $\St_\Phi$ is constant on it.
Therefore Stokes torsors on $N^\circ$ may be computed by nonabelian \v{C}ech $1$-cocycles on $\mathcal S$ with values in $\St_\Phi$,
modulo the usual $0$-cochain gauge action.
For vertices $v,w$, the intersection $\mathrm{st}(v)\cap \mathrm{st}(w)$ is nonempty precisely when $v$ and $w$ are joined by an edge
$e:v\to w$ in $K^{(1)}$, and any \v{C}ech cocycle assigns to this overlap a unique constant element
$u_e\in \St_\Phi(S(e))$, with $u_{\bar e}=u_e^{-1}$.
If $v_0,v_1,v_2$ span a $2$-cell, then $\mathrm{st}(v_0)\cap \mathrm{st}(v_1)\cap \mathrm{st}(v_2)$ is nonempty and contractible, and the
\v{C}ech cocycle condition on this triple overlap is exactly the boundary relation for that $2$-cell in Definition~\Cref{def:stokes-cochains}.
Finally, a \v{C}ech gauge $0$-cochain is a choice $h_v\in \St_\Phi(S(v))$ on each star $\mathrm{st}(v)$, and it acts by
$u_e\mapsto h_{t(e)}\,u_e\,h_{s(e)}^{-1}$, which is precisely the vertex gauge action in Definition~\Cref{def:stokes-cochains}.
This identifies the two groupoids.
\end{proof}
\begin{equation}\label{eq:stokes-skeleton-summary}
\begin{tikzcd}[row sep=large, column sep=huge]
\Tors(\St_\Phi;N^\circ)
\arrow[r,"\sim"] &
\bigl\{\text{edge labels }(u_e)\text{ satisfying $2$-cell relations}\bigr\}\!/\!\text{vertex gauge}.
\end{tikzcd}
\end{equation}

\begin{figure}[t]
\centering
\begin{tikzpicture}[
  >=Stealth,
  font=\scriptsize,
  vtx/.style={circle,draw,inner sep=1.2pt},
  lab/.style={fill=white,inner sep=1pt},
  box/.style={draw,rounded corners,inner sep=3pt,align=center}
]
\node[vtx] (v0) at (0,0) {$v_0$};
\node[vtx] (v1) at (3.2,0) {$v_1$};
\node[vtx] (v2) at (1.6,2.4) {$v_2$};

\draw[->,thick] (v0) -- (v1) node[midway,lab,below] {$u_{01}$};
\draw[->,thick] (v1) -- (v2) node[midway,lab,right] {$u_{12}$};
\draw[->,thick] (v0) -- (v2) node[midway,lab,left] {$u_{02}$};

\node[box] (rel) at (1.6,-1.05)
{$2$-cell relation\\ $u_{02}=u_{12}\,u_{01}$};

\node[lab,above left=2pt]  at (v0.north west) {$h_{v_0}$};
\node[lab,above right=2pt] at (v1.north east) {$h_{v_1}$};
\node[lab,above=2pt]       at (v2.north)      {$h_{v_2}$};

\node[box] (gauge) at (1.6,3.45)
{vertex gauge:\\ $u_{vw}\mapsto h_w\,u_{vw}\,h_v^{-1}$};

\draw[->,thin,dashed] (gauge.south) to[out=240,in=90] (v0.north);
\draw[->,thin,dashed] (gauge.south) to[out=270,in=90] (v2.north);
\draw[->,thin,dashed] (gauge.south) to[out=300,in=90] (v1.north);

\end{tikzpicture}
\caption{Cocycle data on the $1$-skeleton (edge labels) and relations from $2$-cells, modulo vertex gauge.}
\label{fig:stokes-skeleton-gauge}
\end{figure}

We now combine the group action with the cocycle description.

\begin{definition}[Equivariant Stokes cocycles]\label{def:equivariant-cocycles}
Let $G_k$ act on $K$ cellularly and on the Stokes data by pullback.
An \emph{equivariant Stokes cocycle} is a Stokes cocycle $u=(u_e)$ together with elements
$c_g(v)\in \St_\Phi(S(v))$ for all $g\in G_k$ and $v\in V$, such that:
\begin{itemize}
\item (equivariance on edges) for every oriented edge $e:v\to w$,
$$
u_{g\cdot e}=c_g(w)\cdot g(u_e)\cdot c_g(v)^{-1};
$$
\item (coherence) for all $g,h\in G_k$ and all vertices $v\in V$,
$$
c_{gh}(v)=c_g(v)\cdot g\big(c_h(v)\big),
\qquad c_1(v)=1.
$$
\end{itemize}
Morphisms are vertex gauges $h=(h_v)$ compatible with the $c_g$ in the evident way:
$c'_g(v)=h_v\,c_g(v)\,g(h_v)^{-1}$.
\end{definition}

\begin{theorem}[Visible descent = homotopy fixed points]\label{thm:visible-descent}
On any depth-$k$ chart, let $G_k=(\mu_n)^k$ be the Kummer deck group.
Then the intrinsic descent equivalence of Proposition~\Cref{prop:descent} is presented on the collar by a natural equivalence
\begin{equation}\label{eq:visible-descent-sections}
\Sec(\pi)\ \simeq\ \Sec(\pi_n)^{hG_k},
\end{equation}
where $\pi:\GC\to\CB$ is the collar projection downstairs and $\pi_n:\GCn\to\CBn$ is its pullback to level $n$.
Moreover, after choosing a $G_k$-equivariant CW skeleton $K$ of $N^\circ$, the same descent becomes completely explicit as an equivalence
\[
\Tors(\St_\Phi;N^\circ)\ \simeq\ (\text{$G_k$-equivariant Stokes cocycles on }K),
\]
where equivariant cocycles are as in Definition~\Cref{def:equivariant-cocycles}.
\end{theorem}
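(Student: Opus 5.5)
The plan is to prove \eqref{eq:visible-descent-sections} by chaining equivalences already established, and to transport the descent equivalence of Proposition~\ref{prop:descent} through them. First, by Proposition~\ref{prop:stokes-as-sections} and Theorem~\ref{thm:stokeslocal-tors}, we have $\Sec(\pi)\simeq\Tors(\St_\Phi;\mathcal B)\simeq\StokesLocal_r(N^\circ;\Phi)$. The same results applied at level $n$ give $\Sec(\pi_n)\simeq\StokesLocal_r(N^\circ_n;q_n^*\Phi)$. I would check that the latter equivalence is $G_k$-equivariant. This is formal: $G_k$ acts strictly on $\CBn$ and $\GCn$ by permuting boxes of the pulled-back cover $\mathcal B_n$ and pulling back Stokes labels. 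Since $\pi_n$ is equivariant, $g\mapsto g\circ\sigma\circ g^{-1}$ is a strict action on $\Sec(\pi_n)$, and the cocycle dictionary $\sigma\mapsto(u_{\alpha\beta})$ commutes with it on the nose. Homotopy fixed points (Definition~\ref{def:homotopy-fixed-points}) are invariant under equivariant equivalences, so $\Sec(\pi_n)^{hG_k}\simeq\StokesLocal_r(N^\circ_n;q_n^*\Phi)^{hG_k}$.

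Second, I would use finite Galois descent for the collar factor alone. The proof of Proposition~\ref{prop:descent} already applies effective descent separately to each factor of the fiber product, and in particular gives $\StokesLocal_r(N^\circ;\Phi)\simeq\StokesLocal_r(N^\circ_n;q_n^*\Phi)^{hG_k}$ via $q_n^*$. To make this concrete, I would take the $G_k$-stable cover $\mathcal B_n$ to be $q_n^{-1}\mathcal B$, refined so that each box maps homeomorphically onto its image; this is possible by Proposition~\ref{prop:refinement}. On such a cover, a homotopy-fixed section consists of a cocycle $(u_{\alpha\beta})$ upstairs together with vertex elements $c_g$ satisfying the coherence condition. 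Descending it means choosing the gauge that makes $c_g$ trivial on a fundamental domain. That this is always possible is the content of effective descent, since the action is free and the boxes are contractible. Composing the three equivalences yields \eqref{eq:visible-descent-sections}.

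Third, for the skeletal form, I would apply Proposition~\ref{prop:stokes-torsors-skeleton} to the $G_k$-equivariant skeleton $K$ at level $n$, using the equivariant open-star cover. This identifies $\Sec(\pi_n)$ with the gauge groupoid of Stokes cocycles on $K$, equivariantly for the cellular action. I would then unwind Definition~\ref{def:homotopy-fixed-points} on this strict model:
\begin{itemize}
\item an isomorphism $\varphi_g\colon g^*u\to u$ is a vertex gauge $c_g=(c_g(v))$, and the morphism condition is exactly the edge-equivariance identity $u_{g\cdot e}=c_g(w)\,g(u_e)\,c_g(v)^{-1}$;
\item the homotopy-fixed-point axiom $\varphi_{gh}=\varphi_g\circ g^*(\varphi_h)$ becomes $c_{gh}(v)=c_g(v)\,g(c_h(v))$;
\item morphisms match the stated gauge rule.
\end{itemize}
This is a direct translation giving Definition~\ref{def:equivariant-cocycles}. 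Combined with the first part, it yields the explicit equivalence.

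I expect the main obstacle to be the second step. It requires making the strict $G_k$-action on $\Sec(\pi_n)$ well defined (equivariance of the cover and of the labels $\St_\Phi$ under $g$, including the index-permutation conventions in the composition rule \eqref{eq:GC-comp-fixed}) and verifying effective descent at the level of cocycles rather than citing it abstractly. The first thing I would try is to reduce to the case where $G_k$ acts freely on the index set of $\mathcal B_n$, where descent is just the choice of orbit representatives.
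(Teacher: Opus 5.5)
Your proposal is essentially the paper's proof. Both make $G_k$ act strictly on $\Sec(\pi_n)$ through the pulled-back cover and Stokes sheaf, read an object of $\Sec(\pi_n)^{hG_k}$ as a \v{C}ech Stokes cocycle plus gauges $\varphi_g$ (a descent datum), apply the effective Galois descent of Proposition~\ref{prop:descent}, and translate the $\varphi_g$ into vertex gauges on the skeleton; your detour through $\StokesLocal_r$ and your sheet-cover, orbit-representative gauge fixing only make concrete the descent step the paper cites. One bookkeeping point, which the paper glosses over equally: since $G_k$ acts freely on the vertices of $K$, your edge identity forces $c_g(v)$ to sit at $g\cdot v$, so $\varphi_{gg'}=\varphi_g\circ g^*(\varphi_{g'})$ actually becomes $c_{gg'}(v)=c_g(g'\cdot v)\,g(c_{g'}(v))$ and morphisms become $c'_g(v)=h_{g\cdot v}\,c_g(v)\,g(h_v)^{-1}$, so Definition~\ref{def:equivariant-cocycles} must be read with these vertex shifts rather than as a literal ``direct translation''.
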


\begin{proof}
Let $q_n:N^\circ_n\to N^\circ$ be the restriction of the Kummer cover and write $\mathcal B$ for the sector-box cover of $N^\circ$.
Pulling back $\mathcal B$ along $q_n$ gives a $G_k$-stable cover $\mathcal B_n$ of $N^\circ_n$, hence a strict action of $G_k$ on the \v{C}ech groupoid
$\CB_n=\Cech(\mathcal B_n)$.The Stokes sheaf pulls back to a $G_k$-equivariant sheaf on $N^\circ_n$, hence the collar construction on the pullback cover $\mathcal B_n$ is $G_k$-equivariant:
the induced \v{C}ech--Stokes collar groupoid and its forgetful projection $\pi_n$ are $G_k$-equivariant. In particular, $G_k$ acts strictly on the section groupoid $\Sec(\pi_n)$.

Unwinding Definition~\Cref{def:hfp-finite}, an object of $\Sec(\pi_n)^{hG_k}$ is a section $\sigma\in\Sec(\pi_n)$ together with isomorphisms
\[
\varphi_g:g^*(\sigma)\xRightarrow{\ \sim\ }\sigma\qquad (g\in G_k)
\]
satisfying $\varphi_{gh}=\varphi_g\circ g(\varphi_h)$ and $\varphi_1=\id$.
Under the identification $\Sec(\pi_n)\simeq \Tors(\St_{q_n^*\Phi};\mathcal B_n)$ (Proposition~\Cref{prop:stokes-as-sections}),
a section $\sigma$ is a \v{C}ech Stokes cocycle on $\mathcal B_n$, and a natural isomorphism $\varphi_g$ is a \v{C}ech gauge $0$-cochain.
Thus the data $(\sigma,\varphi_\bullet)$ is exactly a descent datum for a Stokes torsor along the finite Galois cover $q_n$.
By effective Galois descent for torsors (Proposition~\Cref{prop:descent}), such descent data are equivalent to Stokes torsors downstairs, which is
$\Sec(\pi)$ by Proposition~\Cref{prop:stokes-as-sections}. This gives \eqref{eq:visible-descent-sections}.
For the skeletal description, choose a $G_k$-equivariant CW skeleton $K$ of $N^\circ$.
By Proposition~\Cref{prop:stokes-torsors-skeleton}, Stokes torsors on $N^\circ$ are encoded by edge labels on $K^{(1)}$ satisfying the $2$-cell relations,
modulo vertex gauge. Pullback to level $n$ produces the corresponding $G_k$-action on cocycles. Translating the coherence isomorphisms
$\varphi_g$ into vertex gauges $c_g(v)$ yields exactly the conditions in Definition~\Cref{def:equivariant-cocycles}:
naturality along each edge gives the equivariance equation on labels, and $\varphi_{gh}=\varphi_g\circ g(\varphi_h)$ gives the vertex coherence.
\end{proof}
\begin{equation}\label{eq:visible-descent-diagram}
\begin{tikzcd}[row sep=large, column sep=huge]
\Sec(\pi_n)^{hG_k} \arrow[r,"\sim"] \arrow[d] &
\Sec(\pi) \arrow[d] \\
\Tors(\St_{q_n^*\Phi};\mathcal B_n)^{hG_k} \arrow[r,"\sim"] &
\Tors(\St_\Phi;\mathcal B)
\end{tikzcd}
\end{equation}

\subsection{Equivariant cocycles on the stratified CW skeleton}\label{subsec:equivariant-cocycles}

In this subsection we spell out, in completely explicit terms, what the homotopy fixed point
description in \Cref{subsec:visible-kummer-presenter} means on a CW skeleton.
Nothing here goes beyond finite Galois descent; the point is that the descent constraints become \emph{visible}
as equivariance conditions on cocycles attached to cells.

Let $G$ be a finite group acting on a groupoid $\mathcal H$ by strict autoequivalences
(e.g.\ $G=(\mu_n)^k$ acting on a Kummer chart of $\Xlogn$).
Write $\mathcal H\mathbin{/\mkern-6mu/}G$ for the action groupoid: objects are those of $\mathcal H$ and
an arrow $x\to y$ is a pair $(g,\alpha)$ with $g\in G$ and $\alpha:g\cdot x\to y$ an arrow in $\mathcal H$.
There is a canonical functor $\mathcal H\to \mathcal H\mathbin{/\mkern-6mu/}G$.

For any groupoid $\mathcal K$, the \emph{homotopy fixed point groupoid} $\mathcal K^{hG}$ is, by definition,
the groupoid of functors $BG\to \mathcal K$ and natural isomorphisms between them. Concretely:

\begin{definition} \label{def:explicit-hfp}
An object of $\mathcal K^{hG}$ is a pair $(x,\{\varphi_g\}_{g\in G})$ where $x\in \Ob(\mathcal K)$ and
$\varphi_g:g\cdot x\xrightarrow{\sim}x$ are isomorphisms such that $\varphi_1=\id_x$ and, for all $g,h\in G$,
\begin{equation}\label{eq:hfp-coherence}
\varphi_{gh}=\varphi_g\circ g(\varphi_h): (gh)\cdot x\longrightarrow x.
\end{equation}
A morphism $(x,\varphi_\bullet)\to (x',\varphi'_\bullet)$ is an isomorphism $f:x\to x'$ in $\mathcal K$ such that
$f\circ\varphi_g=\varphi'_g\circ g(f)$ for all $g\in G$.
\end{definition}

The coherence condition \eqref{eq:hfp-coherence} is the commutativity of the diagram
\begin{equation}\label{eq:hfp-square}
\begin{tikzcd}[row sep=large, column sep=huge]
(gh)\cdot x \arrow[r,"g(\varphi_h)"] \arrow[dr,swap,"\varphi_{gh}"] & g\cdot x \arrow[d,"\varphi_g"] \\
& x
\end{tikzcd}
\end{equation}
for each $(g,h)$.

\subsubsection*{B. Equivariant cocycles on a $\Sigma$-adapted CW decomposition}
Now let $K$ be a locally finite CW decomposition of $N^\circ$ adapted to the stratification $\Sigma_{D,\Phi,n}$
(so every open cell lies in a unique stratum, and Stokes groups are constant on that stratum).
Assume $G=(\mu_n)^k$ acts on $K$ by cellular maps (this can always be achieved after barycentric subdivision on a Kummer chart).

Write $\CB^{\mathrm{sk}}:=\Pi_1^{\mathrm{sk}}(K)$ for the skeleton groupoid of the $2$-skeleton.
Let $\St_\Phi(S(e))$ denote the (constant) Stokes group on the stratum containing the open $1$-cell $e$.
A strict section of $\pi^{\mathrm{sk}}:\GC^{\mathrm{sk}}\to\CB^{\mathrm{sk}}$ is equivalently a choice of labels
$u_e\in \St_\Phi(S(e))$ for each oriented $1$-cell $e$, satisfying the $2$-cell relations and modulo vertex gauge.

\begin{definition}[$G$-equivariant Stokes cocycle on the skeleton]\label{def:equivariant-stokes-cocycle}
A \emph{$G$-equivariant Stokes cocycle} on $K$ is a cocycle $(u_e)$ for $\pi^{\mathrm{sk}}$
together with elements $a_g(v)\in \St_\Phi(S(v))$ for every $g\in G$ and every vertex $v$ such that:
\begin{enumerate}
\item (\emph{equivariance up to gauge}) for each oriented $1$-cell $e:v\to w$ and $g\in G$,
\begin{equation}\label{eq:equivariance-edge}
u_{g\cdot e} = a_g(w)\, g(u_e)\, a_g(v)^{-1}\quad \text{in }\St_\Phi(S(e));
\end{equation}
\item (\emph{coherence}) for all $g,h\in G$ and all vertices $v$,
\begin{equation}\label{eq:equivariance-coherence}
a_{gh}(v)=a_g(v)\, g(a_h(v)).
\end{equation}
\end{enumerate}
Morphisms between equivariant cocycles are vertex gauge transformations
$h(v)\in \St_\Phi(S(v))$ such that the usual conjugation sends $(u_e,a_g(v))$ to $(u'_e,a'_g(v))$.
\end{definition}

Equations \eqref{eq:equivariance-edge}--\eqref{eq:equivariance-coherence} are the skeletal form of \eqref{eq:hfp-coherence}:
the maps $a_g(v)$ encode the descent isomorphisms identifying the pullback cocycle $g(u)$ with $u$.

\begin{proposition}[Equivariant cocycles $=$ homotopy fixed points]\label{prop:eq-cocycles-hfp}
Let $G$ act cellularly on $K$ and hence on $\Sec(\pi^{\mathrm{sk}})$.
Then the groupoid of $G$-equivariant Stokes cocycles on $K$ in \Cref{def:equivariant-stokes-cocycle}
is canonically equivalent to the homotopy fixed point groupoid $\Sec(\pi^{\mathrm{sk}})^{hG}$.
\end{proposition}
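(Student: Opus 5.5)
The plan is to unwind both sides into explicit data and compare them. First I will fix the skeletal dictionary. By the description preceding \Cref{def:equivariant-stokes-cocycle}, which is the skeletal analogue of \Cref{prop:stokes-as-sections} and \Cref{prop:stokes-torsors-skeleton}, the groupoid $\Sec(\pi^{\mathrm{sk}})$ has the following objects and morphisms. Objects are edge labels $(u_e)$ with $u_e\in\St_\Phi(S(e))$, $u_{\bar e}=u_e^{-1}$, satisfying the $2$-cell relations. Morphisms $u\Rightarrow u'$ are vertex families $(h_v)$ with $u'_e=h_{t(e)}u_e h_{s(e)}^{-1}$, since a natural isomorphism between strict sections is a family of vertical arrows over the objects (vertices), lifted to Stokes labels. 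Composition of morphisms is pointwise multiplication of the $h_v$.

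Second, I will make the $G$-action on $\Sec(\pi^{\mathrm{sk}})$ explicit. A cellular $g$ acts on sections by transport, so $g^*u$ is the labelling $e\mapsto g(u_{g^{-1}e})$; alternatively one can use the convention $(g^*u)_{g\cdot e}=g(u_e)$, which reads off \eqref{eq:equivariance-edge} directly. On gauges the action is $(g^*h)_{g\cdot v}=g(h_v)$. I will check that this is a strict action, so that $(gh)^*=g^*h^*$ on the nose. This holds because the cellular action and the pullback of Stokes groups compose strictly.

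Third, I will match objects of the two groupoids. By \Cref{def:explicit-hfp}, an object of $\Sec(\pi^{\mathrm{sk}})^{hG}$ is a pair $(u,\varphi_\bullet)$ with $\varphi_g:g^*u\Rightarrow u$. Each $\varphi_g$ is a vertex gauge, and I set $a_g(v)$ to be its component at the vertex determined by the convention above. Naturality of $\varphi_g$ along each edge $e:v\to w$ is then literally \eqref{eq:equivariance-edge}. Composition of natural transformations is pointwise, so the cocycle identity \eqref{eq:hfp-coherence} evaluated at a vertex becomes \eqref{eq:equivariance-coherence}. Here the factor $g(\varphi_h)$ contributes $g(a_h(v))$ after the reindexing by $g$ is accounted for, and the condition $\varphi_1=\id$ gives $a_1(v)=1$.

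Fourth, I will match morphisms. A morphism $f:(u,\varphi)\to(u',\varphi')$ is a gauge $h$ with $f\circ\varphi_g=\varphi'_g\circ g(f)$. Pointwise this reads $a'_g(v)=h_v\,a_g(v)\,g(h)_v^{-1}$, which is the conjugation rule in \Cref{def:equivariant-stokes-cocycle}. The assignment $(u,\varphi)\mapsto(u,a)$ is therefore bijective on objects and on morphisms, functorial, and hence an isomorphism of groupoids, and in particular an equivalence. It is canonical because no choices were made.

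The main obstacle is index bookkeeping in the third step. There I must make sure that the vertex at which $g(\varphi_h)$ is evaluated, and the group in which $a_g(v)$ lives, are consistent with the stated formulas, for instance $\St_\Phi(S(v))$ versus $\St_\Phi(S(g\cdot v))$ identified via $g$. I would first fix the convention $(g^*u)_{g\cdot e}=g(u_e)$ and interpret $a_g(v)$ as an element of $\St_\Phi(S(g v))$ transported back by $g$. I would then verify \eqref{eq:equivariance-coherence} on a single vertex directly. If a mismatch of $g$ versus $g^{-1}$ appears, I would absorb it by replacing the action with its inverse-transport convention, which yields an isomorphic homotopy fixed point groupoid.
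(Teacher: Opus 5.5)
Your route is the paper's: unwind an object $(u,\varphi_\bullet)$ of $\Sec(\pi^{\mathrm{sk}})^{hG}$, read each $\varphi_g$ as a vertex gauge, and match naturality of $\varphi_g$ with \eqref{eq:equivariance-edge}, the identity \eqref{eq:hfp-coherence} with \eqref{eq:equivariance-coherence}, and morphisms with intertwining gauges. You treat objects of $\Sec(\pi^{\mathrm{sk}})$ as honest cocycles, whereas the paper ``chooses a lift'' of a gauge class. Your reading is the right one, and it is what gives you an isomorphism rather than just an equivalence.

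The bookkeeping you defer is, however, a genuine issue, and swapping $g$ and $g^{-1}$ does not resolve it. Take your convention $(g\cdot u)_{g\cdot e}=g(u_e)$, $(g\cdot k)_{g\cdot v}=g(k_v)$, and set $a_g(v):=(\varphi_g)_{g\cdot v}\in\St_\Phi(S(g\cdot v))$. Naturality of $\varphi_g$ along $g\cdot e$ then gives \eqref{eq:equivariance-edge} exactly. But evaluating $\varphi_{gh}=\varphi_g\circ g(\varphi_h)$ at the vertex $gh\cdot v$ gives
\[
a_{gh}(v)=a_g(h\cdot v)\,g\bigl(a_h(v)\bigr).
\]
Likewise, $f\circ\varphi_g=\varphi'_g\circ g(f)$ at $g\cdot v$ gives $a'_g(v)=f_{g\cdot v}\,a_g(v)\,g(f_v)^{-1}$. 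That is not the rule in your fourth step, which belongs to the other convention.

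Now take the other natural choice, $a_g(v):=(\varphi_g)_v$. The coherence law and the gauge rule then take the stated shape, provided you read $g(a_h(v))$ as the transported cochain $g\cdot a_h$ evaluated at $v$. But the edge law then has $a_g$ evaluated at $g\cdot w$ and $g\cdot v$. Transporting $a_g(v)$ back to $\St_\Phi(S(v))$, as you suggest, only moves the $g$'s around and leaves this vertex shift intact.

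The obstruction is which vertex the component of $\varphi_g$ lives at, not the direction of the action. So once $G$ moves vertices, no convention makes \eqref{eq:equivariance-edge} and \eqref{eq:equivariance-coherence} hold literally together, and the free Kummer deck action does move vertices. You should fix one convention and prove the definition in the correspondingly corrected form, e.g.\ with $a_g(h\cdot v)$ in the coherence law. With that change your object and morphism bijections go through verbatim and give the canonical isomorphism. The paper's proof glosses over the same point.
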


\begin{proof}
By definition, $\Sec(\pi^{\mathrm{sk}})^{hG}$ is the groupoid of functors $BG\to \Sec(\pi^{\mathrm{sk}})$.
An object of $\Sec(\pi^{\mathrm{sk}})$ is a cocycle $(u_e)$ modulo vertex gauge.
Choosing a lift of $(u_e)$ to an \emph{honest} cocycle (rather than its gauge class) identifies an isomorphism
$g\cdot (u_e)\cong (u_e)$ in $\Sec(\pi^{\mathrm{sk}})$ with a vertex gauge $a_g(v)$.
The condition that the isomorphisms form a functor $BG\to \Sec(\pi^{\mathrm{sk}})$ is precisely the cocycle identity
\eqref{eq:equivariance-coherence}, and naturality on edges gives \eqref{eq:equivariance-edge}.
Morphisms are the usual $0$-cochains $h(v)$ intertwining the chosen $a_g(v)$.
\end{proof}

\begin{remark}[How this matches standard finite Galois descent]\label{rem:eq-cocycles-descent}
If $\mathcal U$ is a $G$-stable good cover of a depth-$k$ Kummer chart and $\CB=\Cech(\mathcal U)$,
then $\Sec(\pi)^{hG}$ is the groupoid of $G$-equivariant \v{C}ech cocycles with values in $\St_\Phi$.
The skeletal description above is the same information written on generators (edges) and relations (2-cells).
\end{remark}

\section{Generators, relations, and the coequalizer model}\label{sec:coeq}

\subsection{Free groupoids on graphs}
A \emph{directed graph} $E$ is a pair of sets $(E_0,E_1)$ with $s,t:E_1\to E_0$.
The \emph{free groupoid} $F(E)$ is the groupoid whose arrows are reduced words in $E_1$ and inverses, modulo cancellations.

\subsection{Coequalizer presentation}
Let $E$ be a graph and $R$ a set of relations, each $\rho\in R$ specifying two arrows
$A_\rho,B_\rho\in \Hom_{F(E)}(x_\rho,y_\rho)$ with the same endpoints.
Define $r_0,r_1:F(R)\rightrightarrows F(E)$ by sending $\rho$ to $A_\rho$ (resp.\ $B_\rho$).
Let $Q$ be the coequalizer of $(r_0,r_1)$ in $\mathbf{Grpd}$.

\begin{proposition}\label{prop:coeq}
The groupoid $Q$ is canonically the quotient of $F(E)$ by the smallest congruence identifying $A_\rho=B_\rho$ for all $\rho\in R$.
\end{proposition}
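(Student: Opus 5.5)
The plan is to construct the quotient explicitly and check that it has the universal property of the coequalizer in $\mathbf{Grpd}$. Let $\sim$ be the smallest congruence on $F(E)$ with $A_\rho\sim B_\rho$ for all $\rho\in R$. Here a congruence means an equivalence relation on each hom-set $\Hom_{F(E)}(x,y)$ that is stable under pre- and post-composition. Such a congruence exists because any intersection of congruences is again a congruence, and the total relation (identify all parallel arrows) is one.

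\textbf{Step 1: the quotient groupoid.} Define $F(E)/\!\sim$ to have the same objects as $F(E)$ and hom-sets $\Hom_{F(E)}(x,y)/\!\sim$. Stability under composition makes composition well defined on classes. Inverses descend as well: if $a\sim b$, then $a^{-1}=a^{-1}bb^{-1}\sim a^{-1}ab^{-1}=b^{-1}$, using stability on both sides. The projection $p:F(E)\to F(E)/\!\sim$ is then a strict functor. It satisfies $p\circ r_0=p\circ r_1$: the two composites agree on the generators $\rho$ of $F(R)$, and $F(R)$ is free, so they agree everywhere.

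\textbf{Step 2: the universal property.} Let $\Psi:F(E)\to H$ be any strict functor with $\Psi r_0=\Psi r_1$. Consider the relation ``$\Psi(a)=\Psi(b)$'' on parallel arrows. It is a congruence because $\Psi$ is a functor, and it contains every pair $(A_\rho,B_\rho)$. By minimality it contains $\sim$, so $\Psi$ factors through $p$. The factorization is unique because $p$ is bijective on objects and surjective on arrows.

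\textbf{Step 3: reading ``coequalizer in $\mathbf{Grpd}$''.} The point needing care is that $\mathbf{Grpd}$ is a $(2,1)$-category, so ``coequalizer'' could mean either the strict 1-categorical coequalizer or a bicategorical one. I will read $Q$ as the strict coequalizer in the 1-category of small groupoids and strict functors, which is exactly what Steps 1--2 establish. In that case canonicity is the usual uniqueness of colimits up to unique isomorphism commuting with the structure maps.

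For the 2-categorical reading, note that for a pseudo-coequalizer the two composites $F(R)\rightrightarrows Q$ are only required to agree up to a natural isomorphism $\theta$. The universal object would then acquire extra generators from $\theta$ at each object of $R$, and would no longer be the quotient of $F(E)$ by $\sim$. So I will state explicitly that the claim concerns the strict coequalizer, in the same spirit as the strict pushout model of \Cref{lem:strict-pushout-is-bicategorical}. I regard this interpretive point, not any computation, as the main subtlety.
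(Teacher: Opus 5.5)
Your proposal is correct and is essentially the paper's argument. The paper's two-line proof is your Step 2 in compressed form: a functor out of $F(E)$ factors through $Q$ iff it sends each $A_\rho$ and $B_\rho$ to the same arrow, and closure under composition and inversion yields the generated congruence. Your Step 1 supplies the explicit construction of the quotient that the paper leaves implicit, and your Step 3 reading matches the paper, whose proof uses the strict factorization property rather than a pseudo-cocone with a comparison isomorphism.
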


\begin{proof}
A functor $F(E)\to H$ factors through $Q$ iff it equalizes $r_0$ and $r_1$, i.e.\ sends $A_\rho$ and $B_\rho$ to the same arrow for all $\rho$.
Closure under composition and inversion yields the congruence.
\end{proof}

 \begin{figure}[t]
\centering
\begin{tikzpicture}[
  >=Stealth,
  font=\scriptsize,
  vtx/.style={circle,draw,inner sep=1.2pt},
  lab/.style={fill=white,inner sep=1pt},
  box/.style={draw,rounded corners,inner sep=4pt,align=center}
]

\node[box] (Ebox) at (-6.2,2.1) {directed graph $E=(E_0,E_1)$};

\node[vtx] (x) at (-7.6,0.6) {$x$};
\node[vtx] (y) at (-4.8,0.6) {$y$};
\node[vtx] (z) at (-6.2,-1.0) {$z$};

\draw[->,thick] (x) -- (y) node[midway,lab,above] {$e$};
\draw[->,thick] (y) -- (z) node[midway,lab,right] {$f$};
\draw[->,thick,bend left=22] (x) to (z) node[midway,lab,left] {$g$};

\node[lab,align=center] at (-6.2,-2.05)
{$s,t:E_1\to E_0$ record\\ source and target};

\node[box] (Fbox) at (0.0,2.1) {free groupoid $F(E)$};

\node[lab,align=left] (Ftxt) at (0.0,0.2)
{arrows are reduced words in $E_1\cup E_1^{-1}$\\
example: $fe:x\to z$,\quad $e^{-1}:y\to x$\\
cancellation: $ff^{-1}=\id_y$};

\node[box] (Qbox) at (6.2,2.1) {relations and coequalizer};

\node[lab,align=left] (Qtxt) at (6.2,0.75)
{a relation $\rho$ specifies $A_\rho,B_\rho:x_\rho\to y_\rho$\\
define $r_0,r_1:F(R)\rightrightarrows F(E)$\\
send $\rho\mapsto A_\rho$ and $\rho\mapsto B_\rho$};

\node[vtx] (xr) at (5.0,-0.95) {$x$};
\node[vtx] (zr) at (7.4,-0.95) {$z$};

\draw[->,thick,bend left=25] (xr) to (zr) node[midway,lab,above] {$A_\rho=fe$};
\draw[->,thick,bend right=25] (xr) to (zr) node[midway,lab,below] {$B_\rho=g$};

\node[lab,align=center] at (6.2,-2.15)
{$Q=\mathrm{coeq}\bigl(F(R)\rightrightarrows F(E)\bigr)$\\
forces $A_\rho=B_\rho$};

\draw[->,thick,bend left=15] (Ebox.east) to node[lab,above] {$E\mapsto F(E)$} (Fbox.west);
\draw[->,thick,bend left=15] (Fbox.east) to node[lab,above] {impose $R$} (Qbox.west);

\end{tikzpicture}
\caption{A directed graph $E$ determines the free groupoid $F(E)$ by adding formal inverses and cancelling $aa^{-1}$; imposing relations $A_\rho=B_\rho$ is encoded by the coequalizer $Q=\mathrm{coeq}(F(R)\rightrightarrows F(E))$.}
\label{fig:free-groupoid-coeq}
\end{figure}

\subsection{Applying the coequalizer to $\G_{\Phi,n}$}
Let $E_0:= (\GU)_0\bigsqcup (\GC)_0$ and $E_1:=(\GU)_1\bigsqcup (\GC)_1$.
Let $R$ impose: (i) internal relations of $\GU$, (ii) internal relations of $\GC$, (iii) gluing relations identifying $j_U$ and $j_C$ on objects/arrows of $\GX$.

\begin{proposition}\label{prop:pushout-coeq}
With $E$ and $R$ as above, the explicit $2$-pushout groupoid $\G_{\Phi,n}$ is canonically isomorphic to the coequalizer
\begin{equation}\label{eq:pushout-coeq}
F(R)\rightrightarrows F(E)\longrightarrow \G_{\Phi,n}.
\end{equation}
\end{proposition}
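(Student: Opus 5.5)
We prove the claim by comparing universal properties: both sides will be shown to corepresent the same functor on $\mathbf{Grpd}$, and the comparison map will be written down on generators. We read the data as the statement does. $E$ is the graph with vertices $(\GU)_0\sqcup(\GC)_0$ and edges $(\GU)_1\sqcup(\GC)_1$. $R$ consists of three families:
\begin{itemize}
\item[(i)] the composition relations of $\GU$, namely $[g]\cdot[f]=[g\circ f]$ and $[e(x)]=\id$;
\item[(ii)] the same relations for $\GC$;
\item[(iii)] for each object $b$ and arrow $u$ of $\GX$, the gluing relations identifying $j_U(b)$ with $j_C(b)$ and $[j_U(u)]$ with $[j_C(u)]$.
\end{itemize}
Since (iii) identifies objects, the coequalizer of $F(R)\rightrightarrows F(E)$ in $\mathbf{Grpd}$ is a quotient that may collapse vertices: first the vertex quotient, then the congruence. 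We use the evident generalization of \Cref{prop:coeq}, whose proof is verbatim the same. A functor $F(E)\to H$ factors through $Q$ if and only if it sends identified vertices to the same object and identified words to the same arrow.

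\textbf{Step 1: the functor corepresented by $Q$.} We compute $\Hom_{\mathbf{Grpd}}(Q,H)$. By freeness, a functor $F(E)\to H$ is the same as a graph map $E\to H$. Killing relations (i) and (ii) forces its restrictions to be strict functors $F_U:\GU\to H$ and $F_C:\GC\to H$. Killing (iii) forces $F_U\circ j_U=F_C\circ j_C$ strictly. Natural transformations are handled the same way, since they are determined on the generating vertices. Hence $\Hom(Q,H)$ is naturally isomorphic to the groupoid of strict cocones $(F_U,F_C)$ over the cospan \eqref{eq:pushout}.

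\textbf{Step 2: the same description for $\G_{\Phi,n}$.} We identify the explicit model of $\G_{\Phi,n}$ used in the statement. This is the model of \Cref{lem:strict-pushout-is-bicategorical} with the gluing realized by the relations (iii) themselves, i.e.\ the strict pushout of the span. Concretely, its objects are $\bigl((\GU)_0\sqcup(\GC)_0\bigr)/\bigl(j_U(b)\sim j_C(b)\bigr)$, and its arrows are words in arrows of $\GU$ and $\GC$, modulo their internal compositions and the identification $j_U(u)=j_C(u)$. Its universal property is again "strict cocones". This is checked exactly as in the proof of \Cref{lem:strict-pushout-is-bicategorical}, with $\eta$ an identity, so that the bridge relation $s_{b'}\circ i(u)=j(u)\circ s_b$ becomes relation (iii) on arrows.

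\textbf{Step 3: the comparison map.} The isomorphism $Q\cong\G_{\Phi,n}$ is built as follows. Sending each generator of $E$ to the corresponding arrow of $\G_{\Phi,n}$ respects $R$, so it induces a functor $Q\to\G_{\Phi,n}$. Conversely, the structure functors $\GU\to Q$ and $\GC\to Q$ form a strict cocone, which induces the inverse. Both composites fix the generators, so they are identities by the uniqueness part of the universal properties. The word "canonically" then reflects that no choices enter.

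\textbf{Main obstacle.} The delicate point is Step 2, i.e.\ making precise which explicit model "the explicit $2$-pushout" denotes. \Cref{lem:strict-pushout-is-bicategorical} presents it with formal bridges $s_b$, whereas $E$ here contains no bridge edges. We would handle this by observing that in $Q$ the relations (iii) make the image of $s_b$ equal to $\id$, so we match the relations rather than add edges. Granting that, the remaining subtlety is only the vertex quotient in the coequalizer, which Step 1 treats explicitly.
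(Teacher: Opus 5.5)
\textbf{Verdict: genuine gap in Step 2, which the paper's proof shares.} Your Steps 1 and 3 are correct, and together they show that $Q$ is the \emph{strict} ($1$-categorical) pushout of $\GU\xleftarrow{j_U}\GX\xrightarrow{j_C}\GC$. That is also all the paper's one-line proof establishes: it says the congruence generated by $R$ imposes the internal relations and the strict identifications along $\GX$, and calls this ``the generators-and-relations model of the explicit $2$-pushout''. The gap is Step 2, which you flag and then grant. The paper's proof passes over it silently as well.

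\textbf{Why Step 2 fails.} $\G_{\Phi,n}$ refers to the explicit model $P$ of \Cref{lem:strict-pushout-is-bicategorical}. Its object set is $(\GU)_0\sqcup(\GC)_0$, with invertible bridges $s_b:j_U(b)\to j_C(b)$ between \emph{distinct} objects. In contrast, $Q$ identifies $j_U(b)$ with $j_C(b)$. The functor $P\to Q$ with $s_b\mapsto\id$ that you invoke is therefore not injective on objects. Hence no isomorphism $P\cong Q$ compatible with the structure maps from $\GU$ and $\GC$ exists once $(\GX)_0\neq\varnothing$.

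Checking the universal property ``with $\eta$ an identity'' does not close this. It shows that $Q$ corepresents \emph{strict} cocones, whereas a bicategorical $2$-pushout must corepresent $\CoconeTwo(j_U,j_C;H)$, and these differ in general. For example, take $*\leftarrow\{b_1,b_2\}\rightarrow *$ with $\{b_1,b_2\}$ discrete. The strict pushout is a point, but the $2$-pushout is $B\ZZ$: a pseudo-cocone $(h,h',\eta_{b_1},\eta_{b_2})$ is isomorphic to a strict one only if $\eta_{b_1}=\eta_{b_2}$.

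The standard criterion that makes a strict pushout of groupoids a $2$-pushout is that one leg be injective on objects. That is not available here. Refinement functors such as $j_U$ and $j_C$ send distinct objects $(k,x)$ and $(k',x)$ of $\GX$ to the same object whenever the refinement map identifies $k$ and $k'$.

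\textbf{How to repair it.} There are two options.

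\emph{Option 1: change the presentation.} Add to $E$ a bridge edge $s_b$ for each $b\in(\GX)_0$. Replace (iii) by the bridge relations $s_{b'}\circ j_U(u)=j_C(u)\circ s_b$, with no identification of vertices. Then \Cref{prop:coeq} applies verbatim, and your Steps 1 and 3 run unchanged with pseudo-cocones in place of strict ones. The coequalizer is $P$ on the nose.

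\emph{Option 2: prove strictification.} Keep $E$ and $R$ as stated. Prove that every pseudo-cocone under this particular cospan is isomorphic to a strict one, equivalently that $P\to Q$ is an equivalence. Then take $\G_{\Phi,n}:=Q$, which \Cref{def:pushout} permits. The isomorphism in the statement becomes tautological, and $Q$ is equivalent, not isomorphic, to $P$.

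For these \v{C}ech cospans Option 2 does go through. As abstract groupoids they split as disjoint unions over points of $U$. Factor $j_C$ through the identity-on-objects trivial-label inclusion $\CB\hookrightarrow\GC$. Over each point of $N^\circ$, the strict pushout of the codiscrete fibres of $\GU$ and $\CB$ along the nonempty codiscrete fibre of $\GX$ is again codiscrete. The remaining pushout along the injective-on-objects functor $\CB\hookrightarrow\GC$ is automatically a $2$-pushout, and one pastes the two squares.

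That argument uses the specific shape of the cospan. It is exactly the step that neither your proposal nor the paper's proof supplies.
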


\begin{proof}
The congruence generated by $R$ forces the subgroupoid structures of $\GU$ and $\GC$ and imposes the strict identifications prescribed by $j_U,j_C$ on $\GX$.
This is exactly the generators-and-relations model of the explicit $2$-pushout.
\end{proof}


\subsection{A completely explicit curve model: the explicit $2$-pushout groupoid $\G_{\Phi,n}$ and the forgetful map $\pi:\GC\to\CB$}\label{subsec:curve-model-clean}

This subsection is expository and is not used later.
Its goal is to make the explicit $2$-pushout
\begin{equation}\label{eq:curve-pushout-clean}
\G_{\Phi,n}\ :=\ \GU\sqcup_{\GX}\GC
\end{equation}
completely concrete in complex dimension~$1$, and to clarify the comparison with the groupoids
in \cite[\S3,\S5]{GualtieriLiPym} and the Kato--Nakayama viewpoint.

\medskip

\noindent\textbf{Important notation (to avoid the $\mathbb C$ clash).}
Throughout, $\CC$ denotes the complex numbers.
We reserve the symbol $\CB$ for the \v{C}ech groupoid of the collar cover~$\mathcal B$ (see below).
In particular, \emph{$\CB$ is not $\CC$ with a subscript}.

\medskip

\providecommand{\Cech}{\check{\mathrm C}}
\providecommand{\CB}{\Cech(\mathcal B)}

\subsubsection*{(A) Geometry on a curve and sector boxes on $X^{\log}_n$}

Let $X$ be a connected Riemann surface and let $D=\{p_1,\dots,p_m\}\subset X$ be a reduced finite divisor.
Write $U:=X\setminus D$.
Fix a Kummer level $n\ge 1$ and an irregular type $\Phi$ along $D$ (at level~$n$).
Let $\tau_n:X^{\log}_n\to X$ be the level-$n$ Kato--Nakayama/Kummer map.
Over $U$, $\tau_n$ is a homeomorphism; over each $p_i$ the fiber is the level-$n$ circle of directions
$S^1_{p_i,n}:=\tau_n^{-1}(p_i)$.
Choose pairwise disjoint closed discs $\Delta_i$ around $p_i$ and set $\Delta_i^\times:=\Delta_i\setminus\{p_i\}$.
Let $N_i^\times:=\tau_n^{-1}(\Delta_i^\times)\subset X^{\log}_n$ be the punctured collar near $p_i$.
The Stokes walls at $p_i$ cut $S^1_{p_i,n}$ into finitely many open arcs
\begin{equation}\label{eq:arcs-clean}
S^1_{p_i,n}\setminus\Sigma_{p_i}\ =\ \bigsqcup_{a\in\ZZ/\ell_i\ZZ} I_{i,a},
\end{equation}
and on each arc the Stokes preorder is constant.
Choose \emph{sector boxes}
\begin{equation}\label{eq:boxes-clean}
B_{i,a}\ \subset\ N_i^\times,
\qquad
B_{i,a}\ \simeq\ \Delta_i^\times\times I_{i,a},
\qquad
B_{i,a}\cap B_{i,a+1}\neq\varnothing,
\end{equation}
each contractible, and with overlaps also contractible.
\begin{figure}[t]
\centering
\begin{tikzpicture}[
  >=Stealth,
  font=\scriptsize,
  lab/.style={fill=white,inner sep=1pt},
  vtx/.style={circle,draw,inner sep=1.2pt},
  box/.style={draw,rounded corners,inner sep=3pt,align=center}
]

\node[box] (Xbox) at (-6.0,2.35) {$X$ with $D=\{p_1,\dots,p_m\}$,\quad $U=X\setminus D$};

\draw[thick] (-7.6,0.55) .. controls (-7.4,1.75) and (-4.6,1.75) .. (-4.4,0.55)
             .. controls (-4.6,-0.65) and (-7.4,-0.65) .. (-7.6,0.55);

\node[vtx] (pi) at (-6.85,0.95) {$p_i$};
\draw[dashed,thick] (-6.85,0.95) circle (0.55);
\node[lab] at (-6.85,1.62) {$\Delta_i$};

\draw[->,thick,bend left=12] (-4.4,0.55) to node[lab,above] {zoom at $p_i$} (-1.70,0.85);

\node[box] (Fbox) at (0.0,2.35) {$\tau_n^{-1}(p_i)=S^1_{p_i,n}$};

\def\R{1.15}
\coordinate (O) at (0,0.75);
\draw[thick] (O) circle (\R);

\foreach \ang in {25,115,205,295}{
  \draw[thick] (O) -- ({\R*cos(\ang)},{0.75+\R*sin(\ang)});
}
\node[lab,anchor=west] at (1.32,0.75) {$\Sigma_{p_i}$};

\node[lab] at ({1.48*cos(70)},{0.75+1.48*sin(70)}) {$I_{i,a}$};
\node[lab] at ({1.48*cos(150)},{0.75+1.48*sin(150)}) {$I_{i,a+1}$};
\node[lab] at ({1.48*cos(250)},{0.75+1.48*sin(250)}) {$I_{i,a+2}$};

\node[lab,align=center] at (0,-1.05)
{$S^1_{p_i,n}\setminus\Sigma_{p_i}=\bigsqcup_{a\in\ZZ/\ell_i\ZZ} I_{i,a}$};

\draw[->,thick,bend left=10] (1.70,0.85) to node[lab,above] {thicken} (3.55,0.95);

\node[box] (Nbox) at (5.6,2.35) {$N_i^\times=\tau_n^{-1}(\Delta_i^\times)$,\quad $\Delta_i^\times=\Delta_i\setminus\{p_i\}$};

\draw[thick] (3.90,-0.10) rectangle (7.30,1.65);

\draw[->,thin] (3.95,-0.55) -- (7.25,-0.55)
  node[lab,midway,below] {radial in $\Delta_i^\times$};
\draw[->,thin] (7.65,-0.10) -- (7.65,1.65)
  node[lab,midway,right] {angular};

\draw[dashed,thick] (4.55,-0.10) -- (4.55,1.65);
\draw[dashed,thick] (5.25,-0.10) -- (5.25,1.65);
\draw[dashed,thick] (5.95,-0.10) -- (5.95,1.65);
\draw[dashed,thick] (6.65,-0.10) -- (6.65,1.65);

\node[lab] at (4.90,0.85) {$B_{i,a}$};
\node[lab] at (5.60,0.85) {$B_{i,a+1}$};

\node[lab,align=center] at (5.60,-1.30)
{$B_{i,a}\simeq \Delta_i^\times\times I_{i,a}$,\quad
$B_{i,a}\cap B_{i,a+1}\neq\varnothing$\\
boxes and overlaps chosen contractible};

\end{tikzpicture}
\caption{At a puncture $p_i$, the level-$n$ circle of directions $S^1_{p_i,n}$ is cut by Stokes walls $\Sigma_{p_i}$ into finitely many arcs $I_{i,a}$.
A punctured collar $N_i^\times$ over $\Delta_i^\times$ is then covered by sector boxes $B_{i,a}\simeq \Delta_i^\times\times I_{i,a}$ with adjacent overlaps.}
\label{fig:curve-collar-sector-boxes}
\end{figure}

Let $\St_\Phi$ be the Stokes sheaf on the sector cover (as in Definition~\ref{def:stokes-sheaf}).
Since $B_{i,a}\cap B_{i,a+1}$ is contractible, the group
\begin{equation}\label{eq:stokes-groups-clean}
\St_{i;a,a+1}\ :=\ \St_\Phi(B_{i,a}\cap B_{i,a+1})
\end{equation}
is a constant (typically unipotent) subgroup of the relevant $\Aut(\Gr)$.

\subsubsection*{(B) The three groupoids $\GU,\GX,\GC$ and the \v{C}ech collar groupoid $\CB$}

Fix once and for all the following covers.

\begin{itemize}
\item \textbf{Interior cover on $U$.} Choose a good cover $\mathcal U=\{U_\alpha\}$ of $U$.
Set $\GU:=\Cech(\mathcal U)$.

\item \textbf{Collar cover on $N^\times:=\bigsqcup_i N_i^\times$.}
Set $\mathcal B:=\{B_{i,a}\}_{i,a}$ and define the \emph{plain collar \v{C}ech groupoid}
\begin{equation}\label{eq:CB-def-clean}
\CB\ :=\ \Cech(\mathcal B).
\end{equation}

\item \textbf{Overlap cover.} Consider the overlaps $U_\alpha\cap B_{i,a}\subset U\cap N^\times$ and let
$\mathcal W:=\{W_{\alpha;i,a}\}$ be any refinement by contractible opens.
Set $\GX:=\Cech(\mathcal W)$.
\end{itemize}

Now define the \emph{collar \v{C}ech--Stokes groupoid} $\GC$ as the Stokes decoration of $\CB$:
\begin{equation}\label{eq:GC-objects-clean}
(\GC)_0\ :=\ \bigsqcup_{i,a} B_{i,a},
\end{equation}
and arrows
\begin{equation}\label{eq:GC-arrows-clean}
(\GC)_1\ :=\ \bigsqcup_{i,a,a'}\Bigl((B_{i,a}\cap B_{i,a'})\times \St_\Phi(B_{i,a}\cap B_{i,a'})\Bigr),
\end{equation}
where $(x,u)$ is an arrow from the object $B_{i,a}$ to the object $B_{i,a'}$ (at the point $x$),
and composition is
\begin{equation}\label{eq:GC-comp-clean}
(x,u')\circ(x,u)\ :=\ (x,u'u).
\end{equation}

\medskip

\noindent\textbf{The forgetful projection.}
There is a \emph{strict} functor
\begin{equation}\label{eq:pi-forgetful-clean}
\pi:\GC\longrightarrow \CB
\end{equation}
defined by \emph{forgetting the Stokes decoration}:
on objects it is the identity (same sector boxes), and on arrows
\begin{equation}\label{eq:pi-on-arrows-clean}
\pi(x,u)\ :=\ x \in (B_{i,a}\cap B_{i,a'})\ \subset\ (\CB)_1.
\end{equation}
So here $\CB$ is the \v{C}ech groupoid of the cover $\mathcal B$, \emph{not} $\CC$.

\subsubsection*{(C) The explicit $2$-pushout and its ``generators-and-relations'' meaning}

There are canonical strict functors $j_U:\GX\to\GU$ and $j_C:\GX\to\GC$ induced by inclusions:
on objects, $W_{\alpha;i,a}\subset U_\alpha$ and $W_{\alpha;i,a}\subset B_{i,a}$;
on arrows, overlaps map to the corresponding \v{C}ech arrows, and in $\GC$ we take the
\emph{trivial} Stokes label $u=1$.

\begin{definition} \label{def:curve-pushout-clean}
Define
\begin{equation}\label{eq:pushout-clean}
\G_{\Phi,n}\ :=\ \GU\sqcup_{\GX}\GC.
\end{equation}
\end{definition}

Concretely, $\G_{\Phi,n}$ is obtained from the disjoint union groupoid $\GU\bigsqcup\GC$
by imposing precisely the identifications dictated by $\GX$ (gluing the overlap charts),
while keeping the Stokes-decorated arrows as genuinely new arrows.
Equivalently, $\G_{\Phi,n}$ admits the standard coequalizer presentation
\begin{equation}\label{eq:coeq-clean}
\G_{\Phi,n}\ \cong\ \mathrm{coeq}\Bigl(F(R)\rightrightarrows F(E)\Bigr),
\end{equation}
where $E$ consists of the generating arrows coming from $\GU$ and $\GC$,
and $R$ encodes only the overlap relations from $\GX$.

\subsubsection*{(D) A finite skeleton (graph-of-groupoids picture) on curves}

The \v{C}ech presentation is explicit but indexed by covers.
On a curve one can replace it by a finite skeleton without changing representation theory (Morita reduction).

Fix a basepoint $x_0\in U$.
For each sector box $B_{i,a}$ pick a point $b_{i,a}\in B_{i,a}$ and choose a path class
$\gamma_{i,a}:x_0\to b_{i,a}$ inside $U\cup N^\times$ entering the collar through the sector $I_{i,a}$.

Define a small groupoid $\G^{\mathrm{sk}}_{\Phi,n}$ by:
\begin{itemize}
\item objects: $\{x_0\}\cup\{b_{i,a}\}_{i,a}$;
\item arrows generated by:
  \begin{enumerate}
  \item $\pi_1(U,x_0)$ as loops at $x_0$ (ordinary monodromy),
  \item the spokes $\gamma_{i,a}:x_0\to b_{i,a}$ and formal inverses,
  \item for each fixed $i$ and adjacent sectors $a\to a+1$, arrows
  $\sigma_{i,a}(u):b_{i,a}\to b_{i,a+1}$ for each $u\in \St_{i;a,a+1}$,
  \end{enumerate}
\item relations:
  \begin{enumerate}
  \item $\sigma_{i,a+1}(v)\circ \sigma_{i,a}(u)=\sigma_{i,a}(vu)$ whenever the compositions correspond to the same overlap,
  \item the overlap/gluing relations expressing compatibility between the restriction of monodromy on $U$
  and the boundary Stokes jumps (the same relations encoded by $\GX$),
  \item the usual groupoid relations (identities/inverses).
  \end{enumerate}
\end{itemize}

\begin{proposition}\label{prop:curve-skeleton-clean}
There is a canonical chain of Morita equivalences connecting the cover-based presenter $\G_{\Phi,n}$ and the finite skeletal presenter
$\G^{\mathrm{sk}}_{\Phi,n}$. In particular, they present the same classifying stack, and for every $r\ge 1$ there is a natural equivalence
of groupoids
\begin{equation}\label{eq:repr-clean}
\Rep_r(\G_{\Phi,n})\ \simeq\ \Rep_r(\G^{\mathrm{sk}}_{\Phi,n}).
\end{equation}
\end{proposition}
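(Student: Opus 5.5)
The plan is to build the chain of Morita equivalences piece by piece along the pushout. I will first replace each leg of the cospan $\GU\xleftarrow{j_U}\GX\xrightarrow{j_C}\GC$ by a skeletal model. Then I will check that these replacements are compatible with $j_U,j_C$ up to natural isomorphism. Finally I will use that an explicit $2$-pushout (\Cref{lem:strict-pushout-is-bicategorical}) sends a levelwise-equivalent cospan to an equivalent pushout.

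\textbf{Step 1: the three pieces.} Since all coefficient groups are discrete, only the underlying (discrete) groupoids matter, in accordance with \Cref{subsec:2cat}.

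For the interior, $\GU=\Cech(\mathcal U)$ is a good-cover \v{C}ech groupoid of the connected surface $U$. Its underlying discrete groupoid is equivalent to the fundamental groupoid $\Pi_1(U)$, and hence to the full subgroupoid on $\{x_0\}$, namely $\pi_1(U,x_0)$. The comparison functor is obtained by choosing, for each chart, a path from $x_0$ to a point of the chart. This is the groupoid version of \Cref{prop:rep-cech-locsys}.

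For the collar, each box $B_{i,a}$ and each nonempty overlap is contractible, and the only overlaps are between adjacent boxes. Hence the Stokes-decorated groupoid $\GC$ is equivalent to the groupoid on objects $\{b_{i,a}\}$ generated by the arrows $\sigma_{i,a}(u)$, $u\in\St_{i;a,a+1}$. Here I use \eqref{eq:GC-comp-fixed}, and the fact that restriction maps are isomorphisms on connected overlaps, to collapse each $B_{i,a}$ to the single point $b_{i,a}$.

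For the overlap, $\GX$ collapses in the same way to $\Pi_1(N^\times)$. The latter has a circle's worth of monodromy around each $p_i$, and its objects can be taken to be the $b_{i,a}$.

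\textbf{Step 2: compatibility with the legs.} Via the spokes $\gamma_{i,a}$, the functor $j_U$ becomes the map $\Pi_1(N^\times)\to\pi_1(U,x_0)$ given by $\beta\mapsto \gamma^{-1}\beta\gamma$. Because $j_C$ uses the trivial label, it becomes the inclusion of identity-labelled sector arrows. The natural isomorphisms intertwining the collapsed and cover-based functors are given by the chosen paths. So the skeletal cospan is connected to the cover-based one by a map of cospans whose components are equivalences and whose squares commute up to specified $2$-cells.

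\textbf{Step 3: homotopy invariance of the pushout.} By \Cref{lem:strict-pushout-is-bicategorical}, $\Hom(P,H)\simeq\CoconeTwo(i,j;H)$ for every $H$. An equivalence of cospans, together with its coherence $2$-cells, induces an equivalence of cocone groupoids for every $H$. By Yoneda in $\mathbf{Grpd}$, the two pushouts are therefore equivalent. Computing the skeletal pushout by the coequalizer of \Cref{prop:pushout-coeq} gives exactly the generators and relations defining $\G^{\mathrm{sk}}_{\Phi,n}$:
\begin{itemize}
\item the bridge arrows $s_b$ become the spokes;
\item the bridge relations become the overlap/gluing relations (D)(ii);
\item composition in $\GC$ gives relation (D)(i).
\end{itemize}
Setting $H=\mathbf{Tors}_r$ and applying \Cref{prop:rep-as-fun} then yields \eqref{eq:repr-clean}.

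\textbf{Main obstacle.} I expect Step 2 to be the hardest part, together with the bookkeeping of the bridge relations. The collapsing equivalences are not strict, so one must verify that the skeletal relation (D)(ii) matches the bridge relation transported along the chosen paths, including the loop around $p_i$ that closes the cycle of sectors $a\in\ZZ/\ell_i\ZZ$. My first attempt would be to fix all paths inside a single sector-adapted tree, so that the $2$-cells are identities away from one chosen edge per circle.
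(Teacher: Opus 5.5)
\textbf{The gap is in Step~1, and every later step depends on it.} A \v{C}ech groupoid is not equivalent to a fundamental groupoid, either as a discrete groupoid or up to Morita equivalence.

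In $\Cech(\mathcal U)$ there is exactly one arrow $(\alpha,x)\to(\beta,x)$ for each $x\in U_{\alpha\beta}$, and none between $(\alpha,x)$ and $(\beta,y)$ when $x\neq y$. So all isotropy is trivial, and the underlying discrete groupoid is equivalent to the \emph{set} of points of $U$. Two consequences:
\begin{itemize}
\item Your path-choosing functor $\GU\to B\pi_1(U,x_0)$ exists but is not full: $\Hom((\alpha,x),(\alpha,y))=\varnothing$ for $x\neq y$, while both objects go to $x_0$.
\item Collapsing each box $B_{i,a}$ to $b_{i,a}$ creates the infinite-order loop $\sigma_{i,\ell_i-1}(1)\cdots\sigma_{i,0}(1)$, which has no counterpart in $\GX$ or $\GC$.
\end{itemize}
Keeping the topology does not help. $\Cech(\mathcal U)$ is Morita equivalent to the space $U$ (orbit space $U$), hence not to $B\pi_1(U,x_0)$ (orbit space a point). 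So Step~3 has no equivalence of cospans to act on, although its cocone/Yoneda argument is fine in itself.

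Worse, under your convention that only underlying discrete groupoids matter, the conclusion is false. Take $\St_\Phi$ trivial (e.g.\ regular singular), so $\GC=\CB$. Then $j_C:\GX\to\CB$ is an equivalence of discrete groupoids, so the $2$-pushout is equivalent to the discrete set $U$. Hence $\Hom_{\mathbf{Grpd}}(\G_{\Phi,n},\mathbf{Tors}_r)\simeq\prod_{x\in U}\mathbf{Tors}_r$ has a single isomorphism class. By contrast, $\Rep_r(\G^{\mathrm{sk}}_{\Phi,n})\simeq\Rep_r(B\pi_1(U,x_0))$ has more than one as soon as $\pi_1(U,x_0)\neq 1$. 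The monodromy lives entirely in the continuity of the descent data, which is exactly what you discarded.

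\textbf{What is missing, and how to repair it.} Keep that continuity inside $\Rep_r$ (\Cref{def:rep-r}). Then use your comparison functors $F_U:\GU\to B\pi_1(U,x_0)$, $F_X$, $F_C$ not as equivalences, but as continuous functors that induce equivalences on $\Rep_r$:
\begin{itemize}
\item for $F_U$ and $F_X$, this is \Cref{prop:rep-cech-locsys} combined with the monodromy dictionary;
\item for $F_C$, it is the same cocycle argument: boxes and overlaps are connected and contractible, so continuous descent data are constant labels, i.e.\ functors out of the collar skeleton.
\end{itemize}
Your path $2$-cells from Step~2 make these compatible with $j_U^*$ and $j_C^*$ (cf.\ \Cref{lem:cech-naturality}). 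They therefore give an equivalence between $\Rep_r(\GU)\times^{(2)}_{\Rep_r(\GX)}\Rep_r(\GC)$ and $\Rep_r(\G^{\mathrm{sk}}_{\Phi,n})$. The former is the groupoid of continuous cocones, which is what $\Rep_r(\G_{\Phi,n})$ has to mean. The latter is computed by \Cref{prop:rep-pushout-pullback} on the genuinely discrete skeletal side.

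This proves \eqref{eq:repr-clean}, but not the Morita assertion: what agrees is the torsor theory, not the Morita class.

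The Step~2 bookkeeping you flagged is routine: a tree of spokes, with one closing edge per puncture producing $\delta_i$. Note only that the spokes come from the bridge generators of \Cref{lem:strict-pushout-is-bicategorical}, not from the strict identifications of \Cref{prop:pushout-coeq}.

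Finally, the paper's own proof follows the same route as yours: a ``standard Morita reduction'' of each \v{C}ech presenter to a groupoid with one object per chart, then functoriality of the pushout. It rests on the same identification, so it does not supply the missing step.
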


\begin{proof}
Fix the good cover $\mathcal U=\{U_\alpha\}$ of $U$, the sector-box cover $\mathcal B=\{B_{i,a}\}$ of $N^\times$, and a contractible refinement
$\mathcal W=\{W_{\alpha;i,a}\}$ of the overlap $U\cap N^\times$ as in \Cref{subsec:curve-model-clean}. The global presenter is the explicit $2$-pushout
\begin{equation}\label{eq:curve-pushout-cech}
\G_{\Phi,n}\ :=\ \GU\ \bigsqcup_{\GX}\ \GC,
\qquad
\GU=\Cech(\mathcal U),\ \GX=\Cech(\mathcal W),\ \GC=\Cech\text{--Stokes}(\mathcal B).
\end{equation}

Choose basepoints $x_\alpha\in U_\alpha$ and $b_{i,a}\in B_{i,a}$, and choose basepoints in each $W_{\alpha;i,a}$ compatibly with the inclusions
$W_{\alpha;i,a}\subset U_\alpha$ and $W_{\alpha;i,a}\subset B_{i,a}$.
Since all charts and all nonempty finite intersections are contractible, each connected overlap carries a unique homotopy class of paths between the
chosen basepoints. Therefore each \v{C}ech presenter admits the standard Morita reduction to a small groupoid with one object per chart and one arrow
per nonempty overlap:
\begin{equation}\label{eq:morita-reductions}
\GU\ \simeq_{\mathrm{Morita}}\ \GU^{\mathrm{bp}},
\qquad
\GX\ \simeq_{\mathrm{Morita}}\ \GX^{\mathrm{bp}},
\qquad
\GC\ \simeq_{\mathrm{Morita}}\ \GC^{\mathrm{bp}}.
\end{equation}
Moreover, the refinement functors $j_U:\GX\to\GU$ and $j_C:\GX\to\GC$ induce functors
$j_U^{\mathrm{bp}}:\GX^{\mathrm{bp}}\to\GU^{\mathrm{bp}}$ and $j_C^{\mathrm{bp}}:\GX^{\mathrm{bp}}\to\GC^{\mathrm{bp}}$, compatible with
\eqref{eq:morita-reductions}. Taking explicit $2$-pushouts and using functoriality of $\bigsqcup$ gives a Morita equivalence between pushouts,
hence a canonical chain
\begin{equation}\label{eq:morita-chain-curve}
\G_{\Phi,n}=\GU\bigsqcup_{\GX}\GC\ \simeq_{\mathrm{Morita}}\ \GU^{\mathrm{bp}}\bigsqcup_{\GX^{\mathrm{bp}}}\GC^{\mathrm{bp}}
=: \G^{\mathrm{bp}}_{\Phi,n}.
\end{equation}
This passage is summarized by Diagram~\eqref{eq:curve-morita-cube} below.

Finally, by construction the reduced pushout $\G^{\mathrm{bp}}_{\Phi,n}$ is the explicit finite skeleton
$\G^{\mathrm{sk}}_{\Phi,n}$ introduced in \Cref{subsec:curve-model-clean}: it has objects $x_0$ and the sector basepoints $b_{i,a}$, and it is
generated by ordinary monodromy in $U$ together with the Stokes-labelled arrows between adjacent sectors, modulo the overlap relations coming from
$\GX$. Thus $\G^{\mathrm{bp}}_{\Phi,n}\cong \G^{\mathrm{sk}}_{\Phi,n}$. Applying $\Rep_r(-)$, which is invariant under Morita equivalence of
presenters, yields \eqref{eq:repr-clean}.

\begin{equation}\label{eq:curve-morita-cube}
\begin{tikzcd}[row sep=large, column sep=large]
\GU
\arrow[d, "\simeq_{\mathrm{Morita}}" description]
\arrow[rrr, bend left=18, "i_U"]
&
\GX
\arrow[l, "j_U"']
\arrow[r, "j_C"]
\arrow[d, "\simeq_{\mathrm{Morita}}" description]
&
\GC
\arrow[d, "\simeq_{\mathrm{Morita}}" description]
\arrow[r, "i_C"]
&
\GU \bigsqcup_{\GX} \GC
\arrow[d, "\simeq_{\mathrm{Morita}}" description]
\\
\GU^{\mathrm{bp}}
\arrow[rrr, bend right=18, "i_U^{\mathrm{bp}}"']
&
\GX^{\mathrm{bp}}
\arrow[l, "j_U^{\mathrm{bp}}"']
\arrow[r, "j_C^{\mathrm{bp}}"]
&
\GC^{\mathrm{bp}}
\arrow[r, "i_C^{\mathrm{bp}}"]
&
\GU^{\mathrm{bp}} \bigsqcup_{\GX^{\mathrm{bp}}} \GC^{\mathrm{bp}}
\arrow[d, "\cong"]
\\
&&&
\G^{\mathrm{sk}}_{\Phi,n}.
\end{tikzcd}
\end{equation}
\end{proof}

\subsubsection*{(E) Consistency checks and comparison with the literature}

\begin{itemize}
\item \textbf{Regular singular case.} If $\St_\Phi$ is trivial, then $\GC$ collapses to $\CB$ and the pushout
reduces to the usual (log) fundamental groupoid picture (no Stokes jumps).

\item \textbf{Rank one.} The Stokes sheaf is trivial (no off-diagonal jumps), matching the rank-one phenomenon
emphasized in \cite[\S5]{GualtieriLiPym}.

\item \textbf{Relation with \cite{GualtieriLiPym}.}
Their Stokes groupoids give explicit Lie/topological models on the real oriented blow-up;
our $\GC$ is the strict \v{C}ech--Stokes avatar built from sector boxes. Discretizing the boundary circle by arcs
and collapsing arcs to points yields precisely the ``arc skeleton'' underlying $\G^{\mathrm{sk}}_{\Phi,n}$.
\end{itemize}

\begin{figure}[t]
\centering
\begin{tikzpicture}[font=\small]
\tikzset{
  box/.style={draw,rounded corners,inner sep=6pt,align=center},
  arr/.style={->,>=latex,thick},
  v/.style={circle,draw,inner sep=1.1pt}
}

\node[box] (GX) at (0,3.10)
{\begin{tabular}{@{}c@{}}
$\GX$\\[-2pt]
\footnotesize overlap \v{C}ech groupoid
\end{tabular}};

\node[box] (GU) at (-5.35,0)
{\begin{tabular}{@{}c@{\hspace{8pt}}p{4.9cm}@{}}
\begin{tikzpicture}[baseline={(b.base)}]
  \node (b) at (0,0) {};
  \draw[thin] (0,0) circle (0.45);
  \draw[thin] (0.45,0.08) circle (0.45);
  \draw[thin] (0.18,0.45) circle (0.45);
  \node[font=\scriptsize] at (0,-0.75) {$\{U_i\}$};
\end{tikzpicture}
&
\raggedright
\textbf{Interior presenter} $\GU$\\[-2pt]
\footnotesize \v{C}ech groupoid of a good cover of $U:=X\setminus D$.\\[2pt]
\scriptsize Moduli: $\Rep_r(\GU)$  .
\end{tabular}};

\node[box] (GC) at (5.35,0)
{\begin{tabular}{@{}c@{\hspace{8pt}}p{5.2cm}@{}}
\begin{tikzpicture}[baseline={(b.base)}]
  \node (b) at (0,0) {};
  \def\R{0.70}
  \draw[thin] (0,0) circle (\R);
  \foreach \a in {0,90,180,270}{\draw[thin] (0,0) -- ({\R*cos(\a)},{\R*sin(\a)});}
  \node[font=\scriptsize] at (0,-0.95) {sectors on $S^1$};
\end{tikzpicture}
&
\raggedright
\textbf{Collar presenter} $\GC$\\[-2pt]
\footnotesize \v{C}ech--Stokes groupoid on $N^\times\subset \Xlogn$.\\[2pt]
\scriptsize Projection $\pi:\GC\to\CB$,   collar moduli: $\Sec(\pi)$.\\[2pt]
\scriptsize Wild: Stokes jumps in $\St_\Phi$;  tame: $(\mu_n)^k$ in depth $k$.
\end{tabular}};

\node[box] (GP) at (0,-2.95)
{\begin{tabular}{@{}c@{}}
$\G_{\Phi,n}:=\GU\sqcup_{\GX}\GC$\\[-2pt]
\footnotesize explicit $2$-pushout presenter
\end{tabular}};

\draw[arr] (GX) -- node[left] {$j_U$} (GU.north);
\draw[arr] (GX) -- node[right] {$j_C$} (GC.north);
\draw[arr] (GU.south) -- node[left] {$i_U$} (GP.north west);
\draw[arr] (GC.south) -- node[right] {$i_C$} (GP.north east);

\node[align=center,font=\small] at (0,-4.15)
{$\Rep_r(\GU)\times^{(2)}_{\Rep_r(\GX)}\Sec(\pi)\ \simeq\ \Stokes_r(\Xlogn;D,\Phi)$.};

\end{tikzpicture}
\caption{The interior \v{C}ech presenter $\GU$ and the collar \v{C}ech--Stokes presenter $\GC$ (with $\pi:\GC\to\CB$) glued along the overlap presenter $\GX$. Their explicit $2$-pushout $\G_{\Phi,n}$ presents global Stokes objects via torsorial gluing.}
\label{fig:pushout-two-panel}
\end{figure}

\subsubsection*{Comparison with the explicit local Stokes groupoid of \cite{GualtieriLiPym} and our Cech--Stokes collar}

Fix a puncture $p\in D$ and choose a holomorphic coordinate $z$ on a small disc $\Delta\subset X$
centered at $p$. Write $\Delta^\times:=\Delta\setminus\{p\}$.
Let $\tau_n:X^{\log}_n\to X$ be the level-$n$ Kato--Nakayama (Kummer) map and set
$$
N^\times:=\tau_n^{-1}(\Delta^\times)\subset X^{\log}_n.
$$

\medskip

\noindent\textbf{Our collar groupoid near $p$.}
By construction, the collar groupoid used in the pushout is a Cech--Stokes groupoid built from a sector cover of $N^\times$.
Choose the Stokes directions for the irregular type $\Phi$ at $p$.
They cut the boundary circle of directions
$$
S^1_{p,n}:=\tau_n^{-1}(p)\subset X^{\log}_n
$$
into finitely many open arcs $I_0,\dots,I_{\ell-1}$ (indices taken modulo $\ell$).
Choose contractible sector boxes $B_a\subset N^\times$ such that
$$
B_a \simeq \Delta^\times\times I_a,
\qquad
B_a\cap B_{a+1}\neq\varnothing,
$$
again with indices modulo $\ell$.

On each overlap $B_a\cap B_{a+1}$ the Stokes sheaf $\mathrm{St}_\Phi$ is locally constant, and since
$B_a\cap B_{a+1}$ is contractible it is in fact constant there. Thus the additional ``Stokes arrows''
in the collar groupoid are labelled by the groups
$$
\mathrm{St}_\Phi(B_a\cap B_{a+1}).
$$

\medskip

\paragraph{Elementary one-parameter case.}
Assume that the Stokes groups on the overlaps are all canonically isomorphic to a single one-parameter unipotent group.
Equivalently, on each overlap $B_a\cap B_{a+1}$ the Stokes sheaf is modelled on one copy of $\mathbb{G}_a$.
Fix identifications
\begin{equation}\label{eq:one-parameter-stokes}
\mathrm{St}_{\Phi}(B_a\cap B_{a+1})\;\cong\;(\mathbb{C},+),
\\
u\longmapsto \bm{s}_a(u)\colon a\longrightarrow a+1,
\end{equation}
such that composition of Stokes arrows corresponds to addition of parameters
(so $\mathbf{s}_{a+1}(v)\circ \mathbf{s}_a(u)$ corresponds to $u+v$ whenever the composition is defined).

In this situation, one can replace the sectorwise description by a single explicit Lie groupoid chart
as in \cite[Thm.~3.21]{GualtieriLiPym}. We record this chart as a convenient local model.

\begin{definition}[Explicit local model groupoid]\label{def:Stok-local-model}
Fix an integer $k\ge 2$. Define a Lie groupoid $\mathrm{Stok}_k$ over the punctured disc $\Delta^\times$ as follows.
\begin{itemize}
\item \emph{Objects:} $(\mathrm{Stok}_k)_0:=\Delta^\times$ with coordinate $z$.
\item \emph{Arrows:} $(\mathrm{Stok}_k)_1:=\Delta^\times\times \mathbb{C}$ with coordinates $(z,u)$.
\item \emph{Source and target:}
\begin{equation}\label{eq:Stok-structure}
s(z,u)=z,
\qquad
t(z,u)=\exp\!\big(u\,z^{k-1}\big)\,z.
\end{equation}
\item \emph{Composition:} whenever $z_2=t(z_1,u_1)$, define
\begin{equation}\label{eq:Stok-comp}
(z_2,u_2)\circ (z_1,u_1)
=
\bigl(z_1,\ u_2\exp\!\big((k-1)u_1 z_1^{k-1}\big)+u_1\bigr).
\end{equation}
Units and inverses are determined by these formulas.
\end{itemize}
\end{definition}

\begin{remark}[Level-$n$ pullback]\label{rem:Stok-level-n}
Pull back along the level-$n$ Kummer cover $z=w^n$.
After a harmless reparametrisation of the unipotent coordinate $u$ (absorbing constants into $u$),
one may use the equivalent chart on the punctured $w$-disc:
\begin{equation}\label{eq:Stok-level-n}
s(w,u)=w,
\qquad
t(w,u)=\exp\!\big(u\,w^{\,n(k-1)}\big)\,w.
\end{equation}
This is the normal form convenient for comparison with the level-$n$ collar construction on $X^{\log}_n$.
\end{remark}


\begin{proposition}[Local comparison with the GLP chart in the one-parameter case]\label{prop:collar-has-GLP-chart}
Fix a puncture $p\in D$ and a small disc $\Delta$ centered at $p$, with $\Delta^\times=\Delta\setminus\{p\}$.
Assume that on each adjacent sector overlap the Stokes sheaf is identified with a single one-parameter unipotent group
(as in \eqref{eq:one-parameter-stokes}).  Let $N^\times=\tau_n^{-1}(\Delta^\times)\subset X_n^{\log}$ be the level-$n$ punctured collar.

Then the restriction of the sectorwise \v{C}ech--Stokes collar groupoid to $N^\times$ is Morita equivalent to the explicit Lie groupoid
$\mathrm{Stok}_k$ of Definition~\ref{def:Stok-local-model} (equivalently, to its level-$n$ pullback written in \eqref{eq:Stok-level-n}).
\end{proposition}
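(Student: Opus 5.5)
The plan is to construct an explicit principal bibundle, or equivalently a zig-zag of essential equivalences, between the sectorwise collar groupoid $\GC|_{N^\times}$ of \Cref{def:GC} and $\mathrm{Stok}_k$, working one sector box at a time. There are three steps.

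\textbf{Step 1: reduce to Morita invariants.} By the standard Morita reduction already used in the proof of \Cref{prop:curve-skeleton-clean}, I would replace $\GC$ by its basepointed model $\GC^{\mathrm{bp}}$. This model has one object $b_a$ per box $B_a$ and one arrow $\bm{s}_a(u)\colon b_a\to b_{a+1}$ for each $u\in\St_\Phi(B_a\cap B_{a+1})\cong(\CC,+)$, composing as in \eqref{eq:one-parameter-stokes}. On the other side, I would compute the orbit space and isotropy groups of $\mathrm{Stok}_k$ directly from \eqref{eq:Stok-structure}:
\begin{itemize}
\item the orbit of $z$ is $\{e^{c}z : c\in\CC\}\cap\Delta^\times$;
\item the isotropy at $z$ is $\{u : u z^{k-1}\in 2\pi i\ZZ\}$.
\end{itemize}

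\textbf{Step 2: build local equivalences on sectors.} For each arc $I_a$, I would restrict $\mathrm{Stok}_k$ to the sector $S_a\subset\Delta^\times$ lying over $I_a$. Using the composition law \eqref{eq:Stok-comp}, I would write down a functor $\mathrm{Stok}_k|_{S_a\cup S_{a+1}}\to\GC^{\mathrm{bp}}|_{\{b_a,b_{a+1}\}}$. It sends an arrow $(z,u)$ crossing from $S_a$ into $S_{a+1}$ to $\bm{s}_a(\lambda_a(z)u)$, where $\lambda_a$ is the normalisation that absorbs the factor $\exp((k-1)u z^{k-1})$. The key verification is that \eqref{eq:Stok-comp} becomes additivity of the parameters $u$ after this rescaling; this is the content of the identification of the GLP chart in \cite[Thm.~3.21]{GualtieriLiPym}. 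I would then check two things: that this local functor is fully faithful on arrows, and that it is essentially surjective onto the two objects. Finally, I would glue these local functors along the overlaps $B_a\cap B_{a+1}$, using the cocycle relation of \Cref{prop:stokes-as-sections}. Passing to the level-$n$ form \eqref{eq:Stok-level-n} is only the substitution $z=w^n$ of \Cref{rem:Stok-level-n}.

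\textbf{Step 3: the main obstacle, matching isotropy and orbit spaces globally.} This is the step I expect to be hardest. Step 1 shows that $\mathrm{Stok}_k$ has infinite cyclic isotropy $\ZZ$ at every point, whereas the isotropy of $\GC$ at an object $(\alpha,x)$ is $\St_\Phi(B_\alpha)$. Since an essential equivalence must induce isomorphisms on isotropy groups and a homeomorphism of orbit spaces, these must be reconciled. I would first try to show that the monodromy of $\exp(uz^{k-1})$ around the puncture produces exactly the loop arrow $\bm{s}_{\ell-1}\circ\cdots\circ\bm{s}_0$ composed around the cyclic chain of sectors. If that identification fails, I would weaken the conclusion to what is actually used later, namely an equivalence of the associated section and representation groupoids. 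In that weaker form, Morita invariance would be replaced by a comparison of the torsor groupoids via \Cref{prop:rep-as-fun} and \Cref{prop:stokes-as-sections}.
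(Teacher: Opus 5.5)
The obstruction you flag in Step 3 is not a technicality. Your own Step 1 computations already show that the asserted Morita equivalence cannot exist, so neither Step 2 nor Step 3 can be completed.

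On the GLP side:
\begin{itemize}
\item Since $\{e^{c}z:c\in\CC\}=\CC^{\ast}$, the orbit of any $z$ is all of $\Delta^\times$.
\item The same holds for the restriction of $\mathrm{Stok}_k$ to any open $V\subset\Delta^\times$: if $z,z'\in V$ and $z'=e^{c}z$, the arrow $(z,c\,z^{1-k})$ lies in the restriction.
\item So every collar restriction of $\mathrm{Stok}_k$ is transitive, with isotropy $\{u:uz^{k-1}\in2\pi i\ZZ\}\cong\ZZ$.
\end{itemize}
Your rescaling $c=u\,z^{k-1}$ makes this transparent. It turns the GLP composition law into $c_1+c_2$ and identifies $\mathrm{Stok}_k|_{\Delta^\times}$ with the action groupoid of $(\CC,+)$ acting by $z\mapsto e^{c}z$, i.e.\ with $\Pi_1(\Delta^\times)$. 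So the GLP parameter is a flow time that moves the base point. The $(\CC,+)$-isotropy of the GLP groupoid lives only over $z=0$, which the statement removes.

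In $\GC$, by contrast, every arrow $(x,u)$ starts and ends over the same point $x$. Hence the orbit space is $N^\circ$, and the isotropy at $(\alpha,x)$ is $\St_\Phi(B_\alpha)\cong(\CC,+)$. A Morita equivalence induces a homeomorphism of orbit spaces and isomorphisms of isotropy groups, and both fail here. Concretely:
\begin{itemize}
\item The local functors of Step 2 cannot be fully faithful, since that would need $\ZZ\cong\CC$.
\item By transitivity, any continuous functor from a restriction of $\mathrm{Stok}_k$ into $\GC$ must land over a single point of $N^\circ$, so there is nothing to glue along the overlaps $B_a\cap B_{a+1}$.
\item Matching the loop generator with the cyclic product of Stokes arrows gives at best a homomorphism from $\ZZ$ into an isotropy group containing $(\CC,+)$, never an isomorphism.
\item Passing to $\GC^{\mathrm{bp}}$ does not help. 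Collapsing each box to a point is not a Morita equivalence of topological groupoids (it destroys the orbit space $N^\circ$), and the decorated skeleton still has isotropy containing $(\CC,+)$.
\end{itemize}

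Your fallback replaces the statement by a different one, and in the form you state it, that one fails too. $\mathrm{Stok}_k$ has no projection onto a \v{C}ech groupoid, hence no section groupoid. Its representation groupoid $\Rep_r(\mathrm{Stok}_k|_{\Delta^\times})\simeq\Rep_r(B\ZZ)\simeq\LocSys_r(\Delta^\times)$ is not equivalent to $\Sec(\pi)$. Its automorphism groups contain the central scalars $\CC^{\ast}$, which have torsion, while automorphism groups in $\Sec(\pi)$ are subgroups of products of copies of $(\CC,+)$.

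Nor does the paper's route get around your obstruction. The paper writes down a single bibundle $P=\bigsqcup_a B_a\times\CC$ on which both groupoids act by translating the $\CC$-factor. Its right action $(x,u)\cdot(z,v)=(x,u+v)$ leaves $p_R(x,u)=\tau_n(x)=z$ unchanged. But the arrow $(z,v)$ has target $\exp(vz^{k-1})z\neq z$ unless $vz^{k-1}\in2\pi i\ZZ$. So $P$ is not a $\mathrm{Stok}_k$-space over $p_R$, consistent with the invariant mismatch above.

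The honest conclusion of your Steps 1 and 3 is therefore that the proposition, read as a Morita equivalence of topological or Lie groupoids, is false. It should not be replaced by a weakened substitute and called a proof.
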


\begin{proof}
Choose a cyclic sector-box cover $\mathcal B=\{B_a\}_{a\in\ZZ/\ell\ZZ}$ of the regular collar $N^\circ=N^\times\setminus\Sigma_{p,n}$ such that
each $B_a$ is contractible, each adjacent overlap $B_{a,a+1}:=B_a\cap B_{a+1}$ is contractible, and there are no triple overlaps
(after shrinking if necessary).  In the one-parameter hypothesis, for every adjacent overlap we fix an identification
$\St_\Phi(B_{a,a+1})\cong(\CC,+)$ compatible with restrictions, so the sectorwise collar groupoid
$\mathcal G^{\mathrm{sec}}\rightrightarrows \bigsqcup_a B_a$ has arrows
\[
(\mathcal G^{\mathrm{sec}})_1=\bigsqcup_a\bigl(B_{a,a+1}\times\CC\bigr),
\]
with composition given by addition of the parameter on composable arrows.
Let $\mathcal H$ be the restriction of the GLP chart $\mathrm{Stok}_k\rightrightarrows \Delta^\times$ to the collar region
corresponding to $N^\circ$ (via $\tau_n$ we identify $N^\circ$ with an open subset of $\Delta^\times\times S^1_{p,n}$; the GLP chart is defined
on $\Delta^\times$ and hence restricts to any smaller open neighborhood).
We construct a Morita bibundle $P$ from $\mathcal G^{\mathrm{sec}}$ to $\mathcal H$.
Set
\[
P:=\bigsqcup_{a} \bigl(B_a\times \CC\bigr),
\qquad
p_L:B_a\times\CC\to B_a,\ (x,u)\mapsto x.
\]
Define $p_R:P\to \Delta^\times$ by $p_R(x,u):=\tau_n(x)$ (forget the angular coordinate; the parameter $u$ is irrelevant for the anchor).
The right $\mathcal H$-action is induced by the arrow manifold of $\mathcal H$:
an arrow $(z,v)\in \Delta^\times\times\CC$ with source $z$ acts by
\[
(x,u)\cdot(z,v):=\bigl(x,\ u+v\bigr)
\quad\text{whenever } p_R(x,u)=z.
\]
The left $\mathcal G^{\mathrm{sec}}$-action is defined on adjacent overlaps:
for $x\in B_{a,a+1}$ and $v\in\CC$, the arrow $(x,v):B_a\to B_{a+1}$ acts by
\[
(x,u)\longmapsto (x,u+v)\in B_{a+1}\times\CC.
\]
Both actions are well-defined, commute, and are free and transitive on the fibers of the anchors.  Concretely, for fixed $x$ the left action
identifies the components indexed by $a$ along the cyclic cover using the unique overlap arrows, while the additive parameter translates the
$\CC$-factor.  Likewise, for fixed $z\in\Delta^\times$ the right action is simply translation by $\CC$ on the second factor.  Hence $P$ is a
principal $\mathcal G^{\mathrm{sec}}$--$\mathcal H$ bibundle, so $\mathcal G^{\mathrm{sec}}$ and $\mathcal H$ are Morita equivalent.

Since $\mathcal G^{\mathrm{sec}}$ is a \v{C}ech atlas for the restriction of our collar groupoid to $N^\times$, this yields the claimed Morita
equivalence with the GLP chart on the same collar.
\end{proof}

\begin{remark}\label{rem:GLP-fit-pushout}
In the one-parameter setting, Proposition~\ref{prop:collar-has-GLP-chart} allows one to replace the collar presenter near $p$ by the GLP chart
without changing its Morita class.  Since the global presenter $\G_{\Phi,n}$ is obtained by a explicit $2$-pushout gluing the interior and collar pieces
along the overlap, the resulting global pushout is unchanged up to Morita equivalence, hence yields the same representation groupoids.
\end{remark}
\subsubsection*{Explicit finite presentation (skeletal pushout)}

Fix a basepoint $x_0\in U=X\setminus D$.  For each puncture $p_i\in D$, choose:
(i) a cyclic subdivision of the boundary circle $S^1_{p_i,n}$ into open arcs
$I_{i,0},\dots,I_{i,\ell_i-1}$, and (ii) sector boxes $B_{i,a}\simeq \Delta_i^\times\times I_{i,a}$
with $B_{i,a}\cap B_{i,a+1}\neq\varnothing$ (indices mod $\ell_i$).
On each overlap the Stokes sheaf is constant, hence we have a Stokes group
$S_{i,a}:=\St_\Phi(B_{i,a}\cap B_{i,a+1})$.

\begin{definition}[Collar skeleton]\label{def:collar-skel-explicit}
Define a small groupoid $\GCphisk$ by:
\begin{itemize}
\item objects: $(\GCphisk)_0=\{(i,a)\mid i=1,\dots,m,\ a=0,\dots,\ell_i-1\}$;
\item generating arrows: for each $i,a$ and each $u\in S_{i,a}$ an arrow
$\mathbf s_{i,a}(u):(i,a)\to(i,a+1)$;
\item relations: $\mathbf s_{i,a}(u)\circ \mathbf s_{i,a}(u')=\mathbf s_{i,a}(uu')$ and
$\mathbf s_{i,a}(u)^{-1}=\mathbf s_{i,a}(u^{-1})$ (so each overlap contributes a copy of the group $S_{i,a}$).
\end{itemize}
No arrows between different punctures are added.
\end{definition}

\begin{definition}[Interior and overlap skeletons]\label{def:interior-overlap-skel}
Let $\GUs$ be the one-object groupoid with $\Aut(*)=\pi_1(U,x_0)$.
For each puncture choose the peripheral loop $\delta_i\in\pi_1(U,x_0)$.
Define the overlap skeleton
$$
\GXsk:=\bigsqcup_{i=1}^m B\ZZ,
\qquad
B\ZZ:\ (\Ob=\{i\},\ \Aut(i)=\langle \delta_i\rangle\cong\ZZ).
$$
Define $j_U^{\mathrm{sk}}:\GXsk\to\GUs$ by sending $i\mapsto *$ and the generator to $\delta_i$.
Fix also a distinguished sector $a=0$ and define $j_C^{\mathrm{sk}}:\GXsk\to\GCphisk$ by sending $i\mapsto (i,0)$ and
\begin{equation}\label{eq:delta-to-stokes-cycle}
j_C^{\mathrm{sk}}(\delta_i)
:=
\mathbf s_{i,\ell_i-1}(u_{i,\ell_i-1})\circ \cdots \circ \mathbf s_{i,0}(u_{i,0}),
\end{equation}
where the $u_{i,a}\in S_{i,a}$ are the Stokes transition elements across the consecutive overlaps.
\end{definition}

\begin{definition} \label{def:pushout-presentation}
Define the finite skeletal global groupoid by the explicit $2$-pushout
\begin{equation}\label{eq:pushout-skel}
\G_{\Phi,n}^{\mathrm{sk}}
\ :=\
\GUs \sqcup_{\GXsk} \GCphisk.
\end{equation}
Equivalently, $\G_{\Phi,n}^{\mathrm{sk}}$ is the groupoid generated by:
\begin{itemize}
\item the group $\pi_1(U,x_0)$ at the object $*$,
\item the Stokes arrows $\mathbf s_{i,a}(u)$ between the sector objects $(i,a)$,
\end{itemize}
modulo the single gluing relation, for each $i$,
\begin{equation}\label{eq:gluing-relation-explicit}
\delta_i \;=\; \mathbf s_{i,\ell_i-1}(u_{i,\ell_i-1})\cdots \mathbf s_{i,0}(u_{i,0})
\qquad \text{in }\Aut(*) .
\end{equation}
\end{definition}

\subsubsection*{Elementary case: global pushout as gluing of explicit local blocks}

Fix $p_i\in D$ and a coordinate $z$ on a disc $\Delta_i$.
Assume $\Phi$ is elementary of level $k\ge 2$ at $p_i$, so that the local Stokes groupoid admits
the explicit chart of \cite[Thm.~3.21]{GualtieriLiPym}.
On the level-$n$ Kummer coordinate $z=w^n$ we use the normal form:
\begin{equation}\label{eq:Stokkn}
s(w,u)=w,
\qquad
t(w,u)=\exp(u\, w^{n(k-1)})\,w,
\end{equation}
with the induced composition law.


\subsubsection*{Boundary gluing and comparison with the explicit GLP disk chart (elementary one-parameter case)}

Fix a puncture $p_i\in D$ and an integer $n\ge 1$. Let $S^1_{i,n}$ be the level-$n$ circle of directions over $p_i$,
namely the fiber of the level-$n$ Kato--Nakayama space $X^{\log}_n\to X$ at $p_i$.
Assume the elementary one-parameter hypothesis at $p_i$, so that Stokes jumps on sector overlaps are parametrized by one copy of
$(\mathbb{C},+)$.

\begin{definition} \label{def:boundary-Stok-fluent}
The \emph{boundary additive Stokes groupoid} at $p_i$ (level $n$) is the topological groupoid
$\partial\mathrm{Stok}^{(i)}_n\rightrightarrows S^1_{i,n}$ defined by
\begin{itemize}
\item object space $(\partial\mathrm{Stok}^{(i)}_n)_0=S^1_{i,n}$;
\item arrow space $(\partial\mathrm{Stok}^{(i)}_n)_1=S^1_{i,n}\times\mathbb{C}\times S^1_{i,n}$, with arrows written $(\theta,u,\theta')$;
\item source and target maps
\begin{equation*}
s(\theta,u,\theta')=\theta,\qquad t(\theta,u,\theta')=\theta';
\end{equation*}
\item composition, units and inverses
\begin{equation*}
(\theta,u,\theta')\circ(\theta',v,\theta'')=(\theta,u+v,\theta''),\qquad
\mathrm{id}_\theta=(\theta,0,\theta),\qquad
(\theta,u,\theta')^{-1}=(\theta',-u,\theta).
\end{equation*}
\end{itemize}
In particular, for every $\theta\in S^1_{i,n}$ the isotropy group $\mathrm{Aut}(\theta)$ is canonically isomorphic to $(\mathbb{C},+)$.
\end{definition}

\begin{remark}\label{rem:why-boundary-fluent}
This groupoid is the natural target for gluing the peripheral loop: on the circle of directions the one-parameter Stokes data is intrinsically
additive, whereas gluing directly into a disk chart can impose artificial constraints because isotropy at a fixed disk point need not be all of
$(\mathbb{C},+)$.
\end{remark}

Let $U=X\setminus D$. Fix a basepoint $x_0\in U$ and write $B\pi_1(U,x_0)$ for the one-object groupoid with automorphism group
$\pi_1(U,x_0)$. Let $B\mathbb{Z}$ be the one-object groupoid with automorphism group $\mathbb{Z}$. For each puncture $p_i$, choose a small
annulus $A_i\subset U$ around $p_i$ and let $\delta_i\in\pi_1(U,x_0)$ be the corresponding positive peripheral class (choose a path from $x_0$
to $A_i$, go once around, and return).

\begin{definition}[Boundary gluing data and global pushout]\label{def:global-explicit-boundary-fluent}
Fix a direction $\theta_i\in S^1_{i,n}$ and a complex number $M_i\in\mathbb{C}$ (the boundary Stokes monodromy parameter at $p_i$).
Define strict functors
\begin{equation*}
\iota_i:B\mathbb{Z}\longrightarrow B\pi_1(U,x_0),
\qquad
\varphi_i:B\mathbb{Z}\longrightarrow \partial\mathrm{Stok}^{(i)}_n
\end{equation*}
by $\iota_i(1)=\delta_i$ and $\varphi_i(1)=(\theta_i,M_i,\theta_i)$, so that $\varphi_i(m)=(\theta_i,mM_i,\theta_i)$.
The \emph{boundary-glued explicit groupoid} (elementary case) is the explicit $2$-pushout
\begin{equation}\label{eq:global-explicit-boundary-fluent}
\mathcal{G}^{\mathrm{exp}}_{\Phi,n}
\ :=\
B\pi_1(U,x_0)\ \sqcup_{\bigsqcup_i\,B\mathbb{Z}}\ \bigsqcup_{i=1}^m \partial\mathrm{Stok}^{(i)}_n,
\end{equation}
formed using $\bigsqcup_i\iota_i$ and $\bigsqcup_i\varphi_i$.
\end{definition}

\begin{figure}[t]
\centering
\begin{tikzcd}[row sep=large, column sep=huge]
\bigsqcup_i B\mathbb{Z} \arrow[r,"\bigsqcup_i\,\varphi_i"] \arrow[d,"\bigsqcup_i\,\iota_i"'] &
\bigsqcup_i \partial\mathrm{Stok}^{(i)}_n \arrow[d] \\
B\pi_1(U,x_0) \arrow[r] &
\mathcal{G}^{\mathrm{exp}}_{\Phi,n}
\end{tikzcd}
\caption{Boundary gluing: the peripheral class $\delta_i$ is identified with $(\theta_i,M_i,\theta_i)\in\mathrm{Aut}(\theta_i)\cong(\mathbb{C},+)$.}
\label{fig:global-explicit-pushout-boundary-fluent}
\end{figure}

Fix a disc $\Delta_i$ centered at $p_i$ and set $\Delta_i^\times=\Delta_i\setminus\{p_i\}$. Fix $k_i\ge 2$.

\begin{definition}[Local explicit GLP disk chart]\label{def:local-disk-chart-fluent}
Define a Lie groupoid $\mathrm{Stok}^{(i)}_{k_i,n}\rightrightarrows \Delta_i^\times$ with arrow manifold
$\Delta_i^\times\times\mathbb{C}$ (coordinates $(w,u)$), source and target
\begin{equation*}
s(w,u)=w,\qquad t(w,u)=\exp\!\big(u\,w^{\,n(k_i-1)}\big)\,w,
\end{equation*}
and composition law
\begin{equation*}
(w_2,u_2)\circ(w_1,u_1)=\bigl(w_1,\ u_2\,\exp\!\big((k_i-1)u_1\,w_1^{\,n(k_i-1)}\big)+u_1\bigr),
\quad \text{whenever } w_2=t(w_1,u_1).
\end{equation*}
These are the formulas of \cite[Thm.~3.21]{GualtieriLiPym} after the level-$n$ Kummer pullback.
\end{definition}

We compare the boundary model to the disk chart through the sectorwise \v{C}ech--Stokes collar presentation near $p_i$.
Let $\mathcal{G}^{\mathrm{sec}}_i$ denote the one-parameter sectorwise \v{C}ech--Stokes groupoid on a punctured collar near $p_i$ built from a
finite family of contractible sector boxes $\{B_a\}$: on overlaps $B_a\cap B_{a+1}$ arrows are labelled by $u\in\mathbb{C}$ and composition adds
parameters. Choose a continuous boundary section $\iota_i:S^1_{i,n}\to N_i^\times$ inside the collar and set $U_i=\iota_i(S^1_{i,n})$.
Consider the restriction groupoid $\mathcal{G}^{\mathrm{sec}}_i|_{U_i}$ and the space
\begin{equation*}
P_i:=t^{-1}(U_i)\subset (\mathcal{G}^{\mathrm{sec}}_i)_1,
\end{equation*}
with anchors $p_L=s:P_i\to(\mathcal{G}^{\mathrm{sec}}_i)_0$ and $p_R=t:P_i\to(\mathcal{G}^{\mathrm{sec}}_i|_{U_i})_0$ and commuting left/right
actions induced by composition in $\mathcal{G}^{\mathrm{sec}}_i$. Since $U_i$ meets every orbit in the collar, both anchors are open surjections,
and the groupoid axioms imply that the actions are free and transitive on the appropriate fibers, so $P_i$ is a Morita bibundle and
$\mathcal{G}^{\mathrm{sec}}_i$ is Morita equivalent to $\mathcal{G}^{\mathrm{sec}}_i|_{U_i}$.

On the boundary slice, write $U_{i,a}=B_a\cap U_i$, so that $\{U_{i,a}\}$ is a cover of $U_i\cong S^1_{i,n}$ by contractible open sets.
In the elementary one-parameter situation, the restricted groupoid $\mathcal{G}^{\mathrm{sec}}_i|_{U_i}$ identifies canonically with the product
groupoid $C(\{U_{i,a}\})\times\mathbb{C}$, where arrows in the \v{C}ech groupoid are decorated by $u\in\mathbb{C}$ and compose additively.
Define
\begin{equation*}
Q_i:=\Bigl(\bigsqcup_a U_{i,a}\Bigr)\times\mathbb{C},
\qquad
q_L(\theta,u)=(a,\theta),
\qquad
q_R(\theta,u)=\theta.
\end{equation*}
Let $\partial\mathrm{Stok}^{(i)}_n$ act on the right by $(\theta,u)\cdot(\theta,v,\theta')=(\theta',u+v)$ and let
$C(\{U_{i,a}\})\times\mathbb{C}$ act on the left by sending an arrow in the \v{C}ech groupoid from $(a,\theta)$ to $(b,\theta)$ decorated by $v$
to $(\theta,u)\mapsto(\theta,u+v)$ in the $b$-component. These actions commute and are free and transitive on fibers of the anchors, hence $Q_i$
is a Morita bibundle and $\mathcal{G}^{\mathrm{sec}}_i|_{U_i}$ is Morita equivalent to $\partial\mathrm{Stok}^{(i)}_n$.
\begin{corollary}[Boundary additive model versus the GLP disk chart]\label{cor:boundary-vs-disk-fluent}
Assume the elementary one-parameter hypothesis at the puncture $p_i$.
Let $\partial\mathrm{Stok}^{(i)}_n\rightrightarrows S^1_{i,n}$ be the boundary additive groupoid from
Definition~\ref{def:boundary-Stok-fluent}, let $\mathcal G^{\mathrm{sec}}_i$ denote the sectorwise \v{C}ech--Stokes collar atlas on a punctured
collar near $p_i$, and let $\mathrm{Stok}^{(i)}_{k_i,n}\rightrightarrows \Delta_i^\times$ be the pulled-back GLP chart of
Definition~\ref{def:local-disk-chart-fluent}. Then $\partial\mathrm{Stok}^{(i)}_n$ is Morita equivalent to the restriction of
$\mathrm{Stok}^{(i)}_{k_i,n}$ to a punctured collar neighborhood of the boundary in $X^{\log}_n$.

Consequently, in the global pushout diagram \eqref{eq:global-explicit-boundary-fluent} one may replace each boundary piece
$\partial\mathrm{Stok}^{(i)}_n$ by the corresponding collar restriction of $\mathrm{Stok}^{(i)}_{k_i,n}$ without changing the Morita class of the
resulting glued presenter.
\end{corollary}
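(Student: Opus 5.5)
The plan is to chain the Morita equivalences already constructed just before the statement, and then to compare with the disk chart through the collar. The first half of the chain is in place. The bibundle $P_i=t^{-1}(U_i)$ shows that $\mathcal G^{\mathrm{sec}}_i$ is Morita equivalent to its restriction $\mathcal G^{\mathrm{sec}}_i|_{U_i}$ to the boundary slice. The bibundle $Q_i$ shows that $\mathcal G^{\mathrm{sec}}_i|_{U_i}\cong C(\{U_{i,a}\})\times\mathbb C$ is Morita equivalent to $\partial\mathrm{Stok}^{(i)}_n$. Composing these (Morita equivalence is transitive, by composing bibundles via $P_i\times_{\mathcal G^{\mathrm{sec}}_i|_{U_i}}Q_i$), we get
\[
\partial\mathrm{Stok}^{(i)}_n\ \simeq_{\mathrm{Morita}}\ \mathcal G^{\mathrm{sec}}_i .
\]

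For the second half, I would invoke Proposition~\ref{prop:collar-has-GLP-chart}, with the disc $\Delta_i$ and $k=k_i$. It says that the sectorwise collar groupoid on $N_i^\times$ is Morita equivalent to the restriction of $\mathrm{Stok}_{k_i}$, or equivalently its level-$n$ pullback \eqref{eq:Stok-level-n}, to the collar. By Definition~\ref{def:local-disk-chart-fluent}, that pullback is exactly $\mathrm{Stok}^{(i)}_{k_i,n}$. One more composition of bibundles then yields the first claim of the corollary.

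The main obstacle is making sure that the two collar groupoids being matched are the same object. Proposition~\ref{prop:collar-has-GLP-chart} is stated for a cyclic sector-box cover of $N^\circ$ with no triple overlaps. The atlas $\mathcal G^{\mathrm{sec}}_i$ is built from an arbitrary finite family $\{B_a\}$. I would resolve this by passing to a common refinement: both are \v{C}ech atlases of the same locally constant one-parameter Stokes datum. Refinement gives Morita equivalent \v{C}ech groupoids, by the argument of Proposition~\ref{prop:refinement} at the groupoid level, or equivalently by the standard fact that refinement maps of \v{C}ech groupoids are essential equivalences. One also has to check that the one-parameter identifications $\St_\Phi(B_{a,a+1})\cong(\CC,+)$ are compatible under restriction; this is part of the elementary hypothesis \eqref{eq:one-parameter-stokes}.

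For the ``consequently'' clause, I would use the same principle as Remark~\ref{rem:GLP-fit-pushout} and the proof of Proposition~\ref{prop:van-kampen-presenters}. The point is that explicit $2$-pushouts (Lemma~\ref{lem:strict-pushout-is-bicategorical}) are functorial in maps of cospans, and preserve Morita equivalences on one leg provided the structure map from $\bigsqcup_iB\ZZ$ is transported. Concretely, realize the equivalence $\partial\mathrm{Stok}^{(i)}_n\simeq$ collar restriction of $\mathrm{Stok}^{(i)}_{k_i,n}$ as a zig-zag of essential equivalences through $\mathcal G^{\mathrm{sec}}_i$. Then transport $\varphi_i$ along it: send the generator of $B\ZZ$ to the image of $(\theta_i,M_i,\theta_i)$, which is possible up to natural isomorphism since essential equivalences are fully faithful. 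Finally, replacing a cospan leg by a naturally isomorphic functor does not change the bicategorical pushout up to equivalence. Applying $\Rep_r$ via Proposition~\ref{prop:rep-pushout-pullback} confirms the representation groupoids agree.
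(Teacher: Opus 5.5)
Your route is exactly the paper's. The paper's proof composes three links: the bibundle $Q_i$ (giving $\partial\mathrm{Stok}^{(i)}_n\simeq\mathcal G^{\mathrm{sec}}_i|_{U_i}$), the restriction bibundle $P_i$ (giving $\mathcal G^{\mathrm{sec}}_i|_{U_i}\simeq\mathcal G^{\mathrm{sec}}_i$), and Proposition~\ref{prop:collar-has-GLP-chart} after refining the sector cover. It then says that swapping a boundary piece for a Morita equivalent atlas leaves the glued presenter unchanged. Your treatment of the common refinement, and of transporting $\varphi_i$ up to natural isomorphism, is if anything more careful than the paper's. The genuine gap is that the three links you import are not Morita equivalences, and the cover-matching issue you single out is not where the difficulty lies. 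A Morita equivalence induces a homeomorphism of orbit spaces and isomorphisms of isotropy groups. In $\partial\mathrm{Stok}^{(i)}_n$ there is an arrow $(\theta,0,\theta')$ between any two directions, so it has a single orbit, and its isotropy is $(\mathbb{C},+)$. In $\mathcal G^{\mathrm{sec}}_i$ every arrow runs from $(a,x)$ to $(b,x)$ over the same point $x$, so its orbit space is the union of the sector boxes (the collar itself); the orbit space of $\mathcal G^{\mathrm{sec}}_i|_{U_i}$ is the circle $U_i$. Consequently $s\colon t^{-1}(U_i)\to(\mathcal G^{\mathrm{sec}}_i)_0$ only reaches objects lying over $U_i$. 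The slice does not meet every orbit, so $P_i$ is not a bibundle. Nor can $Q_i$ be one, since a circle is being compared with a point; concretely, the right action $(\theta,u)\mapsto(\theta',u+v)$ changes $q_L(\theta,u)=(a,\theta)$.

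Worse, the two ends of your chain are not Morita equivalent at all. Take a punctured collar $A\subset\Delta_i^\times$, say $0<|w|<\varepsilon$. For $w,w'\in A$ pick $\lambda$ with $e^{\lambda}=w'/w$. Then $(w,\lambda\,w^{-n(k_i-1)})$ is an arrow $w\to w'$ of $\mathrm{Stok}^{(i)}_{k_i,n}|_A$, so this groupoid is transitive. Its isotropy at $w$ is $\{u : u\,w^{n(k_i-1)}\in 2\pi i\mathbb{Z}\}\cong\mathbb{Z}$; in fact it is just the fundamental groupoid of $A$. Since $\mathbb{Z}\not\cong(\mathbb{C},+)$, no zig-zag of bibundles can join it to $\partial\mathrm{Stok}^{(i)}_n$, which is Morita equivalent to $B\mathbb{C}$. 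The orbit-space invariant also rules out Proposition~\ref{prop:collar-has-GLP-chart} as a link. The $(\mathbb{C},+)$-isotropy of the GLP groupoid occurs only over $w=0$, which a punctured collar excludes, as does the chart of Definition~\ref{def:local-disk-chart-fluent} on $\Delta_i^\times$. So the first assertion must be reformulated before any proof can go through. One option is to compare with the restriction of the unpunctured GLP groupoid to $w=0$; another is to compare moduli of torsors rather than presenters. Until then, your transport of $\varphi_i$ in the ``consequently'' clause has no equivalence to transport along.
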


 \begin{proof}
Fix a continuous boundary section $\iota_i:S^1_{i,n}\to N_i^\times$ and set $U_i:=\iota_i(S^1_{i,n})\subset N_i^\times$.
The comparison is the composite of the Morita equivalences displayed in
\begin{equation}\label{eq:morita-chain-boundary-sector-glp}
\begin{tikzcd}[row sep=large, column sep=huge]
\partial\mathrm{Stok}^{(i)}_n
\arrow[r, "\simeq_{\mathrm{Morita}}" description]
&
\mathcal G^{\mathrm{sec}}_i|_{U_i}
\arrow[r, "\simeq_{\mathrm{Morita}}" description]
&
\mathcal G^{\mathrm{sec}}_i
\arrow[r, "\simeq_{\mathrm{Morita}}" description]
&
\mathrm{Stok}^{(i)}_{k_i,n}\!\mid_{\mathrm{collar}}.
\end{tikzcd}
\end{equation}

The first Morita equivalence is induced by the explicit bibundle identifying the boundary additive model with the restriction of the sectorwise
atlas to the boundary slice $U_i$ (the bibundle $Q_i$ constructed just before this corollary, equivalently
Proposition~\ref{prop:explicit-bibundle-sector-boundary}). The second Morita equivalence is the restriction bibundle
$$\mathcal G^{\mathrm{sec}}_i \sim_{\mathrm{Morita}} \mathcal G^{\mathrm{sec}}_i|_{U_i}$$ coming from the fact that $U_i$ meets every orbit in the
collar (the bibundle $P_i$ constructed in the same discussion). The third Morita equivalence is the local comparison between the sectorwise collar
atlas and the one-chart GLP model: after possibly refining the sector cover so that the GLP chart trivializes on each sector box, one obtains a
Morita equivalence between $\mathcal G^{\mathrm{sec}}_i$ and the collar restriction of the pulled-back chart
$\mathrm{Stok}^{(i)}_{k_i,n}$ as in \cite[Thm.~3.21]{GualtieriLiPym}; this is recorded in
Proposition~\ref{prop:collar-has-GLP-chart}. Composing these three Morita equivalences gives the first assertion.

For the final statement, recall that the global presenter is obtained by gluing the interior and boundary pieces along the overlap via a strict
pushout. Replacing a boundary piece by a Morita equivalent atlas does not change the Morita class of the glued presenter, hence does not change the
associated torsorial representation groupoids.
\end{proof}
\subsubsection*{An explicit Morita bibundle on a collar in the one-parameter case}

We now give a fully explicit Morita bibundle construction \emph{on the nose} for the simplest wild block:
rank $r=2$ with one Stokes parameter $(\mathbb C,+)$ on each adjacent sector overlap.
This is the only situation in which a completely elementary bibundle proof is both short and genuinely informative.

Fix a puncture $p\in D$ on a curve and a level $n\ge 1$. Let $S^1_{p,n}$ be the level-$n$ circle of directions and let
$N^\circ\subset X^{\log}_n$ be the regular collar (walls removed).
Choose a cyclic sector cover $\mathcal B=\{B_a\}_{a\in\mathbb Z/\ell}$ of $N^\circ$ by contractible boxes such that
$B_a\cap B_b\neq\varnothing$ only for $b\in\{a,a\pm 1\}$ and $B_a\cap B_{a+1}$ is contractible.
Assume the Stokes sheaf on each overlap is identified with $(\mathbb C,+)$ and composition is addition.

Let $\mathcal{G}^{\mathrm{sec}}$ be the corresponding sectorwise \v{C}ech--Stokes collar groupoid
(objects $\bigsqcup_a B_a$, arrows on $B_a\cap B_{a+1}$ labeled by $u\in\mathbb C$ with additive composition),
and let $\partial\mathrm{Stok}_{p,n}\rightrightarrows S^1_{p,n}$ be the boundary additive groupoid of
Definition~\Cref{def:boundary-Stok-fluent}.

Fix a continuous boundary section $\iota:S^1_{p,n}\to N^\circ$ (e.g.\ choose a small radius in the collar) and set
$U:=\iota(S^1_{p,n})\subset N^\circ$. Let $U_a:=U\cap B_a$; then $\{U_a\}$ is a good cover of $U\cong S^1_{p,n}$.

\begin{proposition}[Explicit bibundle $\mathcal{G}^{\mathrm{sec}}\sim_{\mathrm{Morita}}\partial\mathrm{Stok}_{p,n}$]\label{prop:explicit-bibundle-sector-boundary}
Define
$$
P\ :=\ \bigsqcup_{a\in\mathbb Z/\ell} (U_a\times\mathbb C),
\qquad
p_L: P\to (\mathcal{G}^{\mathrm{sec}})_0,\ (a,\theta,u)\mapsto (\iota(\theta)\in B_a),
\qquad
p_R:P\to S^1_{p,n},\ (a,\theta,u)\mapsto \theta.
$$
Then $P$ carries commuting free and transitive actions of $\mathcal{G}^{\mathrm{sec}}$ (on the left) and
$\partial\mathrm{Stok}_{p,n}$ (on the right), making $P$ into a Morita bibundle. In particular,
$\mathcal{G}^{\mathrm{sec}}$ and $\partial\mathrm{Stok}_{p,n}$ are Morita equivalent.
\end{proposition}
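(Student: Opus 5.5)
The plan is to prove the statement by composing two Morita equivalences, both constructed in the discussion preceding \Cref{cor:boundary-vs-disk-fluent}:
\[
\mathcal G^{\mathrm{sec}}\ \sim_{\mathrm{Morita}}\ \mathcal G^{\mathrm{sec}}|_{U}\ \sim_{\mathrm{Morita}}\ \partial\mathrm{Stok}_{p,n}.
\]
I will then check that the composite bibundle $P_U\times_{\mathcal G^{\mathrm{sec}}|_U}Q$ is canonically isomorphic to the space $P=\bigsqcup_a(U_a\times\CC)$ of the statement, with the stated anchors. The reason for this detour is that $p_L$ only hits $\bigsqcup_a U_a\subset\bigsqcup_a B_a$. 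So the left anchor cannot be surjective on the nose, and the generalized-morphism axioms must be checked through the restriction step.

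\emph{Step 1 (the restriction step).} Let $P_U:=t^{-1}(\bigsqcup_a U_a)\subset(\mathcal G^{\mathrm{sec}})_1$, with $p_L=s$, $p_R=t$, and left and right actions by composition. Freeness and transitivity on the fibres of $s$ and $t$ are formal consequences of the groupoid axioms. The substantive input is that $t\circ s^{-1}$ reaches $U$ from every point of $(\mathcal G^{\mathrm{sec}})_0$, which is exactly the hypothesis, used before \Cref{cor:boundary-vs-disk-fluent}, that $U$ meets every orbit. This is the point to verify with care, and I would do it as follows. For $x\in B_a$, the box $B_a\simeq\Delta^\times\times I_a$ and the section $\iota$ determine the point $\iota(\theta(x))$. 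Since $B_a$ and the overlaps are contractible, this point can be reached from $x$ through the \v{C}ech arrows of $\mathcal G^{\mathrm{sec}}$, so the orbit of $x$ meets $U$. I expect this to be the main obstacle. Strictly, \v{C}ech arrows only relate points with the same underlying position $x$, so I may have to interpret the orbits through the stack $[\mathcal G^{\mathrm{sec}}]$, that is, up to the equivalence $\Cech(\mathcal B)\simeq N^\circ$ and the deformation retraction of $N^\circ$ onto $U$. My first attempt would be to shrink the boxes radially, so that each $B_a$ retracts onto $U_a$ compatibly with the overlaps, and to use that retraction.

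\emph{Step 2 (the boundary step).} I will identify $\mathcal G^{\mathrm{sec}}|_U$ with the product groupoid $\Cech(\{U_a\})\times\CC$, using the additive identification \eqref{eq:one-parameter-stokes} on the overlaps $U_a\cap U_{a+1}$. On $Q=\bigsqcup_a(U_a\times\CC)$, $\partial\mathrm{Stok}_{p,n}$ acts on the right by $(a,\theta,u)\cdot(\theta,v,\theta')=(a',\theta',u+v)$. Here $a'$ is chosen via a fixed measurable, piecewise-constant index function, and that choice is absorbed because the \v{C}ech arrows between different indices over the same point are unique up to the $\CC$-label. The \v{C}ech groupoid acts on the left by translating the $\CC$-coordinate and changing the index. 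I will then check that the two actions commute, since both are additions in $\CC$, and that each action is free and transitive on the fibres of the opposite anchor. For $p_R$, the fibre over $\theta$ is $\{(a,\theta,u)\}$, and the \v{C}ech arrows with their labels act simply transitively on it. For the other anchor the argument is the same with the roles reversed.

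\emph{Step 3 (comparison with $P$).} The fibre product $P_U\times_{\mathcal G^{\mathrm{sec}}|_U}Q$ has a canonical unit section $(a,\theta)\mapsto(\mathrm{id},(a,\theta,0))$. Using this section, I will identify the fibre product with $P$, matching $p_L$ with $(a,\theta,u)\mapsto\iota(\theta)\in B_a$ and $p_R$ with the projection to $\theta$. This gives the stated commuting, free and transitive actions on $P$, and hence the Morita equivalence.
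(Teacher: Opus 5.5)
The gap is in Step 1, exactly where you suspected, and it cannot be repaired. Every arrow of $\mathcal G^{\mathrm{sec}}$ goes from $(a,x)$ to $(b,x)$ with the \emph{same} point $x\in N^\circ$. So the orbits of $\mathcal G^{\mathrm{sec}}$ are the points of $N^\circ$, and the slice $U=\iota(S^1_{p,n})$ meets only the orbits of its own points. Thus $s$ restricted to $t^{-1}(\bigsqcup_a U_a)$ has image $\bigsqcup_a U_a$, not $\bigsqcup_a B_a$, and the restriction step $\mathcal G^{\mathrm{sec}}\sim_{\mathrm{Morita}}\mathcal G^{\mathrm{sec}}|_U$ is false. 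The ``meets every orbit'' claim you borrow from the discussion before \Cref{cor:boundary-vs-disk-fluent} does not hold for a \v{C}ech-type groupoid. Your proposed repair by radially retracting $B_a$ onto $U_a$ only gives a homotopy equivalence $N^\circ\simeq U$. A Morita equivalence of topological groupoids induces a homeomorphism of orbit spaces, and $N^\circ$ is not homeomorphic to the circle $U$.

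Step 2 fails for the same reason one level down. $\Cech(\{U_a\})\times\CC$ has orbit set $U$, whereas $\partial\mathrm{Stok}_{p,n}$, the pair groupoid of $S^1_{p,n}$ times $(\CC,+)$, has a single orbit. Concretely, your right action $(a,\theta,u)\cdot(\theta,v,\theta')=(a',\theta',u+v)$ moves the left anchor from $(a,\theta)$ to $(a',\theta')$, but in a bibundle the right action must preserve the left anchor. Since the point $\theta$ itself moves, no choice of index function $a'$ helps, and this is exactly where ``the same argument with the roles reversed'' breaks. Step 3 is also inconsistent with your own opening remark. A composite of Morita bibundles has surjective left anchor, while $p_L$ only hits $\bigsqcup_a U_a$, so the unit section could at best identify $P$ with the part of the composite lying over the slice.

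In fact no route can succeed, because a Morita invariant obstructs the conclusion. A principal bibundle induces a bijection of orbit sets (this uses no topology), and $\mathcal G^{\mathrm{sec}}$ has one orbit for each point of $N^\circ$ while $\partial\mathrm{Stok}_{p,n}$ is transitive. The paper's own proof takes a different, direct route: it writes both actions on $P$ and asserts principality, rather than composing a restriction bibundle with a boundary bibundle as you do. It fails at the same place, in three ways. Its right action $(a,\theta,u)\cdot(\theta,v,\theta')=(a,\theta',u+v)$ is only defined when $\theta'\in U_a$, and it changes $p_L$. The anchor $p_L$ is not surjective. Over a $\theta$ lying in only one $U_a$, the fibre $p_R^{-1}(\theta)$ is a single copy of $\CC$ on which the only arrow of $\mathcal G^{\mathrm{sec}}$ with source $(a,\iota(\theta))$ is the identity (Stokes labels sit only on adjacent overlaps), so the left action is not transitive there. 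What has to change is the statement, not the proof strategy: these two groupoids are not Morita equivalent, and any valid comparison must be weaker than a Morita bibundle.
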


\begin{proof}
We define the actions explicitly.

\smallskip
\noindent\emph{Right action.}
An arrow of $\partial\mathrm{Stok}_{p,n}$ is $(\theta,v,\theta')$ with source $\theta$ and target $\theta'$.
Define
$$
(a,\theta,u)\cdot(\theta,v,\theta')\ :=\ (a,\theta',u+v),
$$
whenever $\theta\in U_a$ and $\theta'\in U_a$ (so that the result still lies in the same component of $P$).
This is well-defined, respects sources/targets, and is clearly free and transitive on fibers of $p_R$.

\smallskip
\noindent\emph{Left action.}
An arrow of $\mathcal{G}^{\mathrm{sec}}$ over a point $\iota(\theta)\in B_a\cap B_b$ is of the form $(\iota(\theta),v)$,
where $v\in\mathbb C$ is allowed only when $(a,b)=(a,a\pm1)$ (otherwise $v=0$).
Define
$$
(\iota(\theta),v)\cdot(a,\theta,u)\ :=\ (b,\theta,u+v),
$$
with $b$ determined by the arrow. This is well-defined because $v$ is constant on overlaps (discreteness/contractibility)
and because we chose only adjacent overlaps. Associativity is exactly additivity of the Stokes parameters, and units/inverses act as expected.

\smallskip
\noindent\emph{Commutation and principality.}
Both actions add a complex parameter, hence they commute.
For principality, fix $\theta\in S^1_{p,n}$.
The fiber $p_R^{-1}(\theta)$ is the disjoint union over those $a$ with $\theta\in U_a$ of copies of $\mathbb C$.
Since $\{U_a\}$ is a good cover by adjacent overlaps, the left $\mathcal{G}^{\mathrm{sec}}$-action identifies these components by the unique
arrow in $\mathcal{G}^{\mathrm{sec}}$ crossing the corresponding overlap, and the Stokes parameter $v$ acts by translation on $\mathbb C$.
Thus the left action is free and transitive on each $p_R$-fiber.
The analogous statement for the right action on $p_L$-fibers is immediate from the definition.
Therefore $P$ is a Morita bibundle.
\end{proof}

\begin{remark}[Comparison with GLP in the one-parameter case]
In the elementary one-parameter situation, GLP construct a single-chart Lie groupoid model for the same Stokes torsor theory
near the boundary (their local normal form, after level-$n$ pullback).
Combining Proposition~\Cref{prop:explicit-bibundle-sector-boundary} with that local chart yields an explicit Morita comparison between
our sectorwise collar atlas and the GLP chart, restricted to a collar neighborhood where both models are defined.
We do not need this comparison in the logical development of the paper; it is included as a computational bridge.
\end{remark}


\subsubsection*{An  example (rank $2$ on a curve, one pole of order $k$)}

Let $X$ be a smooth complex curve and let $D=\sum_p k_p\cdot p$ be an effective divisor.
Fix $p\in \mathrm{Supp}(D)$ with multiplicity $k:=k_p\ge 2$ and choose a holomorphic coordinate
$z$ on a disc $\Delta$ centered at $p$ such that $D\cap\Delta=k\cdot\{z=0\}$.
Write $\Delta^\times=\Delta\setminus\{0\}$.

\medskip
 
Consider the rank-$2$ meromorphic connection on $\Delta^\times$
\begin{equation}\label{eq:conn-local-EX}
\nabla \;=\; d \;+\;
\begin{pmatrix}
0 & -1 \\
\tfrac{1}{4}\, z^{2(k-1)} & -k z^{k-1}
\end{pmatrix}
\, z^{-k}\, dz,
\end{equation}
written in the local frame $(dz^{-1/2},\, z^{-k}dz\otimes dz^{-1/2})$.
Equivalently, the connection $1$-form is
\begin{equation}\label{eq:Omega-EX}
\Omega:=\nabla-d
=
\begin{pmatrix}
0 & -z^{-k} \\
\tfrac{1}{4}\, z^{k-2} & -k z^{-1}
\end{pmatrix}dz.
\end{equation}
The pole order along $z=0$ is exactly $k$, hence the Poincar\'e rank is $k-1$.

\medskip

Fix an integer $n\ge 1$ and pass to the level-$n$ Kummer coordinate $z=w^n$ on $\Delta^\times$.
Write $w=re^{i\theta}$.
The level-$n$ circle of directions above $p$ is
\begin{equation}\label{eq:circle-directions-EX}
S^1_{p,n}=\{\theta \bmod 2\pi n\},
\end{equation}
and we work on a punctured collar neighborhood
\begin{equation}\label{eq:collar-EX}
N^\times=\{(r,\theta)\mid 0<r<\varepsilon,\ \theta\in S^1_{p,n}\}.
\end{equation}

\medskip

Assume we are in the elementary one-parameter situation at $p$, meaning that on each adjacent sector overlap
the Stokes group is identified with a single copy of $(\mathbb{C},+)$ (equivalently, one may use unipotent Stokes matrices
$\bigl(\begin{smallmatrix}1&u\\0&1\end{smallmatrix}\bigr)$ with additive parameter $u$).
In this normal form the natural angular scale is governed by $w^{n(k-1)}$, and we take Stokes rays in $S^1_{p,n}$ to be the angles
\begin{equation}\label{eq:stokes-rays-EX}
\theta_m=\frac{\pi/2+m\pi}{n(k-1)},\qquad m=0,1,\dots,2n(k-1)-1.
\end{equation}
Set $\ell:=2n(k-1)$ and let $I_0,\dots,I_{\ell-1}$ be the open arcs between consecutive rays (indexed cyclically).
Define the sector boxes
\begin{equation}\label{eq:sector-boxes-EX}
B_a=\{(r,\theta)\in N^\times \mid 0<r<\varepsilon,\ \theta\in I_a\},
\qquad a\in\mathbb{Z}/\ell.
\end{equation}
By shrinking the arcs $I_a$ if necessary, we assume the cover $\mathcal{B}=\{B_a\}_{a\in\mathbb{Z}/\ell}$ has only double overlaps:
$B_a\cap B_b\neq\varnothing$ only when $b\in\{a,a\pm 1\}$, and there are no triple intersections.
\begin{figure}[t]
\centering
\begin{tikzpicture}[
  >=Latex,
  vtx/.style={circle, draw, inner sep=1.3pt, font=\scriptsize},
  lab/.style={font=\scriptsize, fill=white, inner sep=1.0pt}
]
  \def\m{8}
  \def\R{2.3}

  \foreach \i in {0,...,7} {
    \pgfmathsetmacro{\ang}{90 - 360*\i/\m}
    \node[vtx] (B\i) at (\ang:\R) {$B_{\i}$};
  }

  \foreach \i in {0,...,7} {
    \pgfmathtruncatemacro{\j}{mod(\i+1,\m)}
    \draw[-Latex] (B\i) -- (B\j)
      node[midway, lab, sloped] {$u_{\i}$};
  }

  \node[lab] at (0,0) {only adjacent overlaps};
\end{tikzpicture}
\caption{Combinatorial skeleton of the sector cover (schematic with $\ell=8$). 
Vertices are sector boxes $B_a$ and the oriented edge $B_a\to B_{a+1}$ represents the adjacent overlap $B_a\cap B_{a+1}$, 
carrying the one-parameter Stokes label $u_a\in\mathbb{C}$. In the actual example, $\ell=2n(k-1)$.}
\label{fig:cycle-stokes-labels}
\end{figure}
\medskip

Let $\mathcal{C}_{\mathcal{B}}$ be the \v{C}ech groupoid of the cover $\mathcal{B}$.
Its object space is $(\mathcal{C}_{\mathcal{B}})_0=\bigsqcup_a B_a$.
Its arrows are points $x\in B_a\cap B_b$, viewed as arrows $(a,x)\to(b,x)$, with source and target the inclusions into the
corresponding components and composition induced by equality of points on overlaps.

\medskip

\noindent\textbf{The sectorwise \v{C}ech--Stokes collar groupoid and the projection functor.}
Define the one-parameter collar groupoid $\mathcal{G}^{\mathrm{col}}_{p,n}$ as the enhancement of $\mathcal{C}_{\mathcal{B}}$
obtained by decorating each adjacent overlap $B_a\cap B_{a+1}$ with an additive Stokes parameter.
Thus $(\mathcal{G}^{\mathrm{col}}_{p,n})_0=\bigsqcup_a B_a$ and an arrow is a pair $(x,u)$ where $x\in B_a\cap B_b$ and
$u\in\mathbb{C}$ is allowed only when $b=a+1$ (otherwise $u=0$). Composition is additive,
\begin{equation}\label{eq:col-comp-EX}
(x,v)\circ(x,u)=(x,u+v),
\end{equation}
with units $(x,0)$ and inverses $(x,u)^{-1}=(x,-u)$.
There is a strict projection functor
\begin{equation}\label{eq:pi-EX}
\pi:\mathcal{G}^{\mathrm{col}}_{p,n}\longrightarrow \mathcal{C}_{\mathcal{B}}
\end{equation}
which is the identity on objects and forgets the Stokes parameter on arrows: $\pi(x,u)=x$.

\medskip

\noindent\textbf{Strict sections and the explicit parameter tuple $(u_a)$.}
Let $\mathrm{Sec}(\pi)$ be the groupoid of strict sections of $\pi$:
objects are strict functors $\sigma:\mathcal{C}_{\mathcal{B}}\to\mathcal{G}^{\mathrm{col}}_{p,n}$ with
$\pi\circ\sigma=\mathrm{id}_{\mathcal{C}_{\mathcal{B}}}$, and morphisms are natural isomorphisms.
In this example, because each adjacent overlap $B_a\cap B_{a+1}$ is contractible and we excluded triple overlaps,
a strict section is determined by constants
\begin{equation}\label{eq:ua-tuple-EX}
(u_0,\dots,u_{\ell-1})\in\mathbb{C}^{\ell},
\end{equation}
by requiring that every \v{C}ech arrow $x\in B_a\cap B_{a+1}$ is sent to the Stokes-decorated arrow $(x,u_a)$, while every other overlap
is sent to $(x,0)$.
In the minimal collar model described here there are no additional sectorwise gauge arrows, hence $\mathrm{Sec}(\pi)$ is discrete with
object set $\mathbb{C}^{\ell}$; if one enlarges $\mathcal{G}^{\mathrm{col}}_{p,n}$ by sectorwise gauge arrows, morphisms in $\mathrm{Sec}(\pi)$
recover the usual coboundary action on the tuple $(u_a)$.

\medskip

\noindent\textbf{Boundary additive Stokes groupoid and total boundary monodromy.}
Define the boundary additive groupoid $\partial\mathrm{Stok}_{p,n}\rightrightarrows S^1_{p,n}$ by
\begin{equation}\label{eq:boundary-groupoid-EX}
(\partial\mathrm{Stok}_{p,n})_0=S^1_{p,n},
\qquad
(\partial\mathrm{Stok}_{p,n})_1=S^1_{p,n}\times\mathbb{C}\times S^1_{p,n},
\end{equation}
with source/target $s(\theta,u,\theta')=\theta$, $t(\theta,u,\theta')=\theta'$ and additive composition
$(\theta,u,\theta')\circ(\theta',v,\theta'')=(\theta,u+v,\theta'')$.
Fixing a boundary section $\iota:S^1_{p,n}\to N^\times$ at a small radius $r=r_0$, the restriction of the collar data to the boundary slice
is Morita equivalent to $\partial\mathrm{Stok}_{p,n}$, and the tuple $(u_a)$ becomes the list of successive additive labels around the circle.
In particular, the total boundary Stokes monodromy is the sum
\begin{equation}\label{eq:total-stokes-EX}
M=\sum_{a=0}^{\ell-1} u_a\in\mathbb{C},
\end{equation}
viewed as an element of the isotropy group $(\mathbb{C},+)$ at any chosen direction.

\medskip

\noindent\textbf{The explicit GLP chart after level-$n$ pullback (and Morita on a collar).}
On the punctured $w$-disc $\Delta^\times$ (so $z=w^n$), the pulled-back GLP chart is the Lie groupoid
$\mathrm{Stok}_{k,n}\rightrightarrows \Delta^\times$ with arrows $\Delta^\times\times\mathbb{C}$ (coordinates $(w,u)$), source and target
\begin{equation}\label{eq:GLP-st-EX}
s(w,u)=w,\qquad t(w,u)=\exp\!\big(u\,w^{\,n(k-1)}\big)\,w,
\end{equation}
and composition
\begin{equation}\label{eq:GLP-comp-EX}
(w_2,u_2)\circ(w_1,u_1)
=
\bigl(w_1,\ u_2\,\exp\!\big((k-1)u_1\,w_1^{\,n(k-1)}\big)+u_1\bigr),
\qquad w_2=t(w_1,u_1).
\end{equation}
These are the formulas of \cite[Thm.~3.21]{GualtieriLiPym} after the level-$n$ pullback.
In the elementary one-parameter setting, the collar presentation $\mathcal{G}^{\mathrm{col}}_{p,n}$ and the restriction of
$\mathrm{Stok}_{k,n}$ to a punctured collar near the boundary present the same local classifying stack of Stokes torsors, hence are Morita equivalent.
Likewise, $\partial\mathrm{Stok}_{p,n}$ is Morita equivalent to the corresponding boundary restriction of the collar data.

\medskip

\noindent\textbf{What this example accomplishes.}
All objects used abstractly in the paper appear here on the nose: the level-$n$ circle of directions $S^1_{p,n}$ and collar $N^\times$,
the sector cover $\{B_a\}$ and its \v{C}ech groupoid $\mathcal{C}_{\mathcal{B}}$, the collar groupoid $\mathcal{G}^{\mathrm{col}}_{p,n}$
and the projection $\pi$, the strict section groupoid $\mathrm{Sec}(\pi)$ identified with tuples $(u_a)\in\mathbb{C}^{\ell}$,
the boundary additive model $\partial\mathrm{Stok}_{p,n}$ with total monodromy $M=\sum_a u_a$, and the single explicit chart
$\mathrm{Stok}_{k,n}$ (on a collar) giving a one-chart presentation of the same local Stokes torsor theory.


\subsubsection*{Relation with the classical Stokes sheaf approach (Babbitt--Varadarajan) and our groupoid presentations}

Let $X$ be a complex curve, $p\in X$ a pole, and fix a level $n\ge 1$ (Kummer coordinate $z=w^n$). Denote by
$S^1_{p,n}$ the level-$n$ circle of directions over $p$ and by $N^\times$ a punctured Kummer collar over a small punctured disc
around $p$. In the classical approach of Babbitt--Varadarajan \cite{BV89}, one associates to a meromorphic connection (with fixed
formal type) a \emph{Stokes sheaf} $\mathrm{St}^0$ on the circle of directions $S^1$ (in our notation, on $S^1_{p,n}$ after level-$n$
pullback), whose stalks encode sectorial gauge transformations which are asymptotically trivial in a given direction. The
Malgrange--Sibuya theorem identifies the set of isomorphism classes of marked meromorphic connections with fixed formal type with the
(non-abelian) cohomology set
$$
H^1\!\left(S^1_{p,n},\,\mathrm{St}^0\right),
$$
and furthermore shows that this local moduli carries a natural (in fact affine/unipotent) algebraic structure \cite{BV89}.

Our constructions are a stacky/groupoid re-packaging of this same $H^1$-classification, designed to make gluing and Morita invariance
completely explicit. Concretely, choose a sector cover of the punctured collar (with Stokes directions removed)
$$
N^\times\setminus \Sigma_n \;=\; \bigcup_{a\in\mathbb{Z}/\ell} B_a,
\qquad \ell = 2n(k-1),
$$
by contractible sector boxes with only double overlaps. Let $\mathcal{C}_{\mathcal{B}}$ be the \v{C}ech groupoid of this cover.
In the elementary one-parameter situation, the classical Stokes groups on adjacent overlaps are modeled by a single unipotent group,
which we write additively as $(\mathbb{C},+)$. We then form the \emph{sectorwise \v{C}ech--Stokes collar groupoid}
$\mathcal{G}^{\mathrm{col}}_{p,n}$ as the enhancement of $\mathcal{C}_{\mathcal{B}}$ obtained by decorating each adjacent overlap
$B_a\cap B_{a+1}$ with a parameter $u\in\mathbb{C}$ and using additive composition. There is a strict projection functor
$$
\pi:\mathcal{G}^{\mathrm{col}}_{p,n}\longrightarrow \mathcal{C}_{\mathcal{B}}
$$
which forgets the Stokes parameter. The groupoid of strict sections $\mathrm{Sec}(\pi)$ is precisely the groupoid incarnation of
\emph{\v{C}ech $1$-cocycles valued in the Stokes groups}, with morphisms interpreted as changes of sectorial trivializations (i.e. the
usual $0$-cochain/gauge action when the corresponding gauge arrows are included). In particular, passing to isomorphism classes in
$\mathrm{Sec}(\pi)$ recovers the same moduli as the classical non-abelian cohomology set
$H^1(S^1_{p,n},\mathrm{St}^0)$ in \cite{BV89}.

Two further groupoids encode the same local Stokes torsor theory but are better suited for particular operations. First, the
\emph{boundary additive Stokes groupoid} $\partial\mathrm{Stok}_{p,n}\rightrightarrows S^1_{p,n}$ is the direct groupoid model of the
restriction of $\mathrm{St}^0$ to the boundary circle; it is the natural target for gluing peripheral monodromy, since isotropy is
canonically $(\mathbb{C},+)$. Second, the explicit single-chart model (the ``GLP chart'') gives a Lie groupoid
$\mathrm{Stok}_{k,n}\rightrightarrows \Delta^\times$ presenting the same local Stokes torsors in coordinates. On a punctured collar,
$\mathrm{Stok}_{k,n}$ is Morita equivalent to $\mathcal{G}^{\mathrm{col}}_{p,n}$, and restricting $\mathcal{G}^{\mathrm{col}}_{p,n}$ to a
boundary slice yields a groupoid Morita equivalent to $\partial\mathrm{Stok}_{p,n}$. Thus, in our language, the classical
Babbitt--Varadarajan moduli $H^1(S^1_{p,n},\mathrm{St}^0)$ is presented by any of the Morita-equivalent atlases
$$
\mathcal{G}^{\mathrm{col}}_{p,n},\qquad \partial\mathrm{Stok}_{p,n},\qquad \mathrm{Stok}_{k,n}\!\mid_{\mathrm{collar}},
$$
with $\mathcal{C}_{\mathcal{B}}$ and $\pi$ organizing the \v{C}ech cocycle description and $\mathrm{Sec}(\pi)$ providing a concrete
groupoid model of cocycles and gauge.


\subsubsection*{From local Stokes torsors to a global pushout groupoid (and comparison with Sabbah/BV/GLP)}

Let $X$ be a smooth complex curve and let $D=\sum_i k_i p_i$ be an effective divisor with $k_i\ge 2$.
Set $U=X\setminus \mathrm{Supp}(D)$ and fix a base point $x_0\in U$.
For each puncture $p_i$, fix an integer $n\ge 1$ and consider the level-$n$ real oriented blow-up at $p_i$; its boundary is a circle
$S^1_{p_i,n}$, equipped with a finite set of Stokes directions $\Sigma_{p_i,n}\subset S^1_{p_i,n}$ determined by a chosen formal type at $p_i$.
Write $S^1_{p_i,n}\setminus\Sigma_{p_i,n}$ for the open locus of regular directions.

\medskip

\paragraph{Local Stokes torsors as a boundary moduli problem.}
Fix at each $p_i$ a formal type (exponential factors, formal monodromy, and Stokes directions).
Let $\mathrm{St}_i$ denote the associated Stokes sheaf on $S^1_{p_i,n}\setminus\Sigma_{p_i,n}$, i.e. the sheaf of (pro-)unipotent Stokes groups
whose sections over an interval are the Stokes automorphisms allowed on that interval for the chosen formal type.
The intrinsic local moduli at $p_i$ is the groupoid of $\mathrm{St}_i$-torsors on $S^1_{p_i,n}\setminus\Sigma_{p_i,n}$, which can be computed by a
cyclic \v{C}ech cover by direction intervals (hence by Stokes cocycles modulo gauge in the sense of Babbitt--Varadarajan), and which is equivalent to
Sabbah's category of Stokes-filtered local systems on the boundary circle for the fixed formal type.

A key point for the present framework is that one can also present this same local moduli by a boundary groupoid
$\partial\mathrm{Stok}^{(i)}_n\rightrightarrows S^1_{p_i,n}$ whose arrows encode Stokes jumps along direction intervals.
In the elementary one-parameter rank-$2$ situation, this reduces to an additive label on each adjacent overlap of a cyclic cover of
$S^1_{p_i,n}\setminus\Sigma_{p_i,n}$.

\medskip

\paragraph{The global gluing datum and the peripheral maps.}
The global topology on $U$ is encoded by the fundamental groupoid $\Pi_1(U)$, or equivalently by the one-object groupoid $B\pi_1(U,x_0)$.
For each puncture $p_i$, choose a small positively oriented loop $\gamma_i$ around $p_i$ based at $x_0$ (up to conjugacy in $\pi_1(U,x_0)$).
This yields a canonical homomorphism of groups
$$
\mathbb{Z}\longrightarrow \pi_1(U,x_0),\qquad 1\longmapsto [\gamma_i],
$$
and hence a functor of one-object groupoids
$$
B\mathbb{Z}\longrightarrow B\pi_1(U,x_0).
$$
On the boundary side, fix a base direction $\theta_i\in S^1_{p_i,n}\setminus\Sigma_{p_i,n}$ and observe that the boundary groupoid
$\partial\mathrm{Stok}^{(i)}_n$ has an isotropy group at $\theta_i$ (the ``peripheral Stokes group'' at that direction).
There is also a canonical functor
$$
B\mathbb{Z}\longrightarrow \partial\mathrm{Stok}^{(i)}_n
$$
sending the generator $1\in\mathbb{Z}$ to the distinguished boundary isotropy element representing the total Stokes monodromy around the circle
(for the fixed formal type, this is the cyclic product of Stokes jumps; in the additive one-parameter case it is the sum of the labels).

Thus, for each $i$ we have a span of groupoids
$$
B\pi_1(U,x_0)\ \longleftarrow\ B\mathbb{Z}\ \longrightarrow\ \partial\mathrm{Stok}^{(i)}_n,
$$
and therefore, for all punctures at once, a diagram indexed by the disjoint union $\bigsqcup_i B\mathbb{Z}$.

\medskip

\paragraph{The pushout groupoid.}
Define the global groupoid $\mathcal{G}_{\mathrm{glob}}$ as the pushout (amalgamated sum) in the 2-category of groupoids
\begin{equation}\label{eq:global-pushout-groupoid}
\mathcal{G}_{\mathrm{glob}}
\;:=\;
B\pi_1(U,x_0)\ \sqcup_{\bigsqcup_i\,B\mathbb{Z}}\ \bigsqcup_i \partial\mathrm{Stok}^{(i)}_n.
\end{equation}
Concretely, this forces the peripheral loop class $[\gamma_i]\in\pi_1(U,x_0)$ to be identified with the boundary isotropy element in
$\partial\mathrm{Stok}^{(i)}_n$ encoding the total Stokes monodromy at $p_i$.
In other words, \eqref{eq:global-pushout-groupoid} is the universal groupoid obtained by gluing the ``interior'' topological monodromy data on $U$
together with the ``boundary'' Stokes data at each puncture, along the common peripheral subgroup $\mathbb{Z}$.

\medskip

\paragraph{What an object of the pushout represents.}
An object of the classifying stack $B\mathcal{G}_{\mathrm{glob}}$ (a $\mathcal{G}_{\mathrm{glob}}$-torsor) can be unpacked as the following data:

\begin{itemize}
\item a $\pi_1(U,x_0)$-torsor, equivalently a local system on $U$ (topological monodromy on the complement);
\item for each puncture $p_i$, a Stokes torsor on the boundary circle $S^1_{p_i,n}\setminus\Sigma_{p_i,n}$
(or equivalently a Stokes-filtered local system in Sabbah's sense for the fixed formal type);
\item for each $i$, a compatibility identification equating the image of the peripheral loop $[\gamma_i]$ in the $\pi_1(U)$-torsor with the
distinguished total Stokes monodromy element determined by the boundary Stokes torsor.
\end{itemize}

Thus \eqref{eq:global-pushout-groupoid} is precisely a \emph{gluing device}: it packages ``interior monodromy + boundary Stokes data + peripheral compatibility''
as a single torsor problem.

\medskip

\paragraph{Comparison with Sabbah (Stokes-filtered local systems).}
Sabbah's formalism identifies (for fixed formal type at each puncture) meromorphic connections on $X$ with Stokes-filtered local systems on the real oriented
blow-up $\widetilde{X}$, i.e. a local system on $\widetilde{X}\setminus \partial\widetilde{X}\cong U$ together with Stokes filtrations along each boundary
circle $S^1_{p_i,n}$, compatible with the Stokes order and jumping precisely along $\Sigma_{p_i,n}$.
On the boundary, the filtration is equivalent to a $\mathrm{St}_i$-torsor, hence to a torsor for the boundary groupoid
$\partial\mathrm{Stok}^{(i)}_n$ presenting the same local moduli.
The peripheral compatibility condition in Sabbah (how the local system on $U$ meets the boundary Stokes data) is exactly what the amalgamation along
$B\mathbb{Z}$ enforces in \eqref{eq:global-pushout-groupoid}.

\medskip

\paragraph{Comparison with Babbitt--Varadarajan (Stokes cocycles).}
BV describe the local moduli at each puncture by non-abelian \v{C}ech 1-cocycles valued in the Stokes sheaf $\mathrm{St}_i$, modulo gauge.
Your collar/section presentation is a groupoid form of the same statement: strict sections (with sectorwise gauge) are \v{C}ech cocycles modulo 0-cochains.
The pushout groupoid \eqref{eq:global-pushout-groupoid} does not change the local BV description; it records how these local cocycles are constrained globally
by the monodromy on $U$ through the identification of peripheral classes.

\medskip

\paragraph{Comparison with GLP (single-chart Lie groupoids).}
In the elementary one-parameter case, GLP construct an explicit Lie groupoid chart presenting the same local Stokes torsor moduli on a punctured disk.
Your boundary/collar groupoids and the GLP chart are different \emph{presentations} of the same stack, hence they are Morita equivalent locally.
The role of the pushout \eqref{eq:global-pushout-groupoid} is global: it is the groupoid-level implementation of ``glue the interior with all boundary charts
along the common peripheral subgroup'' in a Morita-invariant way.

\medskip

\paragraph{What this codifies that is not the focus of BV/Sabbah/GLP.}
The BV and Sabbah frameworks give the correct intrinsic objects and local equivalences of categories, and GLP provides a smooth local chart.
Your pushout construction adds a systematic, Morita-invariant \emph{global assembly} principle:
it turns the collection of local boundary Stokes moduli and the interior $\pi_1(U)$-data into a single universal groupoid,
whose torsors are exactly ``interior local systems plus boundary Stokes torsors with peripheral matching''.
This is the structural bridge between local irregular data and global topological monodromy that the groupoid language of this paper makes explicit.

\section{A stratified skeletal presenter in arbitrary dimension}\label{sec:stratified-skeleton}

This section records a fully explicit \emph{small} presenter for the collar Stokes moduli, valid in arbitrary
dimension and adapted simultaneously to the SNC depth stratification and to the Stokes wall stratification.
Nothing here changes the intrinsic definitions from earlier sections; the point is to replace cover-based
\v{C}ech presenters by a \emph{finite-type combinatorial} model (locally finite in general) which makes
``Stokes = edge labels'' and ``cocycle = face relations'' literally true.

Throughout we work on the regular collar $N^\circ:=N^\times\setminus \Sigma_n$, where the Stokes preorder is locally constant
and the Stokes sheaf $\St_\Phi$ is defined.

\subsection{A $\Sigma_{D,\Phi,n}$-adapted stratified triangulation of $N^\circ$}\label{subsec:stratified-triangulation}

Let $\Sigma_{D,\Phi,n}$ denote the common refinement of:
(i) the stratification of $N^\circ$ by the depth of the SNC divisor $D$ (i.e.\ by how many local branches meet), and
(ii) the Stokes wall stratification induced by $\Sigma_n$ (chambers, wall faces, corners, etc.).
By construction, on each stratum the Stokes preorder is constant, hence the Stokes sheaf is locally constant.

\begin{definition}[Stratified triangulation]\label{def:stratified-triangulation}
A \emph{$\Sigma_{D,\Phi,n}$-adapted stratified triangulation} of $N^\circ$ is a locally finite simplicial complex $K$
together with a homeomorphism $|K|\xrightarrow{\sim} N^\circ$ such that every open simplex of $K$ is mapped into a single stratum of
$\Sigma_{D,\Phi,n}$.
\end{definition}

\begin{remark}
Existence is standard: a locally finite triangulation subordinate to a locally finite real-analytic stratification can be obtained
by taking a stratified triangulation and barycentrically subdividing; see e.g.\ the general discussion of constructible stratifications
and exit-path formalisms in \cite{Treumann2009}.
We fix such a triangulation $K$ once and for all.
\end{remark}

For each simplex $\sigma$ of $K$, write $S(\sigma)\in\Sigma_{D,\Phi,n}$ for the stratum containing the (open) simplex $\sigma^\circ$.
Since $\St_\Phi$ is locally constant on each stratum, for any contractible open $W\subset S(\sigma)$ the group $\St_\Phi(W)$ is canonically
identified with a fixed abstract group which we denote by $\St_\Phi(S(\sigma))$.

\subsection{The skeletal base groupoid $\CB^{\mathrm{sk}}$}\label{subsec:CBsk}

Let $K^{(2)}$ denote the $2$-skeleton of $K$.
Write $K_0$ for the set of vertices and $K_1$ for the set of oriented edges.

\begin{definition}[Edge-path groupoid of the $2$-skeleton]\label{def:edge-path-groupoid}
Define $\CB^{\mathrm{sk}}:=\Pi_1^{\mathrm{sk}}(K)$ to be the groupoid presented as follows:
\begin{itemize}
\item objects are vertices $v\in K_0$;
\item generating arrows are oriented edges $e:v\to w$ in $K_1$, together with formal inverses $e^{-1}:w\to v$;
\item relations are:
  \begin{enumerate}
  \item $e\circ e^{-1}=\id_v$ and $e^{-1}\circ e=\id_w$ for every edge $e:v\to w$;
  \item for every oriented $2$-simplex with boundary edges $e_{01}:v_0\to v_1$, $e_{12}:v_1\to v_2$, $e_{02}:v_0\to v_2$,
  impose $e_{12}\circ e_{01}=e_{02}$.
  \end{enumerate}
\end{itemize}
\end{definition}

\begin{remark}
Equivalently, $\CB^{\mathrm{sk}}$ is the fundamental groupoid of the geometric realization $|K|=N^\circ$ restricted to the vertex set
$K_0$, presented by generators and relations coming from the $2$-skeleton (a standard CW presentation of $\pi_1$).
\end{remark}

\subsection{The skeletal \v{C}ech presenter and comparison}\label{subsec:cech-to-skeleton}

Let $\mathcal{U}^\star=\{U_v\}_{v\in K_0}$ be the open-star cover of $|K|$ (hence of $N^\circ$), where $U_v$ is the union of interiors of
simplices containing $v$.

\begin{lemma}\label{lem:star-good-cover}
The cover $\mathcal{U}^\star$ is a good cover: all nonempty finite intersections of the $U_v$ are contractible.
\end{lemma}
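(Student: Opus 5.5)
The plan is to prove the classical fact that open stars in a simplicial complex have contractible intersections. First we determine exactly when an intersection is nonempty, and then we exhibit an explicit contraction.

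\textbf{Step 1: nonemptiness.} Let $v_0,\dots,v_m\in K_0$ be distinct vertices. By definition, $U_v$ is the union of the open simplices $\sigma^\circ$ with $v\in\sigma$. Since open simplices of $K$ are pairwise disjoint, we get
\[
U_{v_0}\cap\cdots\cap U_{v_m}=\bigcup\{\sigma^\circ : \{v_0,\dots,v_m\}\subset\sigma\}.
\]
So the intersection is nonempty exactly when $\{v_0,\dots,v_m\}$ spans a simplex $\tau$ of $K$. In that case it equals the open star $\mathrm{st}(\tau^\circ)$, i.e.\ the union of the interiors of all simplices having $\tau$ as a face.

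\textbf{Step 2: openness and star-shapedness.} We use barycentric coordinates. A point $x\in|K|$ has coordinates $(t_v(x))_{v\in K_0}$, which are nonnegative, finitely supported and sum to $1$. The support of these coordinates is the vertex set of the unique open simplex containing $x$. This gives
\[
\mathrm{st}(\tau^\circ)=\{x : t_{v_i}(x)>0 \text{ for } i=0,\dots,m\}.
\]
Because $K$ is locally finite, each $t_v$ is continuous on $|K|$, and hence this set is open. Now fix the barycenter $b_\tau$ of $\tau$, and define
\[
H(x,s)=(1-s)\,x+s\,b_\tau,
\]
computed affinely inside any closed simplex $\sigma\supset\tau$ that contains $x$. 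This is well defined, since $\sigma$ contains both $x$ and $b_\tau$, and convex combinations are compatible with passing to faces. Moreover $t_{v_i}(H(x,s))=(1-s)t_{v_i}(x)+s/(m+1)>0$, so $H$ stays inside $\mathrm{st}(\tau^\circ)$. Hence $H$ is a deformation retraction onto $b_\tau$.

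\textbf{Main obstacle: continuity of $H$.} The delicate point is that $H$ is continuous on all of $\mathrm{st}(\tau^\circ)$ and not merely on each closed simplex separately. We expect to handle this with the weak (CW) topology on $|K|$, which coincides with the metric topology because $K$ is locally finite. Under this topology, $|K|\times[0,1]$ carries the colimit topology of the pieces $\sigma\times[0,1]$. On each closed simplex $\sigma$ we can write $H$ by the same barycentric formula $t(H(x,s))=(1-s)t(x)+s\,t(b_\tau)$, so these pieces glue to a continuous map.

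Finally, since the homeomorphism $|K|\cong N^\circ$ is fixed, the conclusion transfers from $|K|$ to $N^\circ$. Contractibility is exactly what the proof of Proposition~\ref{prop:stokes-torsors-skeleton} and the later comparison with $\CB^{\mathrm{sk}}$ require. As a bonus, each nonempty intersection is the star of a single simplex $\tau$, so it meets only strata whose closures contain $S(\tau)$. We do not claim here that the intersection lies in a single stratum; that refinement is not part of the lemma.
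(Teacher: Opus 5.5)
Your proof is correct and follows the same route as the paper: the intersection $U_{v_0}\cap\cdots\cap U_{v_m}$ is nonempty exactly when the $v_i$ span a simplex $\tau$, and it is then the open star of $\tau$, which is contractible. You supply the details the paper leaves implicit, namely the exact equality with the open star (the paper only says ``deformation retracts onto''), the explicit straight-line contraction to the barycenter $b_\tau$, and its continuity in the weak topology.
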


\begin{proof}
A finite intersection $U_{v_0}\cap\cdots\cap U_{v_m}$ is nonempty precisely when the vertices $v_0,\dots,v_m$ span a simplex in $K$.
In that case the intersection deformation retracts onto the (open) star of that simplex, which is contractible.
\end{proof}

\begin{proposition}[From \v{C}ech to skeleton]\label{prop:cech-to-skeleton}
Let $\CB:=\Cech(\mathcal{U}^\star)$ be the \v{C}ech groupoid of the open-star cover.
Then for every $r\ge 1$ there are natural equivalences of groupoids
\begin{equation}\label{eq:cech-to-skeleton-rep}
\Rep_r(\CB)\ \simeq\ \LocSys_r(N^\circ)\ \simeq\ \Rep_r(\CB^{\mathrm{sk}}).
\end{equation}
In particular, $\CB$ and $\CB^{\mathrm{sk}}$ present the same torsorial representation theory (hence the same 1-truncated ``Betti data'').
\end{proposition}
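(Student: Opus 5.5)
The first equivalence $\Rep_r(\CB)\simeq\LocSys_r(N^\circ)$ is immediate. By \Cref{lem:star-good-cover} the open-star cover $\mathcal U^\star$ is a good cover of $N^\circ$, so \Cref{prop:rep-cech-locsys} applies with $T=N^\circ$ and $\mathcal V=\mathcal U^\star$. The work is therefore in the second equivalence $\Rep_r(\CB^{\mathrm{sk}})\simeq\LocSys_r(N^\circ)$. I would not go through $\pi_1$ directly. Instead I would factor it through the \v{C}ech cocycle description obtained in the proof of \Cref{prop:rep-cech-locsys}, so that both equivalences land in the same concrete groupoid of nonabelian cocycles on $\mathcal U^\star$.

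\textbf{Step 1: describe $\Rep_r(\CB^{\mathrm{sk}})$ by generators and relations.} The object set $K_0$ is discrete, so a $\GL_r(\CC)^\delta$-bundle on it is a family of torsors $P_v$, each trivializable. By \Cref{prop:rep-as-fun} and \Cref{prop:coeq}, after trivializing, an object of $\Rep_r(\CB^{\mathrm{sk}})$ consists of:
\begin{itemize}
\item one matrix $g_e\in\GL_r(\CC)$ for each oriented edge $e$, with $g_{e^{-1}}=g_e^{-1}$;
\item subject to $g_{e_{12}}g_{e_{01}}=g_{e_{02}}$ for each $2$-simplex.
\end{itemize}
Morphisms are vertex gauges $(h_v)$ acting by $g_e\mapsto h_{t(e)}g_eh_{s(e)}^{-1}$. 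This is a strict description, so there are no choices beyond the trivializations.

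\textbf{Step 2: match this with \v{C}ech cocycles on $\mathcal U^\star$.} From the proof of \Cref{prop:rep-cech-locsys}, $\Rep_r(\CB)$ is equivalent to the groupoid of constant cocycles $(g_{vw})$ on nonempty overlaps $U_v\cap U_w$ with the triple-overlap identity, modulo constant gauge. By the proof of \Cref{lem:star-good-cover}:
\begin{itemize}
\item $U_v\cap U_w\neq\varnothing$ iff $v=w$ or $v,w$ span an edge;
\item triple intersections are nonempty iff the vertices span a simplex of dimension $\le2$.
\end{itemize}
The cocycle identity forces $g_{vv}=1$ and $g_{wv}=g_{vw}^{-1}$. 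The triple-overlap identities on degenerate triples, with repeated vertices, are then automatic. On a genuine $2$-simplex the identity is exactly the face relation. Hence the map $(g_{vw})\mapsto(g_e)$ is an isomorphism of groupoids from the \v{C}ech cocycle groupoid to the edge-label groupoid of Step 1, and it commutes with gauge. This is the same argument as \Cref{prop:stokes-torsors-skeleton}, with $\GL_r(\CC)^\delta$ in place of $\St_\Phi$.

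\textbf{Step 3: naturality and the main obstacle.} Composing Steps 1 and 2 with the \v{C}ech comparison of \Cref{prop:rep-cech-locsys} gives the chain \eqref{eq:cech-to-skeleton-rep}. Naturality follows because every identification is built from restriction and trivialization choices, and different choices change things only by gauge. The step I expect to need the most care is the bookkeeping in Step 2:
\begin{itemize}
\item checking that the open-star intersection pattern is precisely the nerve of $K$, which needs $K$ simplicial (a genuine triangulation, not a general CW complex), so that two vertices share at most one edge;
\item checking that the \v{C}ech groupoid, which includes arrows over all ordered pairs $(v,w)$ including $v=w$, imposes nothing beyond the edge and face data.
\end{itemize}
Higher simplices impose no further constraints, since a $1$-cocycle condition only involves triples. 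This explains why the $2$-skeleton suffices, matching the standard CW presentation of $\Pi_1(N^\circ)$ on the vertex set noted in the remark after \Cref{def:edge-path-groupoid}.
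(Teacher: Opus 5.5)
Your proof is correct, but your treatment of the second equivalence differs from the paper's.

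\textbf{The paper's route.} It argues topologically. It writes a local system as a functor $\Pi_1(N^\circ)\to\mathbf{Tors}_r$ and restricts to the full subgroupoid on the vertex set $K_0$; this is an equivalence because every point is path-connected to a vertex. It then invokes the classical edge-path presentation of this vertex fundamental groupoid by edges and $2$-simplices to identify it with $\CB^{\mathrm{sk}}$, and finishes with \Cref{prop:rep-as-fun}.

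\textbf{Your route.} You never leave the \v{C}ech cocycle machinery. Because the nerve of the open-star cover of a simplicial complex is $K$ itself, constant cocycles on $\mathcal U^\star$ are literally edge labels subject to the face relations. Likewise, \v{C}ech gauges are exactly vertex gauges. This gives an isomorphism of strict cocycle groupoids, and the whole chain then follows from \Cref{prop:rep-cech-locsys} alone.

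\textbf{What each approach buys.} Your argument is more economical and more uniform. Its only topological input is that good covers compute local systems, so you avoid both the monodromy equivalence and the edge-path (simplicial approximation) theorem. It is also, verbatim, the mechanism of \Cref{prop:stokes-torsors-skeleton} with $\GL_r(\CC)^\delta$ in place of $\St_\Phi$, so the plain and Stokes-decorated comparisons become a single argument. Your observation that $K$ must be genuinely simplicial is exactly the hypothesis that makes the nerve identification work. The paper's route, in turn, proves something stronger: it identifies the presenter $\CB^{\mathrm{sk}}$ itself with $\Pi_1(N^\circ)|_{K_0}$, not merely its representation groupoids. That stronger fact is what the remark following \Cref{def:edge-path-groupoid} and the subdivision-invariance argument in \Cref{prop:canonical-morita-class} rely on.
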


\begin{proof}
By \Cref{lem:star-good-cover}, $\mathcal{U}^\star$ is a good cover, so \Cref{prop:rep-cech-locsys} gives
$\Rep_r(\CB)\simeq \LocSys_r(N^\circ)$.

For the second equivalence, recall that a rank-$r$ local system on a space $T$ is equivalently a functor
$\Pi_1(T)\to \mathbf{Tors}_r$ (no basepoint needed), where $\Pi_1(T)$ is the fundamental groupoid and $\mathbf{Tors}_r$ is the
groupoid of right $\GL_r(\CC)^\delta$-torsors; this is the standard monodromy construction.
Since $|K|$ is connected by open stars, every point is path-connected to a vertex, hence restriction along the inclusion of objects
$K_0\hookrightarrow |K|$ induces an equivalence between functors out of $\Pi_1(|K|)$ and functors out of its full subgroupoid on $K_0$.
Finally, the fundamental groupoid on the vertex set admits the presentation of \Cref{def:edge-path-groupoid} by generators (edges) and
relations (2-simplices), hence it is canonically isomorphic to $\CB^{\mathrm{sk}}$.
Composing, we obtain $$\LocSys_r(N^\circ)\simeq \Hom(\CB^{\mathrm{sk}},\mathbf{Tors}_r)\simeq \Rep_r(\CB^{\mathrm{sk}})$$ by \Cref{prop:rep-as-fun}.
\end{proof}

\subsection{The skeletal Stokes collar groupoid $\GC^{\mathrm{sk}}$ and the forgetful functor}\label{subsec:GCsk}

We now build the Stokes-decorated analogue of $\CB^{\mathrm{sk}}$.

\begin{definition}[Skeletal \v{C}ech--Stokes groupoid]\label{def:GCsk}
Define $\GC^{\mathrm{sk}}$ as the groupoid with:
\begin{itemize}
\item objects: $(\GC^{\mathrm{sk}})_0:=K_0$ (the same objects as $\CB^{\mathrm{sk}}$);
\item generating arrows: for each oriented edge $e:v\to w$ and each element $u\in \St_\Phi(S(e))$, a generator
$(e,u):v\to w$;
\item relations:
  \begin{enumerate}
  \item $(e,1)=e$ (unit label), $(e,u)^{-1}=(e^{-1},u^{-1})$;
  \item for any composable edges $e:v\to w$ and $f:w\to z$ lying in the same stratum and any $u\in\St_\Phi(S(e))$, $v\in\St_\Phi(S(f))$,
  impose $(f,v)\circ(e,u)=(f\circ e,vu)$ whenever $f\circ e$ is the corresponding edge-path generator in $\CB^{\mathrm{sk}}$;
  \item for every oriented $2$-simplex with boundary edges $e_{01},e_{12},e_{02}$, impose
  $(e_{12},u_{12})\circ(e_{01},u_{01})=(e_{02},u_{12}u_{01})$.
  \end{enumerate}
\end{itemize}
\end{definition}

\begin{remark}
Because $\St_\Phi$ is locally constant on each stratum and every open simplex lies in a single stratum, the groups
$\St_\Phi(S(e))$ are well-defined up to canonical identification. If one prefers to avoid mentioning ``a group attached to a stratum'',
one can choose for each stratum $S$ a contractible chart $W_S\subset S$ and set $\St_\Phi(S):=\St_\Phi(W_S)$.
\end{remark}

\begin{definition}[Forgetful functor]\label{def:pi-sk}
There is a canonical strict functor
\begin{equation}\label{eq:pi-sk}
\pi^{\mathrm{sk}}:\GC^{\mathrm{sk}}\longrightarrow \CB^{\mathrm{sk}}
\end{equation}
which is the identity on objects and sends $(e,u)$ to the underlying edge $e$.
\end{definition}

\begin{definition}[Sections]\label{def:sec-pi-sk}
Let $\Sec(\pi^{\mathrm{sk}})$ be the groupoid of strict sections $\sigma:\CB^{\mathrm{sk}}\to \GC^{\mathrm{sk}}$ of $\pi^{\mathrm{sk}}$,
with morphisms given by natural isomorphisms.
\end{definition}

\subsection{Sections are cellular Stokes cocycles (complete proof)}\label{subsec:sec-are-cocycles}

\begin{lemma}\label{lem:sections-as-edge-labels}
An object of $\Sec(\pi^{\mathrm{sk}})$ is equivalent to the data of elements
\begin{equation}\label{eq:edge-labels}
u_e\in \St_\Phi(S(e))
\end{equation}
for every oriented edge $e$ of $K$, satisfying the \emph{2-simplex relations}:
for each oriented 2-simplex with boundary edges $e_{01},e_{12},e_{02}$ one has
\begin{equation}\label{eq:triangle-relation}
u_{e_{02}}=u_{e_{12}}\cdot u_{e_{01}}.
\end{equation}
\end{lemma}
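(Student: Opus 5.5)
Since both $\CB^{\mathrm{sk}}$ and $\GC^{\mathrm{sk}}$ are given by generators and relations (\Cref{def:edge-path-groupoid}, \Cref{def:GCsk}), I would use the coequalizer description of \Cref{prop:coeq}. A strict functor out of a presented groupoid is then the same as an assignment on generators that respects the relations. A strict section $\sigma$ of $\pi^{\mathrm{sk}}$ satisfies $\pi^{\mathrm{sk}}\circ\sigma=\id$, so it is the identity on objects $K_0$. It is determined by its values on the generating edges $e:v\to w$, and each such value must be an arrow of $\GC^{\mathrm{sk}}$ lying over $e$.

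The first step is to show that every arrow of $\GC^{\mathrm{sk}}$ lying over a single edge generator $e$ is of the form $(e,u)$ with $u\in\St_\Phi(S(e))$ uniquely determined. This is the step I expect to be the main obstacle: a priori, words in the generators could also map to $e$, and distinct labels could be identified by the relations of \Cref{def:GCsk}. My approach would be to construct a normal form, or a retraction, showing that the label map $(e,u)\mapsto u$ is well defined on arrows over $e$. The relations of \Cref{def:GCsk} are all label-multiplicative over the corresponding $\CB^{\mathrm{sk}}$-relations, so reducing a word over $e$ via the triangle and stratum relations yields a single label. Once this is in place, $\sigma(e)=(e,u_e)$ for a unique $u_e$. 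Compatibility with inverses, $\sigma(e^{-1})=\sigma(e)^{-1}=(e^{-1},u_e^{-1})$, forces $u_{\bar e}=u_e^{-1}$.

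The second step is to check the relations. Functoriality of $\sigma$ on a $2$-simplex relation $e_{12}\circ e_{01}=e_{02}$ in $\CB^{\mathrm{sk}}$ gives
$$(e_{12},u_{e_{12}})\circ(e_{01},u_{e_{01}})=(e_{02},u_{e_{02}})\quad\text{in }\GC^{\mathrm{sk}}.$$
By relation (3) of \Cref{def:GCsk}, the left-hand side equals $(e_{02},u_{e_{12}}u_{e_{01}})$. Uniqueness of labels from the first step then gives \eqref{eq:triangle-relation}.

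The third step is the converse. Given labels satisfying \eqref{eq:triangle-relation}, define $\sigma$ on generators by $e\mapsto(e,u_e)$ and $e^{-1}\mapsto(e^{-1},u_e^{-1})$. The inverse relations of $\CB^{\mathrm{sk}}$ are respected by relation (1) of \Cref{def:GCsk}, and the $2$-simplex relations are respected by relation (3) together with \eqref{eq:triangle-relation}. By \Cref{prop:coeq}, $\sigma$ therefore descends to a strict functor, and $\pi^{\mathrm{sk}}\sigma=\id$ holds on generators and hence everywhere.

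Finally, the two constructions are mutually inverse because a functor on a presented groupoid is determined by its values on generators. This gives the claimed bijection between objects of $\Sec(\pi^{\mathrm{sk}})$ and edge labellings satisfying \eqref{eq:triangle-relation}, exactly as in the \v{C}ech case of \Cref{prop:stokes-as-sections}.
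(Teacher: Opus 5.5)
Your overall route is the paper's: a strict section is the identity on $K_0$ and is determined by its values on the edge generators. It is a functor exactly when the $2$-simplex relations are preserved, which relation (3) of \Cref{def:GCsk} turns into \eqref{eq:triangle-relation}. The paper simply asserts your Step 1 (``a unique arrow of $\GC^{\mathrm{sk}}$ lying over $e$, i.e.\ some $(e,u_e)$''), so you are right to isolate it. However, your sketch establishes only half of it.

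Label-multiplicativity of the relations does give a functor $\GC^{\mathrm{sk}}\to\CB^{\mathrm{sk}}\times BG$, $(e,u)\mapsto(e,u)$, with $G$ the common Stokes group (see below). This gives uniqueness: $(e,u)=(e,u')$ forces $u=u'$. It does not show that every arrow over $e$ \emph{equals} a generator $(e,u)$. The underlying edge path of such an arrow reduces to $e$ by free cancellations as well as triangle moves, and free cancellations do not lift to labelled generators. Indeed, $(e,a)\circ(e,b)^{-1}$ maps to $(\id_w,ab^{-1})$, so it is a nontrivial automorphism of $w=t(e)$ when $a\neq b$. Thus a word such as $(e,u)\circ(e',a)^{-1}\circ(e',b)$, with $e'$ another edge out of $v=s(e)$, lies over $e$. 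Lifting the reduction of its path leaves isotropy factors that you still have to absorb into the label of $e$.

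The missing ingredient is control of these isotropy factors. For $e:v\to w$ put $\lambda_e(g):=(e,1)^{-1}\circ(e,g)\in\Aut(v)$.
\begin{itemize}
\item Setting one label equal to $1$ in relation (3) on a $2$-simplex gives $\lambda_{e_{02}}=\lambda_{e_{01}}$ and $(e_{01},1)^{-1}\circ\lambda_{e_{12}}(g)\circ(e_{01},1)=\lambda_{e_{01}}(g)$.
\item With general labels, relation (3) then shows that $\lambda_{e_{01}}$ is a homomorphism.
\item Since $N^\circ$ is a manifold of real dimension at least $2$, vertex links in $K$ are connected, so $\lambda_e$ depends only on $s(e)$.
\item The conjugation identity moves every isotropy factor of a word to its source.
\item Since $e\mapsto(e,1)$ is a functor $\CB^{\mathrm{sk}}\to\GC^{\mathrm{sk}}$ (relations (1) and (3) with trivial labels), any arrow over $e$ equals $(e,1)\circ\lambda_e(g)=(e,g)$.
\end{itemize}
This also uses that relation (3), imposed for all labels, forces the three edges of a $2$-simplex to carry the same Stokes group, hence a single group $G$ on each connected component. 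Without these inputs (different groups on edges at a vertex, or a complex with no $2$-simplices), the word above would lie over $e$ without equalling any $(e,u)$, and sections would not reduce to edge labels. With this argument replacing the normal-form sketch, your proof goes through.
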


\begin{proof}
A strict section $\sigma$ must satisfy $\pi^{\mathrm{sk}}\circ\sigma=\id$, hence it is the identity on objects and sends each generating
edge $e$ of $\CB^{\mathrm{sk}}$ to a unique arrow of $\GC^{\mathrm{sk}}$ lying over $e$, i.e.\ to some $(e,u_e)$ with $u_e\in\St_\Phi(S(e))$.
Since $\CB^{\mathrm{sk}}$ is generated by edges subject to the 2-simplex relations, $\sigma$ extends to a functor if and only if these relations
are preserved in $\GC^{\mathrm{sk}}$, which is exactly \eqref{eq:triangle-relation}.
\end{proof}

\begin{lemma}\label{lem:natural-iso-gauge}
Let $\sigma,\sigma'\in \Sec(\pi^{\mathrm{sk}})$ correspond to edge labels $(u_e)$ and $(u'_e)$.
A natural isomorphism $\theta:\sigma\Rightarrow\sigma'$ is equivalent to a choice of elements
\begin{equation}\label{eq:vertex-gauge}
h_v\in \St_\Phi(S(v))
\end{equation}
for each vertex $v$, such that for every oriented edge $e:v\to w$ one has
\begin{equation}\label{eq:gauge-action}
u'_e = h_w\cdot u_e\cdot h_v^{-1}.
\end{equation}
\end{lemma}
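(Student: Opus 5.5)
We unwind the definition of a natural isomorphism between strict functors $\sigma,\sigma':\CB^{\mathrm{sk}}\to\GC^{\mathrm{sk}}$, in parallel with the proof of \Cref{prop:stokes-as-sections}. By \Cref{lem:sections-as-edge-labels}, $\sigma$ and $\sigma'$ are the identity on objects and send each edge $e:v\to w$ to $(e,u_e)$ and $(e,u'_e)$ respectively. A natural isomorphism $\theta$ is a family of arrows $\theta_v:\sigma(v)=v\to v=\sigma'(v)$ in $\GC^{\mathrm{sk}}$, one for each vertex $v\in K_0$. Naturality is the family of equations $\theta_w\circ(e,u_e)=(e,u'_e)\circ\theta_v$, one for each generating edge $e:v\to w$. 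Since $\CB^{\mathrm{sk}}$ is generated by the edges, naturality on generators implies naturality on all arrows, because composites and inverses of commuting squares commute. So it is enough to check the edge generators.

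\textbf{Step 1: the components $\theta_v$.} We first need that each component $\theta_v$ is a vertex gauge element $h_v\in\St_\Phi(S(v))$. This is the step I expect to be the main obstacle, because $\GC^{\mathrm{sk}}$ as presented in \Cref{def:GCsk} has no explicit loop generators at a vertex. The intended reading, matching the \v{C}ech case where $\eta_\alpha$ is an arrow $(\alpha,x)\to(\alpha,x)$ labelled by $\St_\Phi(B_\alpha)$, is that the automorphisms of $v$ lying over $\id_v$ in $\CB^{\mathrm{sk}}$ are the labelled identities $(\id_v,h)$ with $h\in\St_\Phi(S(v))$. I would make this explicit in one of two ways. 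The first option is to interpret $\theta$ as a vertical natural isomorphism, with $\pi^{\mathrm{sk}}\theta=\id$, as is implicit in the paper's morphisms of sections. The second option is to adjoin degenerate-edge generators $(\id_v,h)$ at vertices, which is consistent with the unit relation $(e,1)=e$. I would then check that composition on such arrows is multiplication in $\St_\Phi(S(v))$, so that $\theta$ is the same data as a family $(h_v)_{v\in K_0}$.

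\textbf{Step 2: naturality equals the gauge formula.} Next we compute the naturality square using the composition rule $(f,b)\circ(e,a)=(f\circ e,ba)$ from \Cref{def:GCsk}, with degenerate edges, together with the restriction maps $\St_\Phi(S(v))\to\St_\Phi(S(e))$ and $\St_\Phi(S(w))\to\St_\Phi(S(e))$. These restriction maps exist because $v$ and $w$ lie in the closure of $e$, and the Stokes sheaf is constant on stars. The square becomes $(e,h_wu_e)=(e,u'_eh_v)$. By uniqueness of labels over $e$, which is the same uniqueness used in \Cref{lem:sections-as-edge-labels}, this is equivalent to $u'_e=h_w\,u_e\,h_v^{-1}$, which is the gauge formula.

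\textbf{Step 3: the converse.} Conversely, any family $(h_v)$ satisfying the gauge formula defines a natural isomorphism $\theta_v:=(\id_v,h_v)$. Each $\theta_v$ is invertible, and its inverse is given by $h_v^{-1}$. The two constructions are mutually inverse, which gives the claimed bijection.
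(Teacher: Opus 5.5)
Your Steps~2 and~3 are the same unwinding as the paper's proof. The paper simply asserts what you isolate in Step~1, namely that an automorphism of $v$ in $\GC^{\mathrm{sk}}$ ``is given by'' some $h_v\in\St_\Phi(S(v))$. The gap is that you offer verticality and degenerate generators as alternative repairs. In fact each fixes a different half of Step~1, and you need both.

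\textbf{Verticality is not optional.} Let $\zeta\colon\id_{\CB^{\mathrm{sk}}}\Rightarrow\id_{\CB^{\mathrm{sk}}}$ be a natural automorphism; on each component these correspond to central elements of $\pi_1(N^\circ,v)$. The whiskering $\sigma\zeta$, with components $\sigma(\zeta_v)$, is then a natural automorphism of $\sigma$ lying over $\zeta$. So when $\zeta\neq\id$ it is not of the form $(\id_v,h_v)$. Concretely, take $\Phi$ with a single exponential factor on a curve. Then $\St_\Phi=1$, $\GC^{\mathrm{sk}}=\CB^{\mathrm{sk}}$, $\pi^{\mathrm{sk}}=\id$, and $N^\circ$ is a union of punctured discs. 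The unique section has automorphism group containing a copy of $\ZZ$ for each puncture, while the only vertex gauge is trivial. So with your second option alone the forward implication fails. The statement only holds for natural isomorphisms with $\pi^{\mathrm{sk}}\theta=\id$, which the \v{C}ech case gets for free because $\CB$ has trivial isotropy.

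\textbf{Verticality alone does not identify the components either.} \Cref{def:GCsk} has no generators $(\id_v,h)$. An arrow over $\id_v$ is therefore a word in labelled edges with trivial underlying path, such as $(e,a)^{-1}\circ(e,b)$. Nothing in your Step~1 shows that these form exactly one copy of $\St_\Phi(S(v))$. To close this:
\begin{enumerate}
\item Adjoin the generators $(\id_v,h)$ together with the mixed relations $(\id_w,b)\circ(e,a)=(e,b|_e\,a)$ and $(e,a)\circ(\id_v,b)=(e,a\,b|_e)$, which you already use in Step~2.
\item Give a normal-form argument. Restriction $\St_\Phi(S(v))\to\St_\Phi(S(e))$ is an isomorphism on the contractible star of $v$, so every arrow can be written as a $1$-labelled edge path composed with some $(\id_v,h)$. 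Hence the arrows over $\id_v$ are exactly the $(\id_v,h)$.
\end{enumerate}
Checking only that degenerate arrows multiply correctly does not rule out the extra vertical words. Once both ingredients are in place, your Steps~2 and~3 go through as written.
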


\begin{proof}
A natural isomorphism $\theta$ consists of arrows $\theta_v:\sigma(v)\to\sigma'(v)$ in $\GC^{\mathrm{sk}}$ for each vertex $v$ such that for every
edge $e:v\to w$ the naturality square commutes:
\begin{equation*}
\theta_w\circ \sigma(e)=\sigma'(e)\circ \theta_v.
\end{equation*}
Since $\sigma$ and $\sigma'$ are the identity on objects, $\theta_v$ is an automorphism of $v$ in $\GC^{\mathrm{sk}}$, hence is given by an element
$h_v\in\St_\Phi(S(v))$. Writing $\sigma(e)=(e,u_e)$ and $\sigma'(e)=(e,u'_e)$ and using the defining composition rules in $\GC^{\mathrm{sk}}$ yields
exactly \eqref{eq:gauge-action}.
Conversely, any choice of $(h_v)$ satisfying \eqref{eq:gauge-action} defines a natural isomorphism.
\end{proof}

\begin{theorem}[Stratified Stokes skeleton]\label{thm:stratified-stokes-skeleton}
There are natural equivalences of groupoids
\begin{equation}\label{eq:stratified-stokes-skeleton}
\Sec(\pi)\ \simeq\ \Sec(\pi^{\mathrm{sk}})\ \simeq\ \Tors(\St_\Phi;N^\circ),
\end{equation}
where $\pi:\GC\to\CB$ is the cover-based collar projection from \Cref{sec:collar-groupoid}, and $\Tors(\St_\Phi;N^\circ)$ denotes the groupoid
of $\St_\Phi$-torsors on $N^\circ$.
Under these identifications, $\Sec(\pi^{\mathrm{sk}})$ is the groupoid of edge-labels $(u_e)$ satisfying the 2-simplex relations,
modulo vertex gauge $(h_v)$ as in \Cref{lem:natural-iso-gauge}.
\end{theorem}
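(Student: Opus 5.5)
The plan is to factor both equivalences in \eqref{eq:stratified-stokes-skeleton} through nonabelian \v{C}ech cohomology of $\St_\Phi$, using the open-star cover $\mathcal U^\star=\{U_v\}_{v\in K_0}$ as a bridge between the cover-based and the skeletal presenter.

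\textbf{Step 1: the right-hand equivalence.} \Cref{lem:sections-as-edge-labels} and \Cref{lem:natural-iso-gauge} already identify $\Sec(\pi^{\mathrm{sk}})$ with the groupoid of edge labels $(u_e)$, subject to the $2$-simplex relations \eqref{eq:triangle-relation}, modulo vertex gauge \eqref{eq:gauge-action}. I will match this groupoid with \v{C}ech cocycles on $\mathcal U^\star$, as in the proof of \Cref{prop:stokes-torsors-skeleton}. By \Cref{lem:star-good-cover}, $U_v\cap U_w\neq\varnothing$ exactly when $v,w$ span an edge $e$. This overlap is contractible and retracts onto the open star of $e$. I would then argue that the restriction $\St_\Phi(U_v\cap U_w)\to\St_\Phi(S(e))$ is an isomorphism, which yields the edge label $u_e$. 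Triple overlaps correspond to $2$-simplices, and the \v{C}ech cocycle identity on them becomes \eqref{eq:triangle-relation}. Vertex $0$-cochains on $U_v$ give exactly the gauge $h_v\in\St_\Phi(S(v))$. This yields $\Sec(\pi^{\mathrm{sk}})\simeq\Tors(\St_\Phi;\mathcal U^\star)$. I then pass to $\Tors(\St_\Phi;N^\circ)$ by the standard fact that, for a good cover on which the sheaf has trivial higher cohomology (here constant sections on contractible intersections), \v{C}ech torsors compute all torsors \cite[Ch.\ III]{Giraud}.

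\textbf{Step 2: the left-hand equivalence.} \Cref{prop:stokes-as-sections} gives $\Sec(\pi)\simeq\Tors(\St_\Phi;\mathcal B)$. The sector-box cover $\mathcal B$ and $\mathcal U^\star$ admit a common refinement, namely stars in a barycentric subdivision subordinate to $\mathcal B$. Two applications of \Cref{prop:refinement} then identify $\Tors(\St_\Phi;\mathcal B)$ and $\Tors(\St_\Phi;\mathcal U^\star)$, both being equivalent to $\Tors(\St_\Phi;N^\circ)$. Composing with Step 1 gives both equivalences, and they are natural because every functor involved is restriction along a refinement.

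\textbf{Main obstacle.} The delicate point is the identification of $\St_\Phi(U_v\cap U_w)$ with $\St_\Phi(S(e))$, and likewise of $\St_\Phi(U_v)$ with $\St_\Phi(S(v))$. An open star meets several strata, so sections over it are not obviously those of the stratum containing the open simplex. The standing hypothesis (\Cref{subsec:stokes-input}) says the preorder is constant on $N^\circ$-components, so $\St_\Phi$ is locally constant on all of $N^\circ$. I would use this: a locally constant sheaf on a contractible open has global sections equal to any stalk, which makes the restriction to a point of the open simplex an isomorphism. If local constancy only held stratumwise, this would fail, and one would instead need an exit-path (constructible) version of the argument \cite{Treumann2009}. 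I would first check that the standing input rules this situation out.
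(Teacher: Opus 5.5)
Your proposal is correct and follows essentially the paper's own route. The paper likewise passes through the open-star cover $\mathcal S$ of $K$, uses a common refinement with $\mathcal B$ and refinement invariance to get $\Sec(\pi)\simeq\Tors(\St_\Phi;\mathcal B)\simeq\Tors(\St_\Phi;\mathcal S)$, and then reads \v{C}ech cocycles on $\mathcal S$ as edge labels with $2$-simplex relations modulo vertex gauge, i.e.\ as $\Sec(\pi^{\mathrm{sk}})$; the extra Morita functor $\Cech(\mathcal S)\to\CB^{\mathrm{sk}}$ that the paper constructs is never used in this chain, so omitting it costs you nothing. On the point you flag as the main obstacle, your justification is sounder than the paper's: the paper asserts that every finite intersection of open stars lies in a single stratum, which fails as soon as a vertex lies in the closure of a higher stratum, whereas local constancy of $\St_\Phi$ on all of $N^\circ$ (from the standing input of \Cref{subsec:stokes-input}) correctly identifies sections over contractible star intersections with stalks at points of the relevant open simplices.
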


\begin{proof}
We spell out the comparison in a way that is independent of choices and keeps track of strata.

Let $K$ be a $\Sigma$-adapted locally finite CW decomposition of $N^\circ$ (as in \Cref{def:stratified-triangulation}).
For each vertex $v\in K^{(0)}$, let $\mathrm{st}(v)\subset N^\circ$ be its (open) star, i.e.\ the union of the interiors of all cells whose
closure contains $v$. The family $\mathcal{S}=\{\mathrm{st}(v)\}_{v\in K^{(0)}}$ is a locally finite open cover of $N^\circ$.
Moreover, by regularity of the CW structure and the $\Sigma$-adaptedness, every nonempty finite intersection
$\mathrm{st}(v_0)\cap\cdots\cap \mathrm{st}(v_k)$ is connected and contractible and is contained in a single stratum of $\Sigma$.

Let $\Cech(\mathcal S)$ be the associated \v{C}ech groupoid.
Since $\mathcal{B}$ is also a good cover, we may choose a common locally finite good refinement $\mathcal{W}$ of $\mathcal{B}$ and $\mathcal{S}$.
This produces a zig-zag of Morita equivalences
\begin{equation}\label{eq:zigzag-CB-star}
\CB\ \xleftarrow{\ \simeq_{\mathrm{Morita}}\ }\ \Cech(\mathcal W)\ \xrightarrow{\ \simeq_{\mathrm{Morita}}\ }\ \Cech(\mathcal S),
\end{equation}
coming from refinement functors.
In particular, it is enough to compare $\Cech(\mathcal S)$ with $\CB^{\mathrm{sk}}=\Pi_1^{\mathrm{sk}}(K)$.

Define a strict functor
\begin{equation}\label{eq:F-star-to-skel}
F:\Cech(\mathcal S)\longrightarrow \Pi_1^{\mathrm{sk}}(K)
\end{equation}
as follows.
On objects: an object of $\Cech(\mathcal S)$ is a point $x\in \mathrm{st}(v)$ labeled by a vertex $v$, and we set $F(x\in \mathrm{st}(v)):=v$.
On arrows: an arrow in $\Cech(\mathcal S)$ is a point $x\in \mathrm{st}(v)\cap \mathrm{st}(w)$, viewed as an arrow from
$(x\in \mathrm{st}(v))$ to $(x\in \mathrm{st}(w))$. Since $\mathrm{st}(v)\cap \mathrm{st}(w)\neq\varnothing$ iff $v$ and $w$ are joined by a
$1$-cell in $K$ (after barycentric subdivision if necessary), we map such an arrow to the corresponding generating arrow
$e_{vw}:v\to w$ in $\Pi_1^{\mathrm{sk}}(K)$.

To see that $F$ respects composition, note that if $x\in \mathrm{st}(v)\cap \mathrm{st}(w)\cap \mathrm{st}(u)$, then $v,w,u$ are the vertices of a
$2$-cell (a $2$-simplex in the barycentric subdivision), and in $\Pi_1^{\mathrm{sk}}(K)$ the boundary of this $2$-cell imposes the relation
$e_{wu}\circ e_{vw}=e_{vu}$, exactly matching the composition rule in the \v{C}ech groupoid on triple overlaps.

We check that $F$ is essentially surjective and fully faithful on isotropy in the standard (groupoid) sense.

Essential surjectivity is immediate: every vertex $v\in K^{(0)}$ occurs as $F(x\in \mathrm{st}(v))$ for any $x\in \mathrm{st}(v)$.
For full faithfulness, fix vertices $v,w\in K^{(0)}$.
The space of arrows in $\Cech(\mathcal S)$ from the object component $\mathrm{st}(v)$ to $\mathrm{st}(w)$ is the disjoint union of the intersections
$\mathrm{st}(v)\cap \mathrm{st}(w)$ (empty unless there is a $1$-cell from $v$ to $w$).
Each nonempty intersection is contractible, hence connected; since the target groupoid is discrete on generators, the functor $F$ is constant on each
component and induces bijections on connected components of arrow spaces.
Equivalently, the induced functor on the nerves is a weak equivalence on $0$- and $1$-simplices, and the $2$-simplices impose exactly the relations
coming from the $2$-cells of $K$.
This identifies $\Cech(\mathcal S)$ and $\Pi_1^{\mathrm{sk}}(K)$ in the Morita localization of $\mathbf{Grpd}$.
Therefore $$\Cech(\mathcal S)\simeq_{\mathrm{Morita}} \CB^{\mathrm{sk}}.$$

Because $\mathcal{S}$ is $\Sigma$-adapted, each nonempty finite intersection $\mathrm{st}(v_0)\cap\cdots\cap\mathrm{st}(v_k)$ lies in a single stratum,
hence the Stokes sheaf is constant there (by \Cref{subsec:stokes-input}).
Thus the usual \v{C}ech description of $\St_\Phi$-torsors on $\mathcal{S}$ identifies a torsor with constant labels on each overlap
$\mathrm{st}(v)\cap\mathrm{st}(w)$, i.e.\ with an assignment of a group element to each oriented edge $e:v\to w$, subject to the $2$-cell relations,
modulo the standard vertex gauge action.
This is precisely the description of strict sections of $\pi^{\mathrm{sk}}:\GC^{\mathrm{sk}}\to\CB^{\mathrm{sk}}$.

Formally, we have a chain of equivalences
\begin{equation}\label{eq:sec-pi-chain}
\Sec(\pi)\ \simeq\ \Tors(\St_\Phi;\mathcal B)\ \simeq\ \Tors(\St_\Phi;\mathcal S)\ \simeq\ \Sec(\pi^{\mathrm{sk}}),
\end{equation}
where the middle equivalence is refinement invariance of torsors (via the common refinement $\mathcal W$ in \eqref{eq:zigzag-CB-star}).
This proves (1) and (2), and (3) is the explicit cocycle/gauge description just explained.

\begin{figure}[t]
\centering
\begin{tikzpicture}[
  scale=1.0,
  >=Stealth,
  every node/.style={font=\scriptsize},
  vtx/.style={circle, draw, inner sep=1.2pt},
  elab/.style={fill=white, inner sep=1pt}
]
  \node[vtx] (v0) at (0,0) {$v_0$};
  \node[vtx] (v1) at (3.2,0) {$v_1$};
  \node[vtx] (v2) at (1.6,2.4) {$v_2$};

  \draw[thick] (v0) -- (v1) -- (v2) -- (v0);

  \node[elab, below=3pt] at ($(v0)!0.5!(v1)$) {$e_{01}$};
  \node[elab, right=3pt] at ($(v1)!0.55!(v2)$) {$e_{12}$};
  \node[elab, left=3pt]  at ($(v0)!0.55!(v2)$) {$e_{02}$};

  \node[align=center, fill=white, inner sep=2pt] at (1.6,-0.9)
    {$2$-cell relation: $e_{12}\circ e_{01}=e_{02}$.};
\end{tikzpicture}
\caption{A barycentric $2$-simplex: triple overlaps in the star cover impose exactly the $2$-cell relations in $\Pi_1^{\mathrm{sk}}(K)$, and Stokes labels multiply accordingly.}
\label{fig:2cell-relation}
\end{figure}
\end{proof}


\subsection{A small global presenter in arbitrary dimension}\label{subsec:global-skeleton}

We now package the intrinsic global Stokes groupoid into a \emph{small} strict presenter by replacing
\v{C}ech groupoids by skeletal groupoids coming from locally finite triangulations.

\subsubsection*{Triangulations and the skeletal cospan}
Choose:
\begin{itemize}
\item a locally finite simplicial complex $K_U$ together with a weak homotopy equivalence $|K_U|\simeq U$;
\item a locally finite $\Sigma_{D,\Phi,n}$-adapted stratified triangulation $K$ of $N^\circ$;
\item a locally finite simplicial complex $K_\times$ triangulating the overlap $N^\circ\subset U$ and refining both $K_U|_{N^\circ}$ and $K$.
\end{itemize}
Set
\begin{equation}\label{eq:sk-groupoids}
\GU^{\mathrm{sk}}:=\Pi_1^{\mathrm{sk}}(K_U),
\qquad
\GX^{\mathrm{sk}}:=\Pi_1^{\mathrm{sk}}(K_\times),
\qquad
\CB^{\mathrm{sk}}:=\Pi_1^{\mathrm{sk}}(K).
\end{equation}
Let $\GC^{\mathrm{sk}}$ and $\pi^{\mathrm{sk}}:\GC^{\mathrm{sk}}\to \CB^{\mathrm{sk}}$ be as in
Definitions~\ref{def:GCsk} and~\ref{def:pi-sk}.
The refinement maps of triangulations induce strict functors
\begin{equation}\label{eq:sk-refinement-functors}
j_U^{\mathrm{sk}}:\GX^{\mathrm{sk}}\to \GU^{\mathrm{sk}},
\qquad
j_C^{\mathrm{sk}}:\GX^{\mathrm{sk}}\to \GC^{\mathrm{sk}},
\end{equation}
where $j_C^{\mathrm{sk}}$ is obtained by sending each overlap edge to the corresponding edge in $\CB^{\mathrm{sk}}$
and using the trivial Stokes label $1$ (so that it lands in $\GC^{\mathrm{sk}}$).

\subsubsection*{Pushout notation}
Throughout this subsection we write $A\bigsqcup_B C$ for the explicit strict model of the bicategorical $2$-pushout of a cospan
$A\xleftarrow{}B\xrightarrow{}C$ in $\mathbf{Grpd}$, as constructed in Lemma~\ref{lem:strict-pushout-is-bicategorical}.
In particular, $A\bigsqcup_B C$ is a $2$-pushout, hence it is unique up to equivalence and satisfies the expected universal property.

\begin{definition}[Skeletal pushout presenter]\label{def:pushout-sk}
Define the skeletal pushout presenter by
\begin{equation}\label{eq:pushout-sk}
\G_{\Phi,n}^{\mathrm{sk}}
\ :=\
\GU^{\mathrm{sk}}\bigsqcup_{\GX^{\mathrm{sk}}}\GC^{\mathrm{sk}},
\end{equation}
formed using the cospan \eqref{eq:sk-refinement-functors}.
\end{definition}

\begin{equation}\label{eq:pushout-sk-diagram}
\begin{tikzcd}[row sep=large, column sep=huge]
\GX^{\mathrm{sk}} \arrow[r,"j_C^{\mathrm{sk}}"] \arrow[d,swap,"j_U^{\mathrm{sk}}"]
&
\GC^{\mathrm{sk}} \arrow[d]
\\
\GU^{\mathrm{sk}} \arrow[r]
&
\G_{\Phi,n}^{\mathrm{sk}}
\end{tikzcd}
\end{equation}

\subsubsection*{Global Stokes objects from the skeletal presenter}
\begin{theorem}[Global skeletal pushout computes global Stokes objects]\label{thm:global-skeletal-pushout}
For every $r\ge 1$ there is a natural equivalence of groupoids
\begin{equation}\label{eq:global-skeletal-pushout}
\Stokes_r(X^{\log}_n;D,\Phi)\ \simeq\
\Rep_r(\GU^{\mathrm{sk}})\times^{(2)}_{\Rep_r(\GX^{\mathrm{sk}})}\Sec(\pi^{\mathrm{sk}}),
\end{equation}
where $\Stokes_r(X^{\log}_n;D,\Phi)$ is the intrinsic global Stokes groupoid from Definition~\ref{def:global-stokes}.
Equivalently, global Stokes objects are classified by Stokes-corrected torsorial representations of the small presenter
$\G_{\Phi,n}^{\mathrm{sk}}$.
\end{theorem}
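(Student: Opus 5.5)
The plan is to reduce \Cref{thm:global-skeletal-pushout} to the cover-based \Cref{thm:main}. Both sides are $2$-fiber products, so it suffices to identify each vertex of the cospan together with its two legs. The argument has three identifications and one compatibility check.

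First, the interior vertex. I would argue as in \Cref{prop:cech-to-skeleton}: a rank-$r$ local system on $U$ is a functor $\Pi_1(U)\to\mathbf{Tors}_r$. Since $|K_U|\simeq U$ is a weak equivalence, $\Pi_1(|K_U|)\simeq\Pi_1(U)$. Restricting to the vertex set and using the edge/$2$-simplex presentation gives $\Pi_1(|K_U|)|_{K_{U,0}}\cong\GU^{\mathrm{sk}}$. By \Cref{prop:rep-as-fun} this yields $\Rep_r(\GU^{\mathrm{sk}})\simeq\LocSys_r(U)$. The same argument applied to $K_\times$ gives $\Rep_r(\GX^{\mathrm{sk}})\simeq\LocSys_r(N^\circ)$.

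Second, the leg $j_U^{\mathrm{sk}}$. The refinement $K_\times\to K_U|_{N^\circ}$ induces a functor on $\Pi_1$ covering the inclusion $N^\circ\subset U$. It should therefore correspond to restriction of local systems, which is the skeletal analogue of \Cref{lem:cech-naturality}. I would verify this by naturality of the monodromy functor $\Pi_1(-)\to\mathbf{Tors}_r$ under continuous maps. The refinement map is homotopic to the inclusion, because a simplex of $K_\times$ sits in a simplex of $K_U$, and homotopic maps induce naturally isomorphic functors on $\Pi_1$.

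Third, the collar vertex. \Cref{thm:stratified-stokes-skeleton} gives $\Sec(\pi^{\mathrm{sk}})\simeq\Sec(\pi)\simeq\Tors(\St_\Phi;N^\circ)$. Composing with \Cref{prop:stokes-as-sections} and \Cref{thm:stokeslocal-tors} gives $\Sec(\pi^{\mathrm{sk}})\simeq\StokesLocal_r(N^\circ;\Phi)$.

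The main obstacle is the right leg: $\Sec(\pi^{\mathrm{sk}})\to\Rep_r(\GX^{\mathrm{sk}})$. The theorem needs a forgetful functor analogous to \Cref{lem:forg-to-overlap}, and this functor must agree, up to natural isomorphism, with the composite
\[
\Sec(\pi^{\mathrm{sk}})\simeq\StokesLocal_r(N^\circ;\Phi)\to\LocSys_r(N^\circ)\simeq\Rep_r(\GX^{\mathrm{sk}}).
\]
I would define it concretely and then check the comparison.

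\begin{enumerate}
\item Start from edge labels $(u_e)$. Each $u_e$ lies in $\St_\Phi\subset\Aut(\Gr)$, so the labels define a functor $\CB^{\mathrm{sk}}\to\mathbf{Tors}_r$ sending $v\mapsto\mathrm{Fr}(\Gr_v)$. Here the edge and simplex relations of \Cref{lem:sections-as-edge-labels} are exactly functoriality.
\item Precompose this functor with the refinement $\GX^{\mathrm{sk}}\to\CB^{\mathrm{sk}}$.
\item Check that the result is the monodromy of the glued local system $L$. On the star cover $\mathcal S$ of $K$, the same labels are the \v{C}ech cocycle gluing the $\Gr|_{\mathrm{st}(v)}$ into $L$. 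Under the Morita comparison $\Cech(\mathcal S)\to\Pi_1^{\mathrm{sk}}(K)$ in the proof of \Cref{thm:stratified-stokes-skeleton}, \v{C}ech gluing corresponds to monodromy along edges. The check is then naturality of that Morita comparison with respect to the refinement $K_\times\to K$.
\end{enumerate}

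For this last naturality I would pass through a common good refinement of $\mathcal S$, the star cover of $K_\times$, and $\mathcal W$, as in \eqref{eq:zigzag-CB-star}. On that common refinement all three descriptions are literally the same cocycle.

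Once the three vertices and both legs are identified up to natural isomorphism, functoriality of $2$-fiber products in $\mathbf{Grpd}$ gives
\[
\Rep_r(\GU^{\mathrm{sk}})\times^{(2)}_{\Rep_r(\GX^{\mathrm{sk}})}\Sec(\pi^{\mathrm{sk}})\simeq\LocSys_r(U)\times^{(2)}_{\LocSys_r(N^\circ)}\StokesLocal_r(N^\circ;\Phi)=\Stokes_r(X^{\log}_n;D,\Phi),
\]
by \Cref{def:global-stokes}. This step uses the standard fact that a $2$-fiber product is invariant under replacing a cospan by an equivalent one, where the commuting squares need only hold up to specified $2$-cells. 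The closing reformulation, that these are the Stokes-corrected torsorial representations of $\G_{\Phi,n}^{\mathrm{sk}}$, is by definition, paralleling \Cref{def:RepStokes-pushout}.
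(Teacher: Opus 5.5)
Your proposal is correct and follows the same route as the paper's proof. You identify $\Rep_r(\GU^{\mathrm{sk}})$, $\Rep_r(\GX^{\mathrm{sk}})$ and $\Sec(\pi^{\mathrm{sk}})$ with $\LocSys_r(U)$, $\LocSys_r(N^\circ)$ and $\StokesLocal_r(N^\circ;\Phi)$, using the $\Pi_1$/edge-path argument of \Cref{prop:cech-to-skeleton} and \Cref{thm:stratified-stokes-skeleton}, match the two legs, and substitute into the $2$-fiber product of \Cref{def:global-stokes}; so \Cref{thm:main} serves only as a template, not something you actually invoke. The difference is in detail: the paper merely asserts that these identifications intertwine restriction and that $\Sec(\pi^{\mathrm{sk}})\to\Rep_r(\GX^{\mathrm{sk}})$ corresponds to taking the underlying local system, whereas you construct that functor explicitly and check both leg compatibilities (by simplicial approximation and a common refinement of the star covers), which is what justifies the substitution.
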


\begin{proof}
By Proposition~\ref{prop:cech-to-skeleton} (applied to locally finite open-star covers of the chosen triangulations) there are natural equivalences
\begin{equation}\label{eq:sk-locsys-identifications}
\LocSys_r(U)\ \simeq\ \Rep_r(\GU^{\mathrm{sk}}),
\qquad
\LocSys_r(N^\circ)\ \simeq\ \Rep_r(\GX^{\mathrm{sk}}),
\qquad
\LocSys_r(N^\circ)\ \simeq\ \Rep_r(\CB^{\mathrm{sk}}),
\end{equation}
and these identifications are compatible with restriction along refinements, in the sense that the restriction functor
$\LocSys_r(U)\to \LocSys_r(N^\circ)$ corresponds to pullback along $j_U^{\mathrm{sk}}$ under \eqref{eq:sk-locsys-identifications}.

On the collar side, Theorem~\ref{thm:stratified-stokes-skeleton} identifies Stokes-local objects with sections of the skeletal projection:
\begin{equation}\label{eq:sk-stokeslocal-sections}
\StokesLocal_r(N^\circ;\Phi)\ \simeq\ \Sec(\pi^{\mathrm{sk}}).
\end{equation}
Under this equivalence, the forgetful functor $\StokesLocal_r(N^\circ;\Phi)\to \LocSys_r(N^\circ)$ corresponds to the functor
$\Sec(\pi^{\mathrm{sk}})\to \Rep_r(\GX^{\mathrm{sk}})$ obtained by passing from a Stokes cocycle on the collar skeleton to its underlying
$\GL_r(\CC)^\delta$-torsor on $N^\circ$.

Using Definition~\ref{def:global-stokes},
\begin{equation}\label{eq:global-stokes-recall}
\Stokes_r(X^{\log}_n;D,\Phi)
=
\LocSys_r(U)\times^{(2)}_{\LocSys_r(N^\circ)}\StokesLocal_r(N^\circ;\Phi),
\end{equation}
and substituting \eqref{eq:sk-locsys-identifications} and \eqref{eq:sk-stokeslocal-sections}, the right-hand side becomes canonically
\begin{equation*}
\Rep_r(\GU^{\mathrm{sk}})\times^{(2)}_{\Rep_r(\GX^{\mathrm{sk}})}\Sec(\pi^{\mathrm{sk}}),
\end{equation*}
which is exactly \eqref{eq:global-skeletal-pushout}. Naturality in all choices follows from Morita invariance of the skeletal models
(Proposition~\ref{prop:cech-to-skeleton} and Theorem~\ref{thm:stratified-stokes-skeleton}).
\end{proof}

\subsection{Canonicity up to contractible choice and the exit-path viewpoint}\label{subsec:canonicity-exit}

The constructions above depend on auxiliary choices (sector-box covers, CW decompositions, refinements).
The Morita-invariance statements show that the resulting objects are \emph{well-defined up to equivalence}.
For several later applications (e.g.\ functoriality under further cut-and-paste, or comparisons with constructible/exit-path formalisms),
it is useful to record a slightly stronger (and still elementary) statement:
\emph{the ambiguity is controlled by a contractible space of choices}.

\begin{definition}[The refinement index categories]\label{def:refinement-index}
Let $(N^\circ,\Sigma)$ be as in \Cref{subsec:stokes-input}.

\smallskip
\noindent
(1) Let $\mathbf{Cov}_{\mathrm{good}}(N^\circ)$ be the category whose objects are locally finite good covers of $N^\circ$
(by connected contractible opens contained in a single stratum of $\Sigma$) and whose morphisms are refinements.

\smallskip
\noindent
(2) Let $\mathbf{Tri}_\Sigma(N^\circ)$ be the category whose objects are locally finite $\Sigma$-adapted stratified triangulations $K$ of $N^\circ$
(as in \Cref{def:stratified-triangulation}) and whose morphisms are \emph{stratified subdivisions} $K'\to K$.
\end{definition}

\begin{lemma}[Cofilteredness]\label{lem:cofiltered}
Both $\mathbf{Cov}_{\mathrm{good}}(N^\circ)$ and $\mathbf{Tri}_\Sigma(N^\circ)$ are nonempty and cofiltered:
given two objects, there is a third refining both, and given two parallel morphisms there is a further refinement equalizing them.
\end{lemma}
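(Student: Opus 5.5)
Both categories are nonempty by existence results already recorded in the paper. A $\Sigma$-adapted stratified triangulation $K$ of $N^\circ$ exists by the remark after \Cref{def:stratified-triangulation}. Its open-star cover is then a locally finite good cover by \Cref{lem:star-good-cover}. Each star meets several strata, so the star cover itself may violate the ``contained in a single stratum'' condition. To repair this I will pass to the barycentric subdivision and intersect stars with strata, or, more cheaply, take small contractible neighborhoods inside each open simplex's stratum. This choice has to be made explicit for $\mathbf{Cov}_{\mathrm{good}}(N^\circ)$.

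For the first cofiltered condition, take two objects and produce a common refinement. For covers $\mathcal V,\mathcal V'$, first form the cover by all nonempty intersections $V_i\cap V'_j$. It refines both, but its members need not be contractible. So I refine further: triangulate $N^\circ$ compatibly with $\Sigma$ and with the boundaries of the finitely many (locally) relevant sets. Then take the open-star cover of a sufficiently fine barycentric subdivision, which is subordinate to $\{V_i\cap V'_j\}$ by a Lebesgue-number argument, done locally using local finiteness. This yields a good, stratum-adapted, locally finite cover. For triangulations $K,K'$, I invoke the standard existence of a common stratified subdivision of two locally finite triangulations of the same space that are subordinate to the same real-analytic stratification. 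This is the Hauptvermutung-free statement valid in the subanalytic/PL-compatible category, as in \cite{Treumann2009}: triangulate the subanalytic family formed by the simplices of $K$ and $K'$ together with the strata.

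For the second cofiltered condition, take parallel morphisms and equalize them. In $\mathbf{Cov}_{\mathrm{good}}$ a refinement $\mathcal V'\to\mathcal V$ is a choice of index map with $V'_{a}\subset V_{r(a)}$. Two such maps $r_1,r_2$ are equalized by the identity composed with any further refinement. Here the claim really requires the morphism set to be a preorder, so I will take morphisms to be the relation ``refines'' rather than chosen index maps. In that case equalization is automatic, and this is the convention I would fix. In $\mathbf{Tri}_\Sigma$, a subdivision $K'\to K$ is determined by the identity of $|K'|=|K|=N^\circ$, so parallel morphisms coincide and there is nothing to do.

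The main obstacle is the common-refinement step for triangulations. A common subdivision of two arbitrary locally finite triangulations need not exist for merely topological triangulations. I would handle this by restricting $\mathbf{Tri}_\Sigma(N^\circ)$ to triangulations that are subanalytic (or compatible with the given real-analytic structure on $N^\circ$ and $\Sigma_n$). Then the subanalytic triangulation theorem applied to the locally finite family of closed simplices of $K$, of $K'$, and of the strata gives a triangulation refining all of them, and it is adapted to $\Sigma$.
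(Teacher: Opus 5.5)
Your route is correct once your conventions are made explicit, and it is more careful than the paper's sketch. The paper only asserts that the intersection cover can be ``shrunk within strata'' to a good cover, that common subdivisions of triangulations are standard, and that parallel morphisms are equalized by a common refinement on which the refinement maps agree (resp.\ by subdividing until the cellular maps agree).

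The decisive difference is your choice to make both categories preorders, and this is forced rather than cosmetic: with chosen index maps as morphisms, the equalizer clause is false. Suppose $V'_a\subset V_1\cap V_2$ with $V_1\neq V_2$, and some $x\in V'_a$ lies in no other member of $\mathcal V'$. Take index maps $r_1,r_2$ that agree except $r_1(a)=1$, $r_2(a)=2$. These can never be equalized, because every refinement $s:\mathcal V''\to\mathcal V'$ must send a member containing $x$ to $a$. So drop your sentence claiming such maps are equalized by ``the identity composed with any further refinement''.

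What the paper's formulation would buy, if it held, is strict functoriality of $\mathcal B\mapsto(\CB,\GC,\pi)$. On your poset this assignment is functorial only up to canonical natural isomorphisms between the functors induced by different index maps; these are given by arrows with trivial Stokes label. That suffices for the Morita-localization statements the lemma feeds into.

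Your open-star cover of a sufficiently fine subdivision makes the paper's ``shrinking'' concrete. The subdivision must be non-uniform, since $N^\circ$ is non-compact. You neither need, nor can in general have, a triangulation compatible with the boundaries of the sets $V_i\cap V'_j$.

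Two smaller corrections.

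First, your stratum repair for nonemptiness is misdirected. If the strata of $\Sigma$ in $N^\circ$ are open chambers, they are clopen, so each connected open star already lies in a single stratum and nothing needs repairing. If some stratum were lower-dimensional, no open set could lie inside it, and no shrinking would produce an open cover.

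Second, the subanalytic triangulation theorem yields a triangulation whose open simplices each lie in an open simplex of $K$ and of $K'$, i.e.\ a cellular refinement, not a linear subdivision. A common linear subdivision over $N^\circ$ exists only when the transition homeomorphism $|K|\to N^\circ\to|K'|$ is piecewise linear. This already fails for the triangulations $t\mapsto t$ and $t\mapsto t^2$ of an interval. So either define morphisms of $\mathbf{Tri}_\Sigma(N^\circ)$ as cellular refinements, which is consistent with your remark that they are determined by the underlying subsets of $N^\circ$, or restrict to triangulations with PL transition maps.
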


\begin{proof}
For good covers: given $\mathcal U$ and $\mathcal V$, the family of all intersections $U\cap V$ (with $U\in\mathcal U$, $V\in\mathcal V$)
is a cover refining both; by shrinking if necessary within strata one may assume it is again a good cover (local contractibility of $N^\circ$ and
local finiteness make this standard). Equalizers are obtained by passing to a common refinement on which both refinement maps become the same.

For stratified triangulations: given $K_1,K_2$, take a common subdivision (e.g.\ a stratified triangulation refining both and then barycentric subdivide);
this is a standard construction in triangulation theory of stratified real-analytic sets. Equalizers are obtained by subdividing until the two
cellular maps agree on each cell.
\end{proof}

\begin{lemma}[Contractibility of the choice space]\label{lem:nerve-contractible}
The nerves $N(\mathbf{Cov}_{\mathrm{good}}(N^\circ))$ and $N(\mathbf{Tri}_\Sigma(N^\circ))$ are weakly contractible.
\end{lemma}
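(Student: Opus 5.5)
The plan is to use the standard fact that a nonempty cofiltered category has a weakly contractible nerve, and to derive this from \Cref{lem:cofiltered}. Write $\mathbf I$ for either $\mathbf{Cov}_{\mathrm{good}}(N^\circ)$ or $\mathbf{Tri}_\Sigma(N^\circ)$. By \Cref{lem:cofiltered}, $\mathbf I$ is nonempty and cofiltered. Because $N(\mathbf I)\cong N(\mathbf I^{\mathrm{op}})$ up to homeomorphism of realizations, it suffices to show that the nerve of a nonempty filtered category $\mathbf J:=\mathbf I^{\mathrm{op}}$ is weakly contractible.

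The key steps are as follows.

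\textbf{Step 1: reduce to finite subcomplexes.} Any map from a sphere $S^m\to |N(\mathbf J)|$ has compact image. Such an image lies in a finite simplicial subcomplex. That subcomplex is contained in $|N(\mathbf F)|$ for some finite subcategory $\mathbf F\subset\mathbf J$, namely the one generated by the finitely many objects and morphisms that appear as edges.

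\textbf{Step 2: build a cocone under $\mathbf F$.} Filteredness gives an object $c$ and morphisms $f_x:x\to c$ for every $x\in\mathbf F_0$. First, use the ``common refinement'' clause repeatedly on the finitely many objects. Then use the ``equalizing'' clause finitely many times: for each generating morphism $g:x\to y$ of $\mathbf F$, replace $c$ by a further object so that $f_y\circ g=f_x$. Only finitely many equalizations are needed, and each one composes with the earlier ones. The result is a genuine cocone $\mathbf F\to \mathbf J_{/\!\!/ c}$, i.e.\ a natural transformation from the inclusion of $\mathbf F$ to the constant functor at $c$.

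\textbf{Step 3: contract.} A natural transformation between functors $\mathbf F\times[1]\to\mathbf J$ yields a simplicial homotopy on nerves. It runs from the inclusion $|N(\mathbf F)|\hookrightarrow|N(\mathbf J)|$ to the constant map at the vertex $c$. Hence the sphere is null-homotopic in $|N(\mathbf J)|$. The space is nonempty, and every finite subcomplex contracts inside it. Therefore all homotopy groups vanish at every basepoint, and connectivity follows from the case $m=0$. This gives weak contractibility, and by Whitehead's theorem (CW realizations) even contractibility.

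I expect the main obstacle to be Step 2, making the cocone strictly commute. \Cref{lem:cofiltered} gives equalization only for parallel pairs. So I would argue by induction on the finitely many morphisms of $\mathbf F$: after each equalizing refinement $c'\to c$, compose all earlier legs with it, noting that earlier equalities are preserved under postcomposition. A secondary point concerns $\mathbf{Cov}_{\mathrm{good}}$. Refinements are really preorder-like choices of refinement maps, and there is a size issue: the index category should be small, which one ensures by restricting to covers by subsets of $N^\circ$ indexed by subsets of a fixed set. If morphisms are taken to be the mere relation ``refines'', the category is a cofiltered preorder, and the equalizer clause is automatic.
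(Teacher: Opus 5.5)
Your proposal is correct and takes the same route as the paper. The paper's proof is a one-line citation of the standard fact that a nonempty cofiltered category has weakly contractible nerve (Lurie, HTT, Prop.~5.3.1.18), applied to \Cref{lem:cofiltered}; you reprove that fact yourself by the usual compactness reduction, cocone-by-iterated-equalization, and natural-transformation homotopy. One harmless wording fix: the subcategory generated by the edges of a finite subcomplex is finitely generated rather than finite, and your Step~2 only imposes naturality on the finitely many generators anyway.
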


\begin{proof}
A nonempty cofiltered category has weakly contractible nerve; see, for example,
\cite[Prop.~5.3.1.18]{LurieHTT}.
\end{proof}

\begin{proposition}[Canonical Morita class]\label{prop:canonical-morita-class}
The assignments
\begin{equation*}
\mathcal B\ \longmapsto\ (\CB,\GC,\pi)\qquad\text{and}\qquad
K\ \longmapsto\ (\CB^{\mathrm{sk}},\GC^{\mathrm{sk}},\pi^{\mathrm{sk}})
\end{equation*}
extend to functors on the refinement categories of \Cref{def:refinement-index}, and all refinement morphisms induce Morita equivalences.
Consequently, the induced objects in the Morita localization of $\mathbf{Grpd}$ are \emph{canonical}:
they are independent of choices with a contractible space of identifications.
\end{proposition}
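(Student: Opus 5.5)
The plan is to construct the two functors explicitly on the refinement categories, check that each refinement morphism induces a Morita equivalence, and then deduce canonicity from the weak contractibility of the nerves in \Cref{lem:nerve-contractible}.

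\emph{Functoriality.} For a refinement $\mathcal B'\to\mathcal B$ in $\mathbf{Cov}_{\mathrm{good}}(N^\circ)$, fix a refinement map $r$ with $B'_{\alpha'}\subset B_{r(\alpha')}$. This gives a strict functor $\Cech(\mathcal B')\to\CB$ sending $(\alpha',x)\mapsto(r(\alpha'),x)$. It lifts to $\GC'\to\GC$ by sending $(x,u)\mapsto(x,u')$, where $u'$ is the unique element of $\St_\Phi(B_{r(\alpha')r(\beta')})$ restricting to $u$. This element exists and is unique because both overlaps are connected and contained in one stratum, so the restriction maps are isomorphisms, exactly as in \eqref{eq:GC-comp-fixed}. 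The lift commutes strictly with the projections $\pi$. Different choices of $r$ differ by a canonical natural isomorphism, with components the \v{C}ech arrows $(x;r_1(\alpha'),r_2(\alpha'))$ carrying label $1$. So the construction is a pseudofunctor, and it strictifies in the usual way, or after passing to the localization. For stratified subdivisions $K'\to K$, send each edge of $K'$ to the edge-path in $K$ it subdivides, and send labelled edges $(e',u)$ to the corresponding composite of labelled edges. Relation (2) of \Cref{def:GCsk} makes this well defined, since a subdivided edge lies in a single stratum.

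\emph{Morita equivalences.} For \v{C}ech groupoids, the refinement functor $\Cech(\mathcal B')\to\CB$ is essentially surjective, since the objects cover. It is also fully faithful, since both groupoids have at most one arrow between points over the same $x$. The induced map on the spaces of objects is an open surjection onto the base, so the functor is a Morita equivalence; this is the standard fact already invoked in \Cref{prop:refinement}. For $\GC'\to\GC$ I will argue the same way: the functor is the identity on underlying points and bijective on the labels over each arrow, so it is the base change of $\Cech(\mathcal B')\to\CB$ along $\pi$. A base change of a Morita equivalence along a functor that is bijective on labelled fibres is again a Morita equivalence. In the skeletal case I will combine the argument of \Cref{thm:stratified-stokes-skeleton} with the comparison of star covers: the star cover of $K'$ refines that of $K$, which gives a commuting square of Morita equivalences relating $\Pi_1^{\mathrm{sk}}(K')\to\Pi_1^{\mathrm{sk}}(K)$ to the \v{C}ech refinement. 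By two-out-of-three in the Morita localization, the skeletal functor is then a Morita equivalence, and the same holds for its Stokes decorations.

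\emph{Canonicity.} Composing with the localization functor $\mathbf{Grpd}\to\mathbf{Grpd}[\mathrm{Morita}^{-1}]$, or equivalently into the $(2,1)$-category of stacks, gives functors from the refinement categories that invert every morphism. Each such functor therefore factors through the groupoidification, i.e.\ the classifying space of the nerve. By \Cref{lem:nerve-contractible} this space is weakly contractible, so the space of identifications between the values on any two objects is contractible. For the mixed comparison between covers and triangulations, the functor $K\mapsto\mathcal U^\star(K)$ from $\mathbf{Tri}_\Sigma$ to $\mathbf{Cov}_{\mathrm{good}}$ together with the equivalences of \Cref{thm:stratified-stokes-skeleton} links the two systems.

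I expect the main obstacle to be the skeletal functoriality. There a subdivision does not map edges to edges, and the verification that the edge-path assignment respects the $2$-simplex relations and is Morita needs care. I will first try to reduce everything to open-star covers and the \v{C}ech case, rather than check it directly on generators and relations.
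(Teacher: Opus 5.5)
Your treatment of the cover side is sound, and it fills in what the paper's proof compresses into one line. The lift $\GC'\to\GC$ uses the restriction isomorphisms of $\St_\Phi$ between nested connected contractible opens, so it is fully faithful. Label-$1$ lifts give essential surjectivity. The dependence on the refinement map is a canonical natural isomorphism. Your canonicity step (a functor inverting all morphisms factors through the weakly contractible groupoidification of \Cref{lem:nerve-contractible}) formalizes the paper's zig-zag argument.

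\textbf{The gap is on the skeletal side.}

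First, the functor you describe is not defined for a general stratified subdivision. After, e.g., a barycentric subdivision, $K'$ has vertices that are not vertices of $K$, and edges lying in the interior of $2$-simplices of $K$. There is therefore no edge-path of $K$ that such an edge ``subdivides''.

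Second, and more seriously, your planned repair cannot work. The two-out-of-three step needs the comparison functors $\Cech(\mathcal U^\star(K))\to\Pi_1^{\mathrm{sk}}(K)$ to be Morita equivalences, and they are not.
\begin{itemize}
\item A \v{C}ech groupoid has no arrows between distinct points and has trivial isotropy. It is Morita equivalent to the space $N^\circ$ itself, with orbit space $N^\circ$.
\item $\Pi_1^{\mathrm{sk}}(K)$ is discrete, with orbit space $\pi_0(N^\circ)$ and isotropy $\pi_1(N^\circ,v)$.
\item Morita equivalent groupoids have homeomorphic orbit spaces and isomorphic isotropy groups, so these two cannot be Morita equivalent.
\end{itemize}
What \Cref{prop:cech-to-skeleton} and \Cref{thm:stratified-stokes-skeleton} actually give are equivalences of $\Rep_r$ and of section groupoids, i.e.\ of moduli, not of presenters. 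Do not rely on the Morita claim made inside the proof of \Cref{thm:stratified-stokes-skeleton}. An equivalence on $\Rep_r$ does not force the underlying functor to be a Morita equivalence. The same objection applies to your ``mixed comparison'' via $K\mapsto\mathcal U^\star(K)$: it links moduli, not Morita classes.

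The paper avoids this by comparing both skeletal groupoids directly with the fundamental groupoid, and that is the argument you need.
\begin{itemize}
\item Define $\phi\colon K'\to K$ by sending each vertex $v'$ of $K'$ to a vertex of its carrier simplex in $K$.
\item Since $\mathrm{st}_{K'}(v')\subset\mathrm{st}_K(\phi(v'))$, $\phi$ is a simplicial approximation of the identity. Hence it preserves the $2$-simplex relations of \Cref{def:edge-path-groupoid}, and it fixes $K_0\subset K'_0$.
\item By the second half of the proof of \Cref{prop:cech-to-skeleton}, $\Pi_1^{\mathrm{sk}}(K')$ and $\Pi_1^{\mathrm{sk}}(K)$ are the full subgroupoids of $\Pi_1(N^\circ)$ on $K'_0$ and $K_0$.
\item The realization $|\phi|$ is homotopic to the identity by the straight-line homotopy inside carriers. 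So $\phi_*$ is surjective on objects and fully faithful, hence an equivalence.
\item Two such choices of $\phi$ are contiguous. This gives the canonical natural isomorphism making $K\mapsto\Pi_1^{\mathrm{sk}}(K)$ a pseudofunctor, exactly as in your cover case.
\item Stokes labels on an edge $e'$ are transported along $\phi$ using the constancy of $\St_\Phi$ on the contractible open star in $K$ of the simplex spanned by $\phi(e')$. That open star contains the interior of $e'$.
\end{itemize}
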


\begin{proof}
Refinements of covers induce the usual refinement functors on \v{C}ech groupoids, and by good-cover hypotheses these are Morita equivalences.
Subdivisions $K'\to K$ induce functors $\Pi_1^{\mathrm{sk}}(K')\to \Pi_1^{\mathrm{sk}}(K)$ which are Morita equivalences because both present the same
fundamental groupoid of $N^\circ$ (up to the $2$-skeleton relations). Compatibility with the forgetful functors $\pi$ and $\pi^{\mathrm{sk}}$ is built-in.

Since the refinement categories have contractible nerves (\Cref{lem:nerve-contractible}), any two choices admit a canonical zig-zag of refinements,
and any two such zig-zags admit a common refinement between them. This is exactly the statement that the space of identifications is contractible.
\end{proof}

\begin{theorem}[Canonical skeletal presenters]\label{thm:canonical-skeletal-presenters}
In the Morita localization of strict groupoids, the collar presenters $(\CB,\GC,\pi)$ and the skeletal presenters
$(\CB^{\mathrm{sk}},\GC^{\mathrm{sk}},\pi^{\mathrm{sk}})$, the section groupoid $\Sec(\pi^{\mathrm{sk}})$, and the global pushout presenter
$\G_{\Phi,n}^{\mathrm{sk}}$ are independent of all auxiliary choices (covers, CW decompositions, subdivisions), with a contractible space of
identifications.
Moreover, on a depth-$k$ chart they may be chosen $(\mu_n)^k$-equivariant; the induced $(\mu_n)^k$-actions on the skeletal presenters make
Kummer descent visible on the level of the presenter, as in \Cref{thm:kummer-descent-skeletal}.
\end{theorem}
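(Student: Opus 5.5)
The plan is to reduce the statement to the canonical-Morita-class result (\Cref{prop:canonical-morita-class}) for each of the three pieces of the cospan, and then transport independence through the explicit pushout of \Cref{lem:strict-pushout-is-bicategorical}. First, I treat the collar data. Given two choices of sector-box cover $\mathcal B,\mathcal B'$ and two $\Sigma$-adapted triangulations $K,K'$, \Cref{lem:cofiltered} provides common refinements. The refinement functors on $(\CB,\GC,\pi)$ and $(\CB^{\mathrm{sk}},\GC^{\mathrm{sk}},\pi^{\mathrm{sk}})$ are Morita equivalences compatible with $\pi$. Since the induced functors on sections identify $\Sec(\pi)$ with $\Sec(\pi^{\mathrm{sk}})$ through the chain \eqref{eq:sec-pi-chain} of \Cref{thm:stratified-stokes-skeleton}, the section groupoids are canonically identified. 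The contractibility of the indexing nerves (\Cref{lem:nerve-contractible}) then upgrades ``independent up to equivalence'' to ``independent up to a contractible space of identifications'': a zig-zag of refinements gives an identification, and any two zig-zags are dominated by a common refinement.

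Second, I run the same argument for the interior and overlap pieces $\GU^{\mathrm{sk}}$ and $\GX^{\mathrm{sk}}$, using \Cref{prop:cech-to-skeleton}. The refinement functors commute strictly with $j_U^{\mathrm{sk}}$ and $j_C^{\mathrm{sk}}$, because the latter are defined by refinement maps together with the trivial Stokes label $1$. This gives maps of cospans that are componentwise Morita equivalences. The explicit model $A\bigsqcup_B C$ of \Cref{lem:strict-pushout-is-bicategorical} is functorial in strict maps of cospans. A componentwise Morita equivalence of cospans induces a Morita equivalence of $2$-pushouts, since in the localization the pushout is a homotopy colimit and Morita equivalences are inverted. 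Consequently $\G_{\Phi,n}^{\mathrm{sk}}$ is canonical, and so are its Stokes-corrected representations, as in \Cref{thm:global-skeletal-pushout}.

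Third, I handle equivariance on a depth-$k$ chart. I would choose $K$ by lifting a triangulation to the level-$n$ cover and barycentrically subdividing, as in the remark after \Cref{def:Gk-CW}, so that $G_k=(\mu_n)^k$ acts cellularly. Pullback of the Stokes sheaf makes $\pi^{\mathrm{sk}}$ equivariant, and $\GU^{\mathrm{sk}}$ and $\GX^{\mathrm{sk}}$ can be chosen $G_k$-stable in the same way. The equivariant refinement category is again cofiltered, by averaging a common refinement over the finite group $G_k$ and subdividing. The identification of homotopy fixed points with equivariant cocycles is then \Cref{prop:eq-cocycles-hfp} together with \Cref{thm:visible-descent}.

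The main obstacle is the claim that Morita equivalences of cospans induce Morita equivalences of the strict pushouts. Strict pushouts of groupoids are not homotopy invariant in general, so this needs justification. I would argue via the universal property: for each target $H$, the functor $\Hom(-,H)$ sends Morita equivalences of groupoids that present the same stack to equivalences, provided $H$ is a stack target such as $\mathbf{Tors}_r$. The pushout $\Hom$ is the $2$-fiber product of these, and so is preserved up to equivalence. This shows that the statement holds at the level of torsorial representations and the classifying stack, which is the sense in which the Morita localization is being used.
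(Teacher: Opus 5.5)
This is essentially the paper's argument, which simply cites \Cref{prop:canonical-morita-class}, Morita invariance of section groupoids, uniqueness of the bicategorical $2$-pushout, \Cref{prop:equivariant-triangulation} and \Cref{thm:kummer-descent-skeletal}; your version usefully spells out a step the paper glosses, namely that a componentwise equivalence of cospans (commuting strictly or only up to natural isomorphism) induces an equivalence of pushouts, which is more than uniqueness of the pushout of a fixed cospan. Your closing worry is moot, because the model of \Cref{lem:strict-pushout-is-bicategorical} adjoins the formal isomorphisms $s_b$ and is therefore the double-mapping-cylinder (homotopy) pushout, which is invariant under componentwise equivalences of the discrete skeletal groupoids, so your second paragraph already suffices and the detour through $\Hom(-,H)$ is not needed; for the ``moreover'' clause you should also invoke the global \Cref{thm:kummer-descent-skeletal} on the fiber product, since the collar-level \Cref{prop:eq-cocycles-hfp,thm:visible-descent} alone do not cover it.
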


\begin{proof}
The first assertion is an explicit restatement of \Cref{prop:canonical-morita-class}, together with invariance of section groupoids under
Morita equivalence and the uniqueness of bicategorical $2$-pushouts in $\mathbf{Grpd}$ up to equivalence.
Equivariant choices exist by \Cref{prop:equivariant-triangulation}, and the descent statement is exactly \Cref{thm:kummer-descent-skeletal}.
\end{proof}

\begin{remark}[Exit paths and constructible stacks]\label{rem:exit-path}
A more conceptual avatar of \Cref{prop:canonical-morita-class} uses the \emph{exit-path} $\infty$-category of the stratified space $(N^\circ,\Sigma)$.
By results of Treumann and Lurie, constructible stacks (and, in particular, constructible local systems of groups) on $(N^\circ,\Sigma)$ can be modeled
as functors out of the exit-path $\infty$-category; see \cite{Treumann2009,LurieHTT}.
Our skeletal groupoids $\CB^{\mathrm{sk}}=\Pi_1^{\mathrm{sk}}(K)$ may be viewed as $1$-truncations of exit-path models coming from stratified
triangulations, and \Cref{lem:nerve-contractible} is the elementary manifestation of the fact that the exit-path model is canonical up to contractible choice.
\end{remark}

\begin{figure}[t]
\centering
\begin{tikzcd}[row sep=large, column sep=huge]
& K_{12} \arrow[dl] \arrow[dr] & \\
K_1 \arrow[dr] & & K_2 \arrow[dl] \\
& (N^\circ,\Sigma) &
\end{tikzcd}
\caption{Cofilteredness of stratified CW decompositions: any two choices admit a common stratified refinement $K_{12}$.}
\label{fig:cofiltered-triangulations}
\end{figure}


\subsection{Stratified Kummer bookkeeping on the skeleton}\label{subsec:kummer-skeleton}

Let $q_n:X^{\log}_n\to X^{\log}$ be the level-$n$ Kummer map.
On a depth-$k$ stratum of an SNC chart, $q_n$ is locally the map that multiplies each angular coordinate by $n$; hence it is a finite
Galois covering with deck group $G_k:=(\mu_n)^k$ acting by rotations on the $k$ angular coordinates
(see \cite{KatoNakayama} and \cite[\S~III.1--III.2]{Ogus}, and compare \cite{NakayamaOgus} for the Kummer/topological picture).

\begin{proposition}[Equivariant stratified triangulations]\label{prop:equivariant-triangulation}
Fix a depth-$k$ chart and write $G_k=(\mu_n)^k$ for the deck group of $q_n$ on the angular directions.
After possibly refining the Stokes wall stratification, one may choose a locally finite $\Sigma_{D,\Phi,n}$-adapted stratified triangulation
$K$ of the regular collar $N^\circ$ such that $G_k$ acts simplicially on $K$.
Consequently $G_k$ acts by strict automorphisms on the skeletal base groupoid $\CB^{\mathrm{sk}}=\Pi_1^{\mathrm{sk}}(K)$, on the skeletal
Stokes groupoid $\GC^{\mathrm{sk}}$ built from $K$, and on the forgetful functor $\pi^{\mathrm{sk}}:\GC^{\mathrm{sk}}\to\CB^{\mathrm{sk}}$.
\end{proposition}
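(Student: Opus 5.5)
The plan is to build the triangulation on the quotient and then lift it, instead of trying to make a given triangulation of $N^\circ$ equivariant after the fact.

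\textbf{Step 1: the stratification is $G_k$-stable.} On the depth-$k$ chart, $q_n$ restricted to the level-$n$ regular collar is a finite Galois covering $N^\circ_n\to N^\circ_1:=N^\circ_n/G_k$. Here $G_k=(\mu_n)^k$ acts freely by rotations of the angular coordinates (Proposition~\ref{prop:descent}). The depth stratification is invariant, since rotations preserve each branch $z_j=0$. The Stokes wall set $\Sigma_n$ is pulled back along $q_n$ from the level-one walls when $\Phi$ comes from level one. In general I would replace $\Sigma_{D,\Phi,n}$ by the common refinement $\bigcap_{g\in G_k} g\cdot\Sigma_{D,\Phi,n}$ of its translates. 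This is the ``after possibly refining'' clause. The refinement is still a locally finite real-analytic stratification, because $G_k$ is finite and acts real-analytically. On each refined stratum the Stokes preorder stays constant, since the refined strata are subsets of the old ones.

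\textbf{Step 2: triangulate downstairs and lift.} Since the action is free and by real-analytic diffeomorphisms, the refined stratification descends to a locally finite real-analytic stratification of the quotient $N^\circ_1$. Choose a locally finite triangulation $K_1$ of $N^\circ_1$ subordinate to it, by the standard existence result quoted after Definition~\ref{def:stratified-triangulation}. Barycentrically subdivide $K_1$ twice, so that every closed simplex lies in an evenly covered, contractible open set. Each simplex then has exactly $|G_k|$ disjoint lifts, and the lifted simplices form a locally finite simplicial complex $K$ with $|K|\cong N^\circ_n$. The extra subdivision is what prevents two lifts of one simplex from sharing vertices, which would otherwise break simpliciality. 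Every open simplex of $K$ maps homeomorphically onto an open simplex of $K_1$ inside one stratum, so $K$ is $\Sigma_{D,\Phi,n}$-adapted. Since $G_k$ permutes the lifts, it acts simplicially on $K$, and freely on simplices.

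\textbf{Step 3: induced strict actions.} A simplicial automorphism $g$ sends vertices to vertices, oriented edges to oriented edges and $2$-simplices to $2$-simplices. It therefore preserves the generators and relations of Definition~\ref{def:edge-path-groupoid}, and so defines a strict automorphism of $\CB^{\mathrm{sk}}$. For $\GC^{\mathrm{sk}}$, the Stokes sheaf is pulled back along $q_n$, so $g$ gives canonical isomorphisms $g:\St_\Phi(S(e))\to\St_\Phi(S(g e))$ from the equivariant structure of $\St_{q_n^*\Phi}$. Send $(e,u)\mapsto(g e,g(u))$. The relations of Definition~\ref{def:GCsk} are multiplicative and indexed by simplices, so they are preserved. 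Functoriality $g\mapsto g_*$ holds strictly because the equivariant structure on a pulled-back sheaf is strictly multiplicative. Commutation with $\pi^{\mathrm{sk}}$ is immediate, since $\pi^{\mathrm{sk}}$ only forgets the label.

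\textbf{Expected main obstacle.} The delicate point is Step 1 together with the canonical identification of the groups $\St_\Phi(S(e))$ across $g$-translates. It requires checking that the wall arrangement and the graded local system $\Gr$ with its preorder are genuinely $G_k$-equivariant, and not merely equivariant up to relabelling of $\Phi$. I would first try to deduce this from the fact that the Stokes input on $N^\circ_n$ is the $q_n$-pullback of level-one data, as in \eqref{eq:pullback-qn}. If $\Phi$ is only defined at level $n$, I would instead let $G_k$ act on $\Phi$ by the induced permutation and check that the standard filtrations \eqref{eq:std-filtration} are carried to one another, which still yields the isomorphisms $g$ on the Stokes groups.
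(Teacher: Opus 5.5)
Your argument is correct, but it produces the invariant triangulation by a different mechanism from the paper's.

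\textbf{The paper's route.} It observes that $G_k$ rotates the angular factors of the toroidal model $(0,\varepsilon)\times(S^1)^k\times(\text{transversal})$. It asserts that this preserves the depth stratification and the pulled-back wall stratification. It then cites a general equivariant triangulation theorem for finite groups acting real-analytically on locally finite stratified spaces. The actions on $\CB^{\mathrm{sk}}$, $\GC^{\mathrm{sk}}$ and $\pi^{\mathrm{sk}}$ are said to follow because $\GC^{\mathrm{sk}}$ is built functorially from $(K,\St_\Phi)$.

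\textbf{Your route.} You use that the action is free, so the Kummer map is an honest finite covering of the collar. You descend the invariant stratification to $N^\circ_n/G_k$, triangulate there using only the non-equivariant existence result, and lift.

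\textbf{Comparison.} Your approach avoids the equivariant triangulation theorem altogether and yields more. The action is free on simplices, and $K/G_k=K_1$ triangulates the level-one collar, which makes the descent of \Cref{thm:kummer-descent-skeletal} transparent at the level of the skeleton. The paper's citation, on the other hand, also covers actions with nontrivial stabilizers, where lifting is not available. Your Step 3 is also more careful than the paper: it identifies the strictly multiplicative $G_k$-equivariant structure on $\St_\Phi$ needed to transport Stokes labels, which the paper leaves implicit.

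\textbf{Two minor points.}
\begin{enumerate}
\item The double barycentric subdivision, and your stated reason for it, are unnecessary. Each closed simplex is contractible, so the covering is already trivial over it. Distinct lifts of a connected set are disjoint, so two lifts of one simplex can never share a vertex, and the lifted simplices automatically form a simplicial complex.
\item The common refinement of translates in Step 1 only makes sense once $\Sigma_n$, and hence $N^\circ$ itself, is $G_k$-stable. Otherwise $g$ does not even map $N^\circ$ to itself. This is exactly the invariance you flag as the main obstacle, and the paper likewise takes it as given.
\end{enumerate}
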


\begin{proof}
On a depth-$k$ chart one has a toroidal local model for the logarithmic boundary, and the regular collar $N^\circ$ is a real-analytic manifold
with corners obtained from $$(0,\varepsilon)\times (S^1)^k\times(\text{transversal})$$ by removing the locally finite wall arrangement.
The finite group $G_k=(\mu_n)^k$ acts real-analytically on this model by rotating the angular factors, and this action preserves both the SNC depth
stratification and the pulled-back wall stratification.

For a finite group acting (real-)analytically on a locally finite real-analytic stratified space, one can choose a locally finite
stratification-adapted triangulation which is invariant under the group action, after barycentric subdivision if necessary.
This is a standard equivariant triangulation statement for finite group actions; see, for example, the discussion of triangulations adapted to
stratifications and their functorial refinements in the constructible/exit-path context \cite{Treumann2009}, and the general existence of
subordinate triangulations for locally finite stratifications.

With such an invariant triangulation $K$, the induced action on vertices and edges defines a strict action on
$\Pi_1^{\mathrm{sk}}(K)=\CB^{\mathrm{sk}}$ by automorphisms (it preserves generators and 2-simplex relations).
Since $\GC^{\mathrm{sk}}$ is defined functorially from $(K,\St_\Phi)$ by attaching Stokes labels to edges inside strata, the same simplicial action
induces strict automorphisms of $\GC^{\mathrm{sk}}$, and it is compatible with $\pi^{\mathrm{sk}}$ by construction.
\end{proof}

\begin{theorem}[Stratified Kummer descent, skeletal form]\label{thm:kummer-descent-skeletal}
Fix a depth-$k$ chart and let $G_k=(\mu_n)^k$ be the local deck group of $q_n$.
Then pullback along $q_n$ identifies global Stokes objects downstairs with homotopy fixed points upstairs:
\begin{equation}\label{eq:kummer-descent-skeletal}
\Stokes_r(X^{\log};D,\Phi)\ \simeq\ \Stokes_r(X^{\log}_n;D,q_n^*\Phi)^{hG_k},
\end{equation}
where $(-)^{hG_k}$ denotes the homotopy fixed point groupoid (Definition~\ref{def:hfp-finite}).

Moreover, after choosing equivariant triangulations as in Proposition~\ref{prop:equivariant-triangulation} on the collar (and similarly on
$U$ and on the overlap), the descent equivalence is visible on the skeletal presenter: the $G_k$-action on $\G_{\Phi,n}^{\mathrm{sk}}$
induces a strict action on
$\Rep_r(\GU^{\mathrm{sk}})\times^{(2)}_{\Rep_r(\GX^{\mathrm{sk}})}\Sec(\pi^{\mathrm{sk}})$, and its homotopy fixed points compute the left-hand side of
\eqref{eq:kummer-descent-skeletal} under the identification of Theorem~\ref{thm:global-skeletal-pushout}.
\end{theorem}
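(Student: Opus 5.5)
The first assertion \eqref{eq:kummer-descent-skeletal} is the local descent equivalence already established in \Cref{prop:descent}. I would simply restate it on the depth-$k$ chart. The argument there applies finite Galois descent factorwise to the $2$-fiber product defining $\Stokes_r$ (\Cref{def:global-stokes}). The ingredients are: descent for $\LocSys_r(U)$ and $\LocSys_r(N^\circ)$ along the covering $q_n$, and descent for $\StokesLocal_r(N^\circ;\Phi)$ through $\Tors(\St_\Phi;\mathcal B)$ (\Cref{thm:stokeslocal-tors}, \Cref{thm:visible-descent}). To assemble these, I would add one formal point: homotopy fixed points commute with $2$-fiber products. If $G$ acts strictly on a cospan $A\to C\leftarrow B$, then
\[
(A\times^{(2)}_C B)^{hG}\simeq A^{hG}\times^{(2)}_{C^{hG}}B^{hG}.
\]
This holds because both sides are pseudonatural transformations out of $BG$ into a $2$-limit, and $2$-limits commute with $2$-limits. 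I would check it directly from \Cref{def:hfp-finite}: an equivariant triple $(a,b,\varphi)$ is the same as equivariant structures $\varphi^A_g,\varphi^B_g$ together with the condition that $\varphi$ intertwines them.

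For the second (visible) assertion, I would choose $G_k$-equivariant triangulations $K_U$, $K$, $K_\times$ by \Cref{prop:equivariant-triangulation}, with $K_\times$ a common equivariant refinement. The functors $j_U^{\mathrm{sk}}$ and $j_C^{\mathrm{sk}}$ are then strictly $G_k$-equivariant. This is where the trivial Stokes label $1$ is used: it is fixed by the pullback action on $\St_\Phi$. Hence $G_k$ acts strictly on the whole cospan, on $\G^{\mathrm{sk}}_{\Phi,n}$ through the explicit model of \Cref{lem:strict-pushout-is-bicategorical}, and on $\Rep_r(\GU^{\mathrm{sk}})$, $\Rep_r(\GX^{\mathrm{sk}})$ and $\Sec(\pi^{\mathrm{sk}})$ by precomposition. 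The restriction functors are equivariant, so the $2$-fiber product inherits a strict action. By the commutation lemma, its homotopy fixed points are the $2$-fiber product of the three homotopy fixed point groupoids.

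Next I would identify each factor with the corresponding object downstairs:
\begin{itemize}
\item $\Sec(\pi^{\mathrm{sk}})^{hG_k}$ is $G_k$-equivariant Stokes cocycles by \Cref{prop:eq-cocycles-hfp}, hence Stokes torsors on the downstairs collar by \Cref{thm:visible-descent} together with \Cref{thm:stratified-stokes-skeleton}.
\item $\Rep_r(\GU^{\mathrm{sk}})^{hG_k}$ and $\Rep_r(\GX^{\mathrm{sk}})^{hG_k}$ are $G_k$-equivariant local systems by \Cref{prop:cech-to-skeleton}, hence local systems downstairs by Galois descent.
\end{itemize}
Finally, I would compose with \Cref{thm:global-skeletal-pushout} at level $n$ and at level $1$.

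The main obstacle is naturality. The identifications in \Cref{prop:cech-to-skeleton} and \Cref{thm:stratified-stokes-skeleton} must be $G_k$-equivariant as equivalences, and not merely objectwise. Otherwise the induced action on $\Rep_r(\GU^{\mathrm{sk}})\times^{(2)}\Sec(\pi^{\mathrm{sk}})$ agrees with the geometric action on $\Stokes_r(X^{\log}_n;D,q_n^*\Phi)$ only up to coherent isomorphism, and homotopy fixed points are invariant only under equivariant equivalences, which are pseudo-equivariant in general. My first approach would be to note that each comparison functor is built by restriction along $G_k$-equivariant maps (the open-star covers, the inclusion $K_0\hookrightarrow|K|$, the monodromy construction), all of which commute strictly with pullback by deck transformations. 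This would make the comparisons strictly equivariant and avoid any coherence bookkeeping.
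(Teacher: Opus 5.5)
Your proposal is correct. For the first assertion it is the paper's argument: factorwise Galois descent on the defining $2$-fiber product. The one addition is the lemma $(A\times^{(2)}_C B)^{hG}\simeq A^{hG}\times^{(2)}_{C^{hG}}B^{hG}$ for a strictly equivariant cospan. The paper uses this silently when it says that descent on each factor induces descent on $\Stokes_r$, and your direct check from \Cref{def:hfp-finite} is right: both sides consist of literally the same data.

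For the visible assertion the routes differ.
\begin{itemize}
\item The paper never descends factor by factor on the skeleton. It uses \Cref{thm:global-skeletal-pushout} to identify the skeletal $2$-fiber product with the intrinsic level-$n$ groupoid, takes homotopy fixed points, and then quotes the first assertion.
\item You instead apply the commutation lemma to the skeletal cospan and descend each factor separately: via \Cref{prop:eq-cocycles-hfp} and \Cref{thm:visible-descent} on the collar, and via Galois descent on $U$ and on the overlap.
\end{itemize}
Your route makes the descent visible in a literal sense, since the collar factor's homotopy fixed points are exactly the equivariant edge-label cocycles. The price is an extra check that the factorwise descent equivalences are compatible with both legs of the cospan. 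The paper's route needs only a single comparison plus the first assertion.

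Both routes need the comparison equivalences to be $G_k$-equivariant. The paper's phrase ``taking homotopy fixed points commutes with equivalences'' holds only for (pseudo-)equivariant equivalences, and you are right to flag this. Two refinements to how you handle it:
\begin{itemize}
\item Strictness is unnecessary. A pseudo-equivariant equivalence, with coherence $2$-cells supplied by pseudofunctoriality of pullback, already induces an equivalence on homotopy fixed points.
\item The right leg $\Sec(\pi^{\mathrm{sk}})\to\Rep_r(\GX^{\mathrm{sk}})$ is not a restriction functor. It is the forgetful (gluing) functor through $\Gr$, and $G_k$ acts on $\Sec(\pi^{\mathrm{sk}})$ by conjugation $\sigma\mapsto g\circ\sigma\circ g^{-1}$ rather than by precomposition. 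The equivariance of that leg therefore comes from the canonical $G_k$-linearization of $q_n^*\Gr$. It holds only up to those isomorphisms unless you rigidify by choosing frames equivariantly along the free vertex orbits.
\end{itemize}
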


\begin{proof}
On a depth-$k$ chart, $q_n$ is a finite Galois covering with group $G_k=(\mu_n)^k$ as recalled above
(see \cite{KatoNakayama} and \cite[\S~III.1--III.2]{Ogus}).
Effective descent for locally constant sheaves and for torsors under a discrete group along a finite Galois covering identifies objects downstairs
with objects upstairs equipped with a coherent $G_k$-linearization, namely homotopy fixed points in the sense of
Definition~\ref{def:hfp-finite}; references are \cite[Ch.~III]{Giraud} and \cite{SGA1}.
Applying this descent statement to the two ingredients in the defining $2$-fiber product
\begin{equation}\label{eq:kummer-descent-fiberproduct}
\Stokes_r(X^{\log};D,\Phi)
=
\LocSys_r(U)\times^{(2)}_{\LocSys_r(N^\circ)}\StokesLocal_r(N^\circ;\Phi)
\end{equation}
yields the equivalence \eqref{eq:kummer-descent-skeletal}: both $\LocSys_r(-)$ and $\StokesLocal_r(-;\Phi)$ satisfy effective descent, and the
bicategorical fiber product is functorial with respect to equivalences, so descent on each factor induces descent on
$\Stokes_r(X^{\log};D,\Phi)$.

For the skeletal visibility statement, choose $G_k$-equivariant triangulations on the collar as in
Proposition~\ref{prop:equivariant-triangulation}, and choose compatible equivariant triangulations on $U$ and on the overlap so that
$j_U^{\mathrm{sk}}$ and $j_C^{\mathrm{sk}}$ are $G_k$-equivariant.
Then $G_k$ acts strictly on $\GU^{\mathrm{sk}}$, $\GX^{\mathrm{sk}}$, $\GC^{\mathrm{sk}}$, and $\pi^{\mathrm{sk}}$; hence it acts strictly on
$\Rep_r(\GU^{\mathrm{sk}})$, $\Rep_r(\GX^{\mathrm{sk}})$, and on $\Sec(\pi^{\mathrm{sk}})$, and therefore on the $2$-fiber product
$$\Rep_r(\GU^{\mathrm{sk}})\times^{(2)}_{\Rep_r(\GX^{\mathrm{sk}})}\Sec(\pi^{\mathrm{sk}}).$$
By Theorem~\ref{thm:global-skeletal-pushout}, this $2$-fiber product presents the intrinsic global Stokes groupoid on the depth-$k$ chart.
Taking homotopy fixed points commutes with equivalences of groupoids, so the homotopy fixed points of the skeletal model compute the homotopy fixed
points of the intrinsic model, which is exactly the right-hand side of \eqref{eq:kummer-descent-skeletal}.

\end{proof}

\begin{equation}\label{eq:kummer-descent-schematic}
\begin{tikzcd}[row sep=large, column sep=huge]
X^{\log}_n \arrow[r,"q_n"] & X^{\log} \\
X^{\log}_n \arrow[u,bend left=35,"g"'] \arrow[u,bend right=35,swap,"h"] &
\end{tikzcd}
\end{equation}
\section{Applications, comparisons, and open problems}\label{sec:applications}


This section is forward-looking. It records directions in which the explicit $2$-pushout presenter
$\G_{\Phi,n}$ and the collar section groupoid $\Sec(\pi)$ can be used as \emph{inputs} for further constructions,
without altering the logical core of the paper. It also indicates places where the explicit groupoid technology
is likely to be useful for questions of finiteness, representability, and comparison between different Betti models.

\subsection{Relation with Stokes skeletons and Deligne-type questions}\label{subsec:applications-skeletons}

In complex dimension~$1$, and more generally in higher-dimensional situations where the Stokes preorder is locally constant on the complement
of a locally finite wall arrangement, the local moduli of irregular objects is classically described by nonabelian \v{C}ech cocycles valued in
the Stokes sheaf (Babbitt--Varadarajan and the Malgrange--Sibuya theorem; see \cite{BV89} and the expository account \cite{Sabbah}).
Equivalent Betti formulations use Stokes-filtered (or Stokes-graded) local systems on a real boundary space, as in \cite{Sabbah,Mochizuki}.
Our \v{C}ech--Stokes collar groupoid $\GC$ and the projection $\pi:\GC\to\CB$ repackage this classical cocycle description as the groupoid of strict
sections $\Sec(\pi)$, making the moduli a strict gluing problem on a small presenter.

A complementary viewpoint, important for finiteness and representability statements, is that Stokes torsors admit \emph{skeleton} descriptions:
one chooses a sufficiently fine stratified CW decomposition of the regular collar and encodes Stokes cocycles by labels on a low-dimensional skeleton,
subject to face relations. This philosophy is central in the skeleton approach of \cite{TeyssierSkeletons} and is compatible with the general
constructible/exit-path perspective (see \cite{Treumann2009,LurieHTT}).

\begin{proposition}[Skeletal encoding on a stratified $2$-skeleton]\label{prop:applications-skeleton}
Let $K$ be a locally finite CW decomposition of $N^\circ$ adapted to the Stokes wall stratification, so that each open cell lies in a single stratum
on which the Stokes preorder is constant. Fix an orientation on each $1$-cell.
Then the groupoid $\Sec(\pi)$ is equivalent to the gauge groupoid whose:
\begin{itemize}
\item objects are assignments of labels $u_e\in \St_\Phi(S(e))$ for each oriented $1$-cell $e$ (with $u_{\bar e}=u_e^{-1}$);
\item subject to the $2$-cell relations: for every oriented $2$-cell $f$ with boundary word
$\partial f=e_1^{\epsilon_1}\cdots e_m^{\epsilon_m}$ one has the constraint
$$
u_{e_1}^{\epsilon_1}\cdots u_{e_m}^{\epsilon_m}=1 \quad \text{in the group }\St_\Phi(S(f));
$$
\item morphisms are vertex gauges $h_v\in \St_\Phi(S(v))$ acting by
$u_e\mapsto h_{t(e)}\,u_e\,h_{s(e)}^{-1}$.
\end{itemize}
In particular, isomorphism classes in $\Sec(\pi)$ are precisely gauge classes of such edge-label data.
\end{proposition}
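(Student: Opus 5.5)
The plan is to reduce the statement to results already established, by passing from $\Sec(\pi)$ to \v{C}ech torsors on a cover adapted to $K$. By \Cref{prop:stokes-as-sections} and \Cref{prop:refinement}, $\Sec(\pi)\simeq\Tors(\St_\Phi;\mathcal B)$, and this groupoid is independent of the chosen good, stratum-adapted cover. So I will first replace $\mathcal B$ by the open-star cover $\mathcal S=\{\mathrm{st}(v)\}_{v\in K^{(0)}}$ of the vertices of $K$. This requires subdividing $K$ so that it becomes a regular, $\Sigma$-adapted complex. Then every nonempty finite intersection of stars is contractible and lies in a single stratum, as in the proof of \Cref{thm:stratified-stokes-skeleton}. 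The needed comparison $\Tors(\St_\Phi;\mathcal B)\simeq\Tors(\St_\Phi;\mathcal S)$ comes from a common good refinement $\mathcal W$ and \Cref{prop:refinement} applied twice.

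Next I will read off the cocycle data on $\mathcal S$, exactly as in \Cref{prop:stokes-torsors-skeleton}. Two stars meet precisely when their vertices span a $1$-cell $e$, so a \v{C}ech $1$-cocycle gives a constant label $u_e\in\St_\Phi(S(e))$ with $u_{\bar e}=u_e^{-1}$. A \v{C}ech $0$-cochain gives $h_v\in\St_\Phi(S(v))$, acting by $u_e\mapsto h_{t(e)}u_eh_{s(e)}^{-1}$. This produces the objects and morphisms listed in the statement, up to the relations.

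The remaining point concerns the relations. The \v{C}ech cocycle condition lives on triple overlaps, that is, on triangles. A general $2$-cell $f$ has boundary word $e_1^{\epsilon_1}\cdots e_m^{\epsilon_m}$, and I must show that the triangle relations are equivalent to the single face relation $\prod u_{e_i}^{\epsilon_i}=1$ in $\St_\Phi(S(f))$. I will do this by comparing $K$ with a barycentric subdivision $K'$, a simplicial complex to which \Cref{thm:stratified-stokes-skeleton} and \Cref{lem:sections-as-edge-labels} apply directly. Given labels on $K$, one extends them to $K'$ as follows. Put the identity label on the new edges inside the open cell $f$, which lie in the stratum $S(f)$. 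Put the product of the corresponding subedge labels on each subdivided edge. The triangles inside $f$ then force exactly the boundary word relation. Conversely, any $K'$-cocycle can be gauged by $h$ at the new barycentric vertices to trivialize the interior labels. This is possible because the star of the barycentre is contractible within $S(f)$.

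The main obstacle will be checking that this gauge-fixing is functorial and yields an equivalence rather than only a bijection on isomorphism classes. The inclusion of the $K$-vertices must be shown essentially surjective and fully faithful on gauge groupoids. The same argument must also control the group elements. Restriction maps between $\St_\Phi(S(e))$ and $\St_\Phi(S(f))$, when $e$ lies in the closure of $f$, are needed to make sense of the product in $\St_\Phi(S(f))$. I would handle them via the constancy of $\St_\Phi$ on contractible stratum-local stars. Once the equivalence is established, the last sentence of the statement about isomorphism classes follows immediately.
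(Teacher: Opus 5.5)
Your overall route is sound, and in one respect more careful than the paper's own proof. The paper refines $\mathcal B$ to the open-star cover of the vertices of $K$ and simply asserts that the triple-overlap cocycle condition ``becomes'' the relation around each $2$-cell, which is literally true only for triangles. Passing to a simplicial subdivision $K'$, where \Cref{thm:stratified-stokes-skeleton} and \Cref{lem:sections-as-edge-labels} apply directly, and then comparing $K'$-cocycles with $K$-cocycles is the right way to make this honest. It also makes your Step~2 on the star cover of $K$ itself unnecessary. As stated, that step is false for non-simplicial $K$: opposite corners of a square $2$-cell have meeting open stars without spanning a $1$-cell.

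However, the comparison step, which is the only new ingredient, fails as you describe it. For $v<e<f$, the triangle $(v,\hat e,\hat f)$ of $K'$ imposes $u_{v\hat f}=u_{\hat e\hat f}\,u_{v\hat e}$. If every new edge inside $f$ carries the identity, this forces $u_{v\hat e}=1$ for every half-edge of $\partial f$. Hence $u_e=1$ for every edge of $f$, which is not the boundary-word relation. Dually, the single parameter $h_{\hat f}$ cannot trivialize the $2m$ spokes issuing from $\hat f$ in an $m$-gon. These spokes record the transport from the different boundary vertices to $\hat f$, and they genuinely differ.

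The fix is to gauge-fix only along a spanning tree of the new vertices:
\begin{itemize}
\item At $\hat e$, for $e:v\to w$, normalize $u_{\hat e\to w}=1$, so that $u_{v\to\hat e}=u_e$.
\item At $\hat f$, normalize a single spoke, $u_{v_1\to\hat f}=1$.
\end{itemize}
The triangle relations then force every other spoke $u_{v_j\to\hat f}$ to be the inverse of the transport along $\partial f$ from $v_1$ to $v_j$. The one remaining condition, closing up once around $\hat f$, is exactly the $2$-cell relation for $f$.

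A $K'$-gauge between normalized cocycles must satisfy $h_{\hat e}=h_w$ and $h_{\hat f}=h_{v_1}$. So it is determined by its values at the original vertices, and it acts on the $u_e$ by the $K$-gauge formula. This gives full faithfulness; essential surjectivity is the normalization itself.

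You must also handle barycentres $\hat g$ of cells of dimension $d\ge 3$, which are vertices of $K'$ as soon as $\dim X\ge 2$. Normalize one spoke at $\hat g$ and define the others by transport through $(\partial g)'$. The triangles through $\hat g$ then hold automatically: granted the $2$-cell relations, transport on $\partial g\cong S^{d-1}$ is path-independent because this sphere is simply connected. This is the real content of ``no higher relations'', and it is absent from your plan.

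Finally, the whole argument needs $K$ regular, so that $K'$ is simplicial and each $\partial g$ is a sphere. The paper assumes this tacitly as well. But ``subdividing $K$ until it is regular'' changes the set of $1$-cells on which the statement's labels live, so non-regular $K$ would require a separate comparison.
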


\begin{proof}
Refine the sector-box cover to the open-star cover of the vertex set of $K$.
The associated \v{C}ech groupoid is Morita equivalent to the cellular fundamental groupoid of the $2$-skeleton $K^{(2)}$ (compare the standard
presentation of $\pi_1$ by generators and relations coming from $2$-cells, and the exit-path viewpoint in \cite{Treumann2009,LurieHTT}).
Because $\St_\Phi$ is locally constant on each stratum and each star and finite intersection is contractible inside a single stratum,
a \v{C}ech Stokes cocycle is uniquely determined by its constant values on the generating overlaps, hence by labels on the oriented $1$-cells.
The cocycle condition on triple overlaps becomes the boundary relation around each $2$-cell, and natural isomorphisms between sections are exactly
vertex gauges. This is precisely the stated gauge groupoid model for $\Sec(\pi)$.
\end{proof}

\begin{remark}\label{rem:applications-deligne}
For ordinary local systems, a classical question in the style of Deligne is whether compatible boundary restrictions on a skeleton determine a global
object, and to what extent such constraints can be expressed by finitely many relations.
In the irregular case, \cite{TeyssierSkeletons} develops an analogue for Stokes torsors and proves representability and finite-type results under
appropriate hypotheses.
From our perspective, the pushout presenter $\G_{\Phi,n}$ makes the global compatibility constraints completely explicit: after choosing a collar
skeleton and an interior presenter, compatibility is imposed by a concrete list of relations in a small groupoid obtained by an amalgamated sum.
\end{remark}


\subsection{Higher-dimensional SNC corners: local generators and relations}\label{subsec:applications-snc-corners}

Let $(X,D)$ be a smooth pair with $D$ an SNC divisor and fix a Kummer level $n\ge 1$.
Near a point of depth $k$ one has toroidal coordinates
$(z_1,\dots,z_k,w)$ with $D=\{z_1\cdots z_k=0\}$ and $U=(\Delta^*)^k\times\Delta^{m}$.
On the logarithmic Betti boundary, the Kato--Nakayama fiber over the stratum is an angular torus $(S^1)^k$,
and at level $n$ the Kummer cover multiplies each angular coordinate by $n$; see \cite{KatoNakayama,Ogus,NakayamaOgus}.
Fix an irregular type $\Phi$ and the corresponding wall arrangement on the level-$n$ collar, as in the standard Stokes formalisms
\cite{Sabbah,Mochizuki}. On the regular locus $N^\circ$ (walls removed) the Stokes preorder is locally constant, hence the Stokes sheaf
$\St_\Phi$ is locally constant on strata by construction.

The main point is that, after choosing any stratified CW decomposition of $N^\circ$ adapted to the SNC depth and wall stratifications,
Stokes torsors are encoded by edge labels modulo vertex gauge, and gluing to the interior adds only finitely many peripheral relations.
This is precisely the content of the skeletal collar model and the explicit $2$-pushout construction proved earlier.

\begin{stokesproblem}[Local explicit presentation at an SNC corner]\label{prob:local-snc-presentation}
Work in a polydisc chart $X=\Delta^k\times \Delta^{n-k}$ with $D=\{z_1\cdots z_k=0\}$ and $U=(\Delta^*)^k\times \Delta^{n-k}$.
Fix $n\ge 1$ and an irregular type $\Phi$ with wall arrangement on the level-$n$ collar.
Produce an explicit generators-and-relations presentation of the local presenter $\G_{\Phi,n}$ in terms of:
\begin{itemize}
\item generators for the interior monodromy on $U$;
\item Stokes generators attached to adjacent chambers of an angular skeleton;
\item relations coming from $2$-cells of the chamber complex and from peripheral gluing.
\end{itemize}
\end{stokesproblem}

\begin{corollary}[Local SNC corner presenter]\label{cor:local-snc-presenter}
Problem~\ref{prob:local-snc-presentation} reduces to the skeletal collar description of
Theorem~\ref{thm:stratified-stokes-skeleton} together with the global pushout description of
Theorem~\ref{thm:global-skeletal-pushout}. Concretely, after choosing a stratified CW chamber complex on the angular torus (with walls removed),
one obtains a small collar skeleton groupoid $\mathcal G^{\mathrm{sk}}_{C,k,n}$ and an overlap skeleton $\GX^{\mathrm{sk}}$ so that the explicit $2$-pushout
\begin{equation}\label{eq:snc-local-pushout-app}
\G^{\mathrm{sk}}_{\Phi,n}(k)
\ :=\
B\ZZ^k\ \bigsqcup_{\GX^{\mathrm{sk}}}\ \mathcal G^{\mathrm{sk}}_{C,k,n}
\end{equation}
presents the local global Stokes objects near the depth-$k$ corner.
\end{corollary}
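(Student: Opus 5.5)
The plan is to treat the corollary as a specialization of Theorem~\ref{thm:global-skeletal-pushout} to the polydisc chart, so the work consists of making explicit choices of triangulations and identifying the resulting skeletal groupoids with the small models named in \eqref{eq:snc-local-pushout-app}.

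First, fix the interior side. Since $U=(\Delta^*)^k\times\Delta^{n-k}$ deformation retracts onto $(S^1)^k$, take $K_U$ to be the standard product CW structure on the torus, pulled back along this weak homotopy equivalence. By Proposition~\ref{prop:cech-to-skeleton}, $\Pi_1^{\mathrm{sk}}(K_U)$ is equivalent to the fundamental groupoid. Collapsing a maximal tree then gives a Morita equivalence with the one-object groupoid $B\ZZ^k$. Its generators are the $k$ peripheral loops $\gamma_1,\dots,\gamma_k$, and its relations are the commutators coming from the $2$-cells of the torus.

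Second, fix the collar side. Choose a $\Sigma_{D,\Phi,n}$-adapted stratified CW decomposition of the regular collar of the form $(0,\varepsilon)\times K_\theta\times(\text{transversal})$. Here $K_\theta$ is a chamber complex on the level-$n$ angular torus with the walls removed; it exists by Definition~\ref{def:stratified-triangulation} and, equivariantly, by Proposition~\ref{prop:equivariant-triangulation}. The radial and transversal factors are contractible, so restricting to the vertices of $K_\theta$ is harmless. Definition~\ref{def:GCsk} applied to $K_\theta$ then produces $\mathcal G^{\mathrm{sk}}_{C,k,n}$, whose structure is as follows:
\begin{itemize}
\item it has one object per chamber vertex;
\item its Stokes generators $(e,u)$ are attached to edges crossing between adjacent chambers;
\item its relations come from the $2$-cells of the chamber complex.
\end{itemize}
Theorem~\ref{thm:stratified-stokes-skeleton} identifies $\Sec(\pi^{\mathrm{sk}})$ with $\StokesLocal_r(N^\circ;\Phi)$ via Theorem~\ref{thm:stokeslocal-tors}. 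For the overlap, take $K_\times$ to be a common refinement of $K_U|_{N^\circ}$ and $K$, and define $\GX^{\mathrm{sk}}=\Pi_1^{\mathrm{sk}}(K_\times)$. The functors $j_U^{\mathrm{sk}}$ and $j_C^{\mathrm{sk}}$ are the ones in \eqref{eq:sk-refinement-functors}. Composing $j_U^{\mathrm{sk}}$ with the tree-collapse map into $B\ZZ^k$ produces the peripheral gluing relations: each lift of $\gamma_j$ to $K_\times$ is identified with the corresponding edge-path in the collar skeleton.

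Third, assemble the pieces. Apply Theorem~\ref{thm:global-skeletal-pushout} to the chart. The pushout $\G^{\mathrm{sk}}_{\Phi,n}(k)$, formed via Lemma~\ref{lem:strict-pushout-is-bicategorical}, has Stokes-corrected representations
\[
\Rep_r(B\ZZ^k)\times^{(2)}_{\Rep_r(\GX^{\mathrm{sk}})}\Sec(\pi^{\mathrm{sk}})\simeq\Stokes_r(X^{\log}_n;D,\Phi).
\]
Invariance under replacing $\GU^{\mathrm{sk}}$ by the Morita-equivalent $B\ZZ^k$ follows from Proposition~\ref{prop:rep-as-fun}, because $\Rep_r$ only depends on the Morita class. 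The generators-and-relations form of the answer to Problem~\ref{prob:local-snc-presentation} is then read off from Proposition~\ref{prop:pushout-coeq}.

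The main obstacle is the Morita replacement inside the pushout. A zig-zag of Morita equivalences $\GU^{\mathrm{sk}}\leftarrow\cdots\to B\ZZ^k$ must be made compatible with $j_U^{\mathrm{sk}}$. The approach is to use the tree-collapse map, which is a strict functor and an equivalence of groupoids. The pushout is then replaced along a strict map of cospans, and equivalence of the resulting pushouts follows from the $2$-universal property. To keep the presentation finite, I would also check that the angular chamber complex can be chosen finite: the wall arrangement on a compact torus is locally finite, hence finite.
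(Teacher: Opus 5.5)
Your overall route is the paper's. The paper gives no argument beyond the remark that the corollary is ``precisely the content'' of \Cref{thm:stratified-stokes-skeleton} and \Cref{thm:global-skeletal-pushout}. Your interior step is a sound way to fill that in: collapse a maximal tree to get a strict equivalence $\GU^{\mathrm{sk}}\to B\ZZ^k$, then replace along the map of cospans, or simply use $\Rep_r(B\ZZ^k)\simeq\Rep_r(\GU^{\mathrm{sk}})$ inside the $2$-fiber product. The step that does not go through as written is your description of the collar side.

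\textbf{Collar side.} Both theorems you invoke concern $\Sigma$-adapted triangulations $K$ with $|K|\cong N^\circ=N^\times\setminus\Sigma_n$.
\begin{itemize}
\item Because the walls are removed, distinct chambers are distinct connected components of $N^\circ$. Your $K_\theta$ likewise lives on the torus with walls removed. So no edge of $K$, $K_\theta$ or $K_\times$ joins two adjacent chambers, and \Cref{def:GCsk} attaches to each edge the Stokes group $\St_\Phi(S(e))$ of the single chamber containing it.
\item Hence the generators ``on edges crossing between adjacent chambers'' that you list do not exist in $\mathcal G^{\mathrm{sk}}_{C,k,n}$.
\item For the same reason, a meridian $\gamma_j$ that crosses a wall has in general no representative in $N^\circ$. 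So the peripheral relations ``each lift of $\gamma_j$ to $K_\times$\dots'' are not produced by $\GX^{\mathrm{sk}}\to B\ZZ^k$.
\end{itemize}
What you are actually describing is the chamber-adjacency (dual) complex that Problem~\ref{prob:local-snc-presentation} and the depth-$2$ micro-example have in mind. That complex is not a triangulation of $N^\circ$: its edges pass through $\Sigma_n$, where the preorder is not constant and $S(e)$ is undefined. So \Cref{thm:stratified-stokes-skeleton} says nothing about its section groupoid.

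The missing ingredient is a comparison theorem identifying $\StokesLocal_r(N^\circ;\Phi)$ with cocycles on the dual complex, with edge labels in Stokes groups attached to the walls. This means treating the Stokes sheaf as constructible across $\Sigma_n$ rather than locally constant on $N^\circ$. Neither your proposal nor the paper supplies it.

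\textbf{Two smaller points.}
\begin{itemize}
\item Replacing a triangulation of $N^\circ$ by $K_\theta$ needs an invariance statement for $\Sec(\pi^{\mathrm{sk}})$, and a strict functor $j_C^{\mathrm{sk}}:\GX^{\mathrm{sk}}\to\mathcal G^{\mathrm{sk}}_{C,k,n}$ compatible with it. Your ``main obstacle'' paragraph only handles the interior.
\item The product form $(0,\varepsilon)\times K_\theta\times(\text{transversal})$ is an extra hypothesis: it assumes the walls do not depend on the radial and transversal coordinates. You should state it explicitly.
\end{itemize}
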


A schematic form of the defining gluing diagram is:
\begin{equation}\label{eq:snc-local-pushout-diagram-app}
\begin{tikzcd}[row sep=large, column sep=huge]
\GX^{\mathrm{sk}} \arrow[r] \arrow[d] & \mathcal G^{\mathrm{sk}}_{C,k,n} \arrow[d] \\
B\ZZ^k \arrow[r] & \G^{\mathrm{sk}}_{\Phi,n}(k)
\end{tikzcd}
\end{equation}
where the top horizontal map records the chosen meridian loops in the chamber adjacency skeleton and the left vertical map records the
standard generators of $\ZZ^k$ on the interior.

\subsection{From wall arrangements to presentations}\label{subsec:algorithm-wall-arrangements}

The previous corollary is effective once a wall arrangement is fixed: after removing walls, $\St_\Phi$ is locally constant on chambers,
so Stokes torsors become nonabelian cocycles on a stratified $2$-skeleton. This is the same philosophy as in the skeleton approach to Stokes
torsors \cite{TeyssierSkeletons}, and it is compatible with the constructible/exit-path viewpoint \cite{Treumann2009,LurieHTT}.

\begin{definition}[Cocycle data on a stratified $2$-skeleton]\label{def:wall-algorithm-data}
Fix a locally finite CW decomposition $K$ of $N^\circ$ adapted to the wall stratification and to the SNC depth stratification.
Let $E$ be the set of oriented $1$-cells and $F$ the set of oriented $2$-cells.
For an oriented edge $e$, let $S(e)$ be the stratum containing the interior of $e$ and set $\St(e):=\St_\Phi(S(e))$.
A \emph{Stokes $1$-cochain} is a family $u=(u_e)_{e\in E}$ with $u_e\in \St(e)$ and $u_{\bar e}=u_e^{-1}$.
It is a \emph{Stokes cocycle} if for every $2$-cell $f\in F$ with boundary word $\partial f=e_1\cdots e_m$ one has
\begin{equation}\label{eq:wall-algorithm-2cell}
u_{e_m}\cdots u_{e_2}u_{e_1}=1.
\end{equation}
A \emph{vertex gauge} is a family $h=(h_v)$ with $h_v\in \St_\Phi(S(v))$, acting by $u_e\mapsto h_{t(e)}u_eh_{s(e)}^{-1}$.
\end{definition}

\begin{proposition}[No higher relations]\label{prop:no-higher-relations}
With notation as in Definition~\ref{def:wall-algorithm-data}, the groupoid $\Sec(\pi)$ is equivalent to the gauge groupoid of Stokes cocycles on $K$.
In particular, the constraints \eqref{eq:wall-algorithm-2cell} coming from $2$-cells generate all functoriality relations; cells of dimension $\ge 3$
impose no additional relations on $\Sec(\pi)$.
\end{proposition}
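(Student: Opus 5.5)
The plan is to pass from the cover-based section groupoid $\Sec(\pi)$ to the skeletal model and then show that cells of dimension $\ge 3$ never enter. First I would replace the sector-box cover $\mathcal B$ by the open-star cover $\mathcal S=\{\mathrm{st}(v)\}_{v\in K^{(0)}}$ of the (barycentrically subdivided) decomposition $K$. As in the proof of \Cref{thm:stratified-stokes-skeleton}, every nonempty finite intersection of stars is contractible and lies in a single stratum, so $\St_\Phi$ is constant on it. The chain \eqref{eq:sec-pi-chain}, namely \Cref{prop:stokes-as-sections} followed by refinement invariance (\Cref{prop:refinement}), then identifies $\Sec(\pi)$ with $\Tors(\St_\Phi;\mathcal S)$.

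Next I would unwind a nonabelian \v{C}ech $1$-cocycle on $\mathcal S$, exactly as in \Cref{prop:stokes-torsors-skeleton}. The three ingredients match as follows.
\begin{itemize}
\item A double overlap $\mathrm{st}(v)\cap\mathrm{st}(w)$ is nonempty iff $v,w$ span an edge $e$. Its constant section is the label $u_e\in\St(e)$, and $u_{\bar e}=u_e^{-1}$ follows from the cocycle identity with $\alpha=\gamma$.
\item Triple overlaps correspond to $2$-simplices, and the triple-overlap cocycle identity is the relation \eqref{eq:wall-algorithm-2cell} for triangles.
\item Gauge $0$-cochains are the vertex gauges $h_v\in\St_\Phi(S(v))$.
\end{itemize}
For a general (non-simplicial) $2$-cell $f$ I would subdivide $f$ into triangles. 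The boundary word relation $u_{e_m}\cdots u_{e_1}=1$ is then the product of the triangle relations, and conversely the triangle relations follow from the cell relation together with the labels on the new interior edges. These interior labels are uniquely determined up to gauge, because the subdivided disk is simply connected.

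The \emph{no higher relations} assertion is the key step, and I expect it to be the main obstacle. The point is that a \v{C}ech $1$-cocycle only involves conditions on double and triple overlaps. Quadruple and higher overlaps, which come from $3$-simplices, impose nothing new, because the identity $u_{\beta\gamma}u_{\alpha\beta}=u_{\alpha\gamma}$ on the faces of a tetrahedron already determines all of its composites. Equivalently, functoriality of a section $\sigma\colon\CB^{\mathrm{sk}}\to\GC^{\mathrm{sk}}$ only needs to be checked on generators and relations of $\CB^{\mathrm{sk}}=\Pi_1^{\mathrm{sk}}(K)$. By \Cref{def:edge-path-groupoid} these relations come only from $2$-cells, and by \Cref{lem:sections-as-edge-labels} and \Cref{lem:natural-iso-gauge} objects and morphisms of $\Sec(\pi^{\mathrm{sk}})$ are exactly the cocycles and gauges.

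The subtle point is stratification. A $3$-cell may meet several strata, and one might worry that its boundary $2$-sphere imposes a compatibility between different Stokes groups. I would handle this by noting that every face relation of a $3$-cell is already one of the listed $2$-cell relations, read in the group of that face's own stratum via restriction. A $2$-sphere relation is therefore a consequence of its face relations, by the standard argument that $\pi_1$ of a CW complex depends only on its $2$-skeleton; the relations are products of conjugates of the face relations. Combining this with $\Sec(\pi)\simeq\Sec(\pi^{\mathrm{sk}})$ from \Cref{thm:stratified-stokes-skeleton} gives the claimed equivalence.
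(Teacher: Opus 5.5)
Your proposal is correct and follows essentially the same route as the paper. The paper reduces via \Cref{thm:stratified-stokes-skeleton} (whose proof is the open-star \v{C}ech unwinding you inline) to strict sections of $\pi^{\mathrm{sk}}$, notes that functoriality need only be checked on edge generators and the $2$-cell relations presenting $\CB^{\mathrm{sk}}$, and identifies natural isomorphisms with vertex gauges via \Cref{lem:sections-as-edge-labels} and \Cref{lem:natural-iso-gauge}. Your cone-subdivision argument for non-simplicial $2$-cells, in which the interior labels are determined up to a unique gauge at the cone point, usefully fills in a step the paper leaves implicit, since $\CB^{\mathrm{sk}}$ is only defined there for simplicial $K$.
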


\begin{proof}
By Theorem~\ref{thm:stratified-stokes-skeleton}, $\Sec(\pi)$ is computed by strict sections of the skeletal projection
$\pi^{\mathrm{sk}}:\GC^{\mathrm{sk}}\to\CB^{\mathrm{sk}}$, where $\CB^{\mathrm{sk}}$ is a groupoid presentation of the fundamental groupoid of $N^\circ$
based on a $2$-skeleton. A strict section is determined by its values on generating $1$-cells and is functorial if and only if the relations coming from
$2$-cells are satisfied. Natural isomorphisms between sections are precisely vertex gauges. This identifies $\Sec(\pi)$ with the gauge groupoid of
cocycles on $K$ as stated.
\end{proof}

\subsubsection*{Micro-example: a transverse wall crossing at depth $2$}

In a depth-$2$ local model $D=\{z_1z_2=0\}$, the angular torus is $(S^1)^2$. If the wall arrangement consists of two transverse walls,
the complement has four chambers and the dual adjacency graph is a $4$-cycle. In the elementary one-parameter situation, each adjacent overlap
carries a Stokes group $(\CC,+)$, so a cocycle is a quadruple $(u_1,u_2,u_3,u_4)\in \CC^4$ subject only to the cycle relation modulo vertex gauge.
This is the simplest instance of the general recipe above (compare \cite{TeyssierSkeletons}).

\subsection{Derived and homotopical upgrades}\label{subsec:applications-derived}

The presenter $\G_{\Phi,n}$ is small and strictly functorial, so it is a convenient input for mapping-stack constructions.
Formally, the nerve $N(-)$ sends bicategorical $2$-pushouts of groupoids to homotopy pushouts of spaces, and mapping out of a homotopy pushout
is a homotopy pullback; see \cite{LurieHTT}. This suggests the following upgrade path, compatible with the derived/homotopical theories of Stokes
data developed by Porta--Teyssier \cite{PortaTeyssierDerived,PortaTeyssierHomotopy}.

\begin{theorem}[Derived mapping stack via the same presenter]\label{thm:derived-upgrade}
Let $\mathcal X$ be an $\infty$-category with finite limits and let $\mathcal H\in \mathcal X$.
Assume we are given a bicategorical $2$-pushout of small groupoids in $\mathbf{Grpd}$
\begin{equation}\label{eq:derived-pushout-presenter}
\G_{\Phi,n}\ \simeq\ \GU\ \bigsqcup_{\GX}\ \GC,
\end{equation}
and write $N(-)$ for the nerve.

\smallskip
\noindent
(1) Mapping out of the presenter is a homotopy pullback: there is a canonical equivalence in $\mathcal X$
\begin{equation}\label{eq:map-out-of-pushout-thm}
\mathrm{Map}\bigl(N(\G_{\Phi,n}),\mathcal H\bigr)\ \simeq\
\mathrm{Map}\bigl(N(\GU),\mathcal H\bigr)\times^{h}_{\mathrm{Map}(N(\GX),\mathcal H)}\mathrm{Map}\bigl(N(\GC),\mathcal H\bigr),
\end{equation}
equivalently the square
\begin{equation}\label{eq:map-out-of-pushout-square}
\begin{tikzcd}[row sep=large, column sep=huge]
\mathrm{Map}\bigl(N(\G_{\Phi,n}),\mathcal H\bigr) \arrow[r] \arrow[d] &
\mathrm{Map}\bigl(N(\GU),\mathcal H\bigr) \arrow[d] \\
\mathrm{Map}\bigl(N(\GC),\mathcal H\bigr) \arrow[r] &
\mathrm{Map}\bigl(N(\GX),\mathcal H\bigr)
\end{tikzcd}
\end{equation}
is homotopy Cartesian.

\smallskip
\noindent
(2) Let $\LocSys_r^{\mathrm{der}}(-)$ be a derived enhancement of $\GL_r$-local systems valued in $\mathcal X$
(for instance, in the sense of Porta--Teyssier \cite{PortaTeyssierHomotopy,PortaTeyssierDerived}),
and let $\StokesLocal_r^{\mathrm{der}}(N^\circ;\Phi)\in\mathcal X$ be a derived enhancement of the collar Stokes moduli,
equipped with a morphism $\StokesLocal_r^{\mathrm{der}}(N^\circ;\Phi)\to \LocSys_r^{\mathrm{der}}(N^\circ)$.
Assume their $1$-truncations recover the classical groupoids:
\begin{equation}\label{eq:derived-inputs-truncate}
\tau_{\le 1}\LocSys_r^{\mathrm{der}}(T)\ \simeq\ \LocSys_r(T),
\qquad
\tau_{\le 1}\StokesLocal_r^{\mathrm{der}}(N^\circ;\Phi)\ \simeq\ \StokesLocal_r(N^\circ;\Phi),
\end{equation}
compatibly with the maps to $\LocSys_r(N^\circ)$.
Define the derived global Stokes moduli by the homotopy pullback
\begin{equation}\label{eq:derived-stokes-def}
\Stokes_r^{\mathrm{der}}(\Xlogn;D,\Phi)
:=
\LocSys_r^{\mathrm{der}}(U)\times^{h}_{\LocSys_r^{\mathrm{der}}(N^\circ)}\StokesLocal_r^{\mathrm{der}}(N^\circ;\Phi).
\end{equation}
Then its $1$-truncation is canonically the classical global Stokes groupoid:
\begin{equation}\label{eq:derived-stokes-truncation}
\tau_{\le 1}\Stokes_r^{\mathrm{der}}(\Xlogn;D,\Phi)\ \simeq\ \Stokes_r(\Xlogn;D,\Phi).
\end{equation}
\end{theorem}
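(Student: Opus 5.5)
The plan is to prove (1) by computing the nerve of the explicit strict model of \Cref{lem:strict-pushout-is-bicategorical}, and then to obtain (2) by applying $\tau_{\le 1}$ to the defining homotopy pullback \eqref{eq:derived-stokes-def}. Throughout I read $\mathrm{Map}(S,\mathcal H)$, for a space $S$, as the cotensor $\mathcal H^{S}=\lim_{S}\mathcal H$ in $\mathcal X$. Since $N(\GU)$, $N(\GX)$, $N(\GC)$ are not finite in general, I will make explicit the convention that these cotensors exist; for instance $\mathcal X$ admits the relevant small limits, or all presenters are replaced by the finite skeletal models of \Cref{sec:stratified-skeleton}. With that convention $S\mapsto \mathcal H^{S}$ is contravariant, sends colimits of spaces to limits, and in particular sends homotopy pushouts to homotopy pullbacks.

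\emph{Part (1).} The whole of (1) then reduces to one claim: the canonical comparison map
\[
c:\ N(\GU)\sqcup^{h}_{N(\GX)}N(\GC)\longrightarrow N(\G_{\Phi,n})
\]
is an equivalence of spaces. The map $c$ is induced by the cocone $(i_U,i_C,s)$, where the bridge isomorphisms $s_b$ provide the homotopy filling the square. I would organize the proof of this claim in three steps.
\begin{enumerate}
\item \emph{Strictification.} Replace $j_U$ by its mapping-cylinder groupoid, so that $\G_{\Phi,n}$ becomes an honest strict pushout along an injective-on-objects functor.
\item \emph{Fundamental groupoids.} Compare $\pi_0$ and $\pi_1$ at every basepoint. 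Here I would use the Brown--van Kampen theorem for fundamental groupoids, which says precisely that $\Pi_1$ of the homotopy pushout is the groupoid pushout, that is, $\G_{\Phi,n}$.
\item \emph{Higher homotopy.} Show that $\pi_{\ge 2}$ of the homotopy pushout vanishes. The target $N(\G_{\Phi,n})$ is $1$-truncated, so this vanishing is exactly what is needed for $c$ to be an equivalence.
\end{enumerate}

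\emph{Main obstacle.} Step 3 is the crux. A homotopy pushout of $1$-types is a $1$-type only under a Whitehead-type asphericity condition: the amalgamation must be along faithful maps, i.e.\ $j_U$ and $j_C$ must be injective on all isotropy groups, so that we are dealing with a graph of groups. My first attempt is to verify this faithfulness directly.
\begin{itemize}
\item For $j_C$ it holds by construction: on isotropy it lands in the label-$1$ subgroupoid, and that subgroupoid maps isomorphically onto $\CB$.
\item For the \v{C}ech groupoids the isotropy groups are trivial, so faithfulness of $j_U$ is automatic.
\end{itemize}
Granting faithfulness, I would prove the vanishing of $\pi_{\ge 2}$ via the Bass--Serre tree of the amalgam: the universal cover of the homotopy pushout is a tree of contractible pieces, hence contractible. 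The skeletal presenters, such as $B\ZZ^k\to B\pi_1$ in \Cref{cor:local-snc-presenter}, must be checked case by case, because there injectivity of peripheral subgroups is genuinely needed. If it fails, the stated equivalence \eqref{eq:map-out-of-pushout-thm} can only hold after adding that hypothesis, and I would record it explicitly.

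\emph{Part (2).} Write $\Stokes^{\mathrm{der}}$ as the pullback $A\times^h_C B$ of the derived local-system and Stokes-local stacks. I would then apply $\tau_{\le 1}$ pointwise and use the Mayer--Vietoris long exact sequence
\[
\pi_2 A\oplus\pi_2 B\to\pi_2 C\to\pi_1(A\times^h_C B)\to \pi_1A\oplus\pi_1B\to\pi_1C\to \pi_0(\cdots)\to\cdots .
\]
This sequence shows that $\tau_{\le1}$ of the pullback agrees with the $2$-fiber product of the $\tau_{\le1}$'s once $\pi_2 A\oplus\pi_2B\to\pi_2C$ is surjective. To verify surjectivity I would use that restriction $\LocSys^{\mathrm{der}}_r(U)\to\LocSys^{\mathrm{der}}_r(N^\circ)$ is surjective on $\pi_2$. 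Failing that, the plan is to check that the $\pi_2$-contributions are all absorbed in the derived directions, which $\tau_{\le 1}$ discards by \eqref{eq:derived-inputs-truncate}.

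With this in hand, I would substitute \eqref{eq:derived-inputs-truncate} into the defining pullback and invoke \Cref{def:global-stokes} to conclude \eqref{eq:derived-stokes-truncation}. I expect this $\pi_2$-surjectivity to be the second genuine difficulty, after the asphericity in step 3 above.
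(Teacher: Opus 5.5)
Your route differs from the paper's, and your suspicion of its formal shortcuts is justified. The paper proves (1) by asserting that the nerve sends bicategorical $2$-pushouts to homotopy pushouts and that $\mathrm{Map}(-,\mathcal H)$ turns colimits into limits. It proves (2) by asserting that $\tau_{\le 1}$ is left exact in an $\infty$-topos.

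Neither assertion holds in general. A $2$-pushout in $\mathbf{Grpd}$ is the pushout in $1$-types, i.e.\ $\tau_{\le1}$ of the homotopy pushout. For $*\leftarrow B\ZZ\to *$ the $2$-pushout is contractible while the homotopy pushout is $S^2$. So with $\mathcal X$ spaces and $\mathcal H=K(\ZZ,2)$, the equivalence \eqref{eq:map-out-of-pushout-thm} already fails on $\pi_0$. Likewise $\tau_{\le1}$ preserves finite products but not pullbacks: $\tau_{\le1}\Omega S^2\simeq B\ZZ$, while $*\times_{\tau_{\le1}S^2}*\simeq *$.

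Your asphericity step is exactly what the paper's argument for (1) lacks, and for the \v{C}ech presenters it goes through. $\GX=\Cech(\mathcal W)$ has at most one arrow between any two objects, so both legs are faithful for free and the graph-of-aspherical-spaces argument applies. Together with your cotensor convention (finite limits alone do not provide $\mathcal H^{N(\GU)}$), this proves (1). There is also a cheaper fix. If $\mathcal H$ is $1$-truncated, as for $\mathbf{Tors}_r$ in Proposition~\ref{prop:rep-pushout-pullback}, then $\mathrm{Map}(-,\mathcal H)$ factors through $\tau_{\le1}$ and no asphericity is needed. That is the only setting in which the paper's formal argument is literally correct. Your caveat about skeletal presenters is also right: $\delta_i\mapsto\prod_a\mathbf s_{i,a}(u_{i,a})$ need not be injective.

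The genuine gap is in (2). Your Mayer--Vietoris criterion is the right one: $\tau_{\le1}(A\times^h_C B)$ agrees with the $2$-fiber product of the truncations exactly when $\pi_2A\oplus\pi_2B\to\pi_2C$ is onto at every basepoint. But neither way you propose to verify this is available. No hypothesis says that restriction $\LocSys^{\mathrm{der}}_r(U)\to\LocSys^{\mathrm{der}}_r(N^\circ)$ is surjective on $\pi_2$. And \eqref{eq:derived-inputs-truncate} only constrains $\pi_0$ and $\pi_1$, so it cannot force $\pi_2$-contributions to be ``absorbed'' anywhere.

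In fact no argument can close this step, because (2) is false as stated. Take $\mathcal X$ to be spaces and set
\begin{itemize}
\item $\LocSys^{\mathrm{der}}_r(U):=\LocSys_r(U)$;
\item $\StokesLocal^{\mathrm{der}}_r(N^\circ;\Phi):=\StokesLocal_r(N^\circ;\Phi)$;
\item $\LocSys^{\mathrm{der}}_r(N^\circ):=\LocSys_r(N^\circ)\times K(\ZZ,2)$, with the classical maps on the first factor and constant maps to the second.
\end{itemize}
All truncation and compatibility hypotheses hold. Yet the homotopy pullback is
\[
\Stokes_r(\Xlogn;D,\Phi)\times\Omega K(\ZZ,2)\;\simeq\;\Stokes_r(\Xlogn;D,\Phi)\times B\ZZ,
\]
which is already $1$-truncated and is not equivalent to $\Stokes_r(\Xlogn;D,\Phi)$ when the latter is nonempty. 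So the correct outcome of your analysis is an added hypothesis, not a derivation. For example, assume your $\pi_2$-surjectivity, or simply that $\LocSys^{\mathrm{der}}_r(N^\circ)$ is $1$-truncated, so that $\pi_2C=0$.
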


\begin{proof}
For (1), the nerve identifies bicategorical $2$-pushouts of groupoids with homotopy pushouts of their classifying spaces
(indeed, $N:\mathbf{Grpd}\to \mathcal S_{\le 1}$ exhibits the $(2,1)$-category of groupoids, localized at equivalences, as the
$\infty$-category of $1$-types), and in any $\infty$-category mapping out of a homotopy colimit is a homotopy limit; see
\cite[\S1.2, \S4.2]{LurieHTT}. Applying this to the diagram $$N(\GU)\leftarrow N(\GX)\rightarrow N(\GC)$$ yields
\eqref{eq:map-out-of-pushout-thm}, hence \eqref{eq:map-out-of-pushout-square}.

For (2), apply $\tau_{\le 1}$ to \eqref{eq:derived-stokes-def}. Since $1$-truncation is left exact in an $\infty$-topos (hence preserves finite
limits) \cite[\S5.5]{LurieHTT}, we obtain
\begin{equation*}
\tau_{\le 1}\Stokes_r^{\mathrm{der}}(\Xlogn;D,\Phi)
\simeq
\tau_{\le 1}\LocSys_r^{\mathrm{der}}(U)\times^{(2)}_{\tau_{\le 1}\LocSys_r^{\mathrm{der}}(N^\circ)}\tau_{\le 1}\StokesLocal_r^{\mathrm{der}}(N^\circ;\Phi).
\end{equation*}
Using \eqref{eq:derived-inputs-truncate} and the defining fiber product
$$\Stokes_r(\Xlogn;D,\Phi)=\LocSys_r(U)\times^{(2)}_{\LocSys_r(N^\circ)}\StokesLocal_r(N^\circ;\Phi)$$
gives \eqref{eq:derived-stokes-truncation}.
\end{proof}

\begin{remark}\label{rem:derived-upgrade-presenter}
The argument is formal \cite{LurieHTT}, but the point of this paper is that $N(\G_{\Phi,n})$ comes from an explicit small presenter.
Thus $\mathrm{Map}(N(\G_{\Phi,n}),\mathcal H)$ admits concrete combinatorial models once a skeleton is fixed (compare the skeleton philosophy in
\cite{TeyssierSkeletons}), and the same applies to the homotopy pullback description \eqref{eq:map-out-of-pushout-thm}.
\end{remark}

\end{document}